\newtheorem{theorem}{Theorem}
\newtheorem*{theorem*}{Theorem}
\newtheorem*{theoremY*}{Theorem Y}
\newtheorem*{theoremAB*}{Theorem AB}
\newtheorem*{linearformsmtp*}{Mass transference principle for linear forms}
\newtheorem{corollary}{Corollary}
\newtheorem*{corollary*}{Corollary}
\newtheorem{proposition}{Proposition}
\newtheorem{lemma}{Lemma}
\newtheorem*{claim*}{Claim}
\theoremstyle{definition}
\newtheorem{definition}{Definition}
\theoremstyle{remark}
\newtheorem{remark}{Remark}
\newtheorem*{remark*}{Remark}
\newcommand{\bp}{\mathbf{p}}
\renewcommand{\Bbb}[1]{\mathbb{#1}}
\newcommand{\C}{{\Bbb C}}         
\newcommand{\F}{{\Bbb F}}
\newcommand{\M}{{\Bbb M}}
\newcommand{\N}{{\Bbb N}}         
\newcommand{\Q}{{\Bbb Q}}         
\newcommand{\R}{{\Bbb R}}        
\newcommand{\Z}{{\Bbb Z}}         
\newcommand{\cA}{\mathcal{A}}
\newcommand{\cB}{{\mathcal B}}
\newcommand{\cC}{{\mathcal C}}
\newcommand{\cH}{\mathcal{H}}
\newcommand{\cK}{\mathcal{K}}
\newcommand{\bfb}{\mathbf{b}}
\newcommand{\bfw}{\mathbf{w}}
\newcommand{\x}{\mathbf{x}}
\newcommand{\y}{\mathbf{y}}
\newcommand{\p}{\mathbf{p}}
\newcommand{\q}{\mathbf{q}}
\newcommand{\0}{\mathbf{0}}
\newcommand{\diam}{r}
\newcommand{\dist}{\operatorname{dist}}
\renewcommand{\le}{\leq}
\DeclareMathOperator{\dimh}{\dim_H}
\newcommand{\bfa}{\textbf{a}}
\newcommand{\Qp}{\Q_{p}}
\newcommand{\Zp}{\Z_{p}}
\newcommand{\bq}{\textbf{q}}
\newcommand{\LL}{\mathcal{L}}
\numberwithin{equation}{section}
\begin{document}

\title{Weighted approximation for limsup sets}

\author[G. Gonz\'alez Robert]{Gerardo Gonz\'alez Robert}
\address{G. Gonz\'alez Robert,  Department of Mathematical and Physical Sciences,  La Trobe University, Bendigo 3552, Australia. }
\email{G.Robert@latrobe.edu.au}

\author[M. Hussain]{Mumtaz Hussain}
\address{Mumtaz Hussain,  Department of Mathematical and Physical Sciences,  La Trobe University, Bendigo 3552, Australia. }
\email{m.hussain@latrobe.edu.au}

\author[N. Shulga]{Nikita Shulga}
\address{Nikita Shulga,  Department of Mathematical and Physical Sciences,  La Trobe University, Bendigo 3552, Australia. }
\email{n.shulga@latrobe.edu.au}

\author[B. Ward]{ Benjamin Ward}
\address{Benjamin Ward,  {Department of Mathematics, University of York, Heslington, YO10 5DD.} }
\email{benjamin.ward@york.ac.uk, ward.ben1994@gmail.com }
\thanks{This research is supported by the ARC Discovery Project 200100994.}
%
%

\frenchspacing
\maketitle
\begin{abstract}
Theorems of Khintchine, Groshev, Jarn\'ik, and Besicovitch in Diophantine approximation are fundamental results on the metric properties of $\Psi$-well approximable sets. These foundational results have since been generalised to the framework of weighted Diophantine approximation for systems of real linear forms (matrices). In this article, we prove analogues of these weighted results in a range of settings including the $p$-adics (Theorems \ref{p-adic weighted KG} and \ref{JB p-adic}),  complex numbers (Theorems \ref{TEO:MEASURE} and \ref{TEO:HDIM}),  quaternions (Theorems \ref{TEO:Q:MEASURE} and \ref{TEO:Q:HDIM}), and formal power series (Theorems \ref{formalt1} and \ref{formalt2}). The key tools in proving the main parts of these results are the weighted ubiquitous systems and weighted mass transference principle introduced recently by Kleinbock--Wang [Adv. Math. (2023)] and  Wang--Wu [Math. Ann. (2021)].

\end{abstract}

\tableofcontents
\section{Introduction}
Let $n,m\geq 1$ be integers and $\Psi=(\psi_{1},\dots, \psi_{n})$ be an $n$-tuple of multivariable approximation functions of the form $\psi_{i}:\Z^{m} \to \R_{+}$ with
\begin{equation*}
    \psi_{i}(\bq) \to 0 \quad \text{ as } \|\bq\|:=\max(|q_1|,\ldots, |q_m|)\to \infty.
\end{equation*}
For $\bp\in\Z^m$ and $i\in\{1,\ldots,n\}$, let $p_i$ be the $i$-th coordinate of $\bp$. Let
\begin{equation*}
    W_{n,m}(\Psi):=\left\{ X \in [0,1]^{m\times n} : \begin{array}{l}|\bq X_{i}+p_{i}|<\psi_{i}(\bq) \quad 1 \leq i \leq n,\\
    \text{ for i. m. } (\bp,\bq) \in \Z^{n}\times \Z^{m}\backslash\{\textbf{0}\}
    \end{array}
    \right\}.
\end{equation*}
Here and in what follows ``i. m.'' stands for ``infinitely many''. $X=(x_{i,j})_{1\leq i\leq m, 1\leq j \leq n}$ is an $m\times n$ matrix with entries $x_{i,j}\in[0,1]$, and $X_{i}$ denotes the $i$th column vector of $X$.  So
\begin{equation*}
    \bq X+\bp=\left( \begin{array}{c} \bq X_{1}+p_{1} \\
    \vdots\\ 
    \bq X_{n} + p_{n} 
    \end{array} \right)=\left( \begin{array}{c} q_{1}x_{1,1} + \dots + q_{m}x_{m,1} + p_{1}  \\
    \vdots \\
    q_{1}x_{1,n} + \dots + q_{m} x_{m,n} + p_{n} \end{array} \right).
\end{equation*}
A Khintchine-Groshev type theorem tells us the $nm$-dimensional Lebesgue measure of $W_{n,m}(\Psi)$  is either zero or full, depending upon the convergence or divergence of a certain series, respectively. Naturally, the series is dependent upon the nature of the approximation functions.  

There are many variations of Khintchine-Groshev type theorems. To highlight a few, we recall the following definitions. When $\psi_{1}=\dots = \psi_{n}$ we say the approximation function $\Psi$ is \textit{non-weighted} and simply denote it by $\psi$, otherwise, it is called \textit{weighted}. If the approximation functions $\psi_{i}$ are of the form $\phi_{i}:\R_{+} \to \R_{+}$ with $\psi_{i}(\bfa)=\phi_{i}(\|\bfa\|)$ for each $1\leq i \leq n$ we say the approximation function is \textit{univariable}. A generalisation of univariable approximation function is when the sup norm is replaced by a quasi-norm $\|\cdot\|_{v}=\max_{1\leq i\leq m}|\cdot|^{1/v_{i}}$ with vector $v=(v_{1}, \dots, v_{m})$ for each $v_{i}>0$ and $\sum v_{i} = m$. If each approximation function is of the form $\psi_{i}(\bfa)=\phi_{i}(\|\bfa\|_{v})$ we follow \cite{BadBerVel13} and say $\Psi$ has \textit{property P}. If each approximation function $\psi_{i}$ is monotonic decreasing, we say $\Psi$ is monotonic decreasing. In the case of multivariable approximation, this means
\begin{equation*}
\psi_{i}(\bfa)<\psi_{i}(\bfb) \quad \forall \, \, \, \|\bfb\|<\|\bfa\|.
\end{equation*}

The following result, due to Kleinbock and Wang \cite[Theorem 2.7]{KW23} provides the most modern version of the Khintchine-Groshev theorem in the weighted monotonic univariable setting.
\begin{theorem}[{\cite{KW23}}] \label{kleinbock-Wang-KG}
Let $\Psi=(\psi_{1},\dots,\psi_{n})$ be an $n$-tuple of monotonic decreasing approximation functions satisfying property P. Then
\begin{equation*}
    \mu^{\R}_{m\times n}\left(W_{n,m}(\Psi)\right)=\begin{cases}
        0 \quad \text{\rm if }\quad  \sum\limits_{r=1}^{\infty}r^{m-1}\prod_{i=1}^{n}\psi_{i}(r) < \infty ,\\
        1 \quad \text{\rm if }\quad  \sum\limits_{r=1}^{\infty}r^{m-1}\prod_{i=1}^{n}\psi_{i}(r) = \infty.
    \end{cases}
\end{equation*}
\end{theorem}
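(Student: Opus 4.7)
The plan is to split the argument into convergence and divergence halves in the usual way, handling the convergence direction by a direct Borel--Cantelli computation and the divergence direction by reducing to the weighted ubiquity framework of Kleinbock--Wang.

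For the convergence half, I would define the events
\[
E_{\bq} = \{ X\in[0,1]^{m\times n} : \exists\,\bp\in\Z^n,\ |\bq X_i+p_i|<\psi_i(\bq)\text{ for all }1\le i\le n\},\qquad \bq\in\Z^m\setminus\{\0\},
\]
and note that, for each fixed $\bq$ and each $i$, the $m$-dimensional set $\{X_i\in[0,1]^m : \exists\, p_i\in\Z,\ |\bq X_i+p_i|<\psi_i(\bq)\}$ is a union of periodic slabs of total Lebesgue measure $\asymp\min(1,2\psi_i(\bq))$. Since the $n$ column conditions involve disjoint variables,
\[
\mu^{\R}_{m\times n}(E_{\bq})\ \ll\ \prod_{i=1}^n \min(1,2\psi_i(\bq)).
\]
Property P lets me write $\psi_i(\bq)=\phi_i(\|\bq\|_v)$, and the lattice-point count $\#\{\bq\in\Z^m:\|\bq\|_v\le R\}\asymp R^m$ (coming from $|q_j|\le R^{v_j}$ and $\sum v_j = m$) gives $\asymp r^{m-1}$ vectors on each level $\|\bq\|_v\in[r,r+1)$. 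Hence
\[
\sum_{\bq\neq\0}\prod_{i=1}^n\psi_i(\bq)\ \asymp\ \sum_{r=1}^\infty r^{m-1}\prod_{i=1}^n\phi_i(r),
\]
and Borel--Cantelli delivers $\mu^{\R}_{m\times n}(W_{n,m}(\Psi))=0$ when the right-hand series converges. This half does not require monotonicity.

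For the divergence half, I would express $W_{n,m}(\Psi)$ as an anisotropic weighted limsup set of neighbourhoods of rational hyperplanes and then invoke the Kleinbock--Wang weighted ubiquity theorem. Each admissible $(\bp,\bq)$ produces a codimension-$n$ resonant affine subspace $R_{\bp,\bq}=\{X\in[0,1]^{m\times n}:\bq X_i+p_i=0\text{ for all }i\}$, and $W_{n,m}(\Psi)$ is the $\|\bq\|\to\infty$ limsup of the $\Psi$-thickening $\Delta_{\bp,\bq}=\{X:|\bq X_i+p_i|<\psi_i(\bq)\text{ for all }i\}$, which is thin in $n$ orthogonal directions at the distinct scales $\psi_1(\bq),\dots,\psi_n(\bq)$. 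The key intermediate step is to verify local $\rho$-ubiquity of $\{R_{\bp,\bq}\}$ with a function $\rho(N)$ of the form $N^{-m/n}$ suitably weighted by $v$: this is a standard consequence of Dirichlet's simultaneous approximation theorem applied to the anisotropic box $\prod_j[-N^{v_j},N^{v_j}]\cap\Z^m$. Monotonicity of the $\phi_i$ then makes the dyadic comparison of $\sum_r r^{m-1}\prod_i\phi_i(r)$ with $\sum_k 2^{km}\prod_i\phi_i(2^k)$ automatic, supplying precisely the divergent input the weighted ubiquity theorem needs in order to conclude $\mu^{\R}_{m\times n}(W_{n,m}(\Psi))=1$.

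The main obstacle is the anisotropic geometry of the thickenings $\Delta_{\bp,\bq}$: they are simultaneously thin in $n$ separately scaled orthogonal directions, so the classical isotropic ubiquity/Khintchine--Groshev machinery does not apply directly, and this is exactly what the Kleinbock--Wang weighted version is designed to handle. A secondary subtlety is reconciling the quasi-norm $\|\cdot\|_v$ appearing in property P with the weight structure governing the ubiquity cover, since the divergence hypothesis must match, term by term, the divergence condition fed into the weighted ubiquity theorem; monotonicity of the $\phi_i$ is what keeps the resulting dyadic summation clean, and without it one would need extra regularity to prevent the series from oscillating between dyadic scales.
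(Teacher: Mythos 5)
The paper states Theorem~\ref{kleinbock-Wang-KG} as a citation from Kleinbock--Wang \cite{KW23} and does not prove it; the closest thing to a proof in the paper is the worked example of Section~3, which derives the $m=1$, $n=2$ case (Corollary~\ref{2 dim KG}) from Proposition~\ref{2 dim ubiquity}. Taken on its own terms, your convergence half is correct and standard.

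The divergence half has a genuine gap in the choice of the ubiquity tuple $\rho=(\rho_1,\ldots,\rho_n)$. Theorem~\ref{KW ambient measure} requires both a Minkowski-type product constraint on the $\rho_i$ (so that ubiquity holds) and the pointwise domination $\psi_i(r)\le\rho_i(r)$ for every $i$ (its condition~(II)). Your suggestion to take the isotropic Dirichlet exponent $\rho_i(N)\sim N^{-m/n}$, adjusted only by the quasi-norm weight $v$, fails~(II) whenever the $\psi_i$ are unbalanced: with $m=1$, $n=2$, $\psi_1(r)=r^{-1/4}$, $\psi_2(r)=r^{-3/4}$, the series $\sum_r\psi_1(r)\psi_2(r)$ diverges, yet $\psi_1(r)>r^{-1/2}$ for large $r$. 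The weight $v$ governs only the shape of the box $\prod_j[-N^{v_j},N^{v_j}]$ in the $m$ denominator variables and says nothing about how the Minkowski budget is distributed across the $n$ columns; that distribution must be made adaptively from the $\psi_i$ themselves, using Minkowski's linear forms theorem rather than Dirichlet, with a scale-by-scale case split (take $\rho_i\asymp\psi_i$ where $\psi_i$ exceeds the balanced exponent and spread the remaining budget evenly over the rest). This is exactly the construction the paper carries out in Proposition~\ref{2 dim ubiquity} and the proof of Corollary~\ref{2 dim KG} (the definitions of $\phi_{l_1(q)}$, $\phi_{l_2(q)}$), and generalises in each application section; without it the divergence direction does not close.
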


Here and throughout, we denote the $m\times n$ dimensional Lebesgue measure to be $\mu_{m\times n}^{\R}$. It should be noted that Kleinbock \& Wang's result was more general still. In particular, rather than property P one can bound each $q_{i}$ by some function $\Phi_{i}$ satisfying certain properties, see \cite{KW23} for more details. \par
Below, we briefly highlight the results preceding this. These include but are not limited to
\begin{itemize}
    \item $n=m=1$, $\psi$ monotonic, Khintchine \cite{K24}.
    \item $n=m=1$, $\psi$ non-monotonic, conjectured by Duffin and Schaeffer (1941) and proven by Maynard and Koukoulopoulos \cite{KM20}.
    \item $n>1, m=1$, $\psi$ is monotonic, Khintchine \cite{K26}.
    \item $nm\geq 1$, $\psi$ is univariable, proven by Groshev \cite{Groshev}. In fact, in the original theorem, there were some stronger assumptions on the approximating functions $\psi$, that is, $r^{\max(1, m-1)}\psi(r)^n$ to be monotonic. The reason that this assumption is unnecessary is due to \cite{BDV06}.
    \item $nm > 1$, $\psi$ is univariable, non-monotonic, by Beresnevich \& Velani \cite{BV10}. Prior to this, for $m=1$, $n\geq 1$ and $\psi$ univariable non-monotonic see \cite{Gallagher}.
    \item $n\geq1$ $m> 1$, for univariable non-monotonic weighted $\Psi$ by Hussain \& Yusupova \cite{HY14}.
\end{itemize}
Similar results for the more general {\em inhomogeneous approximation} are known, but, as far as the authors are aware, the case of weighted multivariable inhomogeneous approximation remains unexplored. \par 
When the $n$-tuple of approximation functions $\Psi$ decreases sufficiently fast such that $\mu^{\R}_{m\times n}\left(W_{n,m}(\Psi)\right)=0$, we desire a more delicate notion of "size". The Hausdorff measure and dimension provide us with such a tool.  The following Hausdorff measure and dimension results are known for the set $W_{n,m}(\Psi)$. 
\begin{theorem}[{\cite[Theorem 11.1]{WW19}}] \label{wang wu KG}
    Let $\Psi=(\psi_{1},\dots,\psi_{n})$ be an $n$-tuple of approximation functions of the form
    \begin{equation*}
        \psi_{i}(\bq)=\bq^{-\tau_{i}}\, , \quad (1\leq i \leq n)
    \end{equation*}
    for $\boldsymbol{\tau}=(\tau_{1}, \dots , \tau_{n})\in\R^{n}_{+}$ with
    \begin{equation*}
        \sum\limits_{i=1}^{n}\tau_{i}>m.
    \end{equation*}
    Then
    \begin{equation*}
        \dimh W_{n,m}(\Psi) = \min_{1\leq k \leq n} \left\{ n(m-1)+\frac{n+m+\sum\limits_{j: \tau_{j}<\tau_{k}}(\tau_{k}-\tau_{j})}{1+\tau_{k}} \right\}.
    \end{equation*}
\end{theorem}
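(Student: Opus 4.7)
The proof splits into an upper bound and a lower bound for $\dimh W_{n,m}(\Psi)$. Throughout set $d = mn$ and regard $X\in[0,1]^d$ as a tuple of $n$ columns, each in $[0,1]^m$.

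\textbf{Upper bound.} For each fixed $k\in\{1,\dots,n\}$, I would construct a natural cover at the scale $r = Q^{-(1+\tau_k)}$. Dyadically decompose $\bq\in\Z^m$ by $\|\bq\|\asymp Q$. The set of $X$ satisfying the system for this $\bq$ (and some $\bp$) is a product over the $n$ columns: the $i$-th column $X_i$ lies in a union of $\asymp Q$ parallel slabs inside $[0,1]^m$ of thickness $\asymp Q^{-1-\tau_i}$ and full length in the $m-1$ transverse directions. A slab of thickness $Q^{-1-\tau_i}$ is covered at scale $r$ by
$$\max\bigl\{1,\,Q^{-1-\tau_i}/r\bigr\}\cdot r^{-(m-1)}$$
cubes, so summing over $\asymp Q$ slabs the cube count for the $i$-th column is $Q^{m(1+\tau_k)-\tau_i}$ when $\tau_i\leq\tau_k$ and $Q^{1+(1+\tau_k)(m-1)}$ when $\tau_i>\tau_k$. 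Taking the product over the $n$ columns and multiplying by the $\asymp Q^m$ available vectors $\bq$, the $s$-dimensional Hausdorff sum for cubes of side $r$ contributed at scale $Q$ is
$$Q^{m + nm(1+\tau_k) - \tau_k(n-|K|) - \sum_{i\in K}\tau_i - s(1+\tau_k)},$$
where $K=\{i:\tau_i\leq\tau_k\}$. A direct simplification shows that convergence in the dyadic scales $Q$ is equivalent to $s > n(m-1) + \bigl(n + m + \sum_{j:\tau_j<\tau_k}(\tau_k-\tau_j)\bigr)/(1+\tau_k)$. Since this holds for every $k$, the upper bound in the theorem follows by taking the minimum over $k$.

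\textbf{Lower bound.} Here I would invoke the mass transference principle for rectangles of Wang and Wu. For each index $k$ fix auxiliary exponents $(\tilde\tau_1,\dots,\tilde\tau_n)$ with $\tilde\tau_i\leq\tau_i$ and $\sum_i\tilde\tau_i = m$; by Theorem \ref{kleinbock-Wang-KG} applied to $\tilde\Psi(\bq)=\|\bq\|^{-\tilde\tau_i}$ the companion set $W_{n,m}(\tilde\Psi)$ has full $mn$-dimensional Lebesgue measure, since $\sum_Q Q^{m-1}\prod_i Q^{-\tilde\tau_i}$ diverges. The generating family of $W_{n,m}(\tilde\Psi)$ is a limsup of products of slabs of thin side $Q^{-1-\tilde\tau_i}$ and long sides $\asymp 1$. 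The rectangles of $W_{n,m}(\Psi)$ sit inside these after the thin side is contracted from $Q^{-1-\tilde\tau_i}$ to $Q^{-1-\tau_i}$, i.e.\ by exponent $(1+\tau_i)/(1+\tilde\tau_i)$; the long sides are simultaneously sub-divided at scale $Q^{-1-\tau_k}$ by exponent $1+\tau_k$. Feeding these contraction data into the rectangle-to-rectangle mass transference principle gives, for the chosen $k$, a lower bound $\dimh W_{n,m}(\Psi)\geq n(m-1)+(n+m+\sum_{j:\tau_j<\tau_k}(\tau_k-\tau_j))/(1+\tau_k)$; optimising the auxiliary $\tilde\tau$-parameters reproduces the full minimum.

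\textbf{Main obstacle.} The upper bound is essentially bookkeeping. The subtle step is the lower bound: one must show that for each $k$ a legitimate choice of auxiliary exponents and of contraction exponents exists so that the optimisation built into the Wang--Wu rectangle MTP produces exactly the $k$-th candidate in the minimum. Equivalently, each of the $n$ candidate values must be attainable as a ubiquity lower bound, and the long (transverse) directions per column, which carry no direct Diophantine constraint, must be handled by treating them as $m-1$ further factors that are contracted at the same controlling scale $Q^{-1-\tau_k}$. Verifying the technical hypotheses of the rectangle MTP (local ubiquity and appropriate Hausdorff content estimates for the family of resonant hyperplanes) is where the argument becomes most delicate; the hypothesis $\sum\tau_i>m$ guarantees that $W_{n,m}(\Psi)$ is genuinely Lebesgue null so that the shrinking is meaningful.
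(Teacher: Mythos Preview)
The paper does not itself prove Theorem~\ref{wang wu KG}; it is quoted from \cite[Theorem 11.1]{WW19} as a known result. What the paper does do is illustrate the underlying method in the special case $m=1$, $n=2$ (Corollary~\ref{2 dim JB}, Section~3.2) and then carry out the same scheme in the $p$-adic, complex, quaternion and formal power series settings. Your proposal is broadly in line with that scheme, so let me compare against the version the paper actually executes.

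Your upper bound is essentially the paper's argument: for each choice of scale $\|\bq\|^{-(1+\tau_k)}$ one covers each factor slab by the appropriate number of cubes, multiplies over the $n$ columns, counts the $\asymp Q^m$ vectors $\bq$ and the $\asymp Q$ admissible $p_i$, and reads off convergence of the $s$-sum. The minimum over $k$ then gives the upper bound. This matches Section~3.2 and Section~\ref{hausdorff dimension}.

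For the lower bound your description is correct in spirit but slightly off in structure. In the paper (and in \cite{WW19}) one does not loop over $k$ and optimise the auxiliary exponents separately for each target candidate. Instead one applies Theorem~\ref{MTPRR} once: one fixes a single ubiquity function $\rho=(\rho^{a_1},\dots,\rho^{a_n})$ with $\sum a_i=m+n$ (equivalently $\tilde\tau_i=a_i-1$ with $\sum\tilde\tau_i=m$), and the theorem automatically outputs a minimum over all elements of $A=\{a_i,a_i+t_i\}$. The work is then a two-case analysis on whether $\min_i\tau_i\geq (m+n)/n$: if so, take all $a_i=(m+n)/n$; if not, set $a_i=\tau_i$ for the small $\tau_i$ and spread the remaining mass equally over the rest. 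In each case one checks that every term in the Theorem~\ref{MTPRR} minimum dominates the desired value. Your ``optimising the auxiliary $\tilde\tau$-parameters'' is really this case split, and the ``long transverse directions'' are handled automatically by the $\kappa$-scaling property ($\kappa=\frac{m-1}{m}$, $\delta=m$) rather than by an ad hoc contraction.
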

It should be noted that the above result was proven for a more general range of non-increasing approximation functions, with the $\tau_{i}$'s in the dimension result being replaced by limit points of vectors related to $\psi_{i}$. For example, in the univariable case $\tau$ is replaced by the lower order at infinity of $\psi$, that is, $\tau=\liminf_{q\to \infty} \tfrac{-\log \psi(q)}{\log q}$. \par 
As with the result of Kleinbock and Wang various results preceded this. We highlight a few below. For brevity, we stick to considering the approximation function of the form considered in the above theorem.
\begin{itemize}
    \item $n=m=1$ was independently proven by Jarn\'ik and Besicovitch. 
    \item for all linear forms but for the non-weighted case, it was proven by Bovey and Dodson in \cite{BoveyDodson}. This result was further generalised to the lower-order settings by Dodson in \cite{Dodson_lowerorder}.
    \item for $m=1$ and $n\geq 1$ it was proven in the weighted case by Rynne \cite{Rynne}. In fact, the result of Rynne was more general, as he restricted the rational denominators to an infinite subset of integers.  This result was further generalised to dual settings by Dickinson and Rynne \cite{DickinsonRynne}.
\end{itemize}

The aim of this paper is to prove analogies of the above two theorems in a variety of settings. In order to prove Lebesgue dichotomy statements (analogue of Theorem \ref{kleinbock-Wang-KG}) in a variety of settings, we use the recently developed notion of weighted ubiquitous systems. The notion of weighted ubiquity was introduced by Wang and Wu in \cite{WW19} and was developed further by Kleinbock and Wang in \cite{KW23}. In the paper \cite{WW19}, the authors established a very powerful weighted mass transference principle that, under weighted ubiquity assumption, allows for the Hausdorff measure/dimension results in the weighted settings. We refer the reader to \cite{AllenDaviaud, AT19}
 for a comprehensive survey of the mass transference principle.
 
In the following section, we recall this theory. In the subsequent sections, along with some other things, we apply this framework to a variety of settings including Diophantine approximation in $p$-adics, formal power series, complex numbers, and quaternions.

\section{Toolbox}\label{SEC:TOOLBOX}
This section consists of a range of tools used to determine metric properties of $\limsup$ sets. The first subsection recalls Dirichlet's and Minkowski's theorems on linear forms. The second subsection provides the generalised setup of weighted ubiquitous systems into which our applications fall. The third subsection gives the techniques required to determine ambient measure results on $\limsup$ sets. The last subsection recalls the definition of Hausdorff measure and dimension and gives the tools required to prove Hausdorff measure and dimension results on $\limsup$ sets.
\subsection{Dirichlet and Minkowski's theorems}
A basic problem of Diophantine approximation is to approximate a given real number by rational numbers to a certain degree. For example, it is obvious that for a given $\alpha\in\R$ and any $q\in\N$, there is some integer $p$ such that
\[
|q\alpha - p|< \frac{1}{2}.
\]
A classical result by Dirichlet improves this observation. Namely, given $\alpha\in\R$, for each real number $Q>1$ there is some $(q,p)\in\Z^{2}$ such that
\[
1\leq q\leq Q
\quad\text{ and }\quad
|q\alpha - p|< \frac{1}{Q}.
\]
As a consequence, there are infinitely many pairs $(q,p)\in\N\times \Z$ satisfying
\[
|q\alpha - p|< \frac{1}{q}.
\]
Dirichlet's result can be proven using the pigeonhole principle. Furthermore, the argument can be easily extended to linear forms (see, for example, \cite[Chapter II, Theorem 1E]{Schmidt80}).

\begin{theorem}[Dirichlet, 1842]\label{TheoremDirichlet}
Let $m,n\in\N$ and $A\in \R^{m\times n}$ be given. For any $Q>1$, there exists a non-zero $\q=(q_1,\ldots,q_m)\in\Z^{m}$ and some $\p=(p_1,\ldots, p_n)\in\Z^n$ such that
\begin{align*}
    |\q A_{i}-p_i| &< Q^{-1} \quad (1\leq i\leq n),\\ 
    |q_j| &\leq Q^{\frac{n}{m}} \quad(1\leq j\leq m).
\end{align*}
\end{theorem}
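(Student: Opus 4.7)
The plan is to prove Theorem~\ref{TheoremDirichlet} via Minkowski's theorem on linear forms, which is itself a direct consequence of the pigeonhole argument alluded to in the paragraph preceding the statement.

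I first encode the conditions of the theorem as a system of $m+n$ linear forms on $\R^{m+n}$. For $(\q,\p) = (q_1,\ldots,q_m,p_1,\ldots,p_n)$, put
\[
L_i(\q,\p) := \q A_i - p_i \quad(1\leq i\leq n), \qquad L_{n+j}(\q,\p) := q_j \quad(1\leq j\leq m).
\]
The coefficient matrix has the block form $\bigl(\begin{smallmatrix}A^T & -I_n\\ I_m & 0\end{smallmatrix}\bigr)$, whose determinant equals $\pm 1$ regardless of $A$; in particular these forms are unimodular on the lattice $\Z^{m+n}$. Apply Minkowski's theorem with bounds $c_i := Q^{-1}$ for $1\leq i\leq n$ and $c_{n+j} := Q^{n/m}$ for $1\leq j\leq m$; since $\prod_k c_k = Q^{-n}\cdot Q^n = 1$ matches the absolute value of the determinant, Minkowski's theorem produces a non-zero $(\q,\p)\in\Z^{m+n}$ with $\abs{L_k(\q,\p)}\leq c_k$ for every $k$. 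Non-triviality of $\q$ follows for free: $\q=\0$ would force $\abs{p_i}\leq Q^{-1}<1$ for all $i$, hence $\p=\0$, contradicting $(\q,\p)\neq\0$.

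The main technical obstacle is converting the non-strict bound $\abs{L_i}\leq Q^{-1}$ coming out of Minkowski into the strict bound $\abs{L_i}<Q^{-1}$ required by the statement. I handle this by a standard perturbation argument: for each $\delta\in(0,1)$, apply the previous step with $Q+\delta$ in place of $Q$ to produce $(\q_\delta,\p_\delta)\neq\0$. Since all such pairs lie in a $\delta$-independent bounded region of $\Z^{m+n}$, some single pair $(\q,\p)$ must recur along a sequence $\delta_k\downarrow 0$. For this fixed pair, the relations $\abs{L_i(\q,\p)}\leq (Q+\delta_k)^{-1}<Q^{-1}$ (valid for every $k$) deliver the desired strict inequality on the $n$ approximation forms, while $\abs{q_j}\leq (Q+\delta_k)^{n/m}\to Q^{n/m}$ as $k\to\infty$ yields $\abs{q_j}\leq Q^{n/m}$ in the limit, completing the proof.
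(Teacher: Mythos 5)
Your proof is correct, and it is worth noting that the paper itself does not prove Theorem~\ref{TheoremDirichlet}: the result is stated as classical with a pointer to Schmidt's pigeonhole argument. Deriving Dirichlet's theorem from Minkowski's linear-forms theorem, as you do, is a sound and arguably cleaner route; your block matrix $\begin{pmatrix} A^{T} & -I_n \\ I_m & 0\end{pmatrix}$ has determinant $\pm 1$, the choice $c_i=Q^{-1}$, $c_{n+j}=Q^{n/m}$ gives $\prod_k c_k = 1 \geq \det(\Z^{m+n})\cdot 1$, and the observation that $\q = \0$ would force $\p = \0$ correctly establishes $\q\neq \0$.

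The one thing to flag is that the perturbation step in your final paragraph is unnecessary if you invoke Minkowski exactly as the paper records it. Theorem~\ref{TheoremMinkowski} already delivers \emph{strict} inequalities $|\mathbf{u}A_j| < c_j$ for $1\leq j < N$ and a non-strict one only for the last form $j=N$. Since $m\geq 1$, we have $n\leq N-1$, so you may order the $m+n$ forms with the $n$ approximation forms $L_1,\ldots,L_n$ first; Minkowski then hands you $|\q A_i - p_i| < Q^{-1}$ for every $i$ and $|q_j|\leq Q^{n/m}$ for every $j$ in one stroke, with no need to run $Q+\delta_k$ and extract a recurring solution. Your perturbation argument is itself correct and is the standard fix when one only has the symmetric convex-body theorem (which yields non-strict bounds everywhere), so there is no error — just extra work relative to the version of Minkowski the paper actually states.
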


Take $m,n\in\N$. Define the function $\Psi:\N\to\R_{+}$ by $\Psi(r)=r^{-m/n}$. For every $\q=(q_1,\ldots,q_m)\in\Z^m$, write $\|\q\|=\max\{|q_1|,\ldots,|q_m|\}$. By Theorem \ref{TheoremDirichlet}, for every matrix $A\in\R^{m\times n}$ there are infinitely many vectors $0\neq \q\in\Z^m$ and $\p\in\Z^n$ such that
\begin{equation}\label{Eq:ThmDirMink}
    |\q A_i - p_i|< \Psi(\|\q\|) \quad (1\leq i\leq n).
\end{equation}

Dirichlet's theorem and its consequences, despite their foundational character, are not strong enough for many applications including the ones we study in this article. This is because we might need to replace the function $\Psi$ in \eqref{Eq:ThmDirMink} by positive and non-increasing functions $\Psi_i$ in each coordinate axis $i\in\{1,\ldots, n\}$. Minkowski's theorem for linear forms (Theorem \ref{TheoremMinkowski} below) is a key tool towards this goal.

Recall that a \textit{lattice} $\Lambda$ on $\R^n$, $n\in\N$, is a set of the form
\[
\Lambda
=
\left\{ a_1 \mathbf{v}_1 + \ldots + a_n \mathbf{v}_n: a_1,\ldots, a_n\in\Z \right\},
\]
where $\mathbf{v}_1, \ldots, \mathbf{v}_n\in\R^n$ are $n$ given linearly independent row vectors. The determinant of $\Lambda$, denoted $\det(\Lambda)$, is the determinant of the matrix formed by $\mathbf{v}_1, \ldots, \mathbf{v}_n$.


\begin{theorem}[Minkowski, 1896]\label{TheoremMinkowski}
    Let $N\in\N$, $\Lambda\subseteq \R^N$ a lattice of determinant $d$, and $A\in \R^{N\times N}$. If the positive real numbers $c_1,\ldots,c_N$ satisfy
    \[
    \det(\Lambda)|\det(A)| \leq c_1\cdots c_N,
    \]
    then there is some non-zero $\mathbf{u}\in \Lambda$ such that
    \begin{align*}
    \left| \mathbf{u}A_{j} \right| &< c_j \quad (1\leq j< N),\\
    \left| \mathbf{u}A_{N} \right| &\leq c_N\, .
    \end{align*}
\end{theorem}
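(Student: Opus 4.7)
The plan is to derive this from Minkowski's convex body theorem by recognizing that the conditions on $\mathbf{u}A_j$ carve out a centrally symmetric convex body in $\R^N$ whose volume is controlled exactly by the hypothesis. First I would assume that $A$ is invertible and define the closed symmetric convex set
\[
S = \{\mathbf{x} \in \R^N : |\mathbf{x}A_j| \leq c_j \text{ for } 1\leq j\leq N\}.
\]
Under the linear change of variables $\mathbf{y}=\mathbf{x}A$, the set $S$ is identified with the box $[-c_1,c_1]\times\cdots\times[-c_N,c_N]$, so $\vol(S)=2^N c_1\cdots c_N/|\det(A)|$. The hypothesis $\det(\Lambda)|\det(A)|\leq c_1\cdots c_N$ then rearranges precisely to $\vol(S)\geq 2^N\det(\Lambda)$, which is exactly the criterion of Minkowski's convex body theorem and yields a non-zero $\mathbf{u}\in\Lambda\cap S$.

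Next I would upgrade the weak inequalities to strict ones in the first $N-1$ coordinates by a standard limiting argument. For each $k\in\N$, leave $c_1,\ldots,c_{N-1}$ unchanged but replace $c_N$ by $c_N+1/k$. The resulting open symmetric convex set
\[
S_k=\{\mathbf{x}\in\R^N:|\mathbf{x}A_j|<c_j \text{ for } j<N,\ |\mathbf{x}A_N|<c_N+1/k\}
\]
satisfies $c_1\cdots c_{N-1}(c_N+1/k)>\det(\Lambda)|\det(A)|$, so its volume strictly exceeds $2^N\det(\Lambda)$; the open form of Minkowski's theorem then supplies a non-zero $\mathbf{u}_k\in\Lambda\cap S_k$. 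Since all $\mathbf{u}_k$ lie in the bounded set $S_1$ and $\Lambda\cap S_1$ is finite, a constant subsequence $\mathbf{u}_{k_i}\equiv\mathbf{u}$ exists by pigeonhole; this $\mathbf{u}$ manifestly satisfies $|\mathbf{u}A_j|<c_j$ for $j<N$ and, by letting $k\to\infty$, $|\mathbf{u}A_N|\leq c_N$.

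The main obstacle is the singular case $|\det(A)|=0$, where the volume computation collapses and $S$ becomes unbounded in the direction of $\ker(\mathbf{x}\mapsto\mathbf{x}A)$. My approach here would be to approximate $A$ by invertible matrices $A^{(k)}\to A$ for which $|\det(A^{(k)})|\to 0$ (so the hypothesis still holds), apply the invertible case to each $A^{(k)}$, and then extract a limit of the resulting lattice vectors $\mathbf{u}^{(k)}$. The delicate point is that the sets analogous to $S$ for $A^{(k)}$ may be arbitrarily elongated in near-kernel directions, so the $\mathbf{u}^{(k)}$ need not remain bounded a priori; this can be resolved either by invoking the structure theorem for closed subgroups of $\R^N$ (observing that if $\Lambda\cap\ker A=\{0\}$ then the image $\Lambda A$ cannot be discrete and hence contains non-zero elements of arbitrarily small norm), or by selecting the perturbation $A^{(k)}$ so that a sublattice of bounded index is kept away from the kernel of $A$. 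This technical bounding step is the core subtlety, whereas the invertible case is essentially a direct application of Minkowski's geometry of numbers.
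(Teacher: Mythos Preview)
The paper does not actually prove Theorem~\ref{TheoremMinkowski}; it simply cites \cite[Chapter III, Theorem III]{Cassels1957} and remarks that the formulation there can be adapted. Your proposal is precisely the classical argument one finds in Cassels: reduce to Minkowski's convex body theorem via the volume computation $\vol(S)=2^N c_1\cdots c_N/|\det(A)|$, and then run the compactness/pigeonhole limit to convert the first $N-1$ weak inequalities into strict ones. So in the invertible case your approach matches the cited reference and is correct.

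For the singular case $\det(A)=0$, both of your suggested routes work, but the second one you mention is decidedly cleaner than the perturbation argument and deserves to be the main line rather than a parenthetical. Concretely: if some non-zero $\mathbf{u}\in\Lambda$ lies in $\ker(\mathbf{x}\mapsto\mathbf{x}A)$ you are done immediately, while if $\Lambda\cap\ker A=\{0\}$ then the image $\Lambda A$ is a rank-$N$ abelian group sitting inside the proper subspace $\mathrm{Im}\,A\cong\R^{N-\dim\ker A}$, hence cannot be discrete, and you can pick $\mathbf{u}\in\Lambda\setminus\{0\}$ with $\|\mathbf{u}A\|$ as small as you like. This avoids entirely the boundedness worry you flag for the perturbation approach, so I would drop the approximation $A^{(k)}\to A$ altogether and present only the structure-of-subgroups argument.
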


A proof of Theorem \ref{TheoremMinkowski} can be found in \cite[Chapter III, Theorem III]{Cassels1957}. Throughout, we will refer to this as Minkowski's Theorem. Our formulation is slightly different from the one in the reference, but the proof can be easily adapted to our case. 
\par 
In relation to Diophantine approximation Minkowski's Theorem allows us to deduce that for vector $v=(v_{1}, \dots , v_{m})$ with $\sum_{i=1}^{m}v_{i}=m$, for any $A \in \R^{m\times n}$
\begin{equation*}
|\bq A_{i}-p_{i}|<\psi_{i}(\|\bq\|_{v}) \quad (1\leq i \leq n)\, ,
\end{equation*}
is solved for infinitely many integer vectors $(\bp, \bq) \in \Z^{n}\times(\Z^{m}\backslash\{0\})$ provided the $n$-tuple of approximation functions satisfy
\begin{equation*}
    \prod_{i=1}^{n}\psi_{i}(r)=r^{-1} \quad \text{ and } \quad \psi_{i}(r)<\tfrac{1}{2} \quad (1\leq i \leq n)\, \text{ monotonic decreasing}
\end{equation*}
for all $r\in\R_{+}$.

\subsection{Weighted Ubiquitous systems} \label{ubiquity} In this section we give the definition of local ubiquity for rectangles as given in \cite{KW23}. This definition is a generalisation of ubiquity for rectangles as found in \cite{WW19}. The notion of an ``Ubiquitous system'' for balls was introduced by Dodson, Rynne, and Vickers \cite{DRV}, which was then generalised to the abstract metric space settings in \cite{BDV06}.\par 
 Fix an integer $n \geq 1$, and for each $1 \leq i \leq n$ let $(\Omega_{i},|\cdot|_{i},\mu_i)$ be a bounded locally compact metric space, where $|\cdot|_i$ denotes a metric on $\Omega_i$, and $\mu_{i}$ denotes a Borel probability measure over $\Omega_i$. Further, we assume that the measure $\mu_i$ is $\delta_{i}$-Ahlfors regular probability measure. That is, there exist constants $0<c_{1}<c_{2}<\infty$ such that for any ball $B_{i}(x_{i},r)$ with centre $x_{i}\in\Omega_{i}$ and radius $0<r<r_{0}$ for some $r_{0}\in\R_{+}$ we have that
 \begin{equation*}
     c_{1}r^{\delta_{i}} \leq \mu_{i}\left(B_{i}(x_{i},r)\right) \leq c_{2}r^{\delta_{i}}\, .
 \end{equation*}
  Consider the product space $(\Omega,\|\cdot\|,\mu)$, where
\begin{equation*}
\Omega=\prod_{i=1}^{n}\Omega_{i}, \quad \mu=\prod_{i=1}^{n}\mu_{i}, \quad \|\cdot\|=\max_{1 \leq i \leq n}|\cdot|_{i}
\end{equation*}
are defined in the usual way.
For any $x \in \Omega$ and $r \in \R_{+}$ define the open ball
\begin{equation*}
B(x,r)=\left\{ y \in \Omega: \max_{1 \leq i \leq n}|x_{i}-y_{i}|_{i}< r \right\}=\prod_{i=1}^{n}B_{i}(x_{i},r),
\end{equation*}
where $B_{i}$ are the open $r$-balls associated with the $i^{\text{th}}$ metric space $\Omega_i$. Let $J$ be a countably infinite index set, and $\beta: J \to \R_{+}$, $\alpha \mapsto \beta_{\alpha}$ a positive function satisfying the condition that for any $N \in \N$
\begin{equation*}
\# \left\{ \alpha \in J: \beta_{\alpha} < N \right\} < \infty.
\end{equation*}
 Let $l_{k},u_{k}$ be two sequences in $\R_{+}$ such that $u_{k} \geq l_{k}$ with $l_{k} \to \infty$ as $k \to \infty$. Define
\begin{equation*}
J_{k}= \{ \alpha \in J: l_{k} \leq \beta_{\alpha} \leq u_{k} \}.
\end{equation*}
Let $\rho=(\rho_{1}, \dots , \rho_{n})$ be an $n$-tuple of non-increasing functions $\rho_{i}: \R_{+} \to \R_{+}$ such that each $\rho_{i}(x) \to 0$ as $x\to \infty$. For each $1 \leq i \leq n$, let $( R_{\alpha,i})_{\alpha \in J}$ be a sequence of subsets in $\Omega_{i}$.
The family of sets $( R_\alpha)_{\alpha\in J}$ where
\begin{equation*}
	R_{\alpha}=\prod_{i=1}^{n} R_{\alpha, i}, 
\end{equation*}
for each $ \alpha \in J$, are called \textit{resonant sets}.

Define
\begin{equation*}
\Delta(R_{\alpha},\rho(r))= \prod_{i=1}^{n}  \Delta_{i}(R_{\alpha,i},\rho_{i}(r)),
\end{equation*}
where for any set $A\subset \Omega_i$ and $b \in \R_{+}$
\begin{equation*}
\Delta_{i}(A,b)= \bigcup_{a \in A}B_{i}(a,b)
\end{equation*}
is the union of balls in $\Omega_i$ of radius $b$ centred at all possible points in $A$. Generally $\Delta(R_{\alpha},\rho(r))$ is the product of $\rho_{i}(r)$-neighbourhoods of $R_{a,i}$ for each coordinate $i\in\{1,\dots, n\}$. \par 
We also require the following definition that was introduced in \cite{AB19} as a generalisation of the intersection properties from \cite{BDV06}.

\begin{definition}[$\kappa$-scaling property]
Let $0 \leq \kappa_i <1$ and $1 \leq i \leq n$. The sequence $(R_{\alpha,i})_{\alpha \in J}$ has a {\em $\kappa_i$-scaling property} if for any $\alpha \in J$, any ball $B_{i}(x_{i},r) \subset \Omega_{i}$ with centre $x_{i} \in R_{\alpha,i}$, and $0 < \epsilon < r$ we have
\begin{equation*}
c_{2}r^{\delta_{i}\kappa_i}\epsilon^{\delta_{i}(1-\kappa_i)} \leq \mu_{i} \left( B_{i}(x_{i},r) \cap \Delta_{i}(R_{\alpha,i},\epsilon) \right) \leq c_{3} r^{\delta_{i}\kappa_i}\epsilon^{\delta_{i}(1-\kappa_i)},
\end{equation*}
for some constants $c_{2},c_{3}>0$.
\end{definition}
See \cite[Section 2]{AB19} for calculations of $\kappa_i$ for various resonant sets. Intuitively, one can think of $\kappa_i$ being the value such that $\delta_{i}\kappa_i$ is the box dimension of $R_{\alpha,i}$. As an example note that $\kappa=0$ when $R_{\alpha,i}$ is a finite collection of points and $\kappa=\frac{m-1}{m}$ for $R_{\alpha,i}$ being $(m-1)$-dimensional affine hyperplanes. Although the definition considers the $\kappa_i$-scaling property in each coordinate axis, for our purpose we take $\kappa_1=\cdots=\kappa_n=\kappa$, and refer to it as $\kappa$-scaling property. In particular, this is the $\kappa$-scaling property considered in \cite{WW19}. Note that in \cite{BDV06} the $\kappa$-scaling property is essentially the intersection conditions \cite[Section 2.3]{BDV06}. In particular $\gamma=\delta\kappa$.\par 
The following notion of ubiquity for rectangles can be found in \cite[Section 2.2]{KW23}.

\begin{definition}[Local ubiquitous system for rectangles]\rm
Call the pair $\big((R_{\alpha})_{\alpha \in J}, \beta\big)$ {\em a local ubiquitous system for rectangles with respect to $\rho$} if there exists a constant $c>0$ such that for any ball $B \subset \Omega$ and all sufficiently large $k \in \N$
\begin{equation*}
\mu \left( B \cap \bigcup_{\alpha \in J_{k}}\Delta(R_{\alpha}, \rho(u_{k})) \right) \geq c m(B).
\end{equation*}
\end{definition}
For an $n$-tuple of approximation functions $\Psi=(\psi_{1},\dots, \psi_{n})$ with each $\psi_{i}:\R_{+} \to \R_{+}$ define
\begin{align*}
    W^{\Omega}(\Psi)&=\left\{x \in \Omega : x \in \Delta\left(R_{\alpha},\Psi(\beta_{\alpha})\right) \, \text{ for infinitely many } \alpha \in J \right\}\\
    &=\limsup_{\alpha \in J} \Delta\left(R_{\alpha}, \Psi(\beta_{\alpha}) \right).
\end{align*}
For all the applications listed below, the corresponding sets will be described by the $\limsup$ set outlined above. 

\subsection{Ambient measure statements}

The following is the well-known Borel-Cantelli Lemma usually used to prove the convergence cases for Lebesgue dichotomy statements, see for example \cite[Theorem 4.18]{Kallenberg2021} for a proof.
\begin{lemma}[Borel-Cantelli lemma, convergence part] \label{Borel_Cantelli convergence}
Let $(\Omega,\mathcal{A},\mu)$ be a measure space and let $(A_k)_{k\geq 1}$ be a sequence of measurable sets. If 
\begin{equation*}
\sum_{k\geq 1} \mu(A_k)< \infty, \, \quad \text{then} \quad \mu\left(\limsup_{k\to \infty} A_k\right)=0\, .
\end{equation*}
\end{lemma}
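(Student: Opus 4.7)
The plan is to use the classical monotone-limit argument built directly on the definition of the $\limsup$ of a sequence of sets. First I would expand
\[
\limsup_{k\to\infty} A_k \;=\; \bigcap_{n=1}^{\infty}\bigcup_{k=n}^{\infty} A_k,
\]
so that a point lies in $\limsup_k A_k$ precisely when it belongs to $A_k$ for infinitely many indices $k$. This immediately gives, for every fixed $n\in\N$, the set-theoretic inclusion $\limsup_{k\to\infty} A_k \subseteq \bigcup_{k=n}^{\infty} A_k$.

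The second step is to apply monotonicity and countable subadditivity of $\mu$ to deduce
\[
\mu\!\left(\limsup_{k\to\infty} A_k\right) \;\leq\; \mu\!\left(\bigcup_{k=n}^{\infty} A_k\right) \;\leq\; \sum_{k=n}^{\infty}\mu(A_k).
\]
This inequality is the entire technical content of the lemma, and it holds uniformly in $n$.

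The third and final step is to invoke the hypothesis $\sum_{k\geq 1}\mu(A_k) < \infty$, which forces the tail $\sum_{k=n}^{\infty}\mu(A_k)$ to tend to $0$ as $n\to\infty$. Letting $n\to\infty$ in the inequality above then yields $\mu(\limsup_{k\to\infty} A_k) = 0$, which is exactly the desired conclusion.

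There is essentially no obstacle in this argument: it uses only countable subadditivity (which holds for any measure on any $\sigma$-algebra, with no finiteness or $\sigma$-finiteness assumption on $\mu$) and the elementary fact that a convergent series of non-negative terms has arbitrarily small tails. The statement's importance lies entirely in its ubiquity as a tool for establishing the convergence half of Khintchine--Groshev type dichotomies, rather than in any depth of its proof.
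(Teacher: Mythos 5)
Your proof is correct and is the standard textbook argument. The paper itself does not supply a proof of this lemma, deferring instead to \cite[Theorem 4.18]{Kallenberg2021}; your argument via $\limsup_k A_k \subseteq \bigcup_{k\geq n} A_k$, monotonicity, countable subadditivity, and vanishing tails is precisely the canonical proof that such a reference would give.
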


The following theorem from \cite{KW23} provides the ambient measure theory for $W^{\Omega}(\Psi)$ in the divergence case. Prior to stating the result we need one more definition on functions. For constant $0<c<1$ a function $f$ is said to be $c$-regular with respect to a sequence $\{r_{i}\}_{i\in\N}$ if 
\begin{equation*} 
f(r_{i+1}) \leq c f(r_{i})
\end{equation*}
for all sufficiently large $i$.

\begin{theorem}[{\cite{KW23}}] \label{KW ambient measure}
     Let $W^{\Omega}(\Psi)$ be defined as above and assume that $\left( (R_{\alpha})_{\alpha \in J}, \beta\right)$ is a local ubiquitous system for rectangles with respect to $\rho$, and that the resonant sets $(R_{\alpha,i})$ have $\kappa_{i}$-scaling property and each measure $\mu_{i}$ is $\delta_{i}$-Ahlfors regular. Suppose that
    \begin{enumerate}
        \item[\rm (I)] each $\psi_{i}$ is decreasing,
        \item[\rm (II)] for each $1\leq i \leq n$,  $\psi_{i}(r) \leq \rho_{i}(r)$ for all $r\in\R_{+}$ and $\rho_{i}(r)\to 0$ as $r\to \infty$,
        \item[\rm (III)] either $\rho_{i}$ is $c$-regular on $( u_{k})_{k\geq 1}$ for all $1\leq i \leq n$ or $\psi_{i}$ is $c$-regular on $ ( u_{k})_{k\geq 1}$ for all $1\leq i \leq n$ for some $0<c<1$.
    \end{enumerate}
    Then,
    \begin{equation*}
        \mu(W(\Psi))=\mu(\Omega) \quad \text{\rm if }\quad  \quad \sum\limits_{k=1}^{\infty}\prod_{i=1}^{n}\left(\frac{\psi_{i}(u_{k})}{\rho_{i}(u_{k})}\right)^{\delta_{i}(1-\kappa_{i})}\, = \infty.
    \end{equation*}
\end{theorem}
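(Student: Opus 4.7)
The plan is to establish the stronger local statement: there exists $c'>0$ such that $\mu\bigl(W^{\Omega}(\Psi)\cap B\bigr)\geq c'\,\mu(B)$ for every ball $B\subset\Omega$. Because each $\mu_i$ is $\delta_i$-Ahlfors regular, the product measure $\mu$ admits a Lebesgue differentiation theorem, so such a uniform lower bound forces $\mu(\Omega\setminus W^{\Omega}(\Psi))=0$, yielding the theorem. The $c$-regularity hypothesis (III) will be used to pass, if necessary, to a geometrically spaced subsequence $(u_{k_j})$ of $(u_k)$ along which either all $\psi_i$ or all $\rho_i$ decrease by a fixed factor; divergence of the original series along this subsequence is preserved up to a constant, and the limsup along the subsequence is contained in $W^{\Omega}(\Psi)$.

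Fix a ball $B$ and a large level $k$. Local ubiquity provides
\[
\mu\!\left(B\cap\bigcup_{\alpha\in J_{k}}\Delta(R_{\alpha},\rho(u_{k}))\right)\geq c\,\mu(B).
\]
I would cover this ubiquitous union by an essentially disjoint family of product ``cells'' $C=\prod_{i}C_{i}$ whose $i$-th side has $\mu_i$-radius comparable to $\rho_i(u_k)$ and whose centres lie near some $R_{\alpha,i}$. The $\kappa_i$-scaling property applied coordinatewise yields
\[
\mu\bigl(C\cap\Delta(R_{\alpha},\psi(u_{k}))\bigr)\;\asymp\;\mu(C)\prod_{i=1}^{n}\left(\frac{\psi_i(u_k)}{\rho_i(u_k)}\right)^{\delta_i(1-\kappa_i)},
\]
since $\psi_i(u_k)\le\rho_i(u_k)$ by (II). Summing over cells gives the per-level measure bound
\[
\mu\!\left(B\cap\bigcup_{\alpha\in J_k}\Delta(R_{\alpha},\psi(u_k))\right)\;\gtrsim\;\mu(B)\prod_{i=1}^{n}\left(\frac{\psi_i(u_k)}{\rho_i(u_k)}\right)^{\delta_i(1-\kappa_i)}.
\]

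By the divergence hypothesis, the sum of these lower bounds is infinite, but a direct Borel--Cantelli converse is unavailable because the families $\{\Delta(R_\alpha,\psi(u_k))\}_k$ are not (quasi)-independent. The real work, and the main obstacle, is to construct a Cantor-like subset of $B\cap W^{\Omega}(\Psi)$ with positive $\mu$-mass, in the style of \cite{WW19,KW23}. Inductively, at stage $j+1$ one enters each sub-cell selected at stage $j$, invokes local ubiquity there to produce resonant sets $R_{\alpha}$ with $\alpha\in J_{k_{j+1}}$ whose $\rho(u_{k_{j+1}})$-neighbourhoods cover a definite proportion, and then chisels out smaller product sub-cells lying inside $\Delta(R_{\alpha},\psi(u_{k_{j+1}}))$ using the scaling estimate axis by axis. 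The subsequence $(k_j)$ must be chosen sparse enough, via the $c$-regularity in (III), that the anisotropic side ratios $\psi_i(u_{k_{j+1}})/\rho_i(u_{k_j})$ are small enough to make the packing of sub-cells inside the previous generation's cells work, while still losing only a bounded proportion of mass per stage. A natural product mass distribution $\nu$ on the Cantor limit $K\subset B\cap W^{\Omega}(\Psi)$, with $\nu(B)\gtrsim\mu(B)$ and $\nu(U)\lesssim\mu(U)$ for all open $U$, followed by the mass distribution principle, then delivers the required local bound. The genuinely delicate point is the simultaneous coordinatewise bookkeeping in the Cantor construction with rectangles rather than balls; this is precisely what the $\kappa_i$-scaling hypothesis is engineered to make tractable.
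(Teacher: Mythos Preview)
The paper does not prove this theorem: it is quoted from \cite{KW23} as a black-box tool, so there is no ``paper's own proof'' to compare your proposal against. Your outline is, in fact, a faithful high-level summary of the strategy used in the original Kleinbock--Wang paper (building on the Cantor-type construction of Wang--Wu \cite{WW19}): local ubiquity is applied inside each previous-generation cell, the $\kappa_i$-scaling property controls the mass of the shrunken $\psi$-neighbourhoods coordinatewise, the $c$-regularity of (III) guarantees enough separation between successive scales for the packing to work, and a mass distribution argument on the resulting Cantor set gives the local lower bound.

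That said, your sketch remains just that---a sketch. The step you flag as ``the genuinely delicate point'' really is the entire content of the proof: ensuring at each stage that the anisotropic sub-cells at scale $\psi(u_{k_{j+1}})$ pack disjointly inside the previous generation's cells (which have sides $\asymp\psi_i(u_{k_j})$, not $\rho_i(u_{k_j})$) while retaining a definite proportion of mass requires careful tracking of several ratios across all $n$ coordinates, and the choice of the subsequence $(k_j)$ must balance the divergence condition against the geometric separation needed. You have identified the right ingredients and the right obstacle, but have not carried out the construction; a reader could not reconstruct the argument from what you have written without consulting \cite{KW23} or \cite{WW19}.
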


It will often be convenient at times to multiply the approximating function $\Psi$ by some constant. The following lemma, see \cite[Lemma 5.7]{BDGW23}, shows that, in terms of ambient measure, the measure of the $\limsup$ set remains unchanged.

\begin{lemma}[{\cite{BDGW23}}] \label{CI_product}
Let $(\Omega, \|\cdot\|,\mu)$ be the product space defined above. Let $(S_i)_{i\in\N}$ be a sequence of subsets in the support of $\mu$ and $(\mathbf{\delta}_{i})_{i \in \N}$ be a sequence of positive $n$-tuples $\mathbf{\delta}_{i}=(\delta^{(1)}_{i},\dots,\delta^{(n)}_{i})$ such that $\delta^{(j)}_{i} \to 0$ as $i \to \infty$ for each $1 \leq j \leq n$. Then, for any $\mathbf{C}=(C_{1},\dots , C_{n})$ and $\mathbf{c}=(c_{1},\dots , c_{n})$ with $0<c_j\le C_{j}$ for each $1 \leq j \leq n$
\begin{equation}\label{vb25}
\mu\left( \limsup_{i \to \infty} \Delta(S_{i}, \mathbf{C}\mathbf{\delta}_{i}) \;\setminus\;\limsup_{i \to \infty} \Delta(S_{i}, \mathbf{c}\mathbf{\delta}_{i}) \right)=0\,,
\end{equation}
where $\mathbf{c}\mathbf{\delta}_i=(c_1\delta^{(1)}_i,\dots,c_n\delta^{(n)}_i)$ and similarly $\mathbf{C}\mathbf{\delta}_i=(C_1\delta^{(1)}_i,\dots,C_n\delta^{(n)}_i)$.
\end{lemma}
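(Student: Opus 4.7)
Write $\tilde{A}_i := \Delta(S_i, \mathbf{C}\mathbf{\delta}_i)$, $\tilde{B}_i := \Delta(S_i, \mathbf{c}\mathbf{\delta}_i)$, and $E := \limsup_i \tilde{A}_i \setminus \limsup_i \tilde{B}_i$; the goal is to prove $\mu(E)=0$. The first step is the set-theoretic reduction
\[
E \;\subseteq\; \limsup_i \bigl(\tilde{A}_i \setminus \tilde{B}_i\bigr),
\]
which holds because any $x\in E$ lies in $\tilde{A}_i$ for infinitely many $i$ and, from some index onward, in no $\tilde{B}_i$, so it sits in $\tilde{A}_i\setminus\tilde{B}_i$ infinitely often. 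For such an $i$ there is $s_i\in S_i$ with
\[
x \;\in\; \prod_j B_j\!\left(s_{i,j},\, C_j\delta_i^{(j)}\right)\setminus \prod_j B_j\!\left(s_{i,j},\, c_j\delta_i^{(j)}\right),
\]
so $x$ lies in a ``shell'' around $s_i$.

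The second step is a local density estimate. For each $i$, apply a $5r$-type covering lemma (valid in our doubling product space modulo a structural constant) to the family $\{\prod_j B_j(s_j, C_j\delta_i^{(j)}):s\in S_i\}$ to extract a disjoint subcollection indexed by $S_i'\subseteq S_i$ whose $K$-enlargements cover $\tilde{A}_i$. Ahlfors regularity of each factor $\mu_i$ yields, for every $s\in S_i'$,
\[
\mu\!\left(\prod_j B_j(s_j, c_j\delta_i^{(j)})\right) \;\geq\; \eta\;\mu\!\left(\prod_j B_j(s_j, KC_j\delta_i^{(j)})\right),
\]
with $\eta>0$ depending only on $\mathbf{c}$, $\mathbf{C}$, $K$, and the Ahlfors exponents $\delta_1,\ldots,\delta_n$. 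Consequently, every covering rectangle $Q$ of $\tilde{A}_i$ satisfies $\mu(\tilde{B}_i\cap Q)\geq\eta\,\mu(Q)$.

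The final step is a Lebesgue differentiation argument. Suppose $\mu(E)>0$. Since $E\subseteq (\limsup_i\tilde{B}_i)^{\mathsf c}$, the Lebesgue differentiation theorem in the doubling measure space $\Omega$ furnishes a density point $x\in E$ of $(\limsup_i\tilde{B}_i)^{\mathsf c}$. However, $x\in\limsup_i\tilde{A}_i$ with $\mathbf{\delta}_i\to\mathbf{0}$, so for infinitely many $i$ there is a covering rectangle $Q_i\ni x$ of diameter tending to $0$ with $\mu(\tilde{B}_i\cap Q_i)\geq\eta\,\mu(Q_i)$. Since $\tilde{B}_i\subseteq U_N:=\bigcup_{j\geq N}\tilde{B}_j$ for every $N\leq i$, the density of $U_N$ at $x$ along the sequence $(Q_i)$ is at least $\eta$ for every $N$, so $x$ is not a density point of $U_N^{\mathsf c}$. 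The inclusion $(\limsup_i\tilde{B}_i)^{\mathsf c}=\bigcap_N U_N^{\mathsf c}\subseteq U_N^{\mathsf c}$ then forces the density of $(\limsup_i\tilde{B}_i)^{\mathsf c}$ at $x$ to be at most $1-\eta<1$, contradicting the density-point property. Hence $\mu(E)=0$.

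\textbf{Main obstacle.} The delicate step is the differentiation argument: standard Lebesgue differentiation in a doubling metric space is phrased for the isotropic balls of the product $\max$-metric, whereas the covering sets $Q_i$ are anisotropic rectangles whose sidelengths $KC_j\delta_i^{(j)}$ can scale at genuinely different rates in each coordinate. The cleanest workaround is to replace each $Q_i$ by a comparable $\max$-metric ball of $\mu$-measure within a uniform factor using Ahlfors regularity in each factor; the density bound is preserved up to adjusting $\eta$. A more hands-on alternative, avoiding differentiation altogether, is to insert a dyadic chain $\mathbf{c}=\lambda_0\leq\lambda_1\leq\cdots\leq\lambda_L=\mathbf{C}$ with $\lambda_{k+1}\leq 2\lambda_k$ and to show at each step that $\mu\bigl(\limsup_i\Delta(S_i,\lambda_{k+1}\mathbf{\delta}_i)\setminus\limsup_i\Delta(S_i,\lambda_k\mathbf{\delta}_i)\bigr)=0$ directly via the $5r$-covering comparison, iterating $L$ times to conclude.
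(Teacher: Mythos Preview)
The paper does not prove this lemma; it is quoted from \cite{BDGW23} without proof, so there is no argument in the present paper to compare against. I therefore evaluate your proposal on its own merits.

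Your overall shape is right, and for $n=1$ it goes through exactly as you outline: if $x\in G_N:=\bigcap_{i\ge N}\tilde{B}_i^{\,c}$ and $x\in\tilde{A}_i$ for some $i\ge N$, choose $s_i\in S_i$ with $|x-s_i|<C\delta_i$; then $B(s_i,c\delta_i)\subseteq\tilde{B}_i\subseteq G_N^{\,c}$ and $B(s_i,c\delta_i)\subseteq B(x,2C\delta_i)$, so Ahlfors regularity gives $\mu(G_N\cap B(x,2C\delta_i))/\mu(B(x,2C\delta_i))\le 1-\eta$ and Lebesgue density in the doubling space $\Omega_1$ supplies the contradiction.

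The genuine gap is in the passage to $n>1$. Your rectangles $Q_i=\prod_jB_j(s'_{i,j},KC_j\delta_i^{(j)})$ have aspect ratios controlled by the ratios $\delta_i^{(j)}/\delta_i^{(l)}$, which are allowed to be unbounded in $i$. Lebesgue density in $(\Omega,\|\cdot\|,\mu)$ is for balls of the $\max$-metric, i.e.\ cubes $\prod_jB_j(x_j,r)$, and strong differentiation over arbitrary axis-parallel rectangles fails for $L^1$ in general. Your first workaround does not repair this: the smallest $\max$-ball containing a highly eccentric $Q_i$ has $\mu$-measure far larger than $\mu(Q_i)$ (e.g.\ $n=2$, $\delta_1=\delta_2=1$, $\delta_i^{(1)}=\varepsilon$, $\delta_i^{(2)}=1$ gives $\mu(Q_i)\asymp\varepsilon$ while the enclosing cube has measure $\asymp 1$), so the density bound is destroyed. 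Your second workaround does not help either: the eccentricity of $Q_i$ comes from $\mathbf{\delta}_i$, not from $\mathbf{C}$, and persists even when $C_j\le 2c_j$.

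Two clean fixes are available. One is to observe that $\mathbf{1}_{G_N}\in L^\infty(\mu)\subseteq L^2(\mu)$ and invoke a Jessen--Marcinkiewicz--Zygmund type theorem (the strong maximal operator, obtained by iterating the one-parameter maximal function in each doubling factor, is bounded on $L^p$ for $p>1$, hence rectangles differentiate $L^2$), which validates your density step. The other, more elementary and almost certainly what \cite{BDGW23} does, is to reduce to the case $n=1$ by Fubini, changing one coordinate at a time. With $\mathbf{C}^{(k)}:=(C_1,\dots,C_k,c_{k+1},\dots,c_n)$ it suffices to prove
\[
\mu\Bigl(\limsup_i\Delta(S_i,\mathbf{C}^{(k)}\mathbf{\delta}_i)\setminus\limsup_i\Delta(S_i,\mathbf{C}^{(k-1)}\mathbf{\delta}_i)\Bigr)=0
\qquad(1\le k\le n).
\]
Fix $x'=(x_j)_{j\neq k}$ and set $T_i(x'):=\{s_k:s\in S_i,\ |x_j-s_j|_j<\mathbf{C}^{(k)}_j\delta_i^{(j)}\ \text{for all } j\ne k\}$. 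Since $\mathbf{C}^{(k)}$ and $\mathbf{C}^{(k-1)}$ agree in every coordinate except the $k$-th, the $x'$-slices of the two thickenings are $\Delta_k(T_i(x'),C_k\delta_i^{(k)})$ and $\Delta_k(T_i(x'),c_k\delta_i^{(k)})$ with the \emph{same} $T_i(x')$. As $\limsup$ commutes with slicing, the $n=1$ argument in $\Omega_k$ (now with genuine metric balls and ordinary Lebesgue density) shows the $x'$-slice of the difference is $\mu_k$-null for every $x'$, and Fubini finishes.
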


\subsection{Hausdorff measure and dimension statements}

For completeness, we give below a very brief introduction to Hausdorff measures and dimensions. For further details, see \cite{F14}.

Let $(\Omega,d)$ be a metric space and $F \subset \Omega$.
 Then for any $0 < \rho \leq \infty$, any finite or countable collection~$\{B_i\}$ of subsets of $\Omega$ such that
$F\subset \bigcup_i B_i$ and 
\begin{equation*}
    \diam (B_i)=\frac{1}{2}\inf\{ d(x,y) :  x,y \in B_{i}\}\le \rho
\end{equation*}
is called a \emph{$\rho$-cover} of $F$.

Let $f : \R_+ \to \R_+$ be a dimension function, that is $f(r)$ is a continuous, non-decreasing function defined on $\R_+$ such $\lim_{r\to\infty} f(r)=0$. Let

\[ 
\cH_{\rho}^{f}(F)=\inf \sum\limits_{i} f\left(\diam (B_i)\right),
\]
where the infimum is taken over all possible $\rho$-covers 
$\{B_i\}$ of $F$. The \textit{$f$-dimensional Hausdorff measure of $F$} is defined to be
\[
\cH^f(F)=\lim_{\rho\to 0}\cH_\rho^f(F).
\]
In the case that $f(r)=r^s \;\; (s\geq 0)$, the measure $\cH^f$ is denoted $\cH^s$ and is called \emph{$s$-dimensional Hausdorff measure}. For any set $F\subset \Omega$ one can easily verify that there exists a unique critical value of $s$ at which the function $s\mapsto\cH^s(F)$ ``jumps'' from infinity to zero. The value taken by $s$ at this discontinuity is referred to as the \textit{Hausdorff dimension} of $F$ and is denoted by $\dimh F$; i.e.
\[
\dimh F :=\inf\left\{s\geq 0\;:\; \cH^s(F)=0\right\}.
\]

Below is the Hausdorff measure analogue of the Borel-Cantelli lemma in the convergence case, which allows us to obtain the convergence Hausdorff measure statement of certain sets via calculating the Hausdorff $f$-sum of a fine cover. Recall a collection $\cB=\{B_{i}\}$ of subsets covering $F$ is called a fine cover if for any $x\in F$ and any $r>0$ there exists $B_{i} \in\cB$ such that $r(B_{i})<r$ and $x\in B_{i}$. Trivially this implies that for any $\rho>0$ there exists a subset of $\cB$ that is a $\rho$-cover of $F$.

\begin{lemma}[Hausdorff-Cantelli lemma]\label{bclem}
Let $\{B_i\}\subset \Omega$ be a fine cover of a set $F$ and let $f$ be a dimension function such that
\begin{equation}
\label{fdimcost}
\sum_i f\left(\diam(B_i)\right) \, < \, \infty.
\end{equation}
Then $\cH^f(F)=0$. In particular, if $f(r)=r^{s}$ and we have \eqref{fdimcost} then
\begin{equation*}
    \dimh F \leq s.
\end{equation*}
\end{lemma}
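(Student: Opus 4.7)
The plan is to exploit the fact that a finite sum $\sum_i f(\diam B_i)<\infty$ has arbitrarily small tails, and combine this with the fineness of the cover to produce, for every scale $\rho>0$ and every $\varepsilon>0$, a $\rho$-cover of $F$ whose $f$-weighted sum is at most $\varepsilon$. Concretely, I would first fix $\varepsilon>0$, choose $N=N(\varepsilon)$ so that $\sum_{i\ge N}f(\diam B_i)<\varepsilon$, and then consider the subcollection
\[
\cB_{\rho,N}=\{B_i:i\ge N,\ \diam B_i<\rho\}.
\]
The key claim is that $\cB_{\rho,N}$ still covers $F$, so it is a $\rho$-cover. Granting the claim, the definition of $\hf_\rho$ gives $\hf_\rho(F)\le\sum_{B\in\cB_{\rho,N}}f(\diam B)\le\sum_{i\ge N}f(\diam B_i)<\varepsilon$. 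Letting $\rho\to 0$ yields $\hf(F)\le\varepsilon$, and since $\varepsilon$ is arbitrary $\hf(F)=0$, which is what we want.

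The step I expect to need the most care is the covering claim, since the fine-cover definition only asserts the existence of \emph{one} $B_i$ of small diameter through each point, not one with arbitrarily large index. To handle this I would argue that for any $x\in F$ there must in fact be infinitely many indices $i$ with $x\in B_i$ and $\diam B_i<\rho$: iterating the fine-cover property with $r=\rho,\rho/2,\rho/4,\dots$ produces an infinite sequence of pairwise distinct sets from $\cB$ containing $x$ and of diameter less than $\rho$ (they are distinct because their diameters are strictly decreasing). Consequently, for any threshold $N$ some such set has index $\ge N$, which places it in $\cB_{\rho,N}$ and verifies the cover property.

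Finally, the ``In particular'' part is immediate from the definition of Hausdorff dimension: applying the first part with $f(r)=r^s$ gives $\hs(F)=0$, so $s$ lies in the set $\{t\ge 0:\cH^t(F)=0\}$, whence $\dimh F\le s$. No extra work is required. Altogether, the only substantive ingredient is the elementary iteration argument used to upgrade the single-set guarantee of fineness to the existence of arbitrarily high-index sets in the cover, and everything else is bookkeeping with the Hausdorff premeasure $\hf_\rho$.
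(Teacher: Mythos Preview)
The paper does not supply a proof of this lemma; it is quoted as a standard tool, so there is nothing to compare against. Your overall strategy is the correct and standard one: use tails of the convergent series together with fineness to manufacture, for every $\rho>0$, a $\rho$-cover of $F$ whose $f$-weight is below any prescribed $\varepsilon$.

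There is one small slip in the iteration step. Applying the fine-cover property successively with $r=\rho,\rho/2,\rho/4,\dots$ only guarantees $\diam B_{i_k}<\rho/2^{k-1}$ for each $k$; this does \emph{not} force the sequence of diameters to be strictly decreasing, and indeed the same set may be returned at several stages (or at every stage, if some $B_j$ containing $x$ has diameter $0$). The easy repair is to iterate instead with $r_{1}=\rho$ and $r_{k+1}=\diam B_{i_k}$ whenever this is positive: then $\diam B_{i_{k+1}}<\diam B_{i_k}$ genuinely holds, the sets are pairwise distinct, and hence infinitely many indices occur, so one of them is $\ge N$. The residual edge case --- points $x$ for which some $B_j=\{x\}$ has diameter $0$, where the iteration may stall --- is harmless: such $x$ form a countable set (a subset of the union of the zero-diameter $B_i$'s), which has $\cH^f$-measure zero since $f(0)=0$. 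With these two tweaks your argument is complete.
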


For the lower bound of the Hausdorff dimension and the divergent counterpart of the Hausdorff measure theory, we have the following theorem due to Wang and Wu \cite[Theorem 3.1-3.2]{WW19}, which appeals to the notion of weighted ubiquitous systems.

\begin{theorem}[{\cite{WW19}}] \label{MTPRR}
Let $W(\Psi)$ be defined as above and assume that $\left( (R_{\alpha})_{\alpha \in J}, \beta\right)$ is a local ubiquitous system for rectangles with respect to $\rho=(\rho^{a_{1}}, \dots ,\rho^{a_{n}})$ for some function $\rho:\R_{+} \to \R_{+}$ and $(a_{1},\dots, a_{n}) \in \R^{n}_{+}$, where $\rho^{a_j}(x):=\rho(x)^{a_j}$. Assume, for each $1\leq i \leq n$, the resonant sets $(R_{\alpha,i})$ have $\kappa$-scaling property. Assume each measure $\mu_{i}$ is $\delta_{i}$-Ahlfors regular. 
Then, if $\Psi=(\rho^{a_{1}+t_{1}},\dots, \rho^{a_{n}+t_{n}})$ for some $\textbf{t}=(t_{1}, \dots, t_{n}) \in \R^{n}_{+}$,
\begin{equation*}
\dimh W(\Psi) \geq \min_{A_{i} \in A} \left\{ \sum\limits_{j \in \cK_{1}} \delta_{j}+ \sum\limits_{j \in \cK_{2}} \delta_{j}+ \kappa \sum\limits_{j \in \cK_{3}} \delta_{j}+(1-\kappa) \frac{\sum\limits_{j \in \cK_{3}}a_{j}\delta_{j}-\sum\limits_{j \in \cK_{2}}t_{j}\delta_{j}}{A_{i}} \right\}=s,
\end{equation*}
where $A=\{ a_{i}, a_{i}+t_{i} : 1 \leq i \leq n \}$ and $\cK_{1},\cK_{2},\cK_{3}$ are a partition of $\{1, \dots, n\}$ defined as
\begin{equation*}
 \cK_{1}=\{ j:a_{j} \geq A_{i}\}, \quad \cK_{2}=\{j: a_{j}+t_{j} \leq A_{i} \} \backslash \cK_{1}, \quad \cK_{3}=\{1, \dots n\} \backslash (\cK_{1} \cup \cK_{2}).
 \end{equation*}
 Furthermore, for any ball $B \subset \Omega$ we have
  \begin{equation} \label{MTPRR_measure}
\cH^{s}(B \cap W(\Psi))=\cH^{s}(B).
\end{equation}
 \end{theorem}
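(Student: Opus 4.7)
The plan is to prove the Hausdorff measure statement \eqref{MTPRR_measure} directly, from which the dimension lower bound $\dimh W(\Psi) \geq s$ follows immediately. Fix an arbitrary ball $B \subseteq \Omega$ and let $A^{*}$ denote a value from $A = \{a_j, a_j+t_j : 1 \leq j \leq n\}$ attaining the minimum in the definition of $s$. The overall strategy is to construct a Cantor-like subset $K \subseteq B \cap W(\Psi)$ carrying a probability measure $\nu$, then deduce $\cH^s(K) > 0$ from the mass distribution principle; repeating the construction inside every subball of $B$ together with a standard density argument upgrades this to the claimed full measure statement.

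At each level $k$, I would work at the critical scale $r_k := \rho(u_k)^{A^{*}}$. Using local ubiquity inside each surviving level-$(k-1)$ ball, I select an abundance of indices $\alpha \in J_k$ whose rectangular neighbourhoods $\Delta(R_{\alpha}, \rho(u_k))$, of side lengths $\rho(u_k)^{a_j}$ in coordinate $j$, cover a definite proportion of the parent. Inside each such neighbourhood, I pack a maximal collection of pairwise disjoint $r_k$-balls, each required to intersect the thinner approximation rectangle $\Delta(R_{\alpha}, \Psi(\beta_\alpha))$ of side lengths $\rho(u_k)^{a_j+t_j}$. These $r_k$-balls become the children at level $k$, so any point of the limit set $K$ belongs to infinitely many approximation rectangles and hence to $W(\Psi)$.

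The heart of the matter, and the main obstacle, is the extreme anisotropy of both rectangle families: their shape varies independently across the $n$ coordinates, so the optimal way to populate a parent by children of equal radius $r_k$ depends on the coordinate. The partition $\{\cK_1,\cK_2,\cK_3\}$ records exactly the three regimes that arise when comparing the two sidelengths with $r_k$. For $j \in \cK_1$ one has $\rho(u_k)^{a_j} \leq r_k$, so the $r_k$-ball already contains the ubiquity side in that coordinate and contributes the Ahlfors factor $\rho(u_k)^{a_j \delta_j}$ to the per-child mass. For $j \in \cK_2$ one has $\rho(u_k)^{a_j+t_j} \geq r_k$, so roughly $\rho(u_k)^{(a_j+t_j-A^{*})\delta_j}$ many disjoint $r_k$-balls can be packed inside the approximation rectangle in direction $j$. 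For $j \in \cK_3$ the scale $r_k$ sits strictly between the two sidelengths, and it is precisely the $\kappa$-scaling property of $R_{\alpha,j}$ that quantifies the measure of the intersection of an $r_k$-ball with the $\rho(u_k)^{a_j+t_j}$-neighbourhood of $R_{\alpha,j}$, so both the resonant-set geometry and the exponent $\kappa$ enter only here. Combining the three contributions and normalising by $r_k^{s}$ shows, after an algebraic rearrangement, that the total per-child mass is comparable to $r_k^{s}$ precisely because $A^{*}$ was chosen to minimise over $A$.

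With the construction in place, assign to each child at level $k$ an equal share of its parent's $\nu$-mass. The Ahlfors regularity, the $\kappa$-scaling, and the controlled decay of $\rho$ along $(u_k)$ give uniform comparability between consecutive levels, so that for any ball $B(x,r)$ centred on $K$ with $r_k \leq r < r_{k-1}$ the local bound $\nu(B(x,r)) \lesssim r^{s}$ holds; the minimisation defining $A^{*}$ is what ensures this bound is valid at every interpolating scale and insensitive to which coordinate direction is dominant, which is the most delicate book-keeping step. The mass distribution principle then yields $\cH^{s}(K) \gtrsim \nu(K) = 1$, and running the construction inside an arbitrary subball of $B$ of positive $\cH^{s}$-measure upgrades this pointwise lower bound to the equality \eqref{MTPRR_measure}, from which $\dimh W(\Psi) \geq s$ is immediate.
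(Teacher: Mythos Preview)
The paper does not prove this theorem at all: it is quoted verbatim from \cite[Theorem~3.1--3.2]{WW19} as a black-box tool in the ``Toolbox'' section, so there is no proof in the paper to compare your proposal against. Your sketch is a reasonable high-level outline of the Cantor-set and mass-distribution-principle argument that Wang and Wu actually carry out in \cite{WW19}, including the role of the partition $\cK_1,\cK_2,\cK_3$ in handling the three scale regimes, though the genuine technical work lies in the delicate separation and counting arguments you gloss over with ``controlled decay'' and ``algebraic rearrangement''.
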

The main motivation for this result came from the landmark paper of Beresnevich and Velani \cite{BeresnevichVelani} in which they developed the mass transference principle from balls to balls. This is surprising as the Hausdorff measure theory underpins the Lebesgue measure theory. This powerful tool has since been generalised to various settings, see for instance \cite{ AB19, AllBer, AllenDaviaud, HussainSimmons,  WW19, WWX15} and references therein. \par
We clarify some notations that will be used throughout. For real quantities $A,B$ and a parameter $t$, we write $A \ll_t B$ if $A \leq c(t) B$ for a constant $c(t) > 0$ that depends on $t$ only (while $A$ and $B$ may depend on other parameters). We write  $A\asymp_{t} B$ if $A\ll_{t} B\ll_{t} A$. If the constant $c>0$ is absolute, we simply write $A\ll B$ and $A\asymp B$.

\section{A motivating example}

As a motivating example, and a warm-up to later applications, let us consider the set
\begin{equation*}
    W^{\R}_{1,2}(\psi_{1},\psi_{2})=\left\{(x_{1},x_{2})\in I^{2}: |qx_{i}-p_{i}|<\psi_{i}(q) \quad (i=1,2) \quad \text{ for i.m.} \  (p_{1},p_{2},q)\in \Z^{2}\times\N \right\}\, ,
\end{equation*}
where $I^{2}=[0,1]^{2}$. Theorem~\ref{kleinbock-Wang-KG} allows us to deduce the following result.
\begin{corollary} \label{2 dim KG}
    For $\psi_{1},\psi_{2}:\N\to \R_{+}$ monotonic decreasing functions, we have that
    \begin{equation*}
        \mu^{\R}_{2}\left( W^{\R}_{1,2}(\psi_{1},\psi_{2})\right)= \begin{cases}
            0 \quad \text{\rm if } \quad \sum\limits_{q=1}^{\infty}\psi_{1}(q)\psi_{2}(q)<\infty\, , \\[2ex]
            1 \quad \text{\rm if } \quad \sum\limits_{q=1}^{\infty}\psi_{1}(q)\psi_{2}(q)=\infty\, .
        \end{cases}
    \end{equation*}
\end{corollary}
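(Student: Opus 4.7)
The plan is to obtain Corollary~\ref{2 dim KG} as a direct specialisation of Theorem~\ref{kleinbock-Wang-KG} to the case $n=2$, $m=1$. First I would verify the hypotheses. With $m=1$ the only admissible weight vector is $v=(1)$, so the quasi-norm $\|\cdot\|_v$ collapses to the absolute value on $\Z$, and any univariable approximation function $\psi_i(q)=\phi_i(|q|)$ automatically satisfies property~P; the monotonicity of $\psi_1,\psi_2$ is given by hypothesis. Moreover, the Kleinbock--Wang series specialises to
\[
\sum_{r=1}^{\infty} r^{m-1}\prod_{i=1}^{n}\psi_i(r)=\sum_{r=1}^{\infty}\psi_1(r)\psi_2(r),
\]
which is exactly the sum governing the dichotomy in the corollary.

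Next I would reconcile the two $\limsup$ sets. The set $W_{2,1}(\Psi)$ from Theorem~\ref{kleinbock-Wang-KG} involves $q\in\Z\setminus\{0\}$ and inequalities $|qx_i+p_i|<\psi_i(q)$ with $(\bp,q)\neq\0$, whereas $W^{\R}_{1,2}(\psi_1,\psi_2)$ uses $q\in\N$ and $|qx_i-p_i|<\psi_i(q)$. Since $p_i$ ranges over all of $\Z$, the sign in front of $p_i$ is cosmetic, and the involution $(\bp,q)\mapsto(-\bp,-q)$ preserves the defining inequalities, so the ``infinitely many'' condition is unaffected by restricting to $q>0$. The excluded case $q=0$ forces $|p_i|<\psi_i(0)$; since the $\psi_i$ are monotonic decreasing and tend to zero, once $\psi_i(0)\le 1$ this allows only the excluded choice $\bp=\0$, and in any case contributes at most a null set. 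Hence the two $\limsup$ sets coincide up to Lebesgue null sets, and the conclusion of Theorem~\ref{kleinbock-Wang-KG} transfers verbatim.

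For context, the convergence direction is self-contained: for each $q\in\N$, the relevant level set $\{(x_1,x_2)\in I^2:|qx_i-p_i|<\psi_i(q),\ 0\le p_i\le q,\ i=1,2\}$ is a union of $\asymp q^2$ boxes of individual $\mu_2^{\R}$-measure $\asymp \psi_1(q)\psi_2(q)/q^2$, with total mass $\asymp \psi_1(q)\psi_2(q)$. Lemma~\ref{Borel_Cantelli convergence} then yields $\mu_2^{\R}(W^{\R}_{1,2}(\psi_1,\psi_2))=0$ whenever $\sum_q\psi_1(q)\psi_2(q)<\infty$. The divergence half is the substantive content, inherited from Theorem~\ref{kleinbock-Wang-KG}. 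The main obstacle is therefore purely notational bookkeeping between the two conventions; all of the genuine analytic work is encapsulated in the cited theorem.
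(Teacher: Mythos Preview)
Your proposal is correct: Corollary~\ref{2 dim KG} is indeed an immediate specialisation of Theorem~\ref{kleinbock-Wang-KG} with $n=2$, $m=1$, and the paper itself introduces the corollary by noting that ``Theorem~\ref{kleinbock-Wang-KG} allows us to deduce the following result''. The bookkeeping you outline (property~P automatic when $m=1$ since the only admissible weight is $v=(1)$, the sign and $q\mapsto -q$ reconciliation between the two $\limsup$ sets, and the series collapsing since $r^{m-1}=1$) is routine and accurate. One small point: since $\psi_i$ is defined on $\N$ and the paper's $W_{n,m}$ already excludes $\bq=\mathbf{0}$, the $q=0$ discussion is unnecessary, though harmless.

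However, the paper deliberately takes a different route in its actual proof. Rather than invoking Theorem~\ref{kleinbock-Wang-KG} as a black box, it sets up the weighted ubiquitous system explicitly (the data $J$, $\beta$, $J_k$, $R_q$), proves the local ubiquity statement Proposition~\ref{2 dim ubiquity} via Minkowski's theorem, and then obtains the divergence case by verifying conditions I($\R$)--III($\R$) of Theorem~\ref{KW ambient measure} for a carefully constructed pair $\rho_1,\rho_2$ (built from an ordering $l_1(q),l_2(q)$ of the $\psi_i$ and an auxiliary function $\phi^*$). The convergence case is handled via Borel--Cantelli, essentially as in your sketch. The purpose of this longer argument is pedagogical: the paper uses the real $n=2$, $m=1$ case as a warm-up to illustrate the ubiquity machinery before deploying it in the $p$-adic, complex, quaternion, and formal-power-series settings, where no ready-made analogue of Theorem~\ref{kleinbock-Wang-KG} is available. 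Your route is shorter and logically sufficient for the corollary in isolation; the paper's route is the template for everything that follows.
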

Furthermore, from Theorem~\ref{wang wu KG} we can deduce the following 
\begin{corollary} \label{2 dim JB}
    For 
    \begin{equation*}
        \psi_{1}(q)=q^{-\tau_{1}}\quad \text{\rm and } \quad \psi_{2}(q)=q^{-\tau_{2}}
    \end{equation*}
    with $\tau_{1}+\tau_{2}>1$ and $\tau_{1},\tau_{2}>0$, we have that
    \begin{equation*}
        \dimh W^{\R}_{1,2}(\psi_{1},\psi_{2}) = \min_{i=1,2} \left\{ \frac{3+(\tau_{i}-\min\{\tau_{1},\tau_{2}\})}{1+\tau_{i}} \right\}\, .
    \end{equation*}
\end{corollary}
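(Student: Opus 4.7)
The set $W^{\R}_{1,2}(\psi_1,\psi_2)$ is exactly the set $W_{n,m}(\Psi)$ from the introduction in the case $n=2$ and $m=1$: here $X\in[0,1]^{1\times 2}$ corresponds to the pair $(x_1,x_2)\in I^2$, the integer vector $\bq\in\Z^1$ collapses to a single integer, and the two approximation functions $\psi_i(q)=q^{-\tau_i}=\|\bq\|^{-\tau_i}$ have the univariable form required. The sign discrepancy between $|\bq X_i+p_i|$ in the definition of $W_{n,m}$ and $|qx_i-p_i|$ in the definition of $W^{\R}_{1,2}$, as well as the restriction $q\in\N$ in the latter, are resolved by the symmetry $(q,\bp)\mapsto(-q,-\bp)$, so the two sets coincide. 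Since $\tau_1+\tau_2>1=m$ by hypothesis, Theorem \ref{wang wu KG} applies directly.

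The plan is then to substitute $n=2$ and $m=1$ into the dimension formula of Theorem \ref{wang wu KG}. The leading term $n(m-1)$ vanishes and the constant $n+m$ in the numerator becomes $3$, giving
\[
\dimh W^{\R}_{1,2}(\psi_1,\psi_2)
=\min_{k=1,2}\left\{\frac{3+\sum_{j:\tau_j<\tau_k}(\tau_k-\tau_j)}{1+\tau_k}\right\}.
\]
It remains to simplify the inner sum. For each fixed $k\in\{1,2\}$ the index set $\{j:\tau_j<\tau_k\}$ is a subset of $\{1,2\}\setminus\{k\}$, which contains the single index $3-k$. If $\tau_{3-k}<\tau_k$, the sum equals $\tau_k-\tau_{3-k}=\tau_k-\min\{\tau_1,\tau_2\}$; otherwise $\tau_k\leq\tau_{3-k}$, so $\tau_k=\min\{\tau_1,\tau_2\}$ and the (empty) sum equals $0=\tau_k-\min\{\tau_1,\tau_2\}$. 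In both cases the numerator equals $3+(\tau_k-\min\{\tau_1,\tau_2\})$, which gives the claimed formula after relabelling $k$ as $i$.

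There is no genuine obstacle here: the result is a purely formal specialisation of Theorem \ref{wang wu KG}. The only care required is to identify the roles of $n$ and $m$ correctly (namely $n=2$ linear forms in $m=1$ integer variable, not the reverse) and to observe that, because there are only two approximation functions, the sum $\sum_{j:\tau_j<\tau_k}(\tau_k-\tau_j)$ collapses uniformly to $\tau_k-\min\{\tau_1,\tau_2\}$ regardless of which of $\tau_1,\tau_2$ is the smaller.
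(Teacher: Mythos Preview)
Your proof is correct: Corollary~\ref{2 dim JB} is indeed the specialisation $n=2$, $m=1$ of Theorem~\ref{wang wu KG}, the hypotheses are satisfied since $\tau_1+\tau_2>1=m$, and your reduction of the inner sum to $\tau_k-\min\{\tau_1,\tau_2\}$ is clean. The paper itself notes before the statement that the corollary can be deduced from Theorem~\ref{wang wu KG}.

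However, the paper then deliberately does \emph{not} prove it this way. Instead, Section~3.2 gives a self-contained argument: the upper bound by a direct covering with balls of the two possible radii $q^{-(1+\tau_1)}$ and $q^{-(1+\tau_2)}$, and the lower bound by constructing the weighted ubiquitous system of Proposition~\ref{2 dim ubiquity} and feeding it into Theorem~\ref{MTPRR}, with a case split on whether $\tau_2\gtrless\tfrac12$ to choose the exponents $a_i^*$. The point of this longer route is pedagogical: Section~3 is explicitly framed as a ``motivating example'' and ``warm-up'', and this proof serves as a template that is then reproduced, with the necessary modifications, in the $p$-adic, complex, quaternion and formal power series settings where no analogue of Theorem~\ref{wang wu KG} is available off the shelf. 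Your approach is logically sufficient and much shorter; the paper's approach buys the reader a worked model of the machinery that drives the new results.
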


We stress the results and methods of proof of this section are not new, in particular see \cite{KW23, WW19}. We now show how both of these results can be derived by constructing a suitable ubiquitous system of rectangles and then applying the theorems of the previous section. The weighted ubiquitous system is setup in much the same was as the classical ubiquitous system, with the key difference being that Minkowski's Theorem is used in place of Theorem~\ref{TheoremDirichlet}. See for example \cite[Theorem 1.1.4]{BRV16} for the statement in the classical one dimensional setting. However, the application of the weighted ubiquitous system is slightly more complex. In particular it requires a careful choice of the $\rho$ function.\par 
To begin, let
\begin{align*}
    \Omega &=[0,1]^{2}, \quad \mu=\mu^{\R}_{2}, \\
    J&=\N , \quad \beta_{\alpha}=\alpha, \\
    l_{k}&=M^{k-1}, \quad u_{k}=M^{k}, \\
    J_{k} &=\left\{q\in\N: M^{k-1}\leq q \leq M^{k} \right\}, \\
    R_{q} &=\left\{\left(\tfrac{p_{1}}{q},\tfrac{p_{2}}{q} \right) : 0\leq p_{1},p_{2}\leq q \right\}\, .
\end{align*}
Note that $R_{q}$ are collections of points and so $(R_{q})_{q\geq 1}$ has $\kappa$-scaling property $\kappa=0$. Since $\mu_{2}^{\R}=\mu_{1}^{\R}\times \mu_{1}^{\R}$ trivially $\delta_{1}=\delta_{2}=1$. Here $M$ in the definitions of $u_{k}$ and $l_{k}$ is some large constant (we can take any integer $M\geq 64$). Observe that
\begin{equation*}
    W^{\R}_{1,2}(\psi_{1},\psi_{2})= \limsup_{q\to \infty}\Delta\left( R_{q}, \left(\tfrac{\psi_{1}(q)}{q},\tfrac{\psi_{2}(q)}{q}\right) \right)\, .
\end{equation*}
We prove the following.

\begin{proposition} \label{2 dim ubiquity}
    Let $\rho_{1},\rho_{2}:\N\to \R_{+}$ be functions of the form
    \begin{equation} \label{2 dim rho conditions}
        \rho_{i}(q)=M\frac{\phi_{i}(q)}{q}\, \quad (i=1,2) \quad \text{  with } \quad \phi_{1}(q)\phi_{2}(q)=q^{-1}\, ,
    \end{equation}
    and each $\phi_{i}(q)\to 0$ as $q\to \infty$. Then for any ball $B\subset I^{2}$ there exists $k_{0}\in\N$ such that for all $k>k_{0}$ we have
    \begin{equation*}
        \mu^{\R}_{2}\left(B\cap \bigcup_{q\in J_{k}} \Delta\left(R_{q}, \left(\rho_{1}(M^{k}),\rho_{2}(M^{k})\right)\right)\right)\geq \frac{1}{2}\mu^{\R}_{2}(B)\, .
    \end{equation*}
\end{proposition}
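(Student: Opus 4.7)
The plan is to apply Minkowski's theorem (Theorem~\ref{TheoremMinkowski}) as the two-dimensional weighted analogue of the Dirichlet-based step in classical ubiquity arguments, and then localise by discarding the contribution from denominators $q<M^{k-1}$. Taking $Q=M^k$, I would apply Minkowski to the lattice $\Z^3$ and the $3\times 3$ matrix whose rows encode the linear forms $qx_1-p_1$, $qx_2-p_2$, and $q$, with bounds $c_1=\phi_1(Q)$, $c_2=\phi_2(Q)$, $c_3=Q$. The key point is that the product $c_1c_2c_3=\phi_1(Q)\phi_2(Q)\,Q$ equals exactly $1$ by the hypothesis $\phi_1(q)\phi_2(q)=1/q$ in \eqref{2 dim rho conditions}, which just meets Minkowski's requirement. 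This yields, for every $x\in I^2$, a non-zero $(q,p_1,p_2)\in\Z^3$ with $|qx_i-p_i|<\phi_i(M^k)$ and $|q|\le M^k$; for $k$ large, $\phi_i(M^k)<1$ excludes $q=0$, so after a sign flip $q\in\{1,\dots,M^k\}$ and $(p_1/q,p_2/q)\in R_q$.

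Let $E_k\subseteq I^2$ denote the \emph{bad} set of those $x$ for which Minkowski's solution can be realised with some $q<M^{k-1}$. For $x\in I^2\setminus E_k$, the denominator necessarily lies in $J_k$, and since $1/q\le M/M^k$ the Minkowski inequality strengthens to
\[
\left|x_i-\tfrac{p_i}{q}\right|<\tfrac{M\phi_i(M^k)}{M^k}=\rho_i(M^k),
\]
placing $x$ inside $\Delta(R_q,(\rho_1(M^k),\rho_2(M^k)))$ for some $q\in J_k$. The proposition therefore reduces to showing
\[
\mu^{\R}_2(B\cap E_k)\le\tfrac12\mu^{\R}_2(B)
\]
for all sufficiently large $k$.

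To estimate $\mu^{\R}_2(B\cap E_k)$, write $B=B(y,r)$ so $\mu^{\R}_2(B)=4r^2$ and use a union bound over $(q,p_1,p_2)$: each rectangle $\{x:|qx_i-p_i|<\phi_i(M^k)\}$ contributes area $4/(q^2 M^k)$ and can meet $B$ only when $|p_i/q-y_i|<r+\phi_i(M^k)/q$, which admits at most $\prod_{i=1}^{2}(2qr+2\phi_i(M^k)+1)$ integer pairs $(p_1,p_2)$. Splitting the sum over $1\le q<M^{k-1}$ at the threshold $q\asymp 1/r$ --- bounding the product by $(5qr)^2$ when $qr\ge 1$ and by a constant when $qr<1$ --- one obtains a bound of the form
\[
\mu^{\R}_2(B\cap E_k)\ll \frac{r^2}{M}+\frac{1}{M^k}.
\]
For $M\ge 64$ the first term is strictly below $\mu^{\R}_2(B)/2=2r^2$, and the residual $1/M^k$ is absorbed once $k$ exceeds some $k_0=k_0(r)$.

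The main technical obstacle is this bad-set counting, where the split into $qr\ge 1$ and $qr<1$ is essential: the large-$q$ regime must produce a term uniform in $k$ (which is why the factor $1/M$ appears and why $M$ is taken large), whereas the small-$q$ regime contributes only an $O(1/M^k)$ tail from the handful of rationals with very small denominators. Everything else is routine verification of Minkowski's hypotheses and comparison with the chosen scaling $\rho_i(q)=M\phi_i(q)/q$.
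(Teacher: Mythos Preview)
Your proposal is correct and follows essentially the same approach as the paper: apply Minkowski's theorem with the bounds $\phi_1(M^k),\phi_2(M^k),M^k$ whose product is $1$, then show the contribution from denominators $q<M^{k-1}$ has measure below $\tfrac12\mu^{\R}_2(B)$ by counting the relevant rationals and summing the rectangle areas. The only cosmetic differences are that the paper splits the union directly rather than defining a bad set $E_k$, and uses the inequality $(a+b)^2\le 2(a^2+b^2)$ in place of your case split at $qr\asymp 1$; both yield the same $r^2/M + O(M^{-k})$ bound.
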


\begin{proof}
    The following is a standard method of proving such a result. Initially, let us give some bounds on $k_{0}$ that will be used later.  Choose $k_{0}$ such that for all $k>k_{0}$ we have
    \begin{equation*}
           \phi_{i}(M^{k})<\frac{1}{2} \qquad (i=1,2) \quad\text{ and }\quad M^{k}> \frac{32}{3}\pi^{2}\mu^{\R}_{2}(B)^{-1}\, .
       \end{equation*}
       It will become clear later why such bounds are chosen.
       Now pick any $k>k_{0}$. Firstly, by Minkowski's Theorem for any $(x_{1},x_{2})\in[0,1]^{2}$ the system
    \begin{equation*}
        \begin{cases}
            |qx_{1}-p_{1}|<\phi_{1}(M^{k}),\\
            |qx_{2}-p_{2}|<\phi_{2}(M^{k}),\\
            |q|\leq M^{k}
        \end{cases}
    \end{equation*}
    has a non-zero integer solution $(p_{1},p_{2},q)\in\Z^{3}$. Note by our choice of $k_{0}$ that $q\neq 0$, since otherwise we would have $p_{1}$ and $p_{2}$ non-zero solving $|p_{i}|<\tfrac{1}{2}$ which is clearly false. So, multiplying the solution $(p_{1},p_{2},q)$ through by $-1$ if necessary, we have that $(p_{1},p_{2},q)\in\Z^{2}\times \N$. Dividing through by $q$ in the first two inequalities allows us to see that
    \begin{equation} \label{motivating example eq 1}
        \mu^{\R}_{2}\left(B\cap \bigcup_{1\leq q\leq M^{k}}\Delta\left(R_{q}, \left(\tfrac{\phi_{1}(M^{k})}{q},\tfrac{\phi_{2}(M^{k})}{q}\right)\right)\right)=\mu^{\R}_{2}(B)\, .
    \end{equation}
    Now observe that
    \begin{align*}
        &\mu^{\R}_{2}\left(B\cap \bigcup_{1\leq q\leq M^{k}}\Delta\left(R_{q}, \left(\tfrac{\phi_{1}(M^{k})}{q},\tfrac{\phi_{2}(M^{k})}{q}\right)\right)\right) \\
        &\leq \mu^{\R}_{2}\left(B\cap \bigcup_{1\leq q< M^{k-1}}\Delta\left(R_{q}, \left(\tfrac{\phi_{1}(M^{k})}{q},\tfrac{\phi_{2}(M^{k})}{q}\right)\right)\right)    +     \mu^{\R}_{2}\left(B\cap \bigcup_{M^{k-1}\leq q\leq M^{k}}\Delta\left(R_{q}, \left(\tfrac{\phi_{1}(M^{k})}{M^{k-1}},\tfrac{\phi_{2}(M^{k})}{M^{k-1}}\right)\right)\right).
    \end{align*}
    The second summation on the right-hand side of the above inequality is precisely what we are trying to calculate (since $\tfrac{\phi_{i}(M^{k})}{M^{k-1}}=\rho_{i}(M^{k})$ for $i=1,2.$). Note \eqref{motivating example eq 1} gives us that the left-hand side of the above inequality is $\mu^{\R}_{2}(B)$, so if we can show that
    \begin{equation*}
        \mu^{\R}_{2}\left(B\cap \bigcup_{1\leq q< M^{k-1}}\Delta\left(R_{q}, \left(\tfrac{\phi_{1}(M^{k})}{q},\tfrac{\phi_{2}(M^{k})}{q}\right)\right)\right)<\tfrac{1}{2}\mu^{\R}_{2}(B)
    \end{equation*}
    we are done. To see this, we compute that
    \begin{align*}
        \mu^{\R}_{2}\left(B\cap \bigcup_{1\leq q< M^{k-1}}\Delta\left(R_{q}, \left(\tfrac{\phi_{1}(M^{k})}{q},\tfrac{\phi_{2}(M^{k})}{q}\right)\right)\right) & \leq \sum_{1\leq q<M^{k-1}}\sum_{\left(\tfrac{p_{1}}{q},\tfrac{p_{2}}{q}\right)\in R_{q}\cap B} \mu^{\R}_{2}\left(\Delta\left( \left(\tfrac{p_{1}}{q},\tfrac{p_{2}}{q}\right), \left(\tfrac{\phi_{1}(M^{k})}{q},\tfrac{\phi_{2}(M^{k})}{q}\right)\right)\right) \\
        &\leq \sum_{1\leq q <M^{k-1}} \left( 2q r(B) + 1\right)^{2} 4 q^{-2}\phi_{1}(M^{k})\phi_{2}(M^{k}) \\
        &\overset{(*)}{\leq} \sum_{1\leq q<M^{k-1}} \left(4q^{2}\mu^{\R}_{2}(B) + 4\right) 4 q^{-2} M^{-k}\\
        & \leq 16 M^{-1}\mu^{\R}_{2}(B) + 16M^{-k}\sum_{1\leq q< M^{k-1}}q^{-2} \\
        &\overset{(M\geq 64)}{\leq} \tfrac{1}{4}\mu^{\R}_{2}(B) + M^{-k}16\tfrac{\pi^{2}}{6} \\
        &\leq \tfrac{1}{2} \mu^{\R}_{2}(B),
    \end{align*}
    where $(*)$ follows on using that $(a+b)^{2}\leq 2(a^{2}+b^{2})$ for all $a,b \geq R_{+}$. Hence, the proof is complete.
\end{proof}

\subsection{Proof of Corollary 1}
Begin with the convergence case of Corollary~\ref{2 dim KG}. By the Borel-Cantelli convergence lemma, and the above formulation of $W_{1,2}^{\R}(\psi_{1},\psi_{2})$ in terms of a $\limsup$ set, we have that
\begin{equation} \label{2 dim bc}
    \mu^{\R}_{2}\left(W^{\R}_{1,2}(\psi_{1},\psi_{2})\right)=0 \quad \text{ if} \quad \sum_{q=1}^{\infty}\mu^{\R}_{2}\left( \Delta\left(R_{q}, \left(\tfrac{\psi_{1}(q)}{q},\tfrac{\psi_{2}(q)}{q} \right)\right)\right)<\infty\, .
\end{equation}
A quick calculation of the Lebesgue measure of the collection of rectangles $\Delta\left(R_{q}, \left(\tfrac{\psi_{1}(q)}{q},\tfrac{\psi_{2}(q)}{q} \right)\right)$ yields
\begin{equation*}
    \mu^{\R}_{2}\left( \Delta\left(R_{q}, \left(\tfrac{\psi_{1}(q)}{q},\tfrac{\psi_{2}(q)}{q} \right)\right)\right) \leq 4 \psi_{1}(q)\psi_{2}(q)\, ,
\end{equation*}
and inputting this into \eqref{2 dim bc} completes the convergence case.\par 
In order to prove the divergence case of Corollary~\ref{2 dim KG} we now need to show conditions (I)-(III) of Theorem~\ref{KW ambient measure} are verified for some suitably chosen functions $\rho_{1},\rho_{2}$ satisfying \eqref{2 dim rho conditions}. Notice in this setting, our functions are of the form
\begin{equation*}
    \Psi(q)=\left(\tfrac{\psi_{1}(q)}{q},\tfrac{\psi_{2}(q)}{q}\right),
\end{equation*}
So the conditions (I)-(III) correspond to
\begin{itemize}
    \item[I($\R$)] $\frac{\psi_{1}(q)}{q},\tfrac{\psi_{2}(q)}{q}$ monotonic decreasing as $q\to \infty$, 
    \item[II($\R$)] $\rho_{1}(q)\geq \tfrac{\psi_{1}(q)}{q}$ and $\rho_{2}(q)\geq \tfrac{\psi_{2}(q)}{q}$ for all $q\in\N$, and $\rho_{1}(q),\rho_{2}(q)\to 0$ as $q\to \infty$.
    \item[III($\R$)] $\frac{\psi_{1}(q)}{q}$ and $\frac{\psi_{2}(q)}{q}$ are $c$-regular on the sequence $(M^{k})_{k\geq 1}$. 
\end{itemize}
Since $\psi_{1},\psi_{2}$ are decreasing I($\R$) is immediately satisfied. For III($\R$) note that for each $i=1,2$
\begin{equation*}
    \frac{\psi_{i}(M^{k+1})}{M^{k+1}}=M^{-1}\frac{\psi_{i}(M^{k+1})}{M^{k}} \leq M^{-1}\frac{\psi_{i}(M^{k})}{M^{k}}
\end{equation*}
where the last inequality follows due to the monotonic decreasing property of $\psi_{i}$. Thus it remains to choose functions $\rho_{1},\rho_{2}$ so that II($\R$) is satisfied. \par 
For each $q\in \N$, let $l_{1}(q),l_{2}(q) \in \{1,2\}$ be the ordering such that
\begin{equation*}
    \psi_{l_{1}(q)}(q)\geq \psi_{l_{2}(q)}(q)\, .
\end{equation*}
We can assume without loss of generality that 
\begin{equation} \label{minkpsi}
\psi_{1}(q)\psi_{2}(q)<q^{-1}
\end{equation}
for all sufficiently large $q\in\N$, say $q>q_{0}$. Otherwise by Minkowski's Theorem and the monotonicity of each $\psi_{i}$ we have that $W_{1,2}^{\R}(\psi_{1},\psi_{2})=[0,1]^{2}$.
For $q>q_{0}$ consider the function
\begin{equation*}
    \phi^{*}(q)= q^{-1}\psi_{l_{1}(q)}(q)^{-1}\, .
\end{equation*}
Observe that $\phi^{*}(q)>\psi_{l_{2}(q)}(q)$, since otherwise we would not have \eqref{minkpsi}. Now, for each $q>q_{0}$ choose functions $\rho_{1},\rho_{2}$ to be
\begin{align*}
    \phi_{l_{1}(q)}(q)& = \begin{cases}
        \psi_{l_{1}(q)}(q) \quad \text{ if } \quad \psi_{l_{1}(q)}(q)>q^{-\tfrac{1}{2}}\, , \\[2ex]
        q^{-\tfrac{1}{2}}\quad \quad \quad \text{ otherwise.}
    \end{cases} \\[2ex]
    \phi_{l_{2}(q)}(q)&=\begin{cases}
        \phi^{*}(q)\,\,\,  \quad \text{ if } \quad \psi_{l_{1}(q)}(q)>q^{-\tfrac{1}{2}}\, , \\[2ex]
        q^{-\tfrac{1}{2}} \quad \quad \text{ otherwise.}
    \end{cases} \\
\end{align*}
Note $\phi_{1},\phi_{2}$ satisfy condition \eqref{2 dim rho conditions}, and furthermore
\begin{equation*}
    \rho_{i}(q)=M\frac{\phi_{i}(q)}{q}> \frac{\psi_{i}(q)}{q} \quad (i=1,2)\, ,
\end{equation*}
where the inequality follows by our choice of each $\phi_{i}$. So II($\R$) is satisfied, thus Theorem~\ref{ambient measure} is applicable. It remains to see, via Cauchy condensation, that
\begin{equation*}
    \sum_{q=1}^{\infty}\psi_{1}(q)\psi_{2}(q) \asymp \sum_{k=1}^{\infty} M^{k}\psi_{1}(M^{k})\psi_{2}(M^{k}),
\end{equation*}
and so this completes the divergence case.

\subsection{Proof of Corollary~\ref{2 dim JB}}

For the upper bound consider the standard cover
\begin{equation*}
    \bigcup_{q>N} \Delta\left(R_{q},\left(q^{-1-\tau_{1}},q^{-1-\tau_{2}}\right) \right)
\end{equation*}
for any $N\in \N$. Letting $N$ tend to infinity naturally gives rise to a better cover of $W_{1,2}^{\R}(\psi_{1},\psi_{2})$. Each layer $\Delta\left(R_{q},(q^{-1-\tau_{1}}, q^{-1-\tau_{2}})\right)$ is composed of $q^{2}$ rectangles, each of which can be covered by either one large ball with the diameter equal to the longest sidelength, or by several smaller balls with diameters equal to the shortest sidelenght of the rectangle. For now take the balls with larger diameter. Say $\tau_{1}\geq \tau_{2}$ and so $2q^{-1-\tau_{2}}$ is the larger sidelenght. Then we have that
\begin{equation*}
    \cH^{s}\left(W_{1,2}^{\R}(\psi_{1},\psi_{2})\right) \leq \sum_{q>N} q^{2} 2^{s}q^{-(1+\tau_{2})s} \leq 2^{s}\sum_{q>N} q^{2-(1+\tau_{2})s} \to 0 
\end{equation*}
as $N\to \infty$ for any $s>\frac{3}{1+\tau_{2}}$. Hence
\begin{equation*}
\dimh W_{1,2}^{\R}(\psi_{1},\psi_{2}) \leq \frac{3}{1+\tau_{2}}
\end{equation*}
when $\tau_{1}\geq \tau_{2}$. Now consider the other case. Then each rectangle in $\Delta\left(R_{q},(q^{-1-\tau_{1}},q^{-1-\tau_{2}})\right)$ can be covered by
\begin{equation*}
    \frac{2q^{-(1+\tau_{2})}}{2q^{-(1+\tau_{1}})}=q^{(\tau_{1}-\tau_{2})}
\end{equation*}
balls of diameter $2q^{-(1+\tau_{1})}$. Hence
\begin{equation*}
    \cH^{s}\left(W_{1,2}^{\R}(\psi_{1},\psi_{2})\right) \leq \sum_{q>N} q^{2} q^{(\tau_{1}-\tau_{2})} 2^{s} q^{-(1+\tau_{1})s} \leq 2^{s}\sum_{q>N} q^{2+(\tau_{1}-\tau_{2})-(1+\tau_{1})s}\to 0
\end{equation*}
as $N\to \infty$ for any $s>\frac{3+(\tau_{1}-\tau_{2})}{1+\tau_{1}}$. Hence
\begin{equation*}
\dimh W_{1,2}^{\R}(\psi_{1},\psi_{2}) \leq \frac{3+(\tau_{1}-\tau_{2})}{1+\tau_{1}}\, .
\end{equation*}
Taking the minimum over the two cases completes the upper bound of Corollary~\ref{2 dim JB}. \par 
For the lower bound we use Theorem~\ref{MTPRR} combined with Proposition~\ref{2 dim ubiquity}. Within the notation of Theorem~\ref{MTPRR} set
\begin{equation*}
    a_{i}=a_{i}^{*}+1, \quad t_{i}=\tau_{i}-a_{i}^{*} \quad (i=1,2), 
\end{equation*}
and consider the function $q\mapsto q^{-1}$. Then provided $a_{1}^{*}+a_{2}^{*}=1$ and each $a_{i}^{*}>0$ the functions $\rho_{1}(q)=q^{-a_{1}}$, $\rho_{2}(q)=q^{-a_{2}}$ are applicable to Proposition~\ref{2 dim ubiquity}. The constant $M$ appearing in the conditions of $\rho$ in Proposition~\ref{2 dim ubiquity}  can clearly be omitted. \par 
Suppose that $\tau_{1}\geq \tau_{2}$, and let
\begin{equation*}
    A:=\{a_{1},a_{2}, a_{1}+t_{1}, a_{2}+t_{2}\}
\end{equation*}
If
\begin{enumerate}
    \item[(i)] $\tau_{2}>\tfrac{1}{2}$: Then set $a_{1}^{*}=a_{2}^{*}=\tfrac{1}{2}$. So $a_{1}^{*}+a_{2}^{*}=1$. Now the sets $\cK_{1},\cK_{2},\cK_{3}$ as defined in Theorem~\ref{MTPRR} become
$$
    \begin{cases}
   \cK_{1}=\{1,2\},  \quad\cK_{2}=\emptyset, \quad \cK_{3}=\emptyset &\text{if} \quad A_{i}=a_{1}=a_{2}, \\[2ex]
   \cK_{1}=\emptyset,\quad   \cK_{2}=\{2\}, \quad \cK_{3}=\{1\}\,  &\text{if}\quad   A_{i}=a_{2}+t_{2}=\tau_{2}+1, \\[2ex]
   \cK_{1}=\emptyset,  \quad \cK_{2}=\{1,2\}, \quad \cK_{3}=\emptyset &\text{if} \quad A_{i}=a_{1}+t_{1}=\tau_{1}+1.        
    \end{cases}
$$
Inputting each of these into the formula for $s$ as in Theorem~\ref{MTPRR} we get the values
    \begin{equation*}
        \dimh W_{1,2}^{\R}(\psi_{1},\psi_{2}) \geq \min \left\{2, \frac{3}{1+\tau_{2}},\frac{3+(\tau_{1}-\tau_{2})}{1+\tau_{1}} \right\}\, .
    \end{equation*}
    
    \item [(ii)]$\tau_{2}<\tfrac{1}{2}$: Then set $a_{1}^{*}=1-\tau_{2}$ and $a_{2}^{*}=\tau_{2}$. Again, note $a_{1}^{*}+a_{2}^{*}=1$. Now the sets $\cK_{1},\cK_{2},\cK_{3}$ as defined in Theorem~\ref{MTPRR} become
    
    \begin{equation*}
    \begin{cases}
        \cK_{1}=\{1,2\}\, , \quad \cK_{2}=\emptyset \, , \quad \ \cK_{3}=\emptyset\, , \quad \quad\, \, \, \, \text{\rm if } \quad  A_{i}=a_{2}=a_{2}+t_{2}=\tau_{2}+1, \\
        \cK_{1}=\{1\}, \ \ \ \quad \cK_{2}=\{2\}, \quad  \ \cK_{3}=\emptyset \, ,\quad\quad\  {\rm if  } \ \quad A_{i}=a_{1}=2-\tau_{2},\\
        \cK_{1}=\emptyset, \quad\quad \  \cK_{2}=\{1,2\}, \  \cK_{3}=\emptyset\, , \quad\quad \, \, \,  {\rm if } \quad A_{i}=a_{1}+t_{1}=\tau_{1}+1\, .
        \end{cases}
    \end{equation*}

Inputting each of these into the formula for $s$ as in Theorem~\ref{MTPRR} we get the values
    \begin{equation*}
        \dimh W_{1,2}^{\R}(\psi_{1},\psi_{2}) 
        \geq 
        \min \left\{2, 2, 2,\frac{3+(\tau_{1}-\tau_{2})}{1+\tau_{1}} \right\}\, 
        =
        \min \left\{2, \frac{3+(\tau_{1}-\tau_{2})}{1+\tau_{1}} \right\}.
    \end{equation*}
\end{enumerate}
Combining the two cases gives us the lower bound of Corollary~\ref{2 dim JB} completing the proof.

\section{$p$-adic approximation}
 Fix a prime $p$ and $n,m \in \N$. Let $|\cdot|_{p}$ be the $p$-adic norm and $\|\bfa\|_{p}=\max\{|a_{i}|_{p}\}$ over all coordinates of $\bfa$, let $\Qp^{m\times n}$ be the set of $m\times n$ dimensional matrices with entries from the $p$-adic numbers $\Qp$, and $\Zp^{m\times n}$ the $m\times n$ matrices with entries from the $p$-adic integers $\Zp:=\{x\in\Qp:|x|_{p}\leq 1\}$. For matrix $X\in \Zp^{m\times n}$ we denote by $X_{i}$ the $i$th column vector of $X$. Let $\mu_{m\times n}^{\Qp}$ denote the $m\times n$-dimensional Haar measure on $\Qp^{m\times n}$ normalised by $\mu^{\Qp}_{m\times n}(\Zp^{m\times n})=1$.\par 
Let $\Psi:\Z^{n+m}\to \R_{+}^{n}$ be an $n$-tuple of approximation functions of the form
\begin{equation*}
    \Psi(\bfa)=\left( \frac{\psi_{1}(\|\bfa\|_{v})}{\|\bfa\|_{v}},\dots , \frac{\psi_{n}(\|\bfa\|_{v})}{\|\bfa\|_{v}} \right)
\end{equation*}
with functions $\psi_{i}:\R_{+} \to \R_{+}$. Throughout this section let $\|\cdot\|_{v}$ (not to be confused with $\|\cdot\|_{p}$ the $p$-adic max norm) be the quasi-norm $\|\bfa\|_{v}=\max_{1\leq i \leq n+m}|a_{i}|^{1/v_{i}}$ for vector $v=(v_{1},\dots,v_{n+m})$ with 
\begin{equation} \label{vector v conditions p-adic}
    v_{i}>0 \quad  (1\leq i \leq m) , \quad \sum\limits_{i=1}^{m}v_{i}=m\, , \quad v_{i+m}= 1   \quad  (1\leq i \leq n) .
\end{equation}
Define
\begin{equation*}
    W_{n,m}^{\Zp}(\Psi):= \left\{ X \in \Zp^{m\times n} : \begin{array}{l} |\bfa_{0}X_{i}-a_{i}|_{p}< \frac{\psi_{i}(\|\bfa \|_{v})}{\|\bfa\|_{v}} \quad(1\leq i \leq n) \\[2ex]
    \text{\rm for infinitely many } \bfa=(\bfa_{0},a_{1},\dots , a_{n}) \in \Z^{m+n} \end{array} \right\}.
\end{equation*}
Note that the approximation function depends on $n+m$ values, rather than $m$ values in the real setting. This is because in the $p$-adic setting, for any $x\in\Zp$, one can make the value $|x-z|_{p}$ arbitrarily small for a sufficiently large choice of $z\in\Z$, that is, $\Z$ is dense in $\Zp$. This is also why we additionally require each approximation function to decrease faster than $\frac{1}{\|\bfa\|_{v}}$, and the condition that the last $n$ terms of the vector $v$ are each equal to $1$ is needed. \par 
We prove the following $p$-adic analogue to the weighted Khintchine-Groshev Theorem \ref{kleinbock-Wang-KG}. 
\begin{theorem}\label{p-adic weighted KG}
Let $\Psi:\Z^{n+m}\to \R_{+}^{n}$ be an $n$-tuple of approximation functions of the form
\begin{equation*}
    \Psi(\bfa)=\left( \frac{\psi_{1}(\|\bfa\|_{v})}{\|\bfa\|_{v}},\dots , \frac{\psi_{n}(\|\bfa\|_{v})}{\|\bfa\|_{v}} \right)
\end{equation*}
for functions $\psi_{i}:\R_{+} \to \R_{+}$ with each $\psi_{i}(r)\to\0$ monotonically as $r\to\infty$. Suppose $\boldsymbol{v}\in \R^{m+n}_{+}$ satisfies \eqref{vector v conditions p-adic}. Then
\begin{equation*}
    \mu^{\Qp}_{m\times n}\left(W_{n,m}^{\Zp}(\Psi)\right)=\begin{cases}
    0 \quad \text{\rm if }\quad  \sum\limits_{r=1}^{\infty} r^{m-1}\prod\limits_{i=1}^{n}\psi_{i}(r) < \infty, \\[2ex]
    1 \quad \text{\rm if }\quad  \sum\limits_{r=1}^{\infty} r^{m-1}\prod\limits_{i=1}^{n}\psi_{i}(r) = \infty .
    \end{cases}
\end{equation*}
\end{theorem}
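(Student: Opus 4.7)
The proof splits along the standard dichotomy: Borel--Cantelli (Lemma \ref{Borel_Cantelli convergence}) handles the convergence case, and the ubiquity framework of Theorem \ref{KW ambient measure} handles the divergence case, following the template established in Section 3 for the real setting.

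For the convergence direction, I would write $W^{\Zp}_{n,m}(\Psi) = \limsup_\bfa A_\bfa$, where
\[
A_\bfa = \left\{X \in \Zp^{m\times n} : |\bfa_0 X_i - a_i|_p < \psi_i(\|\bfa\|_v)/\|\bfa\|_v \text{ for all } 1\leq i\leq n\right\}
\]
is indexed by $\bfa = (\bfa_0, a_1, \ldots, a_n) \in (\Z^m \setminus \{0\}) \times \Z^n$. A direct computation using the formula $\mu_m^{\Qp}\{X_i : |\bfa_0 X_i - a_i|_p < \eta\} = \eta/\|\bfa_0\|_p$ (valid when the set is non-empty and $\eta \leq \|\bfa_0\|_p$) gives $\mu(A_\bfa) \leq \prod_i \eta_i(\bfa)$ for primitive $\bfa_0$, with $\eta_i(\bfa) = \psi_i(\|\bfa\|_v)/\|\bfa\|_v$. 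Grouping $\bfa$'s with $\|\bfa\|_v \in [2^k, 2^{k+1}]$ (of which there are $\asymp 2^{k(m+n)}$), summing yields $\sum_{\bfa : \|\bfa\|_v \asymp 2^k} \mu(A_\bfa) \ll 2^{km} \prod_i \psi_i(2^k)$; non-primitive $\bfa_0 = p^\ell \bfa_0'$ contribute a geometric series in $\ell$ of the same order. Cauchy condensation identifies $\sum_k 2^{km} \prod \psi_i(2^k)$ with $\sum_r r^{m-1} \prod \psi_i(r) < \infty$, so Borel--Cantelli yields $\mu(W) = 0$.

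For divergence, I would apply Theorem \ref{KW ambient measure} with the setup $\Omega_i = \Zp^m$ (for $1\le i\le n$) equipped with the $p$-adic max-norm and normalised Haar measure, so $\delta_i = m$; resonant sets $R_{\bfa, i} = \{X_i \in \Zp^m : \bfa_0 X_i = a_i\}$, which are $p$-adic hyperplanes; weight $\beta_\bfa = \|\bfa_0\|_v$, scales $u_k = M^k$ and $l_k = M^{k-1}$ for a large constant $M$; and $\rho = (\rho_1,\ldots,\rho_n)$ chosen so that $\prod_i \rho_i(r) \asymp r^{-m}$ and $\rho_i \geq \psi_i/\beta_\bfa$, exactly mirroring the construction used in Proposition \ref{2 dim ubiquity}. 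The ultrametric identity $\dist_p(X_i, R_{\bfa, i}) = |\bfa_0 X_i - a_i|_p/\|\bfa_0\|_p$ (obtained by minimising $\|Z\|_p$ subject to $\bfa_0 Z = a_i - \bfa_0 X_i$ using any coordinate $j$ realising $\|\bfa_0\|_p$) shows that the hyperplanes satisfy the $\kappa$-scaling property with $\kappa = (m-1)/m$, so $\delta_i(1-\kappa_i) = 1$. The divergence sum in Theorem \ref{KW ambient measure} therefore becomes $\sum_k M^{km} \prod_i \psi_i(M^k) \asymp \sum_r r^{m-1} \prod \psi_i(r) = \infty$ by Cauchy condensation.

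Local ubiquity rests on a weighted $p$-adic Minkowski statement: for every $X \in \Omega$ and $Q = M^k$, there exist a primitive $\bfa_0 \in \Z^m \setminus \{0\}$ with $\|\bfa_0\|_v \leq Q$ and $a_i \in \Z$ with $|\bfa_0 X_i - a_i|_p \leq \phi_i(Q)$, for any positive $\phi_i$ satisfying $\prod_i \phi_i(Q) = Q^{-m}$. I would deduce this from Theorem \ref{TheoremMinkowski} applied to the sublattice
\[
\Lambda = \bigl\{(\bfa_0, a_1, \ldots, a_n) \in \Z^{m+n} : \bfa_0 X_i^{(N_0)} \equiv a_i \pmod{p^{k_i}} \text{ for all } i\bigr\}
\]
of index $\prod p^{k_i} = Q^m$, where $X_i^{(N_0)} \in \Z^m$ is an integer lift of $X_i \bmod p^{N_0}$ for $N_0 \geq \max k_i$ and $p^{-k_i} \leq \phi_i(Q)$. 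The restriction to $\bfa \in J_k = \{\bfa : \beta_\bfa \in [M^{k-1}, M^k]\}$ follows by subtracting off the contribution from $\|\bfa_0\|_v < M^{k-1}$; the corresponding measure is bounded by $\tfrac{1}{2}\mu(B)$ for $M$ sufficiently large, exactly as in Proposition \ref{2 dim ubiquity}. Conditions (I)--(III) of Theorem \ref{KW ambient measure} then follow as in Section 3: (I) is the given monotonicity; (II) holds by the construction of $\rho$; (III) is $c$-regularity of decreasing functions along the geometric sequence $(M^k)$. The main obstacle is the Minkowski-based ubiquity covering argument itself, requiring careful coordination of the weighted real norm $\|\cdot\|_v$, the $p$-adic valuation $\|\bfa_0\|_p$ (handling primitive and non-primitive $\bfa_0$ separately), and the natural range of the integer representatives $a_i$ of the residues $\bfa_0 X_i \bmod p^{k_i}$, which for $m>n$ can exceed $\|\bfa_0\|_v$ and requires a case analysis on the weights.
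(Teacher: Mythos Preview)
Your high-level strategy---Borel--Cantelli for convergence and Theorem~\ref{KW ambient measure} for divergence---is exactly the paper's, and the convergence sketch is essentially correct (the paper additionally stratifies by $\|\bfa_0\|_p=p^{-t}$, but this is bookkeeping). The divergence side, however, contains a genuine miscalibration: you take $\beta_\bfa=\|\bfa_0\|_v$ and $\prod_i\rho_i(r)\asymp r^{-m}$, whereas the paper takes $\beta_\bfa=\|\bfa\|_v$ and $\prod_i\rho_i(r)\asymp r^{-(m+n)}$. In the $p$-adic setting---unlike the real template---the integers $a_i$ are not determined by $\bfa_0$ and $X$; they are genuine free parameters in the index set, and suppressing them (as your sublattice Minkowski implicitly does by not counting them in $\beta$) discards $n$ degrees of freedom. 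Concretely, with the framework approximation $\Psi_i(r)=\psi_i(r)/r$ and $\delta_i(1-\kappa_i)=1$, your $\rho$ yields
\[
\sum_k\prod_i\frac{\Psi_i(u_k)}{\rho_i(u_k)}
=\sum_k\frac{\prod_i\psi_i(u_k)}{u_k^{\,n}\prod_i\rho_i(u_k)}
\asymp\sum_k u_k^{\,m-n}\prod_i\psi_i(u_k),
\]
which after Cauchy condensation is $\sum_r r^{m-n-1}\prod_i\psi_i(r)$---strictly weaker than the theorem's $\sum_r r^{m-1}\prod_i\psi_i(r)$. (Test $m=n=1$, $\psi(r)=1/(r\log r)$: the theorem's sum diverges, yours converges.) Your stated conclusion ``$\sum_k M^{km}\prod_i\psi_i(M^k)$'' does not follow from your choice of $\rho$. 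A secondary but fatal issue: with $\beta_\bfa=\|\bfa_0\|_v$ the level sets $\{\alpha:\beta_\alpha<N\}$ are infinite, violating the standing hypothesis of the ubiquity framework. The fix is the paper's: index by the full $\bfa\in\Z^{m+n}$, set $\beta_\bfa=\|\bfa\|_v$, and choose $\rho_i(h)=\phi_i(h)/h$ with $\prod_i\rho_i(h)=p^{-n}h^{-(m+n)}$, so that Lemma~\ref{p-adic minkowski} applies at the Dirichlet threshold with $|a_i|\le H$.

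You rightly flag the non-primitivity of $\bfa_0$ as the main obstacle; be aware that this is where most of the work in Proposition~\ref{ubiquity_p-adic} actually lies. One must widen $\rho$ by a factor $p^{\lambda_0}$ and then bound the measure covered by resonant neighbourhoods with $\|\bfa_0\|_p=p^{-\lambda}$ summed over all $\lambda>\lambda_0$ (the paper's $A_1$ term), which requires the counting Lemma~\ref{lines in balls} and a geometric-series estimate exploiting $\|\bfa_0\|_p=p^{-\lambda}\Rightarrow\|\bfa_1\|_p\le p^{-\lambda}$. This step is specific to the ultrametric setting and has no analogue in the real template of Section~3; it is not a routine ``case analysis on the weights''.
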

We again highlight previously known results, these include
\begin{itemize}
    \item $n=m=1$, $\psi$ monotonic Jarn\'ik \cite{J28}.
    \item $n=m=1$, $\psi$ non-monotonic proven by Haynes \cite{H10} via Maynard \& Koukoulopolous \cite{KM20}. See also the more recent work of Kristensen and Laursen \cite{KristensenLaursen2023}.
    \item $nm\geq 1$, $\psi$ monotonic univariable  proven by Lutz \cite{L55}. See also Beresnevich, Dickinson, Velani \cite[Theorem 15]{BDV06} for proof via ubiquitous systems.
    \item $n>1$ $m=1$, $\Psi$ weighted monotonic univariable, proven by Beresnevich, Levesley \& Ward \cite[Theorem 2.1]{BLW21b}. See also \cite[Theorem 2.2]{BLW21b} for a restricted non-monotonic version.
    \item $n=1$ $m\geq 1$, $\psi$ monotonic univariable and for the inhomogeneous settings by Datta \& Ghosh \cite{DatGho22}. This result with property $P$ in place of the univariable condition is claimed true in \cite[Remark 1.4 (5)]{DatGho22}.
\end{itemize}
We also provide the complimentary Hausdorff dimension result in the slightly more restrictive setup of $\boldsymbol{v}=(1,\ldots,1)$.
\begin{theorem}\label{JB p-adic}
Let $\Psi$ be of the form
\begin{equation*}
    \Psi(\bfa)=\left( \|\bfa\|_{v}^{-\tau_{1}}, \dots , \|\bfa\|_{v}^{-\tau_{n}}\right)
\end{equation*}
for vectors $\boldsymbol{v}=(1,\dots,1)$ and $\boldsymbol{\tau}=(\tau_{1}, \dots , \tau_{n}) \in \R^{n}_{+}$ with $\sum\limits_{i}\tau_{i}>m+n$ and each $\tau_{i}>1$. Then
\begin{equation*}
    \dimh W_{n,m}^{\Zp}(\Psi) = \min_{1\leq j \leq n} \left\{n(m-1) + \frac{n+m-\sum\limits_{i:\tau_{i}<\tau_{j}} (\tau_{i}-\tau_{j})}{\tau_{j}} \right\}=s
\end{equation*}
and $$\cH^{s}\left(W_{n,m}^{\Zp}(\Psi)\right)=\infty.$$
\end{theorem}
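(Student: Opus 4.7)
The plan is to establish the matching bounds $\dimh W_{n,m}^{\Zp}(\Psi)\le s$ by a Hausdorff--Cantelli covering argument, and $\dimh W_{n,m}^{\Zp}(\Psi)\ge s$ together with the measure statement $\cH^{s}(W_{n,m}^{\Zp}(\Psi))=\infty$, via the weighted mass transference principle of Theorem~\ref{MTPRR} applied to a carefully constructed weighted ubiquitous system of rectangles.

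For the upper bound, express $W_{n,m}^{\Zp}(\Psi)$ as the $\limsup$ of the rectangles $R(\bfa)=\{X:|\bfa_{0}X_{i}-a_{i}|_{p}<r^{-\tau_{i}},\,1\le i\le n\}$ with $r=\|\bfa\|_{\infty}$. For $\bfa_{0}$ with a $p$-adic unit coordinate (the generic case) each factor $R_{i}(\bfa)\subset\Zp^{m}$ is an $(m-1)$-dimensional $p$-adic strip of width $r^{-\tau_{i}}$, and fixing any $j\in\{1,\dots,n\}$ the rectangle $R(\bfa)$ is coverable by $r^{\tau_{j}n(m-1)+\sum_{i:\tau_{i}<\tau_{j}}(\tau_{j}-\tau_{i})}$ balls of radius $r^{-\tau_{j}}$. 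Since there are $O(r^{m+n-1})$ integer vectors with $\|\bfa\|_{\infty}=r$, the $s$-cost converges for $s>s_{j}^{*}:=n(m-1)+\tfrac{n+m+\sum_{i:\tau_{i}<\tau_{j}}(\tau_{j}-\tau_{i})}{\tau_{j}}$; Lemma~\ref{bclem} then gives $\dimh W_{n,m}^{\Zp}(\Psi)\le s_{j}^{*}$, and minimising over $j$ yields $s$.

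For the lower bound, set up the ubiquity data with $\Omega_{i}=\Zp^{m}$ carrying normalised Haar measure (so $\delta_{i}=m$), $J=\{\bfa\in\Z^{m+n}:\bfa_{0}\ne 0\}$, $\beta_{\bfa}=\|\bfa\|_{\infty}$, $u_{k}=M^{k}$, and resonant sets $R_{\bfa,i}=\{X_{i}\in\Zp^{m}:\bfa_{0}X_{i}=a_{i}\}$ which are $(m-1)$-dimensional affine $\Zp$-submodules and so satisfy the $\kappa$-scaling property with $\kappa=(m-1)/m$. Let $\tau^{*}\in(0,\max_{i}\tau_{i})$ be the unique solution of $\sum_{i=1}^{n}\min\{\tau_{i},\tau^{*}\}=n+m$ (well-defined since $t\mapsto\sum_{i}\min\{\tau_{i},t\}$ is continuous, non-decreasing, vanishes at $0$, and exceeds $n+m$ as $t\to\max\tau_{i}$), and set $\alpha_{i}:=\min\{\tau_{i},\tau^{*}\}$, $t_{i}:=\tau_{i}-\alpha_{i}\ge 0$, $\rho(r)=r^{-1}$, $\rho_{i}=\rho^{\alpha_{i}}$. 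Verifying the weighted local ubiquity follows the two-level strategy of Proposition~\ref{2 dim ubiquity}: the heart of the argument is a $p$-adic/adelic form of Minkowski's theorem (Theorem~\ref{TheoremMinkowski} applied on $\R^{m+n}\times\Qp^{n}$, or equivalently a pigeonhole on the $\asymp u_{k}^{m+n}$ vectors $\bfa$ versus $\asymp u_{k}^{\sum\alpha_{i}}=u_{k}^{n+m}$ residue classes) producing, for every $X\in\Zp^{m\times n}$ and large $k$, a non-zero $\bfa$ with $\|\bfa\|_{\infty}\le u_{k}$ and $|\bfa_{0}X_{i}-a_{i}|_{p}<u_{k}^{-\alpha_{i}}$; the forcing of $\bfa_{0}\ne 0$ uses $\alpha_{i}>1$ (which holds because $\tau^{*}>(n+m)/n>1$ and each $\tau_{i}>1$). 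A direct volume count bounds the ``low-level'' contribution from $\|\bfa\|_{\infty}<u_{k-1}$. This Minkowski/pigeonhole step is the main technical obstacle.

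Plugging into Theorem~\ref{MTPRR} and substituting $\delta_{i}=m$, $\kappa=(m-1)/m$ reduces the bound to
$$\dimh W_{n,m}^{\Zp}(\Psi)\ge\min_{A_{i}\in A}\Bigl\{nm-|\cK_{3}|+\frac{\sum_{j\in\cK_{3}}\alpha_{j}-\sum_{j\in\cK_{2}}t_{j}}{A_{i}}\Bigr\},\quad A=\{\alpha_{i},\tau_{i}:1\le i\le n\}.$$
Ordering $\tau_{1}\le\cdots\le\tau_{n}$ and letting $j^{*}$ be determined by $\tau_{j^{*}}\le\tau^{*}<\tau_{j^{*}+1}$, a case analysis shows the candidates $A_{i}\in\{\tau^{*}\}\cup\{\tau_{k}:k\le j^{*}\}$ each return only the trivial value $nm$ (as $\cK_{3}=\emptyset$ and $t_{j}=0$ on $\cK_{2}$), while $A_{i}=\tau_{k}$ with $k>j^{*}$ returns exactly $s_{k}^{*}$ after applying $(n-j^{*})\tau^{*}=n+m-\sum_{j\le j^{*}}\tau_{j}$. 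The function $h(t)=(n+m+\sum_{i:\tau_{i}<t}(t-\tau_{i}))/t$ has derivative $-(n+m-\sum_{\tau_{i}<t}\tau_{i})/t^{2}$ on intervals where $\#\{i:\tau_{i}<t\}$ is constant, so it is strictly decreasing on $(0,\tau^{*})$ and eventually increasing thereafter; consequently $\min_{k}s_{k}^{*}$ is attained at some $\tau_{k^{*}}$ with $k^{*}>j^{*}$, and the Theorem~\ref{MTPRR} lower bound matches $s$. Finally, \eqref{MTPRR_measure} gives $\cH^{s}(B\cap W_{n,m}^{\Zp}(\Psi))=\cH^{s}(B)$ for every ball $B\subset\Zp^{m\times n}$; the inequality $s<nm$ rearranges to $\sum_{i}\min\{\tau_{i},\tau_{k^{*}}\}>n+m$, which is guaranteed by $\tau_{k^{*}}>\tau^{*}$. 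Thus $\cH^{s}(B)=\infty$ and in particular $\cH^{s}(W_{n,m}^{\Zp}(\Psi))=\infty$, completing the proof.
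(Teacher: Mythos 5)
Your overall strategy matches the paper's: a Hausdorff--Cantelli covering argument for the upper bound, and Wang--Wu's weighted mass transference (Theorem~\ref{MTPRR}) combined with a weighted ubiquitous system for the lower bound and the measure statement. Your parameterisation via the single exponent $\tau^{*}$ solving $\sum_{i}\min\{\tau_{i},\tau^{*}\}=n+m$ is a clean and correct unification of the paper's two cases (``ball to rectangle'' with $\ell_{i}=(n+m)/n$, and ``rectangle to rectangle'' with $\widetilde D$), since your $\alpha_{i}=\min\{\tau_{i},\tau^{*}\}$ reproduces exactly the paper's choice of $\ell_{i}$ in both regimes. Your reduction of the MTPRR expression to $mn-|\cK_{3}|+\frac{\sum_{\cK_{3}}\alpha_{j}-\sum_{\cK_{2}}t_{j}}{A_{i}}$ and the verification that this returns $s_{j}^{*}$ when $A_{i}=\tau_{j}>\tau^{*}$ (using $\sum_{i}\alpha_{i}=n+m$) and $mn$ otherwise, along with the monotonicity argument for $h$ showing the minimum is achieved above $\tau^{*}$ and that $s<nm$, are all correct.

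The genuine gap is in the ubiquity step. You acknowledge it as ``the main technical obstacle'' but propose to mimic the real-variable Proposition~\ref{2 dim ubiquity}, and you assert that ``a direct volume count bounds the low-level contribution from $\|\bfa\|_{\infty}<u_{k-1}$.'' That scheme is insufficient in the $p$-adic setting: after applying Lemma~\ref{p-adic minkowski} and dividing by $\bfa_{0}$, the radius of the thickened resonant set picks up a factor $\|\bfa_{0}\|_{p}^{-1}$ (see display \eqref{eq1} in the paper), so the two-level split ``$\|\bfa\|_{\infty}<u_{k-1}$ vs.\ $\|\bfa\|_{\infty}\in[u_{k-1},u_{k}]$'' must be supplemented by a third split over the $p$-adic valuation of $\bfa_{0}$ (the $A_{1}$ term with $\lambda>\lambda_{0}$ in the paper's Proposition~\ref{ubiquity_p-adic}, controlled by the choice \eqref{lambda 0 size} of $\lambda_{0}$ and by Lemma~\ref{lines in balls}). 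Without this, the volume count for the highly $p$-divisible $\bfa_{0}$ is not controlled. The paper has already proved exactly the statement you need as Proposition~\ref{ubiquity_p-adic}; you should invoke it directly rather than re-deriving the ubiquity by analogy with the Archimedean case. A secondary (minor) point: in the upper bound you restrict to ``$\bfa_{0}$ with a $p$-adic unit coordinate (the generic case)'' without treating the complementary case, where the strip widths are $\|\bfa_{0}\|_{p}^{-1}r^{-\tau_{i}}$ rather than $r^{-\tau_{i}}$; this is balanced by the scarcity of such $\bfa$, but the balance should be stated.
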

The condition that $\sum\limits_{i} \tau_{i}>m+n$ is standard. For $\sum\limits_{i} \tau_{i} \leq m+n$ by the standard version of Dirichlet's approximation theorem in the $p$-adic setting, see for example Lemma~\ref{p-adic minkowski} below, we have that $W_{n,m}^{\Zp}(\Psi)=\Zp^{m\times n}$.\par 
Note that in the simultaneous setting, the dimension and Hausdorff measure was proven in \cite{A95, BDV06} (see \cite[Theorem 16]{BDV06}), and the weighted setting with $m=1$ and $n\geq 1$ was proven in \cite[Theorem 2.4]{BLW21b}. Hence the novelty of the above two theorems is the weighted approximation case of the dual linear forms.
\par  

\subsection{Ubiquity for $p$-adics} \label{ubiquity proof} 
The key to prove the above results is the following ubiquity statement in the $p$-adic setting (Proposition~\ref{ubiquity_p-adic}). Given the notation for a ubiquitous system for rectangles as in \S \ref{ubiquity} we now define our setup. Let $\Omega_{i}=\Zp^{m}$, so $\Omega=\Zp^{m\times n}$, let $\mu=\mu^{\Qp}_{m\times n}$ and $\boldsymbol{v}\in \R^{m+n}_{+}$ satisfy \eqref{vector v conditions p-adic}. Let

\begin{enumerate}[i.]
    \item $J=\Z^{m+n}$, 
    \item $\beta:J \to \R_{+}, \, \alpha=\bfa =(\bfa_{0},\bfa_{1})=(a_{0,1},\dots, a_{0,m},a_{1},\dots , a_{n}) \mapsto \beta_{\bfa}=\|\bfa\|_{v}$,
    \item $l_{k+1}=u_{k}=M^{k+1}$ for some $M\in \N$,
    \item $J_{k}=\left\{\alpha \in J : M^{k} \leq \|\bfa\|_{v} \leq M^{k+1} \right\}$,
    \item $R_{\bfa,i}=\left\{X_{i} \in \Zp^{m}:\bfa_{0}X_{i}-a_{i}=0\right\}$,
    \item $R_{\bfa}=\prod_{i=1}^{n}R_{\bfa,i}$,
    \item $\kappa_{i}=\frac{m-1}{m}$ and $\delta_{i}=m$ for each $i=1,\dots, n$.
\end{enumerate}
For justification as to why $\kappa_{i}=\frac{m-1}{m}$ see for example \cite[Section 12.6]{BDV06} (recall within our framework $n(m-1)=\gamma=nm\kappa$). We prove the following key statement and then prove Theorems~\ref{p-adic weighted KG} and \ref{JB p-adic} in subsections \ref{ambient measure} and \ref{hausdorff dimension} respectively.
\begin{proposition} \label{ubiquity_p-adic}
Consider any ball $B=B(X,r) \subset \Zp^{m\times n}$ with centre $X\in \Zp^{m\times n}$ and radius $r>0$. Let $\rho=(\rho_{1},\dots , \rho_{n})$ be an $n$-tuple of functions $\rho_{i}:\R_{+}\to\R_{+}$ satisfying
\begin{equation}\label{rho cond}
    \rho_{i}(h)=\frac{\phi_{i}(h)}{h}, \, \, \,\, (1\leq i\leq n) \,\,\,\text{ and } \quad \prod_{i=1}^{n}\rho_{i}(h) = p^{-n} h^{-(m+n)} \quad (h\in\R_{+}),
\end{equation}
for $\phi_{i}:\R_{+} \to \R_{+}$ with each $\phi_{i}(h)\to 0$ as $h \to \infty$. Suppose that $\lambda_{0}\in \N$ and $M\in \R_{+}$ are such that
\begin{equation}  \label{lambda 0 size}
    p^{\lambda_{0}}>2^{m+n+2}p^{n}\frac{p^{m-\frac{1}{2}}}{p^{m-\frac{1}{2}}-1} 
    \end{equation}
    and
    \begin{equation} \label{M size}
    M\geq \left(p^{n(\lambda_{0}-1)}(p+1)^{n}4\right)^{\frac{1}{m+n}} \, . 
\end{equation}
Then for all sufficiently large $ k\in \N$, we have that
\begin{equation*}
    \mu^{\Qp}_{m\times n}\left( B\cap \bigcup_{\alpha \in J_{k}}\Delta\left(R_{\alpha},\rho\left(M^{k+1}\right)p^{\lambda_{0}}\right)\right)\geq \frac{1}{2}\mu^{\Qp}_{m\times n}(B).
\end{equation*}
\end{proposition}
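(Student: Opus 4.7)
My plan is to adapt the strategy of the motivating example, Proposition~\ref{2 dim ubiquity}, to the $p$-adic setting. The argument proceeds in three steps: (i) apply a $p$-adic Minkowski theorem to cover $\Zp^{m\times n}$ by enlarged neighbourhoods of resonant sets indexed by $\alpha$ with $\beta_\alpha \leq M^{k+1}$; (ii) show that the union over the ``low-height'' indices $\beta_\alpha < M^k$ contributes at most $\tfrac12\mu^{\Qp}_{m\times n}(B)$; (iii) subtract to isolate the contribution from $\alpha\in J_k$.

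For step (i), I would invoke the $p$-adic Minkowski theorem for linear forms (the version stated later in this section) applied to the system
\begin{align*}
  |a_{0,j}| &\leq M^{(k+1)v_j} \qquad (1\leq j\leq m),\\
  |\bfa_0 X_i - a_i|_p &\leq p^{\lambda_0}\phi_i(M^{k+1}) \qquad (1\leq i \leq n).
\end{align*}
The product of the right-hand side bounds is $M^{(k+1)m}\cdot p^{n\lambda_0}\cdot p^{-n}M^{-m(k+1)} = p^{n(\lambda_0-1)}$, which by \eqref{lambda 0 size} is comfortably larger than $p^n$, so a non-zero integer solution $\bfa\in\Z^{m+n}\setminus\{\mathbf{0}\}$ exists. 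For $k$ large enough, $p^{\lambda_0}\phi_i(M^{k+1})<1$, and this in combination with the size constraint on the $a_{0,j}$'s rules out the degenerate case $\bfa_0=\mathbf{0}$ (otherwise all $|a_i|_p<1$ and the size bound forces $a_i=0$ for all $i$). After dividing by $\|\bfa\|_v$ and interpreting the inequalities as distances to the hyperplanes $R_{\bfa,i}$, this places $X\in \Delta(R_\alpha,\rho(M^{k+1})p^{\lambda_0})$ for some $\alpha$ with $\beta_\alpha\leq M^{k+1}$, so up to measure zero $B$ is covered by such neighbourhoods.

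For step (ii), the $\kappa$-scaling property with $\kappa=(m-1)/m$ and $\delta_i=m$ (the standard value for affine hyperplanes in $\Zp^m$) yields the single-index bound
\begin{equation*}
\mu^{\Qp}_{m\times n}\bigl(B\cap \Delta(R_\alpha,\rho(M^{k+1})p^{\lambda_0})\bigr) \ll r^{n(m-1)}\,p^{n(\lambda_0-1)}\,M^{-(k+1)(m+n)}.
\end{equation*}
To count the integer vectors with $\|\bfa\|_v\in[M^j,M^{j+1}]$, I observe that there are $\ll M^{(j+1)m}$ choices for $\bfa_0$, and for each such $\bfa_0$ the condition that $R_{\bfa,i}$ meet a neighbourhood of $B_i$ confines $a_i$ to a $p$-adic ball of radius $\sim r$ intersected with $|a_i|\leq M^{j+1}$, giving $\lesssim (p+1)M^{j+1}r$ admissible values per index $i$. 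Multiplying the per-scale count with the per-term measure bound and summing the geometric series over $j=0,\dots,k-1$, the top scale dominates and the total bad contribution is of the form
\begin{equation*}
\text{const}\cdot r^{nm}\cdot p^{n(\lambda_0-1)}(p+1)^n M^{-(m+n)}\cdot \frac{p^{m-1/2}}{p^{m-1/2}-1},
\end{equation*}
with the last factor arising from a secondary geometric tail that appears when the height shells are further refined. The explicit hypotheses \eqref{lambda 0 size} and \eqref{M size} are precisely calibrated so that this bound is at most $\tfrac12\mu^{\Qp}_{m\times n}(B)=\tfrac12 r^{nm}$.

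The main obstacle is the counting in step (ii): getting the sharp dependence on $r$, $p$, and $M^{j+1}$ for the number of admissible $a_i$ per $\bfa_0$, and identifying exactly where the factors $(p+1)^n$ and $\tfrac{p^{m-1/2}}{p^{m-1/2}-1}$ come from so that the two explicit hypotheses can be cleanly invoked. Once this counting is in place, subtracting the step (ii) estimate from the full cover in step (i) delivers the ubiquity bound.
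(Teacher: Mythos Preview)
Your outline misses the key $p$-adic feature that distinguishes this proposition from Proposition~\ref{2 dim ubiquity}. In $\Zp^{m}$, the distance from $X_i$ to the hyperplane $R_{\bfa,i}=\{Y:\bfa_0 Y=a_i\}$ is $|\bfa_0 X_i-a_i|_p\,\|\bfa_0\|_p^{-1}$, \emph{not} $|\bfa_0 X_i-a_i|_p/\|\bfa\|_v$. The normalising factor is the $p$-adic norm of $\bfa_0$, which lies in $\{1,p^{-1},p^{-2},\dots\}$ and is unrelated to the archimedean height $\|\bfa\|_v$. Consequently, ``dividing by $\|\bfa\|_v$'' in your step (i) does not place $X$ in $\Delta(R_\alpha,\rho(M^{k+1})p^{\lambda_0})$; from your Minkowski bound $|\bfa_0 X_i-a_i|_p\leq p^{\lambda_0}\phi_i(M^{k+1})$ one only gets $X_i$ within distance $p^{\lambda_0}\phi_i(M^{k+1})\|\bfa_0\|_p^{-1}$, which is a factor $M^{k+1}\|\bfa_0\|_p^{-1}$ too large.

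What the paper actually does is apply Lemma~\ref{p-adic minkowski} with the \emph{sharp} bounds $\rho_i(M^{k+1})$ (so the product condition holds with equality), yielding $X\in\Delta\bigl(R_\alpha,\rho(M^{k+1})\|\bfa_0\|_p^{-1}\bigr)$ with the radius depending on $\alpha$ through $\|\bfa_0\|_p=p^{-\lambda}$. Since $\lambda$ is uncontrolled, the index set must be split into \emph{three} pieces, not two: (a) $\lambda>\lambda_0$ (arbitrary height); (b) $\lambda\leq\lambda_0$ and height $<M^{k}$; (c) $\lambda\leq\lambda_0$ and height in $[M^{k},M^{k+1}]$. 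Only (c) sits inside the target set. Condition~\eqref{lambda 0 size} is calibrated to make the measure of (a) at most $\tfrac14\mu^{\Qp}_{m\times n}(B)$ --- the factor $\tfrac{p^{m-1/2}}{p^{m-1/2}-1}$ is precisely the geometric tail $\sum_{\lambda>\lambda_0}p^{-(m-1/2)\lambda}$, and the counting here uses that $\|\bfa_0\|_p=p^{-\lambda}$ forces $\bfa_0\in p^{\lambda}\Z^{m}$ and $\|\bfa_1\|_p\leq p^{-\lambda}$. Condition~\eqref{M size} is what bounds (b), and this is where $(p+1)^{n}p^{n(\lambda_0-1)}$ arises (via Lemma~\ref{lines in balls} with $\lambda=0$). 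You have conflated (a) and (b) into a single low-height estimate, misidentified the role of $\lambda_0$, and your step (i) never produced the correct cover to begin with.
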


The following lemma, which can be seen as the $p$-adic version of Minkowski's Theorem is crucial in our ubiquitous system construction.

\begin{lemma}[{\cite[Lemma 6.2]{BDGW23}}] \label{p-adic minkowski}
    Let $n,m\in\N$ and $\Psi=(\psi)_{1\leq i\leq n}$ be a n-tuple of approximation functions. Let $H_{1},\dots, H_{m+n}\geq 1$ be positive integers and let $H^{n+m}=\prod_{i=1}^{n+m}H_{i}$. Suppose that
    \begin{equation*}
        \prod_{i=1}^{n}\psi_{i}(H) \geq H^{-(n+m)}p^{-n}
    \end{equation*}
    with $\psi_{i}(H)<p^{-1}$ for all $1\leq i \leq n$. Then for any $X=(x_{i,j})\in \Zp^{m\times n}$ there exists $H_{0}$ dependent only on $\Psi$, such that for all $H\geq H_{0}$,
    \begin{equation*}
        \left| a_{0,1}x_{i,1}+\dots + a_{0,m}x_{i,m}-a_{i}\right|_{p}<\psi_{i}(H) \quad (1\leq i \leq n),
    \end{equation*}
    has a non-zero solution in integers $a_{0,1}, \dots , a_{0,m},a_{1}, \dots , a_{n}$ satisfying
    \begin{equation*}
        |a_{0,i}|\leq H_{i} \,\, \, \, (1\leq i \leq m) \quad \text{and} \quad |a_{j}|\leq H_{j+m} \,\, \, \, (1\leq j\leq n).
    \end{equation*}
\end{lemma}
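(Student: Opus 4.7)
The plan is to convert the strict $p$-adic inequalities into integer congruences modulo appropriate powers of $p$, package these congruences into a sublattice of $\Z^{m+n}$, and then invoke the classical Minkowski theorem for linear forms (Theorem~\ref{TheoremMinkowski}) to extract a nonzero integer vector sitting inside the prescribed archimedean box.

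First I would rewrite each $p$-adic constraint as an integer congruence. Because $|\cdot|_p$ takes values only in $p^{\Z}\cup\{0\}$, the strict inequality $|\bfa_0X_i-a_i|_p<\psi_i(H)$ is equivalent to $|\bfa_0X_i-a_i|_p\le p^{-k_i}$ for the unique positive integer $k_i$ satisfying $p^{-k_i}<\psi_i(H)\le p^{-(k_i-1)}$; in particular, $\psi_i(H)<p^{-1}$ forces $k_i\ge 2$. Using that $\Z$ is dense in $\Zp$, one may then pick an integer matrix $X'=(x'_{i,j})\in\Z^{m\times n}$ with $x'_{i,j}\equiv x_{i,j}\pmod{p^{\max_i k_i}}$, after which the $p$-adic inequality becomes the purely integral congruence $\bfa_0X'_i\equiv a_i\pmod{p^{k_i}}$. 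The threshold $H_0=H_0(\Psi)$ is required precisely so that the matrix $X'$ (whose modulus depends on $\Psi(H)$ through $\max_i k_i$) can be chosen uniformly once $\Psi$ is fixed.

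Second I would assemble these $n$ congruences into the full-rank sublattice
$$
\Lambda\;=\;\bigl\{(\bfa_0,a_1,\ldots,a_n)\in\Z^{m+n}:\bfa_0X'_i\equiv a_i\pmod{p^{k_i}}\;\;(1\le i\le n)\bigr\}\subseteq\Z^{m+n}.
$$
Since each congruence constrains a distinct coordinate $a_i$ modulo $p^{k_i}$ once $\bfa_0$ is fixed, the $n$ constraints are independent, so $\det(\Lambda)=[\Z^{m+n}:\Lambda]=\prod_{i=1}^{n}p^{k_i}$. Finding a nonzero integer solution to the stated system is then exactly the same as finding a nonzero lattice point of $\Lambda$ lying inside the box $\prod_{i=1}^m[-H_i,H_i]\times\prod_{j=1}^n[-H_{j+m},H_{j+m}]$.

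Third I would apply Theorem~\ref{TheoremMinkowski} to $\Lambda$ in $\R^{m+n}$ with $A=I$ and half-widths $c_i=H_i$ for $1\le i\le m$, $c_{m+j}=H_{j+m}$ for $1\le j\le n$; note $c_1\cdots c_{m+n}=\prod_iH_i\cdot\prod_jH_{j+m}=H^{m+n}$, and Minkowski's theorem delivers the required nonzero $\mathbf u\in\Lambda$ in the box as soon as $\det(\Lambda)=\prod_ip^{k_i}\le H^{m+n}$. Verifying this determinantal bound is where the hypothesis $\prod_i\psi_i(H)\ge H^{-(m+n)}p^{-n}$ and the standing assumption $\psi_i(H)<p^{-1}$ are used together: the elementary bound $p^{k_i}\le p/\psi_i(H)$ has to be sharpened using the strict inequality $\psi_i(H)<p^{-1}$ (which forces $k_i\ge 2$) to absorb the rounding loss that arises when $\psi_i(H)$ is replaced by the nearest power of $p$. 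The cleanest organisation of this bookkeeping is the adelic one: embed $\Z^{m+n}\hookrightarrow\R^{m+n}\times\Qp^n$ by $(\bfa_0,a)\mapsto((\bfa_0,a),\bfa_0X-a)$, so that the archimedean box has volume $2^{m+n}H^{m+n}$ and the $p$-adic region $\prod_i\{|y_i|_p<\psi_i(H)\}$ has Haar measure $\prod_ip^{-k_i}$, which is of the order $\prod_i\psi_i(H)$; the hypothesis is then exactly what ensures that the total adelic volume exceeds the covolume of the embedded lattice, yielding the required nonzero integer point.

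The main obstacle is this third step: quantitatively matching the discrete valuation $p^{-k_i}$ to the continuous threshold $\psi_i(H)$ so that $\det(\Lambda)\le H^{m+n}$ holds on the nose. The hypothesis $\prod_i\psi_i(H)\ge H^{-(m+n)}p^{-n}$ is exactly on the boundary of what a direct application of Theorem~\ref{TheoremMinkowski} can handle, and the strict inequality $\psi_i(H)<p^{-1}$ is doing the essential work of absorbing the $p^n$ that comes from rounding $\psi_i(H)$ down to the nearest power of $p$. Everything else—the construction of $X'$, the index/determinant computation for $\Lambda$, and the final appeal to Theorem~\ref{TheoremMinkowski}—is routine.
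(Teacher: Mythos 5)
The paper does not prove this lemma itself---it is imported from \cite{BDGW23} with only a remark that it ``can be obtained using the standard pigeonhole principle''---so there is no in-paper proof to compare against; the question is simply whether your argument is correct.

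Your overall strategy (round each $p$-adic constraint to a power of $p$, encode the resulting congruences as a sublattice $\Lambda\subseteq\Z^{m+n}$ of index $\prod_i p^{k_i}$, and invoke Theorem~\ref{TheoremMinkowski}) is the natural one, and the reductions in your first two steps are sound. The problem is the third step, which you yourself single out as ``the main obstacle'' but then assert can be handled. It cannot, at least not in the way you describe. You need $\det(\Lambda)=\prod_i p^{k_i}\le H^{m+n}$. From $p^{-k_i}<\psi_i(H)\le p^{-(k_i-1)}$ you get $p^{k_i}\le p/\psi_i(H)$, so $\prod_i p^{k_i}\le p^n\big/\prod_i\psi_i(H)$, and then the hypothesis $\prod_i\psi_i(H)\ge H^{-(m+n)}p^{-n}$ gives only
\[
\prod_i p^{k_i}\;\le\;p^n\cdot p^n H^{m+n}\;=\;p^{2n}H^{m+n},
\]
which is $p^{2n}$ too large. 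You claim that the standing assumption $\psi_i(H)<p^{-1}$ ``sharpens'' the bound $p^{k_i}\le p/\psi_i(H)$; it does not. That bound is attained with equality whenever $\psi_i(H)$ is an exact power of $p$ (e.g.\ $\psi_i(H)=p^{-\ell}$ forces $k_i=\ell+1$, hence $p^{k_i}=p/\psi_i(H)$), and $\psi_i(H)<p^{-1}$ only rules out $\ell=0$, which changes nothing. The pigeonhole version of this argument (count residues of $(\bfa_0 X'_i-a_i)$ modulo $\prod p^{k_i}$ over the box $\prod\{0,\ldots,H_j\}$) runs into the same $p^{2n}$ excess, so switching from Minkowski to pigeonhole does not close the gap either. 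Indeed, already for $m=n=1$, $\psi(H)=p^{-1}H^{-2}$, and $H$ a power of $p$ one can write down small $p$ examples in which no admissible $(a_0,a_1)$ exists, so the elementary argument as you set it up genuinely cannot give the stated constant; the hypothesis that a direct Minkowski/pigeonhole argument really needs is $\prod_i\psi_i(H)\ge p^{\,n}H^{-(m+n)}$, not $p^{-n}H^{-(m+n)}$. Your proof should either produce the missing idea that eliminates the $p^{2n}$ or flag the apparent sign discrepancy in the stated hypothesis; asserting the gap can be absorbed by $\psi_i(H)<p^{-1}$ is incorrect.

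A smaller point: your explanation of $H_0$ is off. You say $H_0$ is needed ``so that $X'$ can be chosen uniformly once $\Psi$ is fixed,'' but the truncation modulus $p^{\max_i k_i}$ grows with $H$ through $\psi_i(H)$, so $X'$ necessarily varies with $H$ and cannot be chosen once and for all. The role of $H_0$ in this type of statement is typically to ensure $\psi_i(H)<p^{-1}$ holds, or to make $\prod_j(H_j+1)$ negligibly larger than $\prod_j H_j$; it does not control the choice of $X'$.
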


\begin{remark} \rm
Many variants of this result have appeared, see for example \cite{BLW21b, KT07}. This is a well-known result that can be obtained using the standard pigeonhole principle.
\end{remark}

We also need the following lemma which generally tells us how often, for each $\bfa_{0} \in \Z^{m}$, the thickened resonant sets $R_{(\bfa_{0},\bfa_{1})}$ intersect with a ball.

\begin{lemma}\label{lines in balls}
    Let $\lambda\in\N_{0}$ and fix some $\bfa_{0}\in\Z^{m}$ with $\|\bfa_{0}\|_{p}\leq p^{-\lambda}$. Let $B\subset \Zp^{m\times n}$ with $r(B)=r<p^{-1}$, and $U \in \N$. Then for all $\delta\in \R^{n}_{>0}$ with $\|\delta\|<r$ the cardinality of the set
    \begin{equation*}
        \#\left\{ \bfa_{1}\in\Z^{n} :\begin{cases}
            \|\bfa_{1}\|\leq U\\
            \|\bfa_{1}\|_{p}\leq p^{-\lambda}
        \end{cases}  \quad \text{ and } \quad  \Delta\left(R_{(\bfa_{0},\bfa_{1})},\delta\right)\cap B \neq \emptyset \right\} 
    \end{equation*}
    is at most
    \begin{equation*}
        \left(1+\frac{U}{p^{\lambda-1}}r\right)^{n}\, .
    \end{equation*}
\end{lemma}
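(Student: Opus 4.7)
I would exploit the product structure of $\Omega = \Zp^{m\times n}$ to reduce the problem to $n$ independent one-coordinate $p$-adic counting estimates. Writing the centre of $B$ as $X^*$ with columns $X_1^*, \ldots, X_n^*$, the ball factorises as $B = \prod_{i=1}^n B(X_i^*, r) =: \prod_{i=1}^n B_i$, and both $R_{(\bfa_0, \bfa_1)}$ and its $\delta$-thickening are products indexed by $i$. Consequently $\Delta(R_{(\bfa_0, \bfa_1)}, \delta) \cap B \neq \emptyset$ decouples into the $n$ independent conditions $\Delta_i(R_{(\bfa_0, a_i), i}, \delta_i) \cap B_i \neq \emptyset$, so the quantity I want to bound factorises as $\prod_{i=1}^n N_i$, where $N_i$ is the per-coordinate count. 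It therefore suffices to show that each $N_i$ is at most $1 + Ur/p^{\lambda-1}$.

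For each $i$, set $y_i := \bfa_0 X_i^* \in \Zp$; since the entries of $X_i^*$ lie in $\Zp$, one has $|y_i|_p \leq \|\bfa_0\|_p \leq p^{-\lambda}$. If the $i$-th intersection is non-empty, there exist $Y_i \in B_i$ and $X_i \in \Zp^m$ with $\bfa_0 X_i = a_i$ and $\|Y_i - X_i\|_p < \delta_i$. Writing $a_i - y_i = \bfa_0(X_i - Y_i) + \bfa_0(Y_i - X_i^*)$, the ultrametric inequality combined with $\|\bfa_0\|_p \leq p^{-\lambda}$, $\|Y_i - X_i\|_p < \delta_i$, $\|Y_i - X_i^*\|_p < r$, and the hypothesis $\delta_i < r$ yields the key estimate
\begin{equation*}
|a_i - y_i|_p \;<\; p^{-\lambda}\max(\delta_i, r) \;=\; p^{-\lambda}r.
\end{equation*}
Since $\Z$ is dense in $\Zp$, the set $\{a \in \Z : |a - y_i|_p < p^{-\lambda}r\}$ is a coset $a^* + p^\mu\Z$, where $p^\mu$ is the smallest power of $p$ strictly exceeding $p^\lambda/r$; in particular $p^\mu > p^\lambda/r$. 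The number of such integers lying in $[-U,U]$ is therefore at most
\begin{equation*}
\left\lfloor \frac{2U}{p^\mu}\right\rfloor + 1 \;\leq\; \frac{2Ur}{p^\lambda} + 1 \;\leq\; \frac{pUr}{p^\lambda} + 1 \;=\; 1 + \frac{Ur}{p^{\lambda-1}},
\end{equation*}
where the last inequality uses $p \geq 2$. Taking the product over $i=1,\ldots,n$ gives the claimed bound $(1 + Ur/p^{\lambda-1})^n$.

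The main (but relatively mild) obstacle is the ultrametric bookkeeping in the key estimate: one must pair the factor $p^{-\lambda}$ coming from $\|\bfa_0\|_p$ with $r$ rather than with $1$ in order to extract the correct $r/p^\lambda$ spacing of the arithmetic progression. The degenerate case $\bfa_0 = \0$ (which forces $a_i = 0$ and hence $N_i = 1$) fits the bound trivially, and so does $\lambda = 0$, for which the constraint $|a_i|_p \leq 1$ becomes vacuous and the $r < p^{-1}$ restriction on the radius plays the role of supplying the spacing.
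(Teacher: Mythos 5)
Your proof is correct and follows essentially the same route as the paper: both arguments factorise the problem coordinate-by-coordinate, use the ultrametric inequality together with $\|\bfa_0\|_p\le p^{-\lambda}$ to pin $a_i$ into a residue class modulo a power of $p$ roughly equal to $p^{\lambda}/r$, and then count integers in $[-U,U]$ lying in that residue class. The only cosmetic difference is in the final counting step: the paper enumerates the free and constrained $p$-adic digits of $a_i$ explicitly, whereas you phrase it as an arithmetic progression $a^*+p^{\mu}\Z$ intersected with $[-U,U]$; the latter is a cleaner way of packaging the same count. One small remark worth retaining: you correctly observe that the hypothesis $\|\bfa_1\|_p\le p^{-\lambda}$ is essentially redundant given the non-empty-intersection condition (it already follows from $|a_i-y_i|_p<p^{-\lambda}r$ and $|y_i|_p\le p^{-\lambda}$), whereas the paper invokes it explicitly as a separate ingredient; either way the lemma is stated with both hypotheses and your proof is fully consistent with it.
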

\begin{proof}

Let $B=B(X,r)=\prod_{i=1}^{n}B_{i}(X_{i},r)=\prod_{i=1}^{n}B_{i}$ for $B_{i}(X_{i},r)$ representing a $m$-dimensional $p$-adic ball with centre $X_{i}=(x_{i,1},\dots, x_{i,m})\in\Zp^{m}$ and radius $r>0$ in the $i$th coordinate of the $mn$-dimensional ball $B$ with centre $X \in \Zp^{m\times n}$ and radius $r(B)=r>0$. Consider one coordinate $1\leq i \leq n$ at a time. If there exists 
\begin{equation*}
Y_{i} \in \Delta_{i}\left(R_{\bfa,i},\delta_{i}\right)\cap B_{i} \subset \Zp^{m}\, ,
\end{equation*}
then by definition there exists some $Z_{i}=(z_{i,1},\dots, z_{i,m})\in R_{\bfa,i}$ such that $Y_{i} \in B_{i}\left(Z_{i},\delta_{i}\right)\cap B_{i}$.
By the ultrametric property of $\Qp$ we have that 
\begin{equation*}
B_{i}(X_{i},r)\cup B_{i}(Z_{i},\delta_{i})=B_{i}\left(Z_{i},r \right) \quad \quad  \text{(since $\|\delta\|<r$)}\, ,
\end{equation*}
and so $\|Z_{i}-X_{i}\|_{p}\leq r$.
By the strong triangle inequality we have that
\begin{align} \label{eq1.1}
\left| \sum\limits_{j=1}^{m}a_{0,j}(z_{i,j}-x_{i,j})\right|_{p} &\leq \|\bfa_{0}\|_{p}r\leq p^{-\lambda}r
\end{align}
where the second inequality follows from the assumption $\|\bfa_{0}\|_{p}\leq p^{-\lambda}$. Since $Z_{i}\in R_{\bfa,i}$ we have that $\left(\sum\limits_{j=1}^{m}a_{0,j}z_{i,j}\right) +a_{i}=0$. Combining with \eqref{eq1.1} we have
\begin{equation} \label{bound 1}
    \left| \left(\sum\limits_{j=1}^{m}a_{0,j}x_{i,j}\right)+a_{i} \right|_{p}
    \leq p^{-\lambda}r,
\end{equation}
Since we are counting $\|\bfa_{1}\|_{p}\leq p^{-\lambda}$ we must have that $p^{\lambda}|a_{i}$ for each $1\leq i \leq n$. Hence 
\begin{equation} \label{bound 2}
    a_{i} \equiv 0 \mod p^{\lambda}\, .
\end{equation}
Combining \eqref{bound 2}, noting that $\|\bfa_{1}\|\leq U$, and using the assumption $r<p^{-1}$, we have that each  $a_{i}$, written in base $p$-adic expansion, is of the form
\begin{equation*}
    a_{i}=\sum_{k=\lambda}^{\lambda-\log_{p}r}d_{k}p^{k}+\sum_{j=\lambda-\log_{p}r+1}^{1+\log_{p}U}d_{j}p^{j} \quad \quad \text{ with each digit }\, \,  d_{i} \in \{0,\dots p-1\}\, ,
\end{equation*}
where $d_{k}$ is fixed for $k\in\left\{\lambda,\dots, \lambda-\log_{p}r \right\}$ depending on $X_{i}$ and $\bfa_{0}$ by \eqref{bound 1}. Thus there are at most
\begin{equation*}
    1+p^{\left(1+\log_{p}U  - \lambda+\log_{p}r \right)}=1+p^{-\lambda+1}rU
\end{equation*}
possible values of $a_{i}$. Taking the product over each coordinate axis gives us our claimed upper bound.

\end{proof}

We now proceed with the proof of Proposition~\ref{ubiquity_p-adic}. 

Let $B=B(X,r)$ with $p^{-1}> r>0$ and $X=(X_{1},\dots, X_{n})=(x_{i,j}) \in \Zp^{m\times n}$. Choose $k_{r}$ sufficiently large such that for all $k>k_{r}$ 
\begin{equation} \label{k size}
    \max_{1\leq i\leq n} \phi_{i}(M^{k+1}) < r \, , \quad M^{-\frac{1}{2n}(k+1)}<r\, ,\quad \text{ and } \quad \max_{1\leq i \leq n} \rho_{i}(M^{k+1})p^{\lambda_{0}}<r\, .
\end{equation}
Without loss of generality we can assume that $r\in \{p^{-t} : t\in \N_{0} \}$.
Consider linear forms
\begin{equation*}
\bfa_{0}X_{i}+a_{i}=\left(\sum\limits_{j=1}^{m}a_{0,j}x_{i,j}\right) +a_{i} \quad (1\leq i \leq n) 
\end{equation*}
for $\bfa=(\bfa_{0},a_{1},\dots, a_{n})=(a_{0,1},\dots, a_{0,m},a_{1}, \dots, a_{n})\in \Z^{m+n}$. Then, by Lemma~\ref{p-adic minkowski} (the $p$-adic analogue of Minkowski Theorem) and conditions \eqref{rho cond} on $\rho$, we have that the system
\begin{equation*}
    \begin{cases}
        |\bfa_{0}X_{i}+a_{i}|_{p}\leq \rho_{i}(H) \quad (1 \leq i \leq n), \\[2ex]
        |a_{0,i}|\leq H^{v_{i}} \hspace{2cm} (1\leq i \leq m), \\[2ex]
        |a_{i}|\leq H \hspace{1.9cm} (1\leq i \leq n),
    \end{cases}
\end{equation*}
has a non-zero rational integer solution $\bfa \in \Z^{m+n}$. Furthermore, since $|a_{i}|\leq H$ and $\rho_{i}(H)<H^{-1}$ for each $1\leq i \leq n$ we are forced to conclude that $\bfa_{0}\neq 0$. We claim that if $\|\bfa_{0}\|_{p}=p^{-\lambda}$ for some $\lambda\in\N_{0}$ then $\|\bfa_{1}\|_{p}\leq p^{-\lambda}$. To see this note that $\|\bfa_{0}\|_{p}=p^{-\lambda}$ implies $|a_{0,i}|_{p}\leq p^{-\lambda}$, so $p^{\lambda}|a_{0,i}$, which implies $|a_{0,i}|\geq p^{\lambda}$ for each $1\leq i \leq m$. Since $\sum_{i=1}^{m}v_{i}=m$ and each $v_{i}>0$ for at least one term, say the $j$th term, we have $v_{j}\leq 1$. Thus, by the second row in the above system of inequalities we have
\begin{equation} \label{equation:lambda bound}
p^{\lambda}\leq |a_{0,j}|\leq H^{v_{j}}\leq H\, ,
\end{equation}
and so, for any $1\leq i \leq n$ we have $\rho_{i}(H)<H^{-1}\leq p^{-\lambda}$. Now assume that $|a_{i}|_{p}>p^{-\lambda}$ for some $1\leq i\leq n$. Then by the strong triangle inequality
\begin{equation*}
    |\bfa_{0}X_{i}+a_{i}|_{p}=|a_{i}|_{p}\leq \rho_{i}(H)\leq p^{-\lambda}\, ,
\end{equation*}
which cannot be true, so we must that that $|a_{i}|_{p}\leq p^{-\lambda}$ for every $1\leq i \leq n$ and so \mbox{$\|\bfa_{1}\|_{p}\leq p^{-\lambda}$} as claimed.\par 


The system of inequalities readily implies that
\begin{equation} \label{eq1}
    \mu^{\Qp}_{m\times n}\left( B \cap \bigcup_{\alpha\in J : \|\alpha\|_{v}\leq M^{k+1}}\Delta\left(R_{\alpha},\rho(M^{k+1})\|\bfa_{0}\|_{p}^{-1}\right) \right)=\mu^{\Qp}_{m\times n}(B).
\end{equation}
Since we want the sidelengths of the rectangles in the above set to be independent of $\bfa_{0}$ we want to bound the size of $\|\bfa_{0}\|_{p}$. We need $\lambda_{0}\in\N$ to be large enough to satisfy some technical condition later, hence we have assumed condition \eqref{lambda 0 size}. \par 
For each $\lambda \in \N_{0}$ consider the sets
\begin{align*}
    \widehat{J}(k,\lambda)&:=\left\{\alpha \in J : \|\alpha\|_{v}\leq M^{k+1} \quad \text{ and }\quad  \|\bfa_{0}\|_{p}=p^{-\lambda} \right\}\, , \\
    \widetilde{J}(k,\lambda_{0})&:=\left\{\alpha \in J : \|\alpha\|_{v}< M^{k} \quad \text{ and } \quad  \|\bfa_{0}\|_{p}\geq p^{-\lambda_{0}} \right\}\, ,\\
    J_{k}(\lambda_{0})&:= \left\{\alpha \in J : M^{k}\leq \|\alpha\|_{v}\leq M^{k+1} \quad \text{ and }\quad  \|\bfa_{0}\|_{p}\geq p^{-\lambda_{0}} \right\} \, ,
\end{align*}
and note that
\begin{equation*}
    \left\{\alpha\in J: \|\alpha\|_{v}\leq M^{k+1}\right\}= J_{k}(\lambda_{0})\cup \widetilde{J}(k,\lambda_{0}) \cup \bigcup_{\lambda>\lambda_{0}}\widehat{J}(k,\lambda)\, .
\end{equation*}
Hence, we have that
{
\begin{align} \label{composition}
    \mu^{\Qp}_{m\times n}(B)=  \mu^{\Qp}_{m\times n}\left( B \cap \bigcup_{\alpha\in J : \|\alpha\|_{v}\leq M^{k+1}}\Delta\left(R_{\alpha},\rho\left(M^{k+1}\right)\|\bfa_{0}\|_{p}^{-1}\right) \right) & \leq  \nonumber\\
    & \hspace{-5cm} \underset{:=\mu^{\Qp}_{m\times n}(A_{1})}{\underbrace{\mu^{\Qp}_{m\times n} \left( B \cap \bigcup_{\lambda>\lambda_{0}} \bigcup_{\alpha \in \widehat{J}(k,\lambda)}\Delta\left(R_{\alpha},\rho\left(M^{k+1}\right)p^{\lambda}\right) \right)}}\nonumber\\
    &\hspace{-3cm}     + \,\,\,\,\underset{:=\mu^{\Qp}_{m\times n}(A_{2})}{\underbrace{\mu^{\Qp}_{m\times n}\left( B \cap \bigcup_{\alpha\in \widetilde{J}(k,\lambda_{0})}\Delta\left(R_{\alpha},\rho\left(M^{k+1}\right)p^{\lambda_{0}}\right) \right)}}\nonumber\\
    &\hspace{-1cm}
     + \,\,\,\,\underset{:=\mu^{\Qp}_{m\times n}(A_{3})}{\underbrace{\mu^{\Qp}_{m\times n}\left( B \cap \bigcup_{\alpha\in J_{k}(\lambda_{0})}\Delta\left(R_{\alpha},\rho\left(M^{k+1}\right)p^{\lambda_{0}}\right) \right)}}\, . \nonumber\\
\end{align}}
Considering our set of interest note that $J_{k}\supseteq J_{k}(\lambda_{0})$ and so we have that 
\begin{equation*}
    \mu^{\Qp}_{m\times n}\left(B \cap \bigcup_{\alpha \in J_{k}}\Delta\left(R_{\alpha},\rho\left(M^{k+1}\right)p^{\lambda_{0}}\right)\right) \, \geq\,  \mu^{\Qp}_{m\times n}(A_{3})\,  \geq\,  \mu^{\Qp}_{m\times n}(B)-\mu^{\Qp}_{m\times n}(A_{1})-\mu^{\Qp}_{m\times n}(A_{2})\, .
\end{equation*}
Thus showing that 
\begin{equation*}
    \mu^{\Qp}_{m\times n}(A_{1})<\tfrac{1}{4}\mu^{\Qp}_{m\times n}(B) \quad \text{ and } \quad \mu^{\Qp}_{m\times n}(A_{2})<\tfrac{1}{4}\mu^{\Qp}_{m\times n}(B)
\end{equation*}
completes the proof. \par 
Firstly, for each $\alpha \in J$ and $\lambda \in \N_{0}$ note that the thickened resonant sets with non-empty intersection with the ball $B$ has measure at most
\begin{equation*}
    \mu^{\Qp}_{m\times n}\left( B\cap \Delta\left(R_{\alpha},\rho\left(M^{k+1}\right)p^{\lambda}\right)\right) \leq  p^{n\lambda}r^{n(m-1)}\prod_{i=1}^{n}\rho_{i}\left(M^{k+1}\right)=p^{n\lambda-n}r^{n(m-1)}M^{-(k+1)(m+n)}\, .
\end{equation*}
Starting with the measure of $A_{1}$ observe that
\begin{align*}
    \mu^{\Qp}_{m\times n}(A_{1}) & \leq \sum_{\lambda>\lambda_{0}}\sum_{\alpha \in \widehat{J}(k,\lambda)} p^{n\lambda-n}r^{n(m-1)}M^{-(k+1)(m+n)} \\
    &\leq \sum_{\lambda>\lambda_{0}}\sum_{\|\bfa_{0}\|_{v}\leq M^{k+1} : \|\bfa_{0}\|_{p}=p^{-\lambda}} \left(1+\frac{M^{k+1}}{p^{\lambda-1}}r\right)^{n} p^{n\lambda-n}r^{n(m-1)}M^{-(k+1)(m+n)}\\
\end{align*}
by Lemma~\ref{lines in balls}. Observe Lemma~\ref{lines in balls} is applicable because $\|\bfa_{0}\|_{p}=p^{-\lambda}$ implies that $\|\bfa_{1}\|_{p}\leq p^{-\lambda}$ and, since $p^{\lambda}\leq M^{k+1}$ (see \eqref{equation:lambda bound} and replace $H$ by $M^{k+1}$), we have $\rho_{i}(M^{k+1})p^{\lambda}<r$ by \eqref{k size}. Then
\begin{align}
    \mu^{\Qp}_{m\times n}(A_{1}) & \leq \sum_{\lambda>\lambda_{0}} \left(2\frac{M^{k+1}}{p^{\lambda}}\right)^{m}\left(1+\frac{M^{k+1}}{p^{\lambda-1}}r\right)^{n} p^{n\lambda-n}r^{n(m-1)}M^{-(k+1)(m+n)}\nonumber\\
    &\leq 2^{m}p^{-n}\sum_{\lambda>\lambda_{0}}M^{-(k+1)n}r^{n(m-1)}\left(1+\frac{M^{k+1}}{p^{\lambda-1}}r\right)^{n}p^{(n-m)\lambda} \label{eq2 case}\\
    &= 2^{m}p^{-n}\sum_{\lambda>\lambda_{0}}\left(M^{-(k+1)}p^{\lambda(1-\frac{1}{2n})}\right)^{n}r^{n(m-1)}\left(1+\frac{M^{k+1}}{p^{\lambda-1}}r\right)^{n}p^{(-m+\frac{1}{2})\lambda} \label{eq1 case}
\end{align}
Now, suppose $\frac{M^{k+1}}{p^{\lambda}}r<1$. Then since $p^{\lambda}\leq M^{k+1}$, and by \eqref{k size} for all $k>k_{r}$, we have that
\begin{equation*}
    \left(M^{-(k+1)}p^{\lambda(1-\frac{1}{2n})}\right)^{n}<r^{n}\, .
\end{equation*}
If $\frac{M^{k+1}}{p^{\lambda}}r\geq 1$ then trivially
\begin{equation*}
    \left(1+\frac{M^{k+1}}{p^{\lambda-1}}r\right)^{n} \leq 2^{n}p^{n}\left( \frac{M^{k+1}}{p^{\lambda}}r\right)^{n}\, .
\end{equation*}
So
\begin{equation*}
    \left(M^{-(k+1)}p^{\lambda(1-\frac{1}{2n})}\right)^{n}r^{n(m-1)}\left(1+\frac{M^{k+1}}{p^{\lambda-1}}r\right)^{n}p^{(-m+\frac{1}{2})\lambda} \leq 2^{n}r^{nm}p^{(-m+\frac{1}{2})\lambda} \quad \text{ if } \quad \frac{M^{k+1}}{p^{\lambda}}r<1\, ,
\end{equation*}
and 
\begin{equation*}
    M^{-(k+1)n}r^{n(m-1)}\left(1+\frac{M^{k+1}}{p^{\lambda-1}}r\right)^{n}p^{(n-m)\lambda} \leq 2^{n}p^{n}r^{nm}p^{-m\lambda} \quad \text{ if } \quad  \frac{M^{k+1}}{p^{\lambda}}r\geq 1\, .
\end{equation*}
Hence
\begin{equation*}
    \mu^{\Qp}_{m\times n}(A_{1}) \leq 2^{m+n}p^{n}r^{nm}\sum_{\lambda>\lambda_{0}}p^{(-m+\frac{1}{2})\lambda} \leq \mu^{\Qp}_{m\times n}(B)2^{m+n}p^{n}\frac{p^{m-\frac{1}{2}}}{p^{m-\frac{1}{2}}-1}p^{-\lambda_{0}}
\end{equation*}
and so by \eqref{lambda 0 size} we have that
\begin{equation*}
    \mu^{\Qp}_{m\times n}(A_{1})\leq \frac{1}{4}\mu^{\Qp}_{m\times n}(B)
\end{equation*}
as required. \par 
To show that $\mu^{\Qp}_{m\times n}(A_{2})<\tfrac{1}{4}\mu^{\Qp}_{m\times n}(B)$ observe that
\begin{equation*}
    \mu^{\Qp}_{m\times n} (A_{2}) \leq \mu^{\Qp}_{m\times n}\left( B \cap \bigcup_{\alpha\in J : \|\alpha\|_{v}\leq M^{k}}\Delta\left(R_{\alpha},\rho\left(M^{k+1}\right)p^{\lambda_{0}}\right) \right)\,
\end{equation*}
and furthermore that,
\begin{align*}
    &\mu^{\Qp}_{m\times n}\left( B \cap \bigcup_{\alpha\in J : \|\alpha\|_{v}\leq M^{k}}\Delta\left(R_{\alpha},\rho\left(M^{k+1}\right)p^{\lambda_{0}}\right) \right) \\
    & \hspace{1cm} \leq \sum\limits_{\|\bfa_{0}\|_{(v_{1},\dots,v_{m})}\leq M^{k}} \, \underset{\Delta(R_{\bfa},\rho(M^{k+1})p^{\lambda_{0}}) \cap B \neq \emptyset}{\sum\limits_{\|(a_{1},\dots , a_{n})\|\leq M^{k}:}} \mu^{\Qp}_{m\times n}\left(B\cap \Delta(R_{\bfa},\rho(M^{k+1})p^{\lambda_{0}}\right).
\end{align*}


Now,
\begin{equation*}
    \mu^{\Qp}_{m\times n}\left(A_{2} \right) \leq \left( \prod_{i=1}^{m}M^{kv_{i}} \right)  \left( pM^{k}r+1\right)^{n} \mu^{\Qp}_{m \times n}\left(B\cap \Delta\left(R_{\bfa},\rho(M^{k+1})p^{\lambda_{0}}\right)\right) \\
\end{equation*}
by Lemma~\ref{lines in balls} setting $U=M^{k}$, $\lambda=0$, and noting \eqref{k size} tells us that $\rho_{i}(M^{k+1})p^{\lambda_{0}}<r$. \par 
Then inputting the previous upper bound on the measure of such thickened resonant sets, we have
\begin{equation*}
    \mu^{\Qp}_{m\times n}(A_{2}) \leq M^{k\sum_{i=1}^{m}v_{i}} \left( pM^{k}r+1\right)^{n} p^{n\lambda_{0}-n}r^{n(m-1)}M^{-(k+1)(m+n)}. 
\end{equation*}
Since $k$ is chosen sufficiently large such that $M^{k}r>1$ we have that
\begin{equation*}
    \mu^{\Qp}_{m\times n}(A_{2}) \leq (p+1)^{n}r^{nm} p^{n\lambda_{0}-n} M^{-(m+n)} \leq (p+1)^{n}p^{n(\lambda_{0}-1)}M^{-(n+m)}\mu^{\Qp}_{m\times n}(B)
\end{equation*}
Thus, by \eqref{M size} we have that
\begin{equation*}
    \mu^{\Qp}_{m\times n}(A_{2})\leq \frac{1}{4}\mu^{\Qp}_{m\times n}(B)\, .
\end{equation*}
Hence, by considering \eqref{composition} and the above two calculations, we have completed the proof.


\subsection{Proof of Theorem~\ref{p-adic weighted KG}} \label{ambient measure}
Recall
\begin{align*}
     W_{n,m}^{\Zp}(\Psi)&:= \left\{ X \in \Zp^{m\times n} : \begin{array}{c} |\bfa_{0}X_{i}-a_{i}|_{p}< \frac{\psi_{i}(\|\bfa \|_{v})}{\|\bfa\|_{v}} \quad (1\leq i \leq n) \\
    \text{ for infinitely many  } \bfa=(\bfa_{0},a_{1},\dots , a_{n}) \in \Z^{m+n} \end{array} \right\}\\
    &=\limsup_{\bfa\in\Z^{m+n}}\left\{X\in \Zp^{m\times n}:|\bfa_{0}X_{i}-a_{i}|_{p}<\frac{\psi_{i}(\|\bfa\|_{v})}{\|\bfa\|_{v}} \quad (1\leq i \leq n) \right\}. \\
\end{align*}
For ease of notation, we will write for all $\mathbf{a}=(\mathbf{a}_{0},a_{1},\dots,a_{n})\in\Z^{m + n}$
\begin{equation*}
    \cA_{\bfa}(\Psi)=\left\{X\in \Zp^{m\times n}:|\bfa_{0}X_{i}-a_{i}|_{p}<\frac{\psi_{i}(\|\bfa\|_{v})}{\|\bfa\|_{v}} \quad  (1\leq i \leq n) \right\}.
\end{equation*}

\subsubsection{Convergence case of Theorem~\ref{p-adic weighted KG}}
By the Borel-Cantelli convergence Lemma (Lemma~\ref{Borel_Cantelli convergence}), we have that
\begin{equation*}
    \mu^{\Qp}_{m\times n}\left(W_{n,m}^{\Zp}(\Psi)\right)=0 \quad \text{\rm if }\quad  \sum\limits_{\bfa \in\Z^{n+m}}\mu^{\Qp}_{m\times n}\left(\cA_{\bfa}(\Psi)\right)<\infty.
\end{equation*}
Firstly, note for each $\bfa\in \Z^{n+m}$ by applying the strong triangle inequality in each coordinate axis we have that
\begin{equation*}
    \mu_{m\times n}^{\Qp}(\cA_{\bfa}(\Psi))\asymp_{p,n}\begin{cases}
        \prod_{i=1}^{n}\frac{\psi_{i}(\|\bfa\|_{v})}{\|\bfa\|_{v}}\|\bfa_{0}\|_{p}^{-1}\quad \text{ if } \quad |a_{i}|_{p}\leq \max \left\{\|\bfa_{0}\|_{p}, \frac{\psi_{i}(\|\bfa\|_{v})}{\|\bfa\|_{v}} \right\}\, \quad \text{ for every } \, 1\leq i \leq n\, ,\\
        0 \hspace{20ex} \text{otherwise.}
    \end{cases}
\end{equation*}
Let
\begin{equation*}
    \cC(k,t):=\left\{\bfa\in \Z^{m+n}: \begin{array}{c}
    M^{k}<\|\bfa\|_{v}\leq M^{k+1} \\
    \|\bfa_{0}\|_{p}=p^{-t}\\
    |a_{i}|_{p} \leq \max\left\{ p^{-t}, \frac{\psi_{i}(\|\bfa\|_{v})}{\|\bfa\|_{v}} \right\}\, , 1\leq i\leq n
    \end{array} \right\}
\end{equation*}
Observe that the set is empty for $t>\log_{p}M^{k+1}$ since $\|\bfa_{0}\|_{p}\geq M^{-(k+1)}$, and we have that 
\begin{equation*}
    \mu_{m\times n}^{\Qp}(\cA_{\bfa}(\Psi))=0\quad \text{ if } \quad \bfa\not\in \bigcup_{k=1}^{\infty}\bigcup_{t=0}^{\log_{p}M^{k+1}}\cC(k,t)\, .
\end{equation*}
Hence
\begin{align}
    \sum\limits_{\bfa \in \Z^{n+m}}\mu^{\Qp}_{m\times n}(\cA_{\bfa}(\Psi))&=\sum\limits_{k=1}^{\infty} 
  \sum_{\substack{\bfa\in\Z^{m+n}:\\M^{k}<\|\bfa\|_{v}\leq M^{k+1}}} \mu^{\Qp}_{m\times n}(\cA_{\bfa}(\Psi))\nonumber\\
  & \asymp_{p,n} \sum_{k=1}^{\infty}\sum_{t=0}^{\log_{p}M^{k+1}}\sum_{\bfa\in \cC(k,t)} \prod_{i=1}^{n}\frac{\psi_{i}(\|\bfa\|_{v})}{\|\bfa\|_{v}} \|\bfa_{0}\|_{p}^{-1} \nonumber\\
  &\leq \sum_{k=1}^{\infty}\left(\prod_{i=1}^{n}\frac{\psi_{i}(M^{k})}{M^{k}}\right)\sum_{t=0}^{\log_{p}M^{k+1}}\#\cC(k,t) p^{nt}\, . \label{eq:convergence case}
\end{align}
To bound the cardinality of $\cC(k,t)$ note that if $\|\bfa_{0}\|_{p}=p^{-t}$ then $p^{t}|a_{0,i}$ for each $1\leq i\leq m$ and so $\min_{i}|a_{0,i}|\geq p^{t}$. Furthermore, since $\sum_{i=1}^{m}v_{i}=m$ there exists $1\leq j\leq m$ such that $v_{j}\leq 1$ and so $|a_{0,j}|^{\frac{1}{v_{j}}}\geq p^{t}$. Hence $\|\bfa\|_{v}\geq p^{t}$. Since each $\psi_{i}(r)\to 0$ as $r\to \infty$ we can assume without loss of generality $\psi_{i}(r)<1$ for all $r\geq1$, and so if $\|\bfa_{0}\|_{p}=p^{-t}$ we must have
\begin{equation*}
    |a_{i}|_{p}\leq \max\{p^{-t}, p^{-t}\psi_{i}(\|\bfa\|_{v})\}\leq p^{-t} \quad \text{ for each } \, \, 1\leq i\leq n\, .
\end{equation*}
Thus,
\begin{align*}
    \#\cC(k,t) &\leq \#\left\{ \bfa_{0}\in \Z^{m}: \begin{array}{c}\|\bfa_{0}\|_{v}\leq M^{k+1}\\
    \|\bfa_{0}\|_{p}=p^{-t}
    \end{array} \right\} \times  \#\left\{ \bfa_{1}\in \Z^{n}: \begin{array}{c} \|\bfa_{1}\|\leq M^{k+1} \\
    |a_{i}|_{p}\leq p^{-t}
    \end{array}\right\}\\
    & \leq \#\{\bfa_{0}\in p^{t}\Z^{m}: \|\bfa_{0}\|_{v}\leq M^{k+1}\}\times \#\{\bfa_{1}\in p^{t}\Z^{n}: \|\bfa_{1}\|\leq M^{k+1}\}\\
    & \leq \left(\frac{3 M^{k+1}}{p^{t}}\right)^{m+n}
\end{align*}
Applying this to \eqref{eq:convergence case} we have that
\begin{align*}
     \sum\limits_{\bfa \in \Z^{n+m}}\mu^{\Qp}_{m\times n}(\cA_{\bfa}(\Psi))& \leq \sum_{k=1}^{\infty}\left(\prod_{i=1}^{n}\frac{\psi_{i}(M^{k})}{M^{k}}\right)\sum_{t=0}^{\log_{p}M^{k+1}}\left(\frac{3 M^{k+1}}{p^{t}}\right)^{m+n} p^{nt}\\
     &\leq (3M)^{m+n}\sum_{k=1}^{\infty}M^{k(m+n)}\left(\prod_{i=1}^{n}\frac{\psi_{i}(M^{k})}{M^{k}}\right)\sum_{t=0}^{\log_{p}M^{k+1}}p^{-mt}\\
     &\leq (3M)^{m+n}\frac{p^{m}}{p^{m}-1} \sum_{k=1}^{\infty}M^{k(m+n)}\left(\prod_{i=1}^{n}\frac{\psi_{i}(M^{k})}{M^{k}}\right)\\
     &\leq (3M)^{m+n}\frac{p^{m}}{p^{m}-1} \sum_{k=1}^{\infty}M^{k}\left(M^{k(m-1)}\prod_{i=1}^{n}\psi_{i}(M^{k})\right) 
\end{align*}

Note that we can trivially assume 
\begin{equation*}
    \prod_{i=1}^{n}\psi_{i}(r)<p^{-n}r^{-m}
\end{equation*}
for all $r\in\N$, otherwise by Lemma~\ref{p-adic minkowski} we have $W_{n,m}^{\Zp}(\Psi)=\Zp^{m\times n}$. Hence we can assume $$M^{k(m-1)} \prod_{i=1}^{n}\psi_{i}(M^{k})$$ is decreasing in $k\in\N$, and so by Cauchy condensation we have that
\begin{align*}
 \sum\limits_{\bfa \in \Z^{n+m}}\mu^{\Qp}_{m\times n}(\cA_{\bfa}(\Psi)) \ll_{M,p,n,m} \sum\limits_{r=1}^{\infty}r^{m-1}\prod_{i=1}^{n}\psi_{i}(r)\, .
\end{align*}
Hence the convergence case follows by the Borel-Cantelli Lemma and convergence assumption on $\sum\limits_{r=1}^{\infty}r^{m-1}\prod_{i=1}^{n}\psi_{i}(r)$. \par

\subsubsection{Divergence case of Theorem~\ref{p-adic weighted KG}}

To prove the divergence case of Theorem~\ref{p-adic weighted KG}, we use Proposition~\ref{ubiquity_p-adic} and Theorem~\ref{KW ambient measure}. We need to find an $n$-tuple of functions $\rho=(\rho_{1},\dots, \rho_{n})$ such that \eqref{rho cond} and the conditions (I)-(III) are satisfied for any chosen $\Psi$. Importantly observe that our $n$-tuple of functions $\Psi$ are of the form $\frac{\psi_{i}(r)}{r}$ in each coordinate axis $1\leq i \leq n$, and so we are asking that:
\begin{enumerate}
    \item[I($\Qp$).] For each $1\leq i \leq n$, the function $\frac{\psi_{i}(r)}{r}$ is montonic decreasing as $r\to \infty$, \\
    \item[II($\Qp$).] For each $1\leq i \leq n$, we have $\frac{\psi_{i}(r)}{r}\leq \rho_{i}(r)$ and $\rho_{i}(r)\to 0$ as $r\to \infty$,\\
    \item[III($\Qp$).] For each $1\leq i \leq n$, the function $\frac{\psi_{i}(r)}{r}$ is $c$-regular on $(M^{k})_{k\geq 1}$ for some fixed constant $0<c<1$.
\end{enumerate} 
Firstly, by assumption, each $\psi_{i}$ is monotonic so I($\Qp$) is satisfied. Secondly, note that $\Psi$ is $M^{-1}$-regular on $(M^{k})_{k\geq 1}$ since in each coordinate
\begin{equation*}
    \frac{\psi_{i}(M^{k+1})}{M^{k+1}} \leq \frac{\psi_{i}(M^{k})}{M^{k+1}} \leq M^{-1}\frac{\psi_{i}(M^{k})}{M^{k}},
\end{equation*}
where the first inequality follows due to the monotonicity of each $\psi_{i}$, so III($\Qp$) is satisfied. Thus, it remains to choose functions $\rho_{i}$ such that II($\Qp$) and \eqref{rho cond} are satisfied. 

At each $u\in\N$ let $l(u)_{1}, \dots , l(u)_{n}$ be an ordering of the digits $1, \dots , n$ such that
\begin{equation*}
    \psi_{l(u)_{1}}(u)\geq \psi_{l(u)_{2}}(u) \geq \dots \geq \psi_{l(u)_{n}}(u).
\end{equation*}
For each $u\in\N$ there exists unique $0\leq j \leq n-1$ such that, for $\nu(u,j)$ defined by
\begin{equation*}
    \nu(u,j)=\left(p^{-n}u^{-m}\left(\prod_{i=1}^{j}\psi_{l(u)_{i}}(u)\right)^{-1}\right)^{\frac{1}{n-j}},
\end{equation*}
we have that
\begin{equation*}
    \psi_{l(u)_{1}}(u)\geq \dots \geq \psi_{l(u)_{j}}(u)\geq \nu(u,j) > \psi_{l(u)_{j+1}}(u) \geq \dots \geq \psi_{l(u)_{n}}(u).
\end{equation*}
We can assume without loss of generality that $\nu(u,n-1)>\psi_{l(u)_{n}}(u)$ for all sufficiently large $u\in\N$, and so such $0\leq j\leq n-1$ exists. Otherwise, we would have that for infinitely many $u\in\N$
\begin{equation*}
    p^{-n}u^{-(n+m)}<\prod_{i=1}^{n}\frac{\psi_{l(u)_{i}}(u)}{u}
\end{equation*}
and so, by applying Lemma~\ref{p-adic minkowski}, we could conclude that $W_{n,m}^{\Zp}(\Psi)=I_{p}^{m\times n}$, thus completing the proof.\par 
For each $u \in\N$ set
\begin{equation*}
\begin{cases}
    \phi_{l(u)_{i}}(u)=\psi_{l(u)_{i}}(u) \quad \quad  (1 \leq i \leq j), \\
    \phi_{l(u)_{i}}(u)=\nu(u,j) \quad \quad \,\,\,\,\,\,\,\,\, (j+1\leq i \leq n).
\end{cases}
\end{equation*}
Note that, by definition, $\phi_{i}(u)\geq \psi_{i}(u)$, and that $\phi_{i}(u)\to 0$ as $u\to \infty$ for each $1\leq i \leq n$ since either
\begin{equation*}
\begin{cases}
    \phi_{i}(u)=\psi_{i}(u)\to 0, \\
    \phi_{i}(u)=\nu(u,j)\leq \psi_{j}(u) \to 0 \quad \quad \quad  (1\leq j \leq n-1),\\
    \phi_{i}(u)=\nu(u,0)=p^{-1}u^{-\frac{m}{n}} \to 0
\end{cases}
\end{equation*}
as $u \to \infty$. For each $1\leq i \leq n$ define
\begin{equation*}
    \rho_{i}(u)=\frac{\phi_{i}(u)}{u}\geq \frac{\psi_{i}(u)}{u}\, ,
\end{equation*}
thus II($\Qp$) is satisfied. Lastly, observe that for each $u\in\N$
\begin{align*}
    \prod_{i=1}^{n}\rho_{i}(u)&=\prod_{i=1}^{n}\frac{\phi_{l(u)_{i}}(u)}{u}\\
    &=u^{-(n-j)}\left(p^{-n}u^{-m}\left(\prod_{i=1}^{j}\psi_{l(u)_{i}}(u)\right)^{-1}\right) \times\prod_{i=1}^{j}\frac{\psi_{l(u)_{i}}(u)}{u} \\
    &=p^{-n}u^{-(n+m)},
\end{align*}
so \eqref{rho cond} is satisfied.

Hence, Theorem~\ref{KW ambient measure} and Proposition~\ref{ubiquity} are applicable. Thus $\mu^{\Qp}_{m\times n}\left(W_{n,m}^{\Zp}(\Psi)\right)=1$ if
\begin{equation*}
    \sum\limits_{k=1}^{\infty}\prod_{i=1}^{n}\left(\frac{\psi_{i}(u_{k})}{u_{k}\rho_{i}(u_{k})}\right)^{m\left(1-\tfrac{m-1}{m}\right)}=\infty.
\end{equation*}

Inputting our chosen $\rho$ we have that
\begin{equation*}
     \sum\limits_{k=1}^{\infty} u_{k}^{n+m}\prod_{i=1}^{n}\frac{\psi_{i}(u_{k})}{u_{k}}  \asymp \sum\limits_{k=1}^{\infty} u_{k}^{m}\prod_{i=1}^{n}\psi_{i}(u_{k})  \asymp \sum\limits_{r=1}^{\infty} r^{m-1}\prod_{i=1}^{n}\psi_{i}(r),
\end{equation*}
where the last line follows by Cauchy condensation with our choice of $(u_{k})_{k\geq 1}=(M^{k})_{k\geq 1}$. This completes the proof of Theorem~\ref{p-adic weighted KG}.

\subsection{Proof of Theorem~\ref{JB p-adic}} \label{hausdorff dimension}
Recall that
\begin{equation*}
    \Psi(r)=(r^{-\tau_{1}}, \dots , r^{-\tau_{n}}),
\end{equation*}
and for ease of notation, let
\begin{equation*}
    \cA_{\bfa}(\tau)=\left\{X\in\Zp^{m\times n}:|\bfa_{0}X_{i}-a_{i}|_{p}<\|\bfa\|_{v}^{-\tau_{i}} \quad (1\leq i \leq n) \right\},
\end{equation*}
so that
\begin{equation*}
    W_{n,m}^{\Zp}(\Psi)=\limsup_{\bfa\in\Z^{n+m}}\cA_{\bfa}(\tau).
\end{equation*}
The upper bound uses a standard covering argument, but for completeness, we include it here. Note, for any $N\in\N$
\begin{equation*}
    W_{n,m}^{\Zp}(\tau) \subset \bigcup_{r\geq N} \bigcup_{\|\bfa\|_{v}=r}\cA_{\bfa}(\tau)
\end{equation*}
is a cover of $W_{n,m}^{\Zp}(\tau)$ by rectangles. Since the Hausdorff dimension is determined by covers of balls we consider each $1\leq j\leq n$ and cover the collections of rectangles $\cA_{\bfa}(\tau)$ in the layer $\|\bfa\|_{v}=r$ by balls of radius $\|\bfa\|_{v}^{-\tau_{j}}$ to obtain our upper bound. Observe that in each coordinate axis
\begin{equation*}
    \cA_{\bfa,i}(\tau)=\left\{X_{i}\in\Zp^{m}:|\bfa_{0}X_{i}-a_{i}|<\|\bfa\|_{v}^{-\tau_{i}} \right\}
\end{equation*}
can be covered by
\begin{equation}
    \asymp \max\left\{1, \frac{\|\bfa\|_{v}^{-\tau_{i}}}{\|\bfa\|_{v}^{-\tau_{j}}}\right\}\times \left(\frac{1}{\|\bfa\|_{v}^{-\tau_{j}}}\right)^{m-1}
\end{equation}
balls of size $\|\bfa\|_{v}^{-\tau_{j}}$. So, for any $s>0$,
\begin{align*}
    \cH^{s}\left(W_{n,m}^{\Zp}(\Psi)\right) & \ll \sum\limits_{r\geq N} r^{-s\tau_{j}}r^{n+m-1}r^{n(m-1)\tau_{j}}\prod_{i=1}^{n}\max\{1,r^{\tau_{j}-\tau_{i}}\} \\
    & \ll \sum\limits_{r\geq N} r^{-\tau_{j}s+ n+m-1+n(m-1)\tau_{j}+\sum\limits_{i:\tau_{j}>\tau_{i}}(\tau_{j}-\tau_{i})},
\end{align*}
which converges for all
\begin{equation*}
    s>s_{j}:= n(m-1)+\frac{n+m-\sum\limits_{i:\tau_{j}>\tau_{i}}(\tau_{i}-\tau_{j})}{\tau_{j}}.
\end{equation*}
Hence by definition of Hausdorff dimension $\dimh W_{n,m}^{\Zp}(\Psi) \leq s_{j}$. Since we can do the same calculation as above for each $1\leq j \leq n$, the upper bound of Theorem~\ref{JB p-adic} follows. \par 
For the lower bound of Theorem~\ref{JB p-adic}, we use Theorem~\ref{MTPRR} combined with Proposition~\ref{ubiquity}. Assume without loss of generality that
\begin{equation*}
    \tau_{1}\geq \tau_{2} \geq \dots \geq \tau_{n} >1.
\end{equation*}
In Proposition~\ref{ubiquity_p-adic} we pick each $\rho_{i}$ to be of the form
\begin{equation*}
    \rho_{i}(h)=\rho(h)^{\ell_{i}}=\left(p^{-1}h^{-1}\right)^{\ell_{i}}
\end{equation*}
for
\begin{equation} \label{dirivector}
\boldsymbol{\ell}=(\ell_{1},\dots, \ell_{n}) \in (1,m-1)^{n} \quad \text{ with } \, \, \sum\limits_{i=1}^{n}\ell_{i}=n+m.
\end{equation}
Observing such a choice of exponent satisfies the requirements of Proposition~\ref{ubiquity}. As in the setup of Theorem~\ref{MTPRR}, set each 
\begin{equation*}
t_{i}=\tau_{i}-\ell_{i} \quad (1\leq i \leq n)\, .    
\end{equation*}
This choice of $\rho$ function includes the constant $"p^{-1}"$ appearing in Proposition~\ref{ubiquity_p-adic}, but note this can safely be removed to obtain our result by observing that for any choice of $\varepsilon>0$ there exists sufficiently large $h\in\R_{+}$ such that
\begin{equation*}
     h^{-\tau_{i}-\varepsilon} \leq \rho(h)^{\ell_{i}+t_{i}}=\left(p^{-1}h^{-1}\right)^{\tau_{i}}\leq h^{-\tau_{i}}\, \quad (1\leq i \leq n).
\end{equation*}
Consider the following cases:
\begin{enumerate}
    \item (Ball to rectangle) $\tau_{i}\geq \frac{n+m}{n}$ for all $1\leq i \leq n$. Then let each $\ell_{i}=\frac{n+m}{n}$. Note that such choice satisfies \eqref{dirivector}. The set $A$ from Theorem~\ref{MTPRR} takes the following order:
    \begin{equation*}
        \ell_{1}=\dots=\ell_{n} \leq \tau_{n} \leq \dots \leq \tau_{1}.
    \end{equation*}
    So for any $\ell_{i}$ we have
    \begin{equation*}
        \cK_{1}=\{1,\dots, n \} , \quad \cK_{2}=\emptyset , \quad \cK_{3}=\emptyset ,
    \end{equation*}
    which trivially leads to a full dimension lower bound in the case $A=\ell_{i}$.\par 
    For each $\tau_{j}$ we have
    \begin{equation*}
        \cK_{1}=\emptyset , \quad \cK_{2}=\{j, \dots, n\}, \quad \cK_{3}=\{1,\dots , j-1\}.
    \end{equation*}
    Hence
    \begin{align}
        \dimh W_{n,m}^{\Zp}(\Psi) & \geq \min_{1\leq j \leq n} \left\{ (n-j+1)m + (m-1)(j-1) + \frac{\frac{n+m}{n}(j-1)-\sum\limits_{j\leq i \leq n}(\tau_{i}-\frac{n+m}{n})}{\tau_{j}} \right\} \nonumber \\ 
        & \geq \min_{1\leq j \leq n} \left\{ n(m-1) + \frac{(n+m) -\sum\limits_{j\leq i \leq n}(\tau_{i}-\tau_{j})}{\tau_{j}} \right\}. \label{eq4}
    \end{align}
    \item (Rectangle to rectangle) $\tau_{j}<\frac{n+m}{n}$ for some $1\leq j \leq n$. The idea now is to form a "Dirichlet-exponent" rectangle that contains the $\tau$-rectangle. To do this, we are trying to find $1\leq u \leq n$ that solves
    \begin{equation*}
        u \times \widetilde{D} + \sum\limits_{u < i \leq n}\tau_{i}=n+m,
    \end{equation*}
    for some $\widetilde{D}>0$ with $\tau_{i}>\widetilde{D}$ for all $1\leq i \leq u$. That is, pick $u$ such that
    \begin{equation*}
        \tau_{1} \geq \dots \geq \tau_{u} > \frac{n+m -\sum\limits_{u\leq i \leq n} \tau_{i}}{u}:=\widetilde{D} \geq \tau_{u+1} \geq \dots \geq \tau_{n}.
    \end{equation*}
    Set 
    \begin{align*}
        \ell_{i}&=\widetilde{D} \quad (1\leq i \leq u), \\[2ex]
        \ell_{i}&=\tau_{i} \quad (u+1\leq i \leq n),
    \end{align*}
    and observe, by definition of $\widetilde{D}$ and the fact that each $\tau_{i}>1$, that $\boldsymbol{\ell}$ satisfies \eqref{dirivector}. For $\ell_{i}$ with $1\leq i \leq u$ we have that
    \begin{equation*}
        \cK_{1}=\{ 1, \dots , u \}, \quad \cK_{2}=\{u+1, \dots , n\}, \quad \cK_{3}=\emptyset,
    \end{equation*}
    and for $\ell_{i}$ with $u+1\leq i \leq n$
    \begin{equation*}
        \cK_{1}=\{1, \dots , i \}, \quad \cK_{2}= \{i+1, \dots , n\}, \quad \cK_{3}=\emptyset.
    \end{equation*}
    Thus, for $A=\ell_{i}$ we obtain the trivial full dimension as a lower bound. \par 
    For each $\tau_{j}$ with $1\leq j \leq u$ we have that
    \begin{equation*}
        \cK_{1}=\emptyset , \quad \cK_{2}=\{ j, \dots , n\}, \quad \cK_{3}=\{1, \dots , j-1\},
    \end{equation*}
    thus the same calculation of \eqref{eq4} follows to complete the proof.    
\end{enumerate}

\section{Complex approximation}
In this section, we obtain a Lebesgue measure dichotomy on certain sets of systems of complex linear forms. Under an additional assumption, we calculate the Hausdorff dimension of such sets when they are null.

For each $z\in \C$, let $\llbracket z \rrbracket$ be the distance from $z$ to its nearest Gaussian integer; that is
\[
\llbracket z \rrbracket
:=
\min\left\{ |z - p|: p\in \Z[i]\right\}.
\]
where $|\cdot|$ is the Euclidean norm in the complex plane.
Let $I_{\C}$ be the compact square
\[
I_{\C}
:=
\left\{ z\in \C: - \frac{1}{2}\leq \Re(z)\leq  \frac{1}{2} \;\text{ and }\;  -\frac{1}{2}\leq  \Im(z) \leq  \frac{1}{2} \right\}.
\]
Fix $m,n\in\N$ for the rest of this section. For any $n$-tuple of non-increasing positive functions $\varphi=(\varphi_1,\ldots, \varphi_n):\N\to\R_{+}^m$ with
\[
\lim_{q\to\infty} \varphi_j(q) = 0  \quad (1\leq j\leq n)\, ,
\]
and any $m$-tuple of non-decreasing positive functions $\Phi=(\Phi_1,\ldots, \Phi_m):\N \to \R_{+}^{m}$ such that
\[
\lim_{q\to\infty} \Phi_k(q)=\infty
\quad (1\leq k\leq m),
\]
let $W_{n,m}^{\C}(\varphi,\Phi)\subseteq I_{\C}^{m\times n}$ be the collection of $m\times n$ matrices $A$ with entries in $I_{\C}$ which verify the following property:
\begin{center}
for infinitely many integers $u\geq 1$  there is a non-zero $\q\in \Z[i]^{1\times m}$ such that
\begin{align*}
\left\llbracket \q \,A_j  \right\rrbracket &< \varphi_j(u) \quad (1\leq j\leq n), \\[2ex]
|q_k| &\leq \Phi_k(u) \quad (1\leq k\leq m). 
\end{align*}
\end{center}
For $k\in\{1,\ldots, m\}$ and $u\in \N$, write
\[
F_k(u)
:=
\min\{ v\in\N: \Phi_k(v)\geq u\}.
\]
The set $W_{n,m}^{\C}(\varphi,\Phi)$ is then 
\begin{equation*}
    W_{n,m}^{\C}(\varphi,\Phi) =\left\{ A \in I_{\C}^{m\times n } : \begin{array}{l} \left\llbracket  \q \,A_j  \right\rrbracket 
<
\varphi_j\left( \max\left\{ F_1(|q_1|), \ldots, F_m(|q_m|) \right\}\right)\, \, \, (1\leq j \leq n) \\[2ex]
\text{ for infinitely many } \q \in \Z[i]^{1\times m} \end{array} \right\}. 
\end{equation*}
We denote the Lebesgue measure on $\C^{m\times n}$ by $\mu_{m\times n}^{\C}$.
\begin{theorem}\label{TEO:MEASURE}
If there are some constants $N_0,M\in \N$ with $M\geq 2$, and $c_1, c_2>1$ such that for every $j\in\N_{\geq N_0}$ we have
\begin{equation}\label{EQ:LinearForms:C}
c_1\Phi_k(M^j) \leq \Phi_k(M^{j+1}) \leq c_2\Phi_k(M^j)
\quad (k=1,\ldots,m),
\end{equation}
then
\[
\mu_{m\times n}^{\C}\left(W_{n,m}^{\C}(\varphi, \Phi)\right)
=
\begin{cases}
0 &\text{\rm if }\quad  \displaystyle\sum\limits_{q=1}^{\infty} \frac{1}{q}\left( \displaystyle\prod_{j=1}^n \varphi_j(q) \displaystyle\prod_{k=1}^m \Phi_k(q)\right)^2   < \infty,  \\[2ex]
1 &\text{\rm if }\quad  \displaystyle\sum\limits_{q=1}^{\infty} \frac{1}{q} \left( \displaystyle\prod_{j=1}^n \varphi_j(q) \displaystyle\prod_{k=1}^m \Phi_k(q)\right)^2   = \infty.
\end{cases}
\]
\end{theorem}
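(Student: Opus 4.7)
The plan is to cast $W^{\C}_{n,m}(\varphi,\Phi)$ as a $\limsup$ set and apply the weighted ubiquity framework of Section~\ref{SEC:TOOLBOX}. Setting $u(\q):=\max_{k}F_k(|q_k|)$,
\[
W^{\C}_{n,m}(\varphi,\Phi)=\limsup_{\q\in\Z[i]^{1\times m}\setminus\{\0\}}E_{\q},\qquad E_{\q}:=\bigl\{A\in I_\C^{m\times n}:\llbracket \q A_j\rrbracket<\varphi_j(u(\q))\ (1\le j\le n)\bigr\}.
\]
The resonant sets $R_{\q,j}:=\{\x\in I_\C^m:\q\x\in\Z[i]\}$ are countable unions of real-affine subspaces of codimension $2$ in $I_\C^m\cong\R^{2m}$, so the framework of Section~\ref{ubiquity} applies on each coordinate with Ahlfors exponent $\delta_j=2m$ and $\kappa$-scaling constant $\kappa=(m-1)/m$; in particular $\delta_j(1-\kappa)=2$. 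Dyadic levels are $J_k:=\{\q:M^k\le u(\q)\le M^{k+1}\}$, of cardinality $\#J_k\ll\prod_{l}\Phi_l(M^{k+1})^{2}\asymp\prod_{l}\Phi_l(M^k)^{2}$ thanks to \eqref{EQ:LinearForms:C}.

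\emph{Convergence.} Since for each non-zero $\q$ the map $A_j\mapsto \q A_j\bmod\Z[i]$ transports Lebesgue measure on $I_\C^m$ to Haar measure on $\C/\Z[i]$, one readily obtains $\mu^{\C}_{m\times n}(E_{\q})\asymp\prod_{j=1}^{n}\varphi_j(u(\q))^{2}$. Summing over $\q$ grouped by dyadic level, and using the monotonicity of each $\varphi_j$ together with the bound on $\#J_k$,
\[
\sum_{\q}\mu^{\C}_{m\times n}(E_{\q})\ll\sum_{k\ge N_0}\prod_{j=1}^{n}\varphi_j(M^k)^{2}\prod_{l=1}^{m}\Phi_l(M^k)^{2},
\]
and Cauchy condensation identifies the right-hand side with a constant multiple of the hypothesised convergent series. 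Lemma~\ref{Borel_Cantelli convergence} then gives $\mu^{\C}_{m\times n}(W^{\C}_{n,m}(\varphi,\Phi))=0$.

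\emph{Divergence.} I would prove a local ubiquity proposition modelled on Proposition~\ref{ubiquity_p-adic}. The key input is Minkowski's theorem applied to the unimodular lattice $\Z[i]^{m+n}\subset\R^{2(m+n)}$: for a suitably chosen $n$-tuple $\rho=(\rho_1,\ldots,\rho_n)$ satisfying $\prod_j \rho_j(u)^2\prod_k\Phi_k(u)^2\asymp 1$ and $\rho_j\ge \varphi_j$ pointwise (the second requirement enforced, as in the $p$-adic proof, by a reordering-and-geometric-mean-capping of the $\varphi_j$ at each scale; otherwise $\prod_j\varphi_j^{2}\prod_k\Phi_k^{2}\gtrsim 1$ and Minkowski already yields $W^{\C}_{n,m}=I_\C^{m\times n}$), every $A\in I_\C^{m\times n}$ admits a non-zero $\q$ with $|q_k|\le\Phi_k(M^{k+1})$ and $\llbracket\q A_j\rrbracket<\rho_j(M^{k+1})$. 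A volume-counting argument subtracts the contribution from $\q$'s with $u(\q)<M^k$, bounding the number of such $\q$ in a product of discs and, for each $\q$, the number of Gaussian translates of the hyperplane $\q\x=p_j$ meeting a small ball, which delivers local ubiquity at level $J_k$ provided $M$ is sufficiently large. Conditions (I)--(III) of Theorem~\ref{KW ambient measure} are then verified: (I) from the monotonicity of $\varphi_j$, (II) by construction, and (III) because \eqref{EQ:LinearForms:C} transfers $c$-regularity on $(M^k)$ from $\Phi$ to $\rho$. Since $\delta_j(1-\kappa)=2$, the KW divergence series equals
\[
\sum_{k}\prod_{j=1}^{n}\Bigl(\frac{\varphi_j(M^k)}{\rho_j(M^k)}\Bigr)^{2}\asymp\sum_{k}\prod_{j=1}^{n}\varphi_j(M^k)^{2}\prod_{l=1}^{m}\Phi_l(M^k)^{2},
\]
which by Cauchy condensation is a constant multiple of the hypothesised divergent series, yielding $\mu^{\C}_{m\times n}(W^{\C}_{n,m}(\varphi,\Phi))=1$.

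The principal obstacle is the ubiquity proposition. Following the pattern of Proposition~\ref{ubiquity_p-adic}, the tail estimate for levels $<M^k$ requires careful constants-tracking, including a stratification by the size of $|\q|$ playing the role of the $\|\bfa_0\|_p$-stratification in the $p$-adic proof; the conversion factor $|\q|^{-1}$ relating the geometric Euclidean neighbourhood $\Delta(R_{\q,j},r)$ to the inequality $\llbracket\q A_j\rrbracket<r|\q|$ is handled by using $|\q|\ge 1$ and absorbing the resulting bounded multiplicative constants via Lemma~\ref{CI_product}. A secondary subtlety is constructing $\rho_j$ simultaneously satisfying the product constraint, $\rho_j\ge\varphi_j$ pointwise, and $c$-regularity on $(M^k)$; this is achieved by the same reordering-and-capping procedure as in the $p$-adic proof, making essential use of the two-sided regularity \eqref{EQ:LinearForms:C}.
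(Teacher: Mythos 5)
Your blueprint — cast $W^{\C}_{n,m}(\varphi,\Phi)$ as a $\limsup$ of thickened resonant rectangles, prove a local ubiquity proposition via the complex Minkowski lemma, and feed it into Theorem~\ref{KW ambient measure} with $\delta_j(1-\kappa_j)=2$ — matches the paper's strategy, and your convergence half is sound. However, the sketch of the divergence half carries two inaccuracies that would trip up a direct implementation.

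First, the \emph{reordering-and-geometric-mean-capping} is not what is needed here, and citing it as "enforcing $\rho_j\ge\varphi_j$" misidentifies the issue. With the natural choice $\rho_j(u)\asymp \frac{\varphi_j(u)}{\|\Phi(u)\|_\infty}\bigl(\prod_s\varphi_s(u)\prod_k\Phi_k(u)\bigr)^{-1/n}$, the inequality $\rho_j\ge\psi_j$ is \emph{automatic} from the standing assumption $\prod\varphi_s\prod\Phi_k\le 1$ (justified by Lemma~\ref{Teo:Mink:Cx:LF}), since the correcting factor is $\ge 1$ for every $j$. The genuine obstruction is that this $\rho_j$ need not tend to $0$; the paper fixes it by replacing $\varphi$ with a slightly larger $\widetilde\varphi$ that forces $\|\Phi(u)\|_\infty^n\prod_k\Phi_k(u)\prod_j\varphi_j(u)\to\infty$ (Lemma~\ref{LF:C:AuxLemma}), not by a per-scale reordering of the coordinates. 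Your proposal does not flag the $\rho_j\to 0$ requirement, which is condition~(II) of Theorem~\ref{KW ambient measure} and cannot be skipped.

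Second, and more substantively, the claim that the $\|\q\|^{-1}$ conversion between the Minkowski inequality $\llbracket\q A_j\rrbracket<r$ and the geometric thickening $\Delta(R_{\q,j},r/\|\q\|)$ is "handled by using $|\q|\ge1$ and absorbing the resulting bounded multiplicative constants via Lemma~\ref{CI_product}" does not hold: over a level set $\{\beta_\alpha\le u_s\}$ the factor $\|\q\|_\infty^{-1}$ ranges from $\|\Phi(u_s)\|_\infty^{-1}$ up to $1$, an interval whose length is unbounded in $s$, so there is no bounded constant to absorb. The paper instead restricts the ubiquity estimate to the stratum $\widetilde J_s=\{\q:\Phi_k(u_s)/M\le|q_k|_\infty\le\Phi_k(u_s)\ \forall k\}$, where $\|\q\|_\infty\asymp_M\|\Phi(u_s)\|_\infty$ \emph{uniformly}, and separately shows that the complementary strata $J_{s,k}$ (some coordinate $|q_k|_\infty<\Phi_k(u_s)/M$) contribute less than half of $\mu(B)$ — this is Proposition~\ref{complexprop4}, and it replaces the $\|\bfa_0\|_p$-stratification of the $p$-adic proof with a coordinate-wise split tailored to the vector weight $\Phi$. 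Your phrase "stratification by the size of $|\q|$" is too coarse: a scalar stratification would not give you uniform control on the conversion factor across all coordinates simultaneously, which is exactly what $\widetilde J_s$ is designed for. Finally, you omit the transfer between $\|\cdot\|_\infty$ and $|\cdot|$, which in the paper is a short but separate step (Lemma~\ref{LF:C:AuxRep} plus the inclusion $W_{\infty}(\varphi,\Phi)\subseteq W(\varphi,\Phi)\subseteq W_{\infty}(\sqrt2\varphi,\sqrt2\Phi)$).
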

The problem for complex numbers is less studied than the previous contexts.
Most of the work has been done in the case $m=n=1$.
\begin{itemize}
    \item $n=m=1$. LeVeque used continued fractions to get Theorem \ref{TEO:MEASURE} \cite{Leveque}. 
    \item $n=m=1$. Sullivan proved Khintchine's theorem for real numbers and imaginary quadratic fields \cite{Sullivan}.
    \item $n=m=1$. Beresnevich, Dickinson, and Velani derived Sullivan's result using ubiquitous systems \cite[Theorem 7]{BDV06}.
    \item  $m,n\in\N$. Hussain, non-weighted small linear forms \cite{HussainNZJM}.
\end{itemize}
In $\mathbb{Q}(i)$, the problems can be attacked using Hurwitz continued fractions. To this end, the best-known results are in \cite{BGH, GRGRH}.

For any vector $\boldsymbol{\tau} = (\tau_1, \ldots, \tau_n)\in \R^n$ such that
\begin{equation}\label{EQ:CondOnTau}
\min_{1\leq j\leq n} \tau_j > 1
\;\text{ and }\;
\sum\limits_{j=1}^n \tau_j \geq m+n,
\end{equation}
define the numbers
\[
s_j(\boldsymbol{\tau})
:= 2n(m-1) 
+ 
2\, \frac{m + n - \sum\limits_{r\,:\, \tau_{r}<\tau_{j}} ( \tau_r-\tau_j)}{\tau_j}
\quad (1\leq j\leq n)
\]
and the set
\[
W^{\C}_{n,m}(\boldsymbol{\tau})
:=
\left\{ A\in I_{\C}^{m\times n}:\begin{array}{l} \left\llbracket \mathbf{q}  A_j \right\rrbracket  < \frac{1}{\|\mathbf{q}\|^{\tau_j-1}} \; (1\leq j \leq n)\;\\[2ex] \text{ for infinitely many } \mathbf{q}\in \Z[i]^m \end{array}\right\}.
\]

\begin{theorem}\label{TEO:HDIM}
For any vector $\boldsymbol{\tau}$ satisfying \eqref{EQ:CondOnTau}, we have 
\[
\dimh W^{\C}_{n,m}(\boldsymbol{\tau}) = \min\{ s_1(\boldsymbol{\tau}), \ldots, s_n(\boldsymbol{\tau})\}.
\]   
\end{theorem}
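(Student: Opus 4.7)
The plan is to prove the bound $\dimh W^{\C}_{n,m}(\boldsymbol{\tau})\leq\min_j s_j(\boldsymbol{\tau})$ by a direct Hausdorff--Cantelli covering argument (Lemma~\ref{bclem}), and the matching lower bound via a weighted ubiquity statement for complex linear forms combined with the mass transference principle for rectangles (Theorem~\ref{MTPRR}). The overall scheme parallels the proof of Theorem~\ref{JB p-adic}, with the doubling of every term in $s_j(\boldsymbol{\tau})$ reflecting the fact that Lebesgue measure on $I_{\C}^m\simeq\R^{2m}$ is $2m$-Ahlfors regular while the complex affine hyperplanes retain the scaling value $\kappa=(m-1)/m$.

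For the upper bound, write $W^{\C}_{n,m}(\boldsymbol{\tau})=\limsup_{\mathbf{q}\in \Z[i]^m\setminus\{0\}}E_{\mathbf{q}}$, where $E_{\mathbf{q}}$ denotes the set of $A\in I_{\C}^{m\times n}$ admitting some $\mathbf{p}\in \Z[i]^n$ with $|\mathbf{q}A_j-p_j|<\|\mathbf{q}\|^{-(\tau_j-1)}$ for every $j$. For $\mathbf{q}$ with $\|\mathbf{q}\|$ in a dyadic shell of scale $r$, the set $E_{\mathbf{q}}\cap I_{\C}^{m\times n}$ is, up to boundary effects, a disjoint union of rectangles in $\R^{2mn}$, each having $2$ ``thin'' real sides of length $\asymp r^{-\tau_j}$ and $2(m-1)$ ``thick'' real sides of length $\asymp 1$ in every column $j$. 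Since the image of $I_{\C}^m$ under $A_j\mapsto \mathbf{q}A_j$ is a planar region of area $\asymp r^2$, there are $\asymp r^{2n}$ admissible rectangles per $\mathbf{q}$ and $\asymp r^{2m}$ lattice points $\mathbf{q}$ in the shell. Fixing $j^*\in\{1,\ldots,n\}$ and covering each rectangle by balls of radius $r^{-\tau_{j^*}}$ produces a Hausdorff $s$-sum at dyadic level $r$ of order
\[
r^{2m+2n+2n(m-1)\tau_{j^*}+2\sum_{j:\tau_j<\tau_{j^*}}(\tau_{j^*}-\tau_j)-s\tau_{j^*}},
\]
and the resulting geometric series in $r$ decays precisely when $s>s_{j^*}(\boldsymbol{\tau})$; minimising over $j^*$ gives the upper bound.

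For the lower bound, I would set up a weighted ubiquitous system in the framework of Section~\ref{ubiquity} with $\Omega_i=I_{\C}^m$ (so $\delta_i=2m$), $J=\Z[i]^m\setminus\{0\}$, $\beta_{\mathbf{q}}=\|\mathbf{q}\|$, levels $u_k=M^{k+1}$, and resonant sets $R_{\mathbf{q},i}=\bigcup_{p\in\Z[i]}\{A_i\in I_{\C}^m:\mathbf{q}A_i=p\}$, which have $\kappa$-scaling property with $\kappa=(m-1)/m$. An application of Minkowski's Theorem~\ref{TheoremMinkowski} to the Gaussian integer lattice $\Z[i]^{m+n}\cong\Z^{2(m+n)}$ yields the complex analogue of Proposition~\ref{ubiquity_p-adic}: for any $n$-tuple $\rho_j(h)=h^{-\ell_j}$ with each $\ell_j>1$ and $\sum_j\ell_j=n+m$, the thickened resonant sets $\Delta(R_{\mathbf{q}},\rho(M^{k+1}))$ cover at least half of any fixed ball of $I_{\C}^{m\times n}$ for all sufficiently large $k$; this is also the ubiquity needed for the divergence part of Theorem~\ref{TEO:MEASURE}. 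Applying Theorem~\ref{MTPRR} with $a_j=\ell_j$ and $t_j=\tau_j-\ell_j\geq 0$, the case analysis from the proof of Theorem~\ref{JB p-adic} transfers with the factor-of-$2$ modification coming from $\delta_i(1-\kappa)=2$ in place of $1$: after reordering so that $\tau_1\geq\cdots\geq\tau_n$, take $\ell_j=(n+m)/n$ when $\tau_n\geq(n+m)/n$, and otherwise $\ell_j=\widetilde D$ for $j\leq u$ and $\ell_j=\tau_j$ for $j>u$, where $\widetilde D=(n+m-\sum_{r>u}\tau_r)/u$ and $u$ is chosen so that $\tau_u>\widetilde D\geq \tau_{u+1}$. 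Every choice $A_i=\ell_{j_0}$ yields the trivial bound $2mn$; $A_i=\tau_{j_0}$ with $j_0\leq u$ yields exactly $s_{j_0}(\boldsymbol{\tau})$; and $A_i=\tau_{j_0}$ with $j_0>u$ yields $2mn$ again. Since the hypothesis $\sum_j\tau_j\geq m+n$ forces $\min_j s_j(\boldsymbol{\tau})\leq 2mn$ (as is readily checked by evaluating $s_{j_0}$ at an index $j_0$ with $\tau_{j_0}=\max_r\tau_r$), minimising over $A_i\in A$ delivers the lower bound $\min_j s_j(\boldsymbol{\tau})$.

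The principal technical obstacle is the ubiquity statement for complex linear forms: the measure estimates in Proposition~\ref{ubiquity_p-adic}, in particular a Gaussian integer analogue of Lemma~\ref{lines in balls} bounding the number of pairs $(\mathbf{q},\mathbf{p})\in \Z[i]^{m+n}$ whose thickened resonant set meets a given ball, must be reworked in the complex setting, and the contribution of vectors $\mathbf{q}$ with small norm must be removed carefully. Once this ubiquity is established, both the covering computation for the upper bound and the case-by-case computation for the lower bound are routine.
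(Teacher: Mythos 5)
Your proposal follows the same route as the paper: the upper bound is a dyadic Hausdorff--Cantelli covering of the rectangles $\cA_{\q,\p}$ by balls of radius $\|\q\|^{-\tau_{j^*}}$ (the paper sums over singleton shells $\|\q\|_\infty=Q$ with the count $Q^{2m-1}$ in place of your dyadic shells, but the convergence threshold $s>s_{j^*}(\boldsymbol{\tau})$ is identical), and the lower bound is the same weighted-ubiquity--plus--Theorem~\ref{MTPRR} scheme as in Theorem~\ref{JB p-adic}, with $\delta_i=2m$, $\kappa=(m-1)/m$, and the same case split on $\tau_n\gtrless (n+m)/n$. The paper's own ubiquity statement is Lemma~\ref{Lem:UbiqSyst:CxLF:HD}, which is obtained by modifying the proof of Lemma~\ref{Lem:UbiqSyst:CxLF} (via Propositions~\ref{Prop:Cx:Measure:Aux}--\ref{complexprop4}); your flagged ``principal technical obstacle'' is exactly that lemma, and your sketched construction (Minkowski's theorem on $\Z[i]^{m+n}\cong\Z^{2(m+n)}$ plus a Gaussian analogue of Lemma~\ref{lines in balls} and a careful removal of small $\|\q\|$) is precisely what the paper carries out, so this is a correctly identified gap rather than a misstep. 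One step the paper does but your proposal does not spell out: the paper first proves everything for the sup-norm $|\cdot|_\infty$ (the sets $\widetilde{W}_\infty(\gamma,\boldsymbol{\tau})$), lets $\gamma$ vary by continuity of $\boldsymbol{\tau}\mapsto\dimh\widetilde{W}_\infty(1,\boldsymbol{\tau})$, and only then transfers to $|\cdot|$ using $|z|_\infty\leq|z|\leq\sqrt 2\,|z|_\infty$; your argument should include that sandwich.

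One place where your write-up is actually \emph{more} careful than the paper's: you observe that $\sum_j\tau_j\geq m+n$ forces $\min_j s_j(\boldsymbol{\tau})\leq 2mn$, by evaluating $s_{j_0}$ at a maximal $\tau_{j_0}$, so that the trivial value $2mn$ produced by the choices $A_i=\ell_{j_0}$ (and by $A_i=\tau_{j_0}$ with $j_0>K$, where $t_{j_0}=0$) does not spoil the minimum. The paper invokes ``the same calculation as \eqref{eq4}'' and leaves this tacit, but your observation is genuinely needed to justify that the minimum over $A_i\in A$ in Theorem~\ref{MTPRR} comes out as $\min_j s_j(\boldsymbol{\tau})$. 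In short: same method, one gap correctly flagged but not filled, and one implicit step of the paper's argument made explicit.
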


The only previously known Hausdorff dimension results for $\Psi$ approximable complex numbers was for $m=n=1$ in \cite{DodsonKristensen} by Dodson and Kristensen.
They proved $\dimh W^{\C}_{1, 1}( \tau )=\frac4{\tau+1}$.
Later, He and Xiong \cite{HeXiong2021} extended the result to an arbitrary approximation function.

We develop the proofs in a slightly different context. For each complex number $z$, define
\[
|z|_{\infty}:= \max\{|\Re(z)|, |\Im(z)|\}
\;\text{ and }\;
\llbracket z \rrbracket_{\infty} := \min\{|z - p|_{\infty}: p\in \Z[i]\}.
\]
First, we show Theorem \ref{TEO:MEASURE} and Theorem \ref{TEO:HDIM} when $|\,\cdot\,|$ and $\llbracket \,\cdot\,\rrbracket$ are replaced by $ |\,\cdot\, |_{\infty}$ and $\llbracket \,\cdot\,\rrbracket_{\infty}$, respectively. Afterwards, we use the equivalence between $|\,\cdot\, |_{\infty}$ and $|\,\cdot\,|$ to conclude the results in the original setting.

\subsection{Ubiquity for complex numbers}
We need the following complex version of Minkowski's theorem for linear forms (cfr. \cite[Theorem 95]{Hecke1981}).
\begin{lemma}\label{Teo:Mink:Cx:LF}
Let $\gamma_1,\ldots, \gamma_n,\theta_1,\ldots, \theta_m$ be positive numbers satisfying $\prod_{k=1}^n \gamma_k \prod_{j=1}^m \theta_j\geq 1$. 
For every matrix $A\in \C^{m\times n}$ with columns $A_1, \ldots, A_n$ there exists a vector $(\q,\p)\in \Z[i]^{1\times (m + n)}$, $\q\neq 0$, such that
\begin{align}\label{Eq:SysMink}
\left|\, \q \; A_k - p_k\right|_{\infty} &< \gamma_k  \quad (1\leq k\leq n),\nonumber \\
\left| q_j \right|_{\infty} &\leq  \theta_j \quad (1\leq j\leq m).
\end{align}
\end{lemma}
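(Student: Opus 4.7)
The plan is to reduce the claim to the real Minkowski theorem for linear forms, Theorem~\ref{TheoremMinkowski}, via the natural identification $\C\cong\R^2$ given by $z = x+iy\mapsto(x,y)$. Under this identification, $\Z[i]^k$ corresponds to the standard lattice $\Z^{2k}\subset\R^{2k}$ of determinant $1$, and the sup norm $|\cdot|_\infty$ on $\C$ coincides with the sup norm on $\R^2$. In particular, the whole problem lives in a real ambient space of dimension $2(m+n)$ with $2(m+n)$ integer unknowns $(\Re\q,\Im\q,\Re\p,\Im\p)$.

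First I would consider the $\C$-linear endomorphism $T_\C$ of $\C^{m+n}$ sending a row vector $(\q,\p)$ to $(\q A-\p,\q)$. Its block matrix is
\[
\begin{pmatrix} A & I_m \\ -I_n & 0 \end{pmatrix},
\]
whose complex determinant has absolute value $1$, e.g.\ by cofactor expansion along the $-I_n$ block. Passing to the associated $\R$-linear map $T_\R$ on $\R^{2(m+n)}$ and using the classical identity $|\det T_\R| = |\det T_\C|^{2}$, I obtain $|\det T_\R| = 1$. The system \eqref{Eq:SysMink} then translates to $2(m+n)$ real linear inequalities: each complex sup-norm bound $\gamma_k$ or $\theta_j$ splits into a real-part bound and an imaginary-part bound of the same numerical value. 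The product of all right-hand sides equals
\[
\Bigl(\prod_{k=1}^n\gamma_k\prod_{j=1}^m\theta_j\Bigr)^{2}\;\ge\;1\;=\;|\det T_\R|,
\]
which is precisely Minkowski's hypothesis.

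Placing the $2n$ strict inequalities (involving the $\gamma_k$) at the start and reserving one non-strict inequality (involving some $\theta_j$) for the last slot, an application of Theorem~\ref{TheoremMinkowski} yields a non-zero lattice vector $(\q,\p)\in\Z[i]^{m+n}$ satisfying the entire system, with the strict/non-strict distribution matching \eqref{Eq:SysMink}. The one remaining item is to upgrade \emph{non-zero $(\q,\p)$} to \emph{non-zero $\q$}: if $\q=\mathbf{0}$ then the first $n$ inequalities collapse to $|p_k|_\infty<\gamma_k$, and as soon as each $\gamma_k\le 1$ this forces $p_k=0$ for all $k$, contradicting $(\q,\p)\neq 0$. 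One may reduce to that case by the standard observation that any inequality with $\gamma_k>1$ is automatically solvable by rounding $\q A_k$ to the nearest Gaussian integer in sup norm, and so such coordinates can be handled separately from the Minkowski step.

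The genuinely mechanical steps are the two determinant calculations and the bookkeeping of strict versus non-strict inequalities. The main obstacle I expect is the last one—cleanly isolating the condition $\q\neq 0$ when some $\gamma_k$ are larger than $1$—since Minkowski only delivers non-triviality of the full vector $(\q,\p)$, whereas the statement insists on non-triviality of the $\q$-component. Once that case-split is handled, the proof is essentially a repackaging of the real-variable result of Theorem~\ref{TheoremMinkowski}.
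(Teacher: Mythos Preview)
Your approach is essentially the same as the paper's: both reduce to the real Minkowski theorem (Theorem~\ref{TheoremMinkowski}) via the identification $\C\cong\R^2$, obtaining a nonzero lattice point $(\q,\p)\in\Z[i]^{m+n}$. You package the determinant calculation as $|\det T_{\R}|=|\det T_{\C}|^{2}=1$, while the paper writes out the real $2(m+n)\times2(m+n)$ system explicitly; the content is identical.

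You are right to worry about upgrading $(\q,\p)\neq 0$ to $\q\neq 0$; the paper's proof does not address this step. But your proposed fix has a gap: splitting off the coordinates with $\gamma_k>1$ and applying Minkowski to the remaining system also removes factors larger than $1$ from the product $\prod_k\gamma_k\prod_j\theta_j$, so the reduced product $\prod_{k:\gamma_k\leq 1}\gamma_k\prod_j\theta_j$ may drop below $1$ and Minkowski no longer applies. In fact no fix is possible in full generality, because the lemma as stated is false: with $m=n=1$, $\gamma_1=10$, $\theta_1=1/10$ one has $\gamma_1\theta_1=1$, yet the only $q\in\Z[i]$ with $|q|_{\infty}\leq 1/10$ is $q=0$. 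The clean repair is to add the hypothesis $\gamma_k\leq 1$ for all $k$; then your observation that $\q=0$ forces $|p_k|_{\infty}<\gamma_k\leq 1$ and hence $p_k=0$ finishes immediately. In every application of the lemma in the paper the $\gamma_k$ are values of functions tending to $0$, so this extra hypothesis is harmless there.
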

\begin{proof}
Take any matrix $A=(a_{j,k})_{1\leq j\leq m, 1\leq k\leq n} \in \C^{m\times n}$. For $j\in\{1,\ldots, m\}$ and $k\in \{1,\ldots, n\}$, write
\[
a_{j,k}^1 := \Re(a_{j,k})
\;\text{ and }\;
a_{j,k}^2 := \Im(a_{j,k}).
\]
For each $j\in\{1,\ldots, n\}$, define the column vectors $B_{2j-1}$, $B_{2j}$ by
\[
B_{2j-1}
:= 
\begin{pmatrix}
    a_{1,j}^1 \\
    -a_{1,j}^2 \\
    \vdots \\
    a_{m,j}^1 \\
    -a_{m,j}^2
\end{pmatrix}
\;\text{ and }\;
B_{2j}
:=
\begin{pmatrix}
   a_{1,j}^2 \\ 
   a_{1,j}^1 \\
   \vdots \\
   a_{m,j}^2 \\
   a_{m,j}^1.
\end{pmatrix}.
\]
Call $B\in \Z^{2m\times 2n}$ the matrix whose $r$-th column is $B_r$. 
Then, a vector $(\q,\p)\in \Z[i]^{1\times (m + n)}$ solves \eqref{Eq:SysMink} if and only if the vector $( \mathbf{Q} , \mathbf{P})=(Q_1,\ldots, Q_{2m}, P_1, \ldots, P_{2n})\in \Z^{1\times (2m+2n)}$ given by
\[
\mathbf{Q} =(\Re(q_1), \Im(q_1), \ldots, \Re(q_m), \Im(q_m))
\;\text{ and }\;
\mathbf{P} =(\Re(p_1), \Im(p_1), \ldots, \Re(p_n), \Im(p_n))
\]
solves the system
\begin{align}\label{Eq:SysMink02}
\left|\, \mathbf{Q} \, B_{2k-1} - P_{2k-1}\right| &< \gamma_{k} \quad (1\leq k\leq n),  \nonumber\\
\left|\, \mathbf{Q} \, B_{2k} - P_{2k}\right|  &< \gamma_{k} \quad (1\leq k\leq n),  \nonumber\\
\left| Q_{2j-1} \right| &\leq  \theta_{j}  \quad (1\leq j\leq m), \nonumber\\
\left| Q_{2j} \right|  &\leq  \theta_{j}  \quad (1\leq j\leq m).
\end{align}
Call $I_{2n}$ (resp. $I_{2m}$) the identity matrix of size $2n\times 2n$ (resp. $2m\times 2m$) and let $O_{2m\times 2n}$ be the matrix of size $2m\times 2n$ whose entries are all equal to $0$. Since 
\[
\prod_{k=1}^n \gamma_k \prod_{j=1}^m \theta_j\geq 1
\]
and the determinant of the matrix
\[
\begin{pmatrix}
B & I_{2m} \\
-I_{2n} & O_{2m\times 2n}
\end{pmatrix}
\]
is $1$, the system \eqref{Eq:SysMink} has a non-trivial solution by Minkowski's Theorem.
\end{proof}

Given any $k\in \N$ and $\mathbf{z}=(z_1,\ldots, z_k)\in \C^k$, irrespective of interpreting it as a row or a column, define
\[
\| \mathbf{z}\|_{\infty} 
:=
\max\left\{ |z_1|_{\infty}, \ldots, |z_k|_{\infty}\right\}.
\]
This way, if $\|\mathbf{z}\|_2=\sqrt{|z_1|^2 + \cdots + |z_k|^2}$ is the Euclidean norm on $\C^k$, we have
\begin{equation}\label{Eq:EquivNorm}
\frac{1}{\sqrt{2k}} \| \mathbf{z}\|_{2}
\leq 
\| \mathbf{z}\|_{\infty}
\leq
\| \mathbf{z}\|_{2}.
\end{equation}
Let $M \in \N$ be such that $M>2^{5n}3^m m^{n+1}$. 
We work with the following objects:
\begin{enumerate}[i.]
\item $J:=\{\alpha=(\q,\p)\in \Z[i]^{m+n}: \|\p\|_{\infty} \leq 2m \|\q\|_{\infty}\}$,
\item $\beta:J\to\R_{+}$, $\alpha=(\q,\p) \mapsto \beta_{\alpha}=\max\left\{ F_1\left( |q_1|_{\infty}\right), \ldots, F_m\left( |q_m|_{\infty} \right)\right\}$,
\item $(u_j)_{j\geq 1}$ given by $u_j=M^j$ for all $j\in \N$,
\item For each $\alpha = (\q,\p) \in J$, define $R_{\alpha,j} :=\left\{ A_j \in I_{\C}^{m\times 1}: \q \,A_j = p_j\right\}$ for $j\in\{ 1,\ldots, n\}$ and the resonant set $R_{\alpha}$ is $R_{\alpha}:=\prod_{j=1}^n R_{\alpha,j}$,
\item In this setting, we have $\kappa_j=1  - \frac{1}{m} = \frac{2m-2}{2m}$ and $\delta_j=2m$ for $j\in\{1,\ldots,n\} $.
\end{enumerate}
Let us expand on the $\kappa$-scaling property. If $V$ is a finite-dimensional complex vector space and if $W$ is a subspace of $V$, then both $V$ and $W$ are real vector spaces, and their dimension as real vector spaces is twice their dimension as complex vector spaces. 
The value of $\kappa_i$ is then obtained from \cite[Section 2.2]{AB19}.

Define the $n$-tuples of positive functions $\rho:(\rho_1,\ldots, \rho_n):\N\to \R_{+}^n$ by
\[
\rho_j(u)
:=
\sqrt{2}\,M\frac{\varphi_j(u)}{\|\Phi(u)\|_{\infty}} \left( \prod_{s=1}^n \varphi_s(u)\prod_{k=1}^m \Phi_k(u)\right)^{-1/n}
\quad (1\leq j\leq n, \, u\in \N)
\]
and $\Psi=(\psi_1,\ldots, \psi_n): \N\to \R_{+}^n$ by
\[
\psi_j(u)
:=
\frac{\varphi_j(u )}{2m \|\Phi(u )\|_{\infty} }
\quad (1\leq j\leq n, \, u\in \N).
\]
Note that
\begin{equation}\label{Eq:Quotpsirho}
\prod_{j=1}^n \frac{\psi_j(u)}{\rho_j(u)}
=
\left(2^{3/2} m M\right)^{-n}  \prod_{j=1}^n \varphi_j(u) \prod_{k=1}^m \Phi_k(u)
\quad(u\in \N).
\end{equation}

When there is a strictly increasing sequence of natural numbers $(r_s)_{s\geq 1}$ satisfying
\[
\prod_{j=1}^n \varphi_s(r_s)\prod_{k=1}^m \Phi_k(r_s)> 1
\quad(s\in \N),
\]
then, by Lemma \ref{Teo:Mink:Cx:LF}, we have $\mu_{m\times n}^{\C}\left(W_{n,m}^{\C}(\varphi,\Phi)\right)=\mu_{m\times n}^{\C}\left(I_{\C}^{m\times n}\right)$. Thus, we assume 
\begin{equation}\label{EQ:LF:C:ExtAss}
\prod_{j=1}^n \varphi_j(u)\prod_{k=1}^m \Phi_k(u) \leq 1
\quad(u\in \N).
\end{equation}
\begin{lemma}\label{Lem:UbiqSyst:CxLF}
The system $( ( R_{\alpha} )_{\alpha\in J}, \beta)$ is a weighted ubiquitous system with respect to the function $\rho$.
\end{lemma}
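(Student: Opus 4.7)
\textbf{Proof plan for Lemma \ref{Lem:UbiqSyst:CxLF}.}

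The plan is to follow the template of the motivating example (Proposition \ref{2 dim ubiquity}) and the $p$-adic case (Proposition \ref{ubiquity_p-adic}), with the $p$-adic Minkowski (Lemma \ref{p-adic minkowski}) replaced by its complex counterpart Lemma \ref{Teo:Mink:Cx:LF}. Fix a ball $B\subset I_{\C}^{m\times n}$. Implicit in the setup is the choice $l_k=M^{k-1}$, so $J_k=\{\alpha\in J: M^{k-1}\le \beta_{\alpha}\le M^k\}$. For each $A\in B$ and each large $k$, apply Lemma \ref{Teo:Mink:Cx:LF} with the parameters
\[
\gamma_{j}=\varphi_{j}(M^{k})\Bigl(\prod_{s=1}^{n}\varphi_{s}(M^{k})\prod_{r=1}^{m}\Phi_{r}(M^{k})\Bigr)^{-1/n},\qquad \theta_{r}=\Phi_{r}(M^{k}),
\]
which satisfy $\prod_{j}\gamma_{j}\prod_{r}\theta_{r}=1$. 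This produces a non-zero $(\q,\p)\in \Z[i]^{m+n}$ solving \eqref{Eq:SysMink}; since $\gamma_{j}\to 0$ (by \eqref{EQ:LF:C:ExtAss}, $\gamma_j\ge \varphi_j(M^k)$), one checks $\q\neq 0$ for large $k$, and then the triangle inequality together with $\|A_{j}\|_{\infty}\le 1/2$ gives $\|\p\|_{\infty}\le 2m\|\q\|_{\infty}$, so $(\q,\p)\in J$.

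Next, translate the inequalities $|\q A_{j}-p_{j}|_{\infty}<\gamma_{j}$ into membership in a thickened resonant set. Using the equivalence \eqref{Eq:EquivNorm} together with the computation that the Euclidean distance from $A_{j}$ to the complex affine hyperplane $R_{\alpha,j}=\{Y:\q Y=p_{j}\}$ equals $|\q A_{j}-p_{j}|/\|\q\|_{2}$, one obtains
\[
A_{j}\in\Delta_{j}\!\left(R_{\alpha,j},\,C_{m}\gamma_{j}/\|\q\|_{\infty}\right)
\]
for a constant $C_{m}$ depending only on $m$. Since $\|\q\|_{\infty}\le \|\Phi(M^{k})\|_{\infty}$ and $\rho_{j}(M^{k})=\sqrt{2}M\gamma_{j}/\|\Phi(M^{k})\|_{\infty}$, the choice $M>2^{5n}3^{m}m^{n+1}$ makes $C_{m}\gamma_{j}/\|\q\|_{\infty}\le \rho_{j}(M^{k})$ for every admissible $\q$. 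Consequently,
\[
\mu^{\C}_{m\times n}\Bigl(B\cap\bigcup_{\alpha\in J:\,\beta_{\alpha}\le M^{k}}\Delta\!\left(R_{\alpha},\rho(M^{k})\right)\Bigr)=\mu^{\C}_{m\times n}(B).
\]

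Finally, decompose the union over $\{\beta_\alpha\le M^k\}$ into the portion with $\alpha\in J_{k}$ (what we want) and a ``bad'' portion with $\beta_{\alpha}<M^{k-1}$, and show the bad portion contributes at most $\tfrac12\mu^{\C}_{m\times n}(B)$. Here the regularity hypothesis \eqref{EQ:LinearForms:C} enters in the form $\Phi_{r}(M^{k-1})\le c_{1}^{-1}\Phi_{r}(M^{k})$, so $\beta_{\alpha}<M^{k-1}$ forces $|q_{r}|_{\infty}<\Phi_{r}(M^{k-1})$ for every $r$. Estimate the measure of the bad part by the familiar counting scheme: bound the number of admissible $\q$ by $\prod_{r}(1+2\Phi_{r}(M^{k-1}))^{2}$, bound the number of $\p$ with $\Delta(R_{\alpha},\rho(M^k))\cap B\neq\emptyset$ given $\q$ by an analogue of Lemma \ref{lines in balls} for Gaussian integers, and multiply by the single-rectangle measure bound $\ll_m r(B)^{2n(m-1)}\prod_{j}\rho_{j}(M^{k})^{2}$. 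Using \eqref{Eq:Quotpsirho} and the relation $\psi_{j}(M^{k})=\varphi_{j}(M^{k})/(2m\|\Phi(M^{k})\|_{\infty})$, the sum telescopes and the prefactor is $\ll_{c_1,c_2,n,m} M^{-2}$ times $\mu^{\C}_{m\times n}(B)$, which is $<\tfrac12\mu^{\C}_{m\times n}(B)$ provided $M$ exceeds the stated threshold.

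The main obstacle is step three: obtaining the Gaussian-integer counting estimate and organising the resulting measure bound so that the ``lost'' $\mu^{\C}_{m\times n}(B)/2$ is genuinely controlled by the regularity constants $c_{1},c_{2}$ and by $M$. The translation from linear-form inequalities to hyperplane neighbourhoods, which requires keeping track of the $m$-dependent constants coming from \eqref{Eq:EquivNorm} and from the $\R$-linear structure underlying $\C$-linear forms, is the other delicate point; all other arguments are routine analogues of those in Propositions \ref{2 dim ubiquity} and \ref{ubiquity_p-adic}.
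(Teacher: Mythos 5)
There is a genuine gap, and it appears precisely where you flag ``step three'' as the main obstacle, but the trouble starts earlier, in the claim that makes your full-measure identity possible. You write that the choice $M>2^{5n}3^mm^{n+1}$ makes $C_m\gamma_j/\|\q\|_\infty\leq\rho_j(M^k)$ \emph{for every admissible} $\q$. Unwinding $\rho_j(M^k)=\sqrt{2}M\gamma_j/\|\Phi(M^k)\|_\infty$, this inequality is $\|\q\|_\infty\geq (C_m/\sqrt{2}M)\,\|\Phi(M^k)\|_\infty$, which fails badly for $\q$ with small norm (Minkowski only guarantees $|q_k|_\infty\leq\Phi_k(M^k)$, not a lower bound). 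Consequently the displayed identity
\(
\mu^{\C}_{m\times n}\bigl(B\cap\bigcup_{\beta_\alpha\leq M^k}\Delta(R_\alpha,\rho(M^k))\bigr)=\mu^{\C}_{m\times n}(B)
\)
is not justified; the Minkowski-produced $A$ lands in the $\|\q\|_\infty$-dependent neighbourhood $\prod_j\Delta(R_{\alpha,j},C_m\gamma_j/\|\q\|_\infty)$, which can be much fatter than $\Delta(R_\alpha,\rho(M^k))$. This is exactly why the paper's Proposition~\ref{Prop:Cx:Measure:Aux} keeps the radius $\sqrt{2}\,f_j(u)/\|\q\|_\infty$ instead of passing to $\rho$ at that stage.

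The decomposition you choose does not repair this. ``Bad'' $=\{\beta_\alpha<M^{k-1}\}$ forces \emph{all} coordinates $|q_r|_\infty\lesssim\Phi_r(M^{k-1})\leq c_1^{-1}\Phi_r(M^k)$, so the count of bad $\q$ shrinks only by a fixed factor $c_1^{-2m}$, not by $M^{-2}$; multiplying this by the $\rho$-rectangle measure (which itself carries a factor $M^{2n}$ from the definition of $\rho$) gives a bound of order $(M/c_1)^{2n}c_1^{-2m}\mu^{\C}_{m\times n}(B)$, which \emph{grows} in $M$. So the asserted prefactor $\ll_{c_1,c_2,n,m}M^{-2}$ is incorrect. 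Conversely, ``good'' $=\{\beta_\alpha\geq M^{k-1}\}$ only guarantees that \emph{one} coordinate of $\q$ is comparable to the corresponding $\Phi_r$; it does not give $\|\q\|_\infty\gtrsim\|\Phi(M^k)\|_\infty/M$, which is what you need to replace $\gamma_j/\|\q\|_\infty$ by $\rho_j(M^k)$. The paper resolves both issues at once with a \emph{per-coordinate} decomposition: the good set $\widetilde{J}_s$ requires $\Phi_k(u_s)/M\leq|q_k|_\infty\leq\Phi_k(u_s)$ \emph{for every} $k$ (whence $\|\q\|_\infty\geq\|\Phi(u_s)\|_\infty/M$ and the $\rho$-conversion is valid), while the bad sets $J_{s,k}$ each constrain a \emph{single} coordinate $|q_k|_\infty\leq\Phi_k(u_s)/M$, which is what produces the $M^{-2}$ cardinality gain against the $M$-independent radii $\sqrt{2}f_j(u_s)/\|\q\|_\infty$. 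Your $\beta_\alpha$-based split, borrowed from the scalar-denominator real and $p$-adic cases, does not generalise to the vector $\Phi=(\Phi_1,\ldots,\Phi_m)$ here; you must split on the coordinates of $\q$ individually.
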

The proof of Lemma \ref{Lem:UbiqSyst:CxLF} will require three preliminary propositions and some additional discussion.
In what follows, given a vector $\mathbf{z}\in \C^{m\times 1}$ (resp. $\mathbf{z}\in \C^{1\times m }$) , we denote by $\overline{\mathbf{z}}$ the vector in $\C^{m\times 1}$ (resp. in $\C^{1\times m }$) whose $r$-th coordinate is the complex conjugate of the $r$-th coordinate of $\mathbf{z}$.
\begin{proposition}\label{Prop:Cx:Measure:Aux}
Let $f_j:\N\to\R_{+}$, $1\leq j\leq n$, be positive functions such that $f_j(u)\to 0$ as $u\to \infty$ and
\[
\prod_{j=1}^n f_j(u)\prod_{k=1}^m \Phi_k(u) = 1
\quad(u\in\N).
\]
For all large $u\in \N$ and all $A$ in the interior of $I_{\C}^{m\times n}$, there is some $\alpha=(\q,\p) \in J$ satisfying $\beta_{\alpha} \leq u$ and 
\begin{equation}\label{Eq:Propo:LemUbiqSys}
A\in \prod_{j=1}^n \Delta\left( R_{\alpha,j}; \sqrt{2} \, \frac{f_j(u)}{\|\q\|_{\infty} } \right).
\end{equation}
\end{proposition}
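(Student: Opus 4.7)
My plan is to apply the complex Minkowski theorem (Lemma~\ref{Teo:Mink:Cx:LF}) with the parameters $\gamma_k = f_k(u)$ for $1\leq k\leq n$ and $\theta_j = \Phi_j(u)$ for $1\leq j\leq m$. The normalisation hypothesis $\prod_{j=1}^n f_j(u)\prod_{k=1}^m \Phi_k(u) = 1$ is exactly the determinant condition needed, so the lemma produces a non-zero $(\q,\p)\in \Z[i]^{m+n}$ satisfying
\begin{align*}
|\q A_k - p_k|_\infty &< f_k(u) \quad (1\leq k\leq n),\\
|q_j|_\infty &\leq \Phi_j(u) \quad (1\leq j\leq m).
\end{align*}
The bulk of the work is then to check that this $\alpha=(\q,\p)$ is a valid index and that \eqref{Eq:Propo:LemUbiqSys} holds.

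Next I would verify the three structural properties of $\alpha$. First, that $\q\neq 0$: since $f_k(u)\to 0$, for $u$ large enough $f_k(u)<1$, and $\q=0$ would force every $p_k\in\Z[i]$ to satisfy $|p_k|_\infty<1$, i.e.\ $p_k=0$, contradicting $(\q,\p)\neq 0$. In particular $\|\q\|_\infty\geq 1$. Second, that $\alpha\in J$: since $A\in I_\C^{m\times n}$ each entry satisfies $|a_{k,j}|_\infty\leq \tfrac12$, and a direct computation on real and imaginary parts yields $|q_k a_{k,j}|_\infty\leq |q_k|_\infty$, so that $|\q A_j|_\infty\leq m\|\q\|_\infty$. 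The triangle inequality together with $f_j(u)\leq 1\leq \|\q\|_\infty$ then gives $|p_j|_\infty\leq (m+1)\|\q\|_\infty\leq 2m\|\q\|_\infty$ as required. Third, that $\beta_\alpha\leq u$: from $|q_j|_\infty\leq \Phi_j(u)$ and the definition of $F_j$ as a minimum, we get $F_j(|q_j|_\infty)\leq u$, hence $\beta_\alpha=\max_j F_j(|q_j|_\infty)\leq u$.

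For the main inclusion \eqref{Eq:Propo:LemUbiqSys}, I would construct an explicit nearby point on each resonant hyperplane. Fix $j\in\{1,\ldots,n\}$ and pick an index $k_0$ realising $|q_{k_0}|_\infty=\|\q\|_\infty$. Define $A_j^*\in \C^{m\times 1}$ by leaving all coordinates of $A_j$ unchanged except the $k_0$-th, which is replaced by $a_{k_0,j}^*:=a_{k_0,j}-(\q A_j-p_j)/q_{k_0}$. Then $\q A_j^*=p_j$, and using the elementary inequalities $|z|_\infty\leq|z|\leq\sqrt{2}\,|z|_\infty$ for $z\in\C$ applied to the quotient $(\q A_j-p_j)/q_{k_0}$, we obtain
\[
\|A_j - A_j^*\|_\infty = \bigl| (\q A_j - p_j)/q_{k_0}\bigr|_\infty \leq \sqrt{2}\,\frac{|\q A_j - p_j|_\infty}{|q_{k_0}|_\infty} < \sqrt{2}\,\frac{f_j(u)}{\|\q\|_\infty},
\]
which is exactly the radius in the statement.

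The one remaining subtlety is that $R_{\alpha,j}$ demands $A_j^*\in I_\C^{m\times 1}$, not merely $A_j^*\in \C^{m\times 1}$; this is where the interior hypothesis on $A$ enters. Since the perturbation is of size at most $\sqrt{2}\,f_j(u)$ and $f_j(u)\to 0$, for every sufficiently large $u$ (with threshold depending on the distance from $A$ to $\partial I_\C^{m\times n}$) the modified entry $a_{k_0,j}^*$ still lies in $I_\C$, so that $A_j^*\in R_{\alpha,j}$. I expect the main point of technical care throughout to be the consistent conversion between $|\cdot|_\infty$ and the Euclidean modulus on $\C$ so that the constant $\sqrt{2}$ tracks through correctly; beyond that, the argument is a straight application of Minkowski.
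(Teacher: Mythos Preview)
Your proof is correct and actually more self-contained than the paper's, but the two diverge at the key step of showing $A_j$ lies in the $\sqrt{2}\,f_j(u)/\|\q\|_\infty$-neighbourhood of $R_{\alpha,j}$. The paper constructs the orthogonal projection of $A_j$ onto the affine hyperplane $\{\q\,x=p_j\}$ with respect to the Euclidean inner product on $\C^m\cong\R^{2m}$, verifies via an orthogonality argument that this realises the minimum Euclidean distance $|\q A_j-p_j|/\|\q\|_2$, and then converts norms. You instead perturb a single coordinate of $A_j$ (the one indexed by a maximal $|q_{k_0}|_\infty$) to land on the hyperplane and bound the perturbation directly. Your route is more elementary---no inner product or minimisation---and yields the same constant $\sqrt{2}$; the paper's route has the advantage of giving the exact Euclidean distance, which can be reused (and is, implicitly, in Proposition~\ref{Prop:EstMeasBall}). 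You also supply two details the paper leaves implicit: the verification that $\|\p\|_\infty\leq 2m\|\q\|_\infty$ so that $\alpha\in J$, and the observation that the interior hypothesis on $A$ together with $f_j(u)\to 0$ keeps the perturbed point inside $I_\C^{m\times 1}$.
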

\begin{proof}
Take $A$ in the interior of $I_{\C}^{m\times n}$. 
By Lemma \ref{Teo:Mink:Cx:LF}, there is some non-zero $(\q,\p)\in \Z[i]^{1\times(m+n)}$ such that 
\begin{align*}
\left| \q\, A_j - p_j\right|_{\infty} &< f_j(u) \quad (1\leq j\leq n), \\[2ex]
\left|q_k\right|_{\infty} &\leq \Phi_k(u) \quad (1\leq k\leq m).
\end{align*}
The vector $\alpha=(\q,\p)$ satisfies $\beta_{\alpha}\leq u$ and
\begin{align*}
\left| \q\,  A_j - p_j\right| &< \sqrt{2} \, f_j(u)  \,\quad\, (1\leq j\leq n), \\[2ex]
\left|q_k\right| &\leq \sqrt{2}\, \Phi_k(u)\quad (1\leq k\leq m).
\end{align*}
Let us verify \eqref{Eq:Propo:LemUbiqSys}. 
Let $\mathbf{q}^{\top}$ be the transpose of $\mathbf{q}$. 
Define the vector
\[
\bfa
=
\left(
\frac{\overline{p_j}}{\|q_j\|_2^2} - \frac{\overline{\q}\,\overline{A_j}}{\|\q\|_2^2}
\right)\q^{\top}
+
\overline{A_j}.
\]
Direct computations yield 
\[
\bfa \in R_{\alpha,j}
\quad\text{ and }\quad
\|A_j - \bfa\| = \frac{|\q A_j - p|}{\|\q\|^2}.
\]
Pick any $\bfb \in R_{\alpha,j}$. 
The vectors 
\[
\bfw_{\bfb} = \overline{\bfb} - \frac{\overline{p_j}}{\| \q \|_2^2} \q^{\top}
\;\text{ and }\;
\mathbf{v} = \overline{A_j} - \frac{ \overline{\q} \, \overline{A_j} }{\|\q\|^2} \q^{\top}
\]
are orthogonal to $\q^{\top}$ with respect to the usual inner product,
so
\begin{align*}
\left\| A_j - \bfb \right\|_2^2
&=
\left\| \overline{A_j} - \overline{\bfb} \right\|_2^2 \\
&=
\left\| 
 \frac{ \overline{\q} \, \overline{A_j} }{\|\q\|_2^2} \q^{\top}
 -
 \frac{\overline{p_j}}{\| \q \|_2^2} \q^{\top}
 \right\|_2^2 
 +
 \|\mathbf{v} - \bfw_{\bfb}\|_2^2 \\
 &\geq 
 \frac{|\q A_j - p|^2}{\| \q\|_2^2} \\
 &=
 \|A_j - \bfa\|_2^2.
\end{align*}
Therefore, 
\[
\min\left\{ \|A_j - \mathbf{a}\|_{\infty}:\mathbf{a}\in R_{\alpha,j} \right\}
\leq 
\min\left\{ \|A_j - \mathbf{a}\|_2:\mathbf{a}\in R_{\alpha,j} \right\}
=
\frac{ |\q  \, A_j -  p_j | }{\|\q\|_2}
\leq 
\frac{ \sqrt{2}\, f_j(u) }{\|\q\|_{\infty}}.
\]
\end{proof}
Define the functions $f_j:\N \to \R$, $j\in\{1,\ldots,n\}$, by
\[
f_j(u)
:=
\varphi_j(u) \left( \prod_{s=1}^n \varphi_s(u)  \prod_{k=1}^m \Phi_k(u) \right)^{-1/n}
\quad\quad(u\in \N).
\]
For each $s\in \N$, define the set
\[
\widetilde{J}_s
:=
\left\{ \alpha=(\q,\p)\in J : \frac{\Phi_k(u_s)}{M}\leq |q_k|_{\infty} \leq \Phi_k(u_s) \quad (1\leq k\leq m) \right\}.
\]
Observe that $\beta_{\alpha}\leq u_s$ when $\alpha=(\q,\p)\in \widetilde{J}_{s}$. 
Then, since $\beta_{\alpha}\to \infty$ as $\alpha\to\infty$, we may choose an adequate $l_s$ ensuring $\widetilde{J}_s\subseteq J_s$. For $k\in \{1,\ldots, m\}$, write 
\[
J_{s,k}
:=
\left\{ \alpha\in J: \left| q_k\right|_{\infty} \leq \frac{\Phi_k(u_s)}{M} \text{ and } |q_j|_{\infty}\leq \Phi_j(u_s) \quad( j\in \{1,\ldots m\}\setminus \{k\}) \right\}.
\]
Let $B=\prod_{k=1}^n B(X_k;r)$ be an arbitrary ball whose closure is contained in the interior of $I_{\C}^{m\times n}$. In view of Proposition \ref{Prop:Cx:Measure:Aux}, for large $s\in\N$ we have 
\begin{align*}
B 
&= 
B\cap \bigcup_{ \substack{\alpha\in J\\ \beta_{\alpha}\leq u_s} } \prod_{j=1}^n\Delta\left( R_{\alpha,j}, \sqrt{2}\, \frac{f_j(u_s)}{\beta_{\alpha} } \right) \nonumber\\
&=
\left(B\cap \bigcup_{\alpha \in \widetilde{J}_s} \prod_{j=1}^n \Delta\left( R_{\alpha,j}, \sqrt{2}\, \frac{f_j(u_s)}{\beta_{\alpha}  } \right)\right)
\,\cup\,
\left(B\cap \bigcup_{h=1}^m \bigcup_{\alpha \in J_{s,h}} \prod_{j=1}^n \Delta\left( R_{\alpha,j}, \sqrt{2}\, \frac{f_j(u_s)}{\beta_{\alpha} } \right)\right).
\end{align*}
\begin{proposition}\label{Prop:EstMeasBall}
There is some $N(r)\in\N$ such that for all $j\in\{1,\ldots, n\}$ and $\q \in \Z[i]^{1\times m}$, every $s\in\N_{\geq N(r)}$ satisfies
\[
\#\left\{ p \in \Z[i]:
B(X_j,r)
\cap 
\Delta\left( R_{\alpha,j}, \sqrt{2}\, \frac{f_j(u_s)}{\|\q\|_{\infty}} \right) 
\neq 
\varnothing
\right\}
\leq 
(8rm \|\q\|_{\infty} + 2)^{2}.
\]
\end{proposition}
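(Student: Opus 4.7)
The strategy is to show that the condition
$B(X_j,r)\cap \Delta(R_{\alpha,j},\varepsilon_s)\neq\varnothing$
with $\varepsilon_s:=\sqrt{2}\,f_j(u_s)/\|\q\|_{\infty}$ forces the Gaussian integer $p=p_j$ to lie in a square of side comparable to $r\|\q\|_{\infty}$ around $\q X_j$, and then to count lattice points in that square. Concretely, picking any $A_j$ in the intersection and then any $\bfa\in R_{\alpha,j}$ with $\|A_j-\bfa\|_{\infty}<\varepsilon_s$ gives, by the triangle inequality, $\|\bfa-X_j\|_{\infty}<r+\varepsilon_s$. Since $\bfa\in R_{\alpha,j}$ means $\q\bfa=p$, the key identity $p-\q X_j=\q(\bfa-X_j)$ constrains the location of $p$.

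The case $\q=\mathbf{0}$ is immediate: membership $\alpha=(\q,\p)\in J$ forces $\|\p\|_{\infty}\leq 2m\|\q\|_{\infty}=0$, so only $p=0$ is admissible and the bound $(8rm\cdot 0+2)^2=4$ holds trivially. For $\q\neq \mathbf{0}$ we have $\|\q\|_{\infty}\geq 1$ and thus $\varepsilon_s\leq \sqrt{2}\,f_j(u_s)$. Choose $N(r)\in\N$ large enough that $\sqrt{2}\,f_j(u_s)<r$ for every $s\geq N(r)$ and every $j\in\{1,\ldots,n\}$, so that $\|\bfa-X_j\|_{\infty}<2r$. Cauchy--Schwarz combined with the norm equivalence \eqref{Eq:EquivNorm} then yields
\begin{equation*}
|p-\q X_j|_{\infty}\leq |p-\q X_j|=|\q(\bfa-X_j)|\leq \|\q\|_2\,\|\bfa-X_j\|_2\leq 2m\|\q\|_{\infty}\|\bfa-X_j\|_{\infty}<4rm\|\q\|_{\infty}.
\end{equation*}

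Hence both $\Re(p)$ and $\Im(p)$ lie in a real interval of length strictly less than $8rm\|\q\|_{\infty}$, and since any such interval contains at most $\lfloor 8rm\|\q\|_{\infty}\rfloor+1\leq 8rm\|\q\|_{\infty}+2$ integers, the admissible Gaussian integers $p$ number at most $(8rm\|\q\|_{\infty}+2)^2$, as claimed. The main subtle point is the existence of $N(r)$: it requires $\max_{1\leq j\leq n}f_j(u_s)\to 0$ as $s\to\infty$, which should follow from the ubiquity regime's natural requirement that $\rho_j(u_s)\to 0$ (as imposed in Theorem \ref{KW ambient measure}). If some $f_j$ fails to tend to zero, one can instead split the argument into the cases $\varepsilon_s\leq r$ and $\varepsilon_s>r$, handling the latter range via the crude estimate $\|\bfa-X_j\|_{\infty}\leq 1$ coming from $\bfa,X_j\in I_{\C}^{m\times 1}$, at the cost of adjusting the constants.
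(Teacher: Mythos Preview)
Your proof is correct and follows essentially the same route as the paper: pick a point in the intersection, pass to a nearby point on the hyperplane $R_{\alpha,j}$, use Cauchy--Schwarz together with the norm comparison \eqref{Eq:EquivNorm} to bound $|p-\q X_j|_{\infty}$ by $4mr\|\q\|_{\infty}$, and count Gaussian integers in the resulting square. The paper uses the same chain of inequalities (with $Y_j$ playing the role of your $A_j$ and $A_j$ playing the role of your $\bfa$) and the same lattice-point count. Your explicit treatment of $\q=\mathbf{0}$ and your remark on the need for $f_j(u_s)\to 0$ are points the paper leaves implicit; note, however, that $\rho_j\to 0$ alone does not obviously force $f_j\to 0$ (since $f_j(u)=\rho_j(u)\|\Phi(u)\|_{\infty}/(\sqrt{2}M)$), so the paper is tacitly relying on the hypothesis $f_j(u)\to 0$ stated in Proposition~\ref{Prop:Cx:Measure:Aux}.
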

\begin{proof}
Let $j\in\{1,\ldots, n\}$ and $0\neq \q\in\Z[i]^m$ be arbitrary. Take $p\in\Z[i]$ be such that for some $A_j, Y_j \in I_{\C}^{m\times 1}$ we have
\[
\|X_j - Y_j\|_{\infty}< r, 
\quad
\q A_j = p, 
\;\text{ and } \;
\|A_j - Y_j\|_{\infty}< \sqrt{2}\, \frac{f_j(u_s)}{\|\q\|_{\infty} }.
\]
That is, the $\frac{f_j(u_s)}{\|\q\|_{\infty} }$-thickened hyperplane $\q A_j = p$ has non-empty intersection with $B(X_j,r)$. By Cauchy's inequality and \eqref{Eq:EquivNorm}, we have
\[
| \q  X_j - p|_{\infty}
\leq 
| \q  (X_j - A_j)|
\leq 
\|\q\|_2 \|X_j - A_j\|_2
\leq 
2m \|\q\|_{\infty} \left(\|X_j - Y_j\|_{\infty} + \|Y_j - A_j \|_{\infty}\right)
\]
and hence
\[
| \q  X_j - p|_{\infty}
<
2m \|\q\|_{\infty} \left( r + \sqrt{2} \, \frac{f_j(u_s)}{\|\q\|_{\infty}} \right).
\]
As a consequence, for every large $s\in \N$ (depending on $r$) we have
\[
| \q  X_j - p|_{\infty}
\leq 
4mr\|\q\|_{\infty}.
\]
The proposition now follows from the next elementary estimate: 
\begin{equation}\label{Eq:ElemEst}
\#\{z\in \Z[i] : |z|_{\infty} \leq R\} \leq (2R+2)^2
\quad (R>0).
\end{equation}
\end{proof}

\begin{proposition}\label{complexprop4}
If $M>2^{5n}3^m m^{n+1}$ and writing $\alpha=(\q,\p)$, every large $s\in\N$ satisfies
\begin{equation}\label{Eq:LemUbiqPrel}
\mu_{m\times n}^{\C} \left( B\cap \bigcup_{k=1}^m \bigcup_{\alpha \in J_{s,k}} \prod_{j=1}^n\Delta\left( R_{\alpha,j}, \sqrt{2}\, \frac{f_j(u_s)}{\|\q\|_{\infty}} \right)\right)
\leq 
\frac{1}{2}\mu_{m\times n}^{\C}(B).
\end{equation}
\end{proposition}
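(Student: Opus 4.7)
The plan is to prove the estimate \eqref{Eq:LemUbiqPrel} by a direct union bound, in the same spirit as the $p$-adic Proposition \ref{ubiquity_p-adic}: bound the measure of each thickened resonant piece using the $\kappa$-scaling property, count the number of relevant $\p$ for a fixed $\q$ via Proposition \ref{Prop:EstMeasBall}, count the relevant $\q$ in $J_{s,k}$ via \eqref{Eq:ElemEst}, and exploit a clean normalisation built into the definition of $f_j$.

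Step one. For $\alpha=(\q,\p)\in J_{s,k}$, I use the $\kappa$-scaling property with $\delta_j=2m$ and $\kappa_j=(m-1)/m$ (so $\delta_j\kappa_j=2(m-1)$ and $\delta_j(1-\kappa_j)=2$) together with the product structure to get
\[
\mu^{\C}_{m\times n}\!\left(B\cap \prod_{j=1}^n\Delta\!\left(R_{\alpha,j},\sqrt{2}\,\tfrac{f_j(u_s)}{\|\q\|_\infty}\right)\right)
\;\ll\; r^{2n(m-1)}\,\|\q\|_\infty^{-2n}\prod_{j=1}^n f_j(u_s)^2.
\]
Step two. For fixed $\q$, Proposition \ref{Prop:EstMeasBall} gives at most $(8rm\|\q\|_\infty+2)^{2n}$ choices of $\p\in\Z[i]^n$ for which the corresponding product set meets $B$. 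Since $\q\neq 0$ forces $\|\q\|_\infty\ge 1$, this bound is absorbed as $(8rm+2)^{2n}\|\q\|_\infty^{2n}$, cancelling the awkward $\|\q\|_\infty^{-2n}$ factor.

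Step three. For each $k$, \eqref{Eq:ElemEst} bounds the number of admissible $\q$ by
\[
\left(\tfrac{2\Phi_k(u_s)}{M}+2\right)^2 \prod_{j\neq k}\bigl(2\Phi_j(u_s)+2\bigr)^2\;\ll\;\frac{1}{M^{2}}\prod_{j=1}^m \Phi_j(u_s)^2,
\]
valid for large $s$ since each $\Phi_j(u_s)\to\infty$; this is where the decisive $M^{-2}$ saving appears. Step four is the crucial algebraic cancellation: the definition of $f_j$ yields
\[
\prod_{j=1}^n f_j(u_s)=\prod_{k=1}^m \Phi_k(u_s)^{-1},
\qquad\text{so}\qquad
\prod_{j=1}^n f_j(u_s)^2\,\prod_{k=1}^m \Phi_k(u_s)^2=1.
\]
Assembling these four pieces, the sum over $\alpha\in J_{s,k}$ is bounded by a constant multiple of $r^{2nm}\,m^{2n}\,M^{-2}$; using $\mu^\C_{m\times n}(B)\asymp r^{2nm}$ and summing over $k\in\{1,\ldots,m\}$ yields a bound of the form $C\,m^{2n+1}\,M^{-2}\,\mu^\C_{m\times n}(B)$. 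The quantitative hypothesis $M>2^{5n}3^m m^{n+1}$ is then tuned precisely to absorb $C$ and deliver the required factor $\tfrac12$.

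The main obstacle will be purely bookkeeping rather than conceptual: keeping track of the real-versus-complex dimension factors of $2$ throughout, the $\sqrt{2}$'s arising from the equivalence \eqref{Eq:EquivNorm} and from Proposition \ref{Prop:Cx:Measure:Aux}, the absolute constants produced at each step (the factor $(8rm+2)^{2n}$ from Proposition \ref{Prop:EstMeasBall}, the $4^{2m}$ style constant from \eqref{Eq:ElemEst}, the implied constant from the $\kappa$-scaling property), and matching them against the explicit threshold $M>2^{5n}3^m m^{n+1}$. Since all the constants are absolute, this is an accounting task; the mathematics itself is essentially forced by the structure already present in the $p$-adic proof.
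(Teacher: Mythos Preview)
Your overall strategy matches the paper's, but Step two contains a genuine gap that breaks the argument. You bound
\[
(8rm\|\q\|_\infty+2)^{2n}\le (8rm+2)^{2n}\|\q\|_\infty^{2n},
\]
which is correct since $\|\q\|_\infty\ge 1$, but this produces a factor $(8rm+2)^{2n}$ rather than the $r^{2n}$ you need. Combined with the $r^{2n(m-1)}$ from the $\kappa$-scaling you obtain a bound of order $(8rm+2)^{2n}r^{2n(m-1)}M^{-2}$, and comparing with $\mu^{\C}_{m\times n}(B)\asymp r^{2nm}$ leaves you with a ratio $\asymp (8rm+2)^{2n}r^{-2n}M^{-2}$. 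For small $r$ this behaves like $2^{2n}r^{-2n}M^{-2}$, which cannot be made $\le \tfrac12$ by any choice of $M$ independent of $B$.

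The paper deals with this by splitting $(a+b)^{2n}\le (2a)^{2n}+(2b)^{2n}$ with $a=8mr\|\q\|_\infty$ and $b=2$. The first term carries the factor $r^{2n}$ and, after your Steps three and four, gives exactly the bound $\tfrac14\mu^{\C}_{m\times n}(B)$ once $M>2^{5n}3^m m^{n+1}$. The second term has no $r$-dependence and no $M^{-2}$ saving; instead one is left with
\[
r^{2n(m-1)}\sum_{k=1}^m\sum_{\q\in J_{s,k}'}\frac{\prod_j f_j(u_s)^2}{\|\q\|_\infty^{2n}}
\;\ll\;
\sum_{k=1}^m \frac{\log \Phi_k(u_s)}{\Phi_k(u_s)^2},
\]
which tends to $0$ as $s\to\infty$. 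This second piece is precisely what ``every large $s$'' in the statement is buying you, and it cannot be absorbed into a constant depending only on $M,m,n$.
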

\begin{proof}
Take $s\in\N$. For each $k\in\{1,\ldots, m\}$, write 
\[
J_{s,k}':= \left\{ \q \in \Z[i]^{1 \times m} : \alpha=(\q,\p) \in J_{s,k} \right\}\, ,
\]
so, by \eqref{Eq:ElemEst}, every large $s\in \N$ verifies
\[
\# J_{n,k}'
\leq
3^{2m}\left(\frac{1}{M}\prod_{j=1}^m \Phi_j(u_n)\right)^2.
\]
Let $G_{\C}$ be the intersection in \eqref{Eq:LemUbiqPrel}. By the $\kappa$-scaling property and Proposition \ref{Prop:EstMeasBall}, 
\[
\mu_{m\times n}^{\C}(G_{\C})
\leq 
\sum\limits_{k=1}^m \sum\limits_{\q\in J_{s,k}'} 
(8m r\|\q\|_{\infty} + 2)^{2n} \left(\prod_{j=1}^n \sqrt{2}\,\frac{f_j(u_s)r^{m-1}}{\|\q\|_{\infty}} \right)^2.
\]
Recall that $(a+b)^{2n}\leq (2a)^{2n}+ (2b)^{2n}$ for all $a,b\geq 0$, then 
\begin{equation}\label{Eq:Cx:Sums}
\mu_{m\times n}^{\C}(G_{\C})
\leq 
2^{9n}m^{2n}r^{2mn}
\sum\limits_{k=1}^m \sum\limits_{\q\in J_{s,k}'} 
\prod_{j=1}^n f_j^2(u_s)  
+ 
2^{5n}r^{2n(m-1)}
\sum\limits_{k=1}^m \sum\limits_{\q\in J_{s,k}'} 
\prod_{j=1}^n \frac{f_j^2(u_s) }{\|\q\|_{\infty}^2}.
\end{equation}
We can bound the the first sum in \eqref{Eq:Cx:Sums} using $M>2^{5n}3^m m^{n+1}$ as follows:
\begin{align*}
2^{9n}m^{2n}r^{2mn}
\sum\limits_{k=1}^m \sum\limits_{\q\in J_{s,j}'} 
\prod_{k=1}^n f_k^2(u_s)  
&=
\frac{2^{9n}m^{2n}r^{2mn}}{ \Phi_1^2(u_s)\cdots \Phi_m^2(u_s) }
\sum\limits_{k=1}^m \#J_{s,j}' \\
&\leq
\frac{2^{9n}3^{2m}m^{2n+1}}{M^2} r^{2nm} \\
&<  \frac{1}{4} \mu_{m\times n}^{\C}(B).   
\end{align*}

The second sum in \eqref{Eq:Cx:Sums} tends to $0$ as $s$ tends to $\infty$. Indeed, since $\#\{q\in \Z[i]: |q|_{\infty}=s\}=8s$, we have
\begin{align*}
\sum\limits_{k=1}^m \sum\limits_{\q\in J_{s,k}'} 
\prod_{j=1}^n \frac{f_j^2(u_s) }{\|\q\|^2}
&=
\frac{1}{\Phi_1^2(u_s)\cdots \Phi_m^2(u_s)}
\sum\limits_{k=1}^m \sum\limits_{\q\in J_{s,k}'} 
\frac{1}{\|\q\|^{2n}}\nonumber\\
&\leq 
\frac{1}{\Phi_1^2(u_s)\cdots \Phi_m^2(u_s)}
\sum\limits_{k=1}^m \sum\limits_{\q\in J_{s,k}'} 
\frac{1}{\|\q\|^{2}}\nonumber\\
&\leq 
\frac{1}{\Phi_1^2(u_s)\cdots \Phi_m^2(u_s)}
\sum\limits_{k=1}^m \sum\limits_{q_k=1}^{\Phi_k(u_s)/M} 
\frac{ 8q_k }{q_k^2} \left(\frac{\Phi_1(u_s)\cdots \Phi_m(u_s)}{\Phi_k(u_s)}\right)^2 \nonumber\\
&\leq
\frac{8}{\Phi_1^2(u_s)\cdots \Phi_m^2(u_s)}
\sum\limits_{k=1}^m \left(\frac{\Phi_1(u_s)\cdots \Phi_m(u_s)}{\Phi_k(u_s)}\right)^2
\left( \log \Phi_k(u_s) - \log M \right) \nonumber\\
&\leq
8
\sum\limits_{k=1}^m \frac{\log \Phi_k(u_s)}{\Phi_k(u_s)^2} \to 0 \text{ as } s\to\infty.
\end{align*}
\end{proof}
\begin{proof}[Proof of Lemma \ref{Lem:UbiqSyst:CxLF}]
It only remains to show the ubiquity condition with respect to $\rho$. For any $s\in \N$, the definition of $\widetilde{J}_s$ tells us that
\[
\|\q\|_{\infty}
\geq 
\frac{1}{M} \left\| \left(\Phi_1(u_s), \ldots, \Phi_m(u_s)\right) \right\|_{\infty}
\quad
\left(\alpha=(\q,\p)\in \widetilde{J}_s\right),
\]
then
\begin{align*}
\mu_{m\times n}^{\C} \left( B \cap \bigcup_{\alpha\in J_s} \Delta\left( R_{\alpha},\rho\right) \right) 
&=
\mu_{m\times n}^{\C} \left( B\cap  \bigcup_{\alpha \in J_{s}} \prod_{k=1}^m \Delta\left( R_{\alpha,k}, \sqrt{2} M\, \frac{f_k(u_s)}{\|\Phi(u_s)\|_{\infty}} \right)\right)\\
&\geq 
\mu_{m\times n}^{\C}\left( B\cap  \bigcup_{\alpha \in \widetilde{J}_{s}} \prod_{k=1}^m \Delta\left( R_{\alpha,k}, \sqrt{2} M\, \frac{f_k(u_s)}{\|\Phi(u_s)\|_{\infty}} \right)\right) \\
&\geq 
\mu_{m\times n}^{\C}\left( B\cap  \bigcup_{\alpha \in \widetilde{J}_{s}} \prod_{k=1}^m \Delta\left( R_{\alpha,k}, \sqrt{2} \, \frac{f_k(u_s)}{\|\q \|_{\infty}} \right)\right) \\
&\geq 
\frac{1}{2}\mu_{m\times n}^{\C}(B).
\end{align*}
\end{proof}
Approximating by finitely many balls whose closure is contained in the interior of $I_{\C}^{m\times n}$, we conclude that previous estimates also holds for any ball $B$ in $I_{\C}^{m\times n}$.

\subsection{Proof of Theorem~\ref{TEO:MEASURE}}

Firstly we introduce Lemma \ref{LF:C:AuxLemma} below, which allows us to impose an additional assumption on $\psi$ and $\Phi$ without losing any generality. We omit its proof, for it is shown as Lemma 6.1 in \cite{KW23}. The only difference is that in the last step we deal with the series $\sum_t (c_1^{-2(d-\varepsilon)})^t <\infty$. Call $W_{\infty}(\varphi,\Phi)$ the set of matrices $A\in I_{\C}^{m\times n}$ such that
\begin{align*}
    \left\llbracket \q A_j \right\rrbracket_{\infty} &< \phi_j(u) \quad (1\leq j \leq n),\\
    |q_k|_{\infty} &\leq \Phi_k(u) \quad (1\leq k \leq m).
\end{align*}
\begin{lemma}\label{LF:C:AuxLemma}
Under the assumptions of Theorem \ref{TEO:MEASURE}, there are functions $\widetilde{\varphi}=(\widetilde{\varphi}_1,\ldots, \widetilde{\varphi}_n)$ such that $W_{n,m}^{\C}(\varphi,\Phi)\subseteq W_{n,m}^{\C}(\widetilde{\varphi},\Phi)$, $\mu_{m\times n}^{\C}\left(W_{n,m}^{\C}(\varphi,\Phi)\right)=\mu_{m\times n}^{\C}\left( W_{n,m}^{\C}(\tilde{\varphi},\Phi)\right)$, and 
\[
\lim_{u\to \infty} \|\Phi(u)\|_{\infty}^n \prod_{j=1}^m \Phi_k(u) \prod_{j=1}^n \varphi_j(u)= \infty
\quad (1\leq j \leq n).
\]
\end{lemma}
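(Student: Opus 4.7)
The plan is to follow the same scheme as Lemma 6.1 in \cite{KW23}, adapted so that real-dimensional exponents become complex-dimensional ones. The only substantive change is accounting for the fact that in the complex setting each coordinate contributes a factor of order $|\cdot|^2$ to the Lebesgue measure, which is why the decisive series becomes $\sum_t c_1^{-2(d-\varepsilon)t}$ rather than $\sum_t c_1^{-(d-\varepsilon)t}$.

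First I would dispose of the trivial case: if already $\|\Phi(u)\|_{\infty}^n \prod_{k=1}^m \Phi_k(u) \prod_{j=1}^n \varphi_j(u) \to \infty$, set $\widetilde\varphi = \varphi$ and there is nothing to do. Otherwise, for each $j\in\{1,\dots,n\}$, define
\[
\widetilde\varphi_j(u)
:=
\max\!\left\{\varphi_j(u),\ g_j(u)\right\},
\]
where $g_j(u)$ is chosen (using the multiplicative growth bound \eqref{EQ:LinearForms:C} on $\Phi$) so that the enlarged product $\|\Phi(u)\|_{\infty}^n \prod_k \Phi_k(u) \prod_j \widetilde\varphi_j(u)$ tends to infinity in a controlled way along the sequence $(M^t)_{t\ge 1}$, yet is not too large. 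Concretely, $g_j$ should be set just large enough that after replacement, for some fixed $\varepsilon>0$ and all large $t$, one has $\|\Phi(M^t)\|_\infty^n \prod_k \Phi_k(M^t) \prod_j \widetilde\varphi_j(M^t) \geq c_1^{(d-\varepsilon)t}$ for a suitable dimensional constant $d$. The inclusion $W_{n,m}^{\C}(\varphi,\Phi) \subseteq W_{n,m}^{\C}(\widetilde\varphi,\Phi)$ is then immediate.

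The core of the proof is the reverse measure-theoretic inclusion, namely that
\[
\mu_{m\times n}^{\C}\!\left(W_{n,m}^{\C}(\widetilde\varphi,\Phi)\setminus W_{n,m}^{\C}(\varphi,\Phi)\right)=0.
\]
To this end, I would apply the convergence Borel--Cantelli lemma (Lemma \ref{Borel_Cantelli convergence}) to the ``excess'' family of thickened hyperplanes, where in the $j$-th coordinate the radius is $\widetilde\varphi_j(u)/\|\Phi(u)\|_\infty$ but $\widetilde\varphi_j(u)>\varphi_j(u)$. Decompose the indices $u$ into dyadic-type blocks $u\in[M^t,M^{t+1}]$ and use \eqref{EQ:LinearForms:C} to estimate, uniformly on each block, the measure of a single thickened resonant set and the number of resonant sets meeting $I_{\C}^{m\times n}$. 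Summing inside one block and then over $t$ produces a series whose $t$-th term is bounded by $c_1^{-2(d-\varepsilon)t}$ (the factor $2$ coming from $\dim_{\R}\C=2$), and this converges because $c_1>1$.

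The main obstacle is the careful choice of $g_j$: it must be simultaneously large enough to force the divergence of the limit in the conclusion, and small enough that the Borel--Cantelli series for the difference set still converges. This is exactly where the two-sided bound in \eqref{EQ:LinearForms:C} (both $c_1>1$ and $c_2<\infty$) is essential, and where the quadratic exponent appears. Everything else—the inclusion, the measure comparison via summation in blocks, and the final $\varepsilon$-chasing—proceeds verbatim as in \cite[Lemma 6.1]{KW23}, so I would simply state the modifications and refer to that source.
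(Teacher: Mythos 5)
Your proposal matches the paper's treatment: the paper also defers to \cite[Lemma 6.1]{KW23} and notes only the single modification that the final step deals with the series $\sum_t (c_1^{-2(d-\varepsilon)})^t < \infty$, with the exponent $2$ arising because each complex coordinate contributes two real dimensions. Your sketch of the enlargement $\widetilde\varphi_j = \max\{\varphi_j, g_j\}$, the trivial inclusion, and the Borel--Cantelli argument for the excess along blocks $[M^t, M^{t+1}]$ is consistent with that reference.
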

\begin{lemma}\label{LF:C:AuxRep}
Theorem \ref{TEO:MEASURE} holds if we replace $|\,\cdot\,|$ and $\llbracket \,\cdot\,\rrbracket $ with $|\,\cdot\,|_{\infty}$ and $\llbracket \,\cdot\,\rrbracket_{\infty}$, respectively.
\end{lemma}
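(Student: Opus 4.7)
The approach is to establish the dichotomy in two halves. For convergence I will use a Borel--Cantelli argument after reindexing by $\q$, and for divergence I will apply Theorem \ref{KW ambient measure} to the ubiquitous system of Lemma \ref{Lem:UbiqSyst:CxLF}.

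For the convergence direction, I first rewrite $W_\infty(\varphi,\Phi)$ as a $\limsup$ indexed by $0\neq \q\in \Z[i]^{1\times m}$. For each such $\q$, let $u_\q := \max_{1\leq k\leq m} F_k(|q_k|_\infty)$, the smallest admissible level for $\q$. Since $\varphi_j$ is non-increasing, every $A$ witnessing the definition of $W_\infty(\varphi,\Phi)$ at some level $u\geq u_\q$ through this particular $\q$ must lie in
\[
G_\q := \{ A\in I_{\C}^{m\times n} : \llbracket \q A_j\rrbracket_\infty < \varphi_j(u_\q) \;\; (1\leq j\leq n)\}.
\]
A Fubini-type computation along a coordinate $r$ with $q_r\neq 0$ gives $\mu_{m\times n}^{\C}(G_\q)\leq 4^n \prod_j \varphi_j(u_\q)^2$. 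Partitioning the $\q$'s by dyadic bands $u_\q\in(M^l,M^{l+1}]$ and using \eqref{EQ:LinearForms:C} to bound the number of admissible $\q$ by a constant times $\prod_k \Phi_k(M^l)^2$ yields
\[
\sum_{\q\neq 0} \mu_{m\times n}^{\C}(G_\q)\lesssim \sum_{l} \left(\prod_{j=1}^n \varphi_j(M^l)\prod_{k=1}^m \Phi_k(M^l)\right)^{\!2}.
\]
A Cauchy-condensation comparison (described below) dominates this series by the hypothesis series up to a constant, and Borel--Cantelli then gives $\mu_{m\times n}^{\C}(W_\infty(\varphi,\Phi))=0$.

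For the divergence direction, I apply Theorem \ref{KW ambient measure} to the ubiquitous system of Lemma \ref{Lem:UbiqSyst:CxLF} with $\psi_j,\rho_j$ as defined there. Conditions (I)--(III) are checked as follows: monotonicity of $\psi_j$ follows from the monotonicity of $\varphi_j$ and $\|\Phi\|_\infty$; the bound $\psi_j\leq \rho_j$ is a direct consequence of $M>2m$ and \eqref{EQ:LF:C:ExtAss}, while $\rho_j(u)\to 0$ is read off from Lemma \ref{LF:C:AuxLemma}; and the $c$-regularity of $\psi_j$ along $(M^k)_{k\geq 1}$ holds with $c=c_1^{-1}$ by the monotonicity of $\varphi_j$ together with the lower bound in \eqref{EQ:LinearForms:C}. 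Since $\delta_j(1-\kappa_j)=2m\cdot (1/m)=2$, the divergence series of Theorem \ref{KW ambient measure} is, by \eqref{Eq:Quotpsirho}, a constant multiple of $\sum_k (\prod_j\varphi_j(M^k)\prod_k\Phi_k(M^k))^2$. The same Cauchy-condensation comparison, in the opposite direction, shows that this diverges whenever the hypothesis series does. A short norm comparison (Cauchy--Schwarz, combined with the bound $\|\q\|_\infty\leq \|\Phi(\beta_\alpha)\|_\infty$ valid on $J$ because $\beta_\alpha=\max_k F_k(|q_k|_\infty)$) then shows $W^\Omega(\Psi)\subseteq W_\infty(\varphi,\Phi)$, whence $\mu_{m\times n}^{\C}(W_\infty(\varphi,\Phi))=\mu_{m\times n}^{\C}(I_{\C}^{m\times n})$.

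The main obstacle is the two-sided Cauchy-condensation comparison between $\sum_l(\prod_j\varphi_j(M^l)\prod_k\Phi_k(M^l))^2$ and the hypothesis series $\sum_u \frac{1}{u}(\prod_j\varphi_j(u)\prod_k\Phi_k(u))^2$. The product $\prod\varphi \prod\Phi$ is not monotone in general, so the argument must rely essentially on the doubling condition \eqref{EQ:LinearForms:C}: for $u\in[M^{l-1},M^l]$, the inequalities $\Phi_k(M^l)\leq c_2\Phi_k(M^{l-1})\leq c_2\Phi_k(u)$ together with $\varphi_j(u)\geq \varphi_j(M^l)$ sandwich $\prod\varphi(u)\prod\Phi(u)$ between two absolute constant multiples of its value at $M^l$, and $\sum_{u\in[M^{l-1},M^l]}\frac{1}{u}\asymp \log M$ is bounded. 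This gives termwise comparability after the dyadic grouping and closes both halves of the argument.
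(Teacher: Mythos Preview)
Your argument is correct and mirrors the paper's: divergence via Theorem \ref{KW ambient measure} applied to the ubiquitous system of Lemma \ref{Lem:UbiqSyst:CxLF} (checking (I)--(III) exactly as you do) together with the Cauchy-condensation comparison driven by \eqref{EQ:LinearForms:C}, and convergence via Borel--Cantelli after dyadic grouping. The only cosmetic difference is that for convergence you index directly by $\q$ (through $u_\q$) whereas the paper indexes by the level $u$ and then passes to the subsequence $(u_s)$; these bookkeepings yield the same estimate, though in your version you should note that going from infinitely many admissible levels to infinitely many distinct $\q$ requires discarding the null set on which some fixed $\q A_j\in\Z[i]$.
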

\begin{proof}
First, we verify the hypotheses of Theorem \ref{KW ambient measure}.
\begin{enumerate}[1.]
\item The system $(( R_{\alpha})_{\alpha\in J}, \beta)$ is ubiquitous with respect to $\rho$, $(l_j)_{j\geq 1}$, and $(u_j)_{j\geq 1}$ by the discussion above.

\item The function $\Psi=(\psi_1,\ldots, \psi_n)$ is $c$-regular. Indeed, by \eqref{EQ:LinearForms:C} and the monotonicity of each $\varphi_j$ and each $\Phi_k$, we have 
\[
\psi_j(u_{s+1})
=
\frac{\varphi_j(u_{s+1})}{2m\|\Phi(u_{s+1})\|_{\infty}} 
\leq
\frac{\varphi_j(u_{s})}{c_12m\|\Phi(u_{s})\|_{\infty}} 
=
\frac{1}{c_1} \psi_j(u_{s})
\quad
(s\in \N).
\]
\item By \eqref{EQ:LF:C:ExtAss}, for $j\in\{1,\ldots, n\}$ we have $\rho_j(u)\geq \psi_j(u)$ for all $u\in\N$ and Lemma \ref{LF:C:AuxLemma} implies
\[
\lim_{u\to\infty} \rho_j(u) = 0.
\]
\end{enumerate}
Combining \eqref{Eq:Quotpsirho}, $\delta_j(1-\kappa_j)=2$ for all $j\in\{1,\ldots,n\}$, and Theorem \ref{KW ambient measure}, the set $W_{\infty}(\varphi,\Phi)$ is either of full measure or null depending on the divergence or convergence of the series
\begin{equation}\label{Eq:SerDetMeas:01}
\sum\limits_{s=0}^{\infty} \left( \prod_{j=1}^n \varphi_j(u_s)\prod_{k=1}^m \Phi_k(u_s)\right)^2.
\end{equation}
Take any $s\in\N$. Since each $\varphi_j$ is non-increasing, it follows that
\begin{align*}
\sum\limits_{u_{s-1}\leq q < u_s} \, \frac{1}{q} \left( \prod_{j=1}^n \varphi_j(q) \prod_{k=1}^m \Phi_k(q) \right)^2
&\leq 
\sum\limits_{u_{s-1}\leq q < u_s} \, \frac{1}{u_{s-1}} \left(\prod_{j=1}^n \varphi_j(u_{s-1}) \prod_{k=1}^m c_2\Phi_k(u_{s-1}) \right)^2  &&\text{ (by }\eqref{EQ:LinearForms:C})\\
&=
\left( \frac{u_s}{u_{s-1}}-1\right) c_2^{2m} \,  \left(\prod_{j=1}^n \varphi_j(u_{s-1}) \prod_{k=1}^m  \Phi_k(u_{s-1}) \right)^2 \\
&=
\left(M-1\right) c_2^{2m} \,  \left(\prod_{j=1}^n \varphi_j(u_{s-1}) \prod_{k=1}^m  \Phi_k(u_{s-1}) \right)^2 ;
\end{align*}
as a consequence, 
\begin{align*}
\sum\limits_{q=1}^{\infty} \frac{1}{q} \left( \prod_{j=1}^n \varphi_j(q) \prod_{k=1}^m \Phi_k(q) \right)^2
&=
\sum\limits_{s=1}^{\infty} \sum\limits_{u_{s-1}\leq q < u_n} \, \frac{1}{q} \left(\prod_{j=1}^n \varphi_j(q) \prod_{k=1}^m \Phi_k(q) \right)^2\\
&\leq 
\left(M-1\right) c_2^{2m} \,
\sum\limits_{s=1}^{\infty} \left(\prod_{j=1}^n \varphi_j(u_{s-1}) \prod_{k=1}^m \Phi_k(u_{s-1})\right)^2.
\end{align*}
Similarly, we may see that
\begin{align*}
\sum\limits_{u_{s-1}\leq q < u_{s} }\frac{1}{q} \left( \prod_{j=1}^n \varphi_j(q)\prod_{k=1}^n \Phi_k(q)\right)^2 
&\geq 
\sum\limits_{u_{s-1}\leq q < u_{s} } \frac{1}{u_s} \left( \prod_{j=1}^n \varphi_j(u_s)\prod_{k=1}^n \Phi_k(u_{s-1})\right)^2  \\
&\geq 
\frac{1}{c_2^{2m}} \left( 1 - \frac{1}{M}\right)  \left( \prod_{j=1}^n \varphi_j(u_s)\prod_{k=1}^n \Phi_k(u_{s})\right)^2.
\end{align*}
Therefore, the convergence of the series in \eqref{Eq:SerDetMeas:01} is equivalent to that of 
\begin{equation}\label{Eq:SerDetMeas:02}
    \sum\limits_{q=1}^{\infty} \frac{1}{q} \left( \prod_{j=1}^n \varphi_j(q)\prod_{k=1}^m \Phi_k(q)\right)^2  .
\end{equation}
The divergence part of Theorem \ref{TEO:MEASURE} follows from Theorem \ref{KW ambient measure}. In order to prove the convergence part, for each $u\in \N$, let $\cA(u;\varphi,\Phi)$ be the set of matrices $A\in I_{\C}^{m\times n}$ for which there is some non-zero $\q\in\Z[i]^m$ satisfying
\begin{align*}
    \llbracket \q A_j\rrbracket_{\infty}< \varphi_j(u) &\quad (1\leq j\leq n), \\
    |q_k|_{\infty} \leq \Phi_k(u) &\quad (1\leq k\leq m).
\end{align*}
If $\q\in \Z[i]^m$, $\q\neq 0$, verifies $|q_k|_{\infty} \leq \Phi_k(u)$ for $k\in \{1,\ldots,m\}$, define
\[
\cA_{\q}(u;\varphi,\Phi)
:=
\left\{ A\in I_{\C}^{m\times n}: \llbracket \q A_j\rrbracket_{\infty}< \varphi_j(u) \quad (1\leq j\leq n) \right\}
\]
and, for all $\p\in\Z[i]^n$,
\[
\cA_{\q,\p}(u;\varphi,\Phi)
:=
\left\{ A\in I_{\C}^{m\times n}: |\q A_j-p_j|_{\infty}< \varphi_j(u) \quad (1\leq j\leq n)  \right\}.
\]
The following estimates hold for some constants depending on $m$ and $n$:
\begin{align*}
    \#\left\{\q\in\Z[i]^m:  |q_k|_{\infty}\leq \Phi_k(u) \quad (1\leq k\leq m) \right\} &\ll_{m,n} \left(\prod_{k=1}^m\Phi_k(u)\right)^2,\\
    \#\left\{\p\in\Z[i]^n: \cA_{\q,\p}(u;\varphi,\Phi)\neq \varnothing \right\} &\ll_{m,n} \|\q\|_{\infty}^{2n}\\
    \mu_{m\times n}^{\C}(\cA_{\q,\p}(u;\varphi,\Phi)) &\ll_{m,n} \frac{1}{\| \q\|_{\infty}^{2n}} \left(\prod_{j=1}^n \varphi_j(u)\right)^2.
\end{align*}
Therefore, we have
\[
\mu_{m\times n}^{\C} ( \cA(u;\varphi,\Phi))
\ll_{m,n}
\left(\prod_{j=1}^n \varphi_j(u) \prod_{k=1}^m\Phi_k(u) \right)^2.
\]
Choose $s\in \N$ such that $u_{s-1}\leq u< u_s$, so $\varphi(u_{s})\leq \varphi(u)\leq \varphi(u_{s-1})$ and $\Phi(u_{s-1})\leq \Phi(u)\leq \Phi(u_{s})$. Hence, by \eqref{EQ:LinearForms:C}, 
\[
\cA(u;\varphi,\Phi)
\subseteq
\cA(u_{s-1};\varphi,c_2\Phi),
\]
which implies
\[
\limsup_{u\to\infty} \cA(u;\varphi,\Phi)
\subseteq 
\limsup_{s\to\infty} \cA(u_s;\varphi,c_2 \Phi).
\]
Finally, since the series in \eqref{Eq:SerDetMeas:01} converges if and only if the series in \eqref{Eq:SerDetMeas:02} converges, the result follows from the Borel-Cantelli lemma. 
\end{proof}
\begin{proof}[Proof of Theorem \ref{TEO:MEASURE}]
Since $|z|_{\infty}\leq |z|\leq \sqrt{2}|z|_{\infty}$ for all $z\in \C$, we have
\[
W_{\infty}(\varphi,\Phi)
\subseteq
W(\varphi,\Phi)
\subseteq
W_{\infty}(\sqrt{2}\,\varphi,\sqrt{2}\,\Phi).
\]
When the series in \eqref{Eq:SerDetMeas:02} converges, the set $W_{\infty}(\sqrt{2}\,\varphi,\sqrt{2}\,\Phi)$ is null and $W(\varphi,\Phi)$ is also null. The divergence of the series implies the full measure of $W_{\infty}(\varphi,\Phi)$ and, hence, the full measure of $W(\varphi,\Phi)$.
\end{proof}
\subsection{Proof of Theorem \ref{TEO:HDIM}}
Consider again the norm $\| \cdot\|_{\infty}$. Pick $\boldsymbol{\tau} =(\tau_1, \ldots, \tau_n)\in \R^n$ satisfying \eqref{EQ:CondOnTau}. Let $\Phi=(\Phi_1, \ldots, \Phi_m)$ be determined by 
\[
\Phi_k(q)=q
\quad (1\leq k \leq m, \; q\in \N)
\]
and let $\varphi=(\varphi_1, \ldots, \varphi_n)$ be given by
\[
\varphi_j(q)= \frac{1}{q^{\tau_j-1}}
\quad (1\leq j \leq n, \; q\in \N).
\]
If $\eta= \frac{1}{2}(\tau_1 + \cdots + \tau_n - n - m)>0$, we have for all $q\in\N$ 
\[
\frac{1}{q} \left( \prod_{j=1}^n \varphi_j(q) \prod_{k=1}^m \Phi_k(q)\right)^2
=
\frac{1}{q^{1+\eta}},
\]
and Theorem \ref{TEO:MEASURE} implies $\mu_{m\times n}^{\C}(W(\varphi,\Phi))=0$. It is trivial to choose $\boldsymbol{\tau}=(\tau_1,\ldots, \tau_n)$ such that for some $j\in \{1,\ldots, n\}$ we have
\[
\tau_1 + \cdots + \tau_n>  n\tau_j + n + m;
\]
(for example, $\tau_1= \cdots = \tau_{n-1}= 3(n+m)$ and $\tau_n=2$). For such $\boldsymbol{\tau}$ and $j$, the function $\rho_j$ defined as in the previous section does not converge to $0$ when its argument tends to $\infty$. However, we may find a suitable $n$-tuple of positive functions $\tilde{\rho}=(\tilde{\rho}_1,\ldots,\tilde{\rho}_n)$ and a sequence $(\tilde{l}_s)_{s\geq 1}$ such that, for a sufficiently large $M$, the system $( (R_{\alpha})_{\alpha\in J}, \beta)$ is ubiquitous with respect to $\tilde{\rho}$ and $(\tilde{l}_s)_{s\geq 1}$, $(u_s)_{s\geq 1}$.

Let $M\in \N$ large (we determine how large $M$ should be below) and take $J$, $R_{\alpha}$ for $\alpha\in J$, $\beta$, and $(u_s)_{s\geq 1}$ as in Lemma \ref{Lem:UbiqSyst:CxLF}. Call $(\tilde{l}_s)_{s\geq 1}$ the sequence given by $l_s=M^{s-1}$ for all $s\in \N$. In this context, we have 
\[
\beta_{\alpha}=\|\mathbf{q}\|_{\infty}
\quad \left(\alpha=(\mathbf{q},\mathbf{p}) \in J\right).
\]
Given any $\bfa=(a_1,\ldots, a_n)\in \R^n$ such that
\begin{equation}\label{Eq:CondOnA}
\min_{1\leq j\leq n} a_j \geq 1
\;\text{ and }\;
\sum\limits_{j=1}^n a_j = m+n,
\end{equation}
define $\tilde{\rho} =(\tilde{\rho}_1 ,\ldots,\tilde{\rho}_n):\N\to\R^n$ by
\[
\tilde{\rho}_j(u) = \sqrt{2} \,\frac{1}{u^{a_j-1}}
\quad
(1\leq j\leq n,\; u\in\N).
\]
\begin{lemma}\label{Lem:UbiqSyst:CxLF:HD}
For any $\mathbf{a} =(a_1,\ldots, a_n) \in \R^n$ satisfying \eqref{Eq:CondOnA}, the system $(\{R_{\alpha}\}_{\alpha\in J}, \beta)$ is ubiquitous with respect to $\tilde{\rho}$ and $(\tilde{l}_s)_{s\geq 1}$, $(u_s)_{s\geq 1}$.
\end{lemma}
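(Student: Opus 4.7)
My plan is to follow the template of Lemma~\ref{Lem:UbiqSyst:CxLF}, replacing the auxiliary data $\varphi_j,\Phi_k$ there by choices tailored to the new thickening $\tilde{\rho}$. Setting $f_j(u):=u^{-(a_j-1)}$ for $1\leq j\leq n$ and $\Phi_k(u):=u$ for $1\leq k\leq m$, condition~\eqref{Eq:CondOnA} gives $\prod_{j}f_j(u)\prod_{k}\Phi_k(u)=u^{m+n-\sum_{j}a_j}=1$, so Lemma~\ref{Teo:Mink:Cx:LF} applies and the same argument as Proposition~\ref{Prop:Cx:Measure:Aux} supplies, for every sufficiently large $s\in\N$ and each $A$ in the interior of $I_{\C}^{m\times n}$, some $\alpha=(\q,\p)\in J$ with $\|\q\|_\infty\leq u_s$ and $A\in\prod_{j}\Delta(R_{\alpha,j},\sqrt{2}\,f_j(u_s)/\|\q\|_\infty)$. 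Since $\|\q\|_\infty\geq 1$ for every nonzero $\q\in\Z[i]^m$, this thickening is automatically dominated by $\tilde{\rho}_j(u_s)=\sqrt{2}\,u_s^{-(a_j-1)}$.

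Fix a ball $B=\prod_{j=1}^{n}B(X_j,r)$ whose closure lies in the interior of $I_{\C}^{m\times n}$ and define
\[
E_s:=B\cap\bigcup_{\substack{\alpha=(\q,\p)\in J\\0<\|\q\|_\infty<M^{s-1}}}\prod_{j=1}^{n}\Delta\!\left(R_{\alpha,j},\sqrt{2}\,\frac{f_j(u_s)}{\|\q\|_\infty}\right).
\]
Any $A\in B\setminus E_s$ admits an $\alpha$ from the previous step with $\beta_\alpha=\|\q\|_\infty\in[M^{s-1},M^{s}]=[\tilde{l}_s,u_s]$, and then $A\in\prod_{j}\Delta(R_{\alpha,j},\tilde{\rho}_j(u_s))$. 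So it suffices to show $\mu_{m\times n}^{\C}(E_s)\leq\tfrac12\mu_{m\times n}^{\C}(B)$ for every large $s$, provided $M$ is fixed large enough in terms of $m$ and $n$. A final approximation by balls with closures in the interior of $I_{\C}^{m\times n}$, identical to the one at the end of the previous subsection, extends the ubiquity to arbitrary balls.

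To estimate $\mu_{m\times n}^{\C}(E_s)$ I will mimic Proposition~\ref{complexprop4}. The analogue of Proposition~\ref{Prop:EstMeasBall} caps the number of admissible $\p$ per $\q$ by $(8mr\|\q\|_\infty+2)^{2n}$ for all large $s$, and the $\kappa$-scaling property with $\kappa=(m-1)/m$ and $\delta_j=2m$ (together with $\prod_{j}f_j(u_s)^2=u_s^{-2m}$) bounds the per-rectangle measure by a constant times $r^{2n(m-1)}\cdot 2^n/(u_s^{2m}\|\q\|_\infty^{2n})$. Expanding $(8mr\|\q\|_\infty+2)^{2n}\ll(mr\|\q\|_\infty)^{2n}+1$ splits the sum into two pieces. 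In the leading piece the factor $\|\q\|_\infty^{2n}$ cancels exactly against $\|\q\|_\infty^{-2n}$, and summing over the $\ll M^{2m(s-1)}=u_s^{2m}/M^{2m}$ admissible $\q$ gives a total $\ll_{m,n}r^{2nm}/M^{2m}$, which is below $\tfrac14\mu_{m\times n}^{\C}(B)$ once $M$ is large enough. The remaining piece reduces to $r^{2n(m-1)}u_s^{-2m}\sum_{0<\|\q\|_\infty<M^{s-1}}\|\q\|_\infty^{-2n}$, and standard lattice-point estimates show this contribution is $o(\mu_{m\times n}^{\C}(B))$ as $s\to\infty$.

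The main technical point is the algebraic cancellation between the $\|\q\|_\infty^{2n}$ arising from the count of admissible $\p$ and the $\|\q\|_\infty^{-2n}$ coming from the rectangle measure; this is the very same mechanism that made the bound in the original Proposition~\ref{complexprop4} independent of $s$, and is what allows a single choice of $M=M(m,n)$ to work across all scales.
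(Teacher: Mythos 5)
Your proof is correct and follows the same route as the paper: both invoke the complex Minkowski lemma with $f_j(u)=u^{-(a_j-1)}$, $\Phi_k(u)=u$, observe that $\|\q\|_\infty\geq 1$ lets the thickening $\sqrt{2}f_j(u_s)/\|\q\|_\infty$ sit inside $\tilde{\rho}_j(u_s)$, and then bound the measure of the exceptional set $\{\|\q\|_\infty<M^{s-1}\}$ by half of $\mu(B)$ via the same two-piece estimate as Proposition~\ref{complexprop4}. The paper leaves those estimates implicit ("similar to \eqref{Eq:LemUbiqPrel}"); you have filled them in, correctly identifying the $\|\q\|_\infty^{2n}$ cancellation that makes the dominant piece $s$-independent.
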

We omit several steps in the forthcoming proof, for it resembles that of Lemma \ref{Lem:UbiqSyst:CxLF}.
\begin{proof}
We may show--as we did with \eqref{Eq:Propo:LemUbiqSys}--that for every $A$ in the interior of $I_{\C}^{m\times n}$ and any large $s\in \N$ there is some $\alpha=(\mathbf{q},\mathbf{p})\in J$ such that $1\leq \|\mathbf{q}\|_{\infty} \leq u_s$ and 
\[
A
\in 
\prod_{j=1}^n \Delta\left( R_{\alpha,j}, \frac{\sqrt{2}}{\|\mathbf{q}\|_{\infty} u_s^{a_j-1}} \right).
\]
Similar to \eqref{Eq:LemUbiqPrel}, we may choose a sufficiently large $M$ for which any ball $B=\prod_{j=1}^n B_j\subseteq X$ and any large $s\in \N$ (depending on the radius of $B$) satisfy
\begin{align*}
\mu_{m\times n}^{\C}\left( B\cap \bigcup_{\substack{\alpha =(\mathbf{q},\mathbf{p})\in J \\ \beta_{\alpha}< \tilde{l}_s}} \prod_{j=1}^n \Delta\left( R_{\alpha,j}, \frac{ \sqrt{2} }{\|\mathbf{q}\|_{\infty} u_s^{a_j-1}} \right) \right)
&=
\mu_{m\times n}^{\C}\left( \bigcup_{q=1}^{\tilde{l}_s-1} \bigcup_{\substack{\alpha\in J\\ \beta_{\alpha}=q}} \prod_{j=1}^n   B_j\cap \Delta\left( R_{\alpha,j}, \frac{ \sqrt{2} }{\|\mathbf{q}\|_{\infty} u_s^{a_j-1}} \right)\right) \\
&\leq 
\frac{1}{2}\mu_{m\times n}^{\C}(B),
\end{align*}
which implies
\[
\mu_{m\times n}^{\C}\left( B\cap \bigcup_{\substack{\alpha =(\mathbf{q},\mathbf{p})\in J \\ \tilde{l}_s\leq \beta_{\alpha} \leq u_s}} \prod_{j=1}^n \Delta\left( R_{\alpha,j}, \frac{\sqrt{2}}{ u_s^{a_j-1}}  \right) \right)
\geq 
\frac{1}{2} \mu_{m\times n}^{\C}(B).
\]
\end{proof}
For any $\boldsymbol{\tau}=(\tau_1,\ldots, \tau_n)\in\R^n$ satisfying \eqref{EQ:CondOnTau}, define the sets
\[
W^{\C}_{\infty}(\boldsymbol{\tau})
:=
\left\{ A\in I_{\C}^{m\times n}: \left\llbracket \mathbf{q}  A_j \right\rrbracket_{\infty} < \frac{1}{\|\mathbf{q}\|_{\infty}^{\tau_j-1}} \quad(1\leq j \leq n) \text{ for i. m. } \mathbf{q}\in \Z[i]^m\right\}
\]
and
\[
\widetilde{W}^{\C}_{\infty}(\gamma,\boldsymbol{\tau})
:=
\bigcap_{Q\in \N}
\bigcup_{\substack{\mathbf{q}\in \Z[i]^m\\ \|\mathbf{q}\|_{\infty}=Q}}
\bigcup_{\substack{\mathbf{p}\in \Z[i]^n\\ \alpha=(\mathbf{q},\mathbf{p})\in J}} 
\prod_{j=1}^n \Delta\left( R_{\alpha,j}, \frac{\gamma}{\|\mathbf{q}\|_{\infty}^{\tau_j}} \right)
\quad (\gamma>0).
\]
Clearly, when we consider $\|\,\cdot\,\|_{\infty}$ and $|\,\cdot\, |_{\infty}$ and the function $\Psi=(\psi_1,\ldots, \psi_n)$ given by $\psi_j(u)= \gamma u^{-\tau_j}$, the set $\widetilde{W}^{\C}_{\infty}(\gamma,\boldsymbol{\tau})$ is precisely $W^{I_{\C}^{m\times n} }(\Psi)$ as defined in Section \ref{SEC:TOOLBOX}.
\begin{lemma}\label{Lem:HD:Wgamma}
If $\boldsymbol{\tau}=(\tau_1,\ldots,\tau_n)\in\R^n_{+}$ satisfies \eqref{EQ:CondOnTau} and $\gamma>0$, then
\[
\dimh \widetilde{W}_{\infty}(\gamma,\boldsymbol{\tau}) 
= 
\min\{s_1(\boldsymbol{\tau}),\ldots, s_n(\boldsymbol{\tau})\}.
\]
\end{lemma}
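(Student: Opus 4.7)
The plan is to adapt the strategy used for Theorem~\ref{JB p-adic} in the $p$-adic section: the upper bound will come from a natural covering argument, and the lower bound from Theorem~\ref{MTPRR} applied through a (slightly sharpened) form of Lemma~\ref{Lem:UbiqSyst:CxLF:HD}.

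For the upper bound I would fix $k\in\{1,\ldots,n\}$ and cover $\widetilde{W}^{\C}_{\infty}(\gamma,\boldsymbol{\tau})$ by the tail $\bigcup_{\|\q\|_{\infty}\geq N}\bigcup_{\p}\prod_{j=1}^n\Delta(R_{(\q,\p),j},\gamma\|\q\|_{\infty}^{-\tau_j})$, then cover each product rectangle by balls of common radius $\delta=\gamma\|\q\|_{\infty}^{-\tau_k}$. Because each resonant set $R_{(\q,\p),j}$ is a complex affine hyperplane (real codimension two in $\C^m\cong \R^{2m}$), the $j$-th coordinate factor needs $\asymp (1/\delta)^{2m-2}\max\{1,\|\q\|_{\infty}^{2(\tau_k-\tau_j)}\}$ balls. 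Combined with $\#\{\q\in\Z[i]^m:\|\q\|_{\infty}=Q\}\asymp Q^{2m-1}$ and $\asymp Q^{2n}$ admissible $\p$ per $\q$, the resulting Hausdorff $s$-series converges precisely when $s>s_k(\boldsymbol{\tau})$, so $\dimh\widetilde{W}^{\C}_{\infty}(\gamma,\boldsymbol{\tau})\leq\min_k s_k(\boldsymbol{\tau})$.

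For the lower bound I plan to invoke Theorem~\ref{MTPRR}. A subtle point is that the thickening $\sqrt 2\,u_s^{-(a_j-1)}$ stated in Lemma~\ref{Lem:UbiqSyst:CxLF:HD} is weaker than what its proof actually delivers: the constraint $\tilde{l}_s\leq\beta_{\alpha}\leq u_s$ forces $\|\q\|_{\infty}\geq u_s/M$, so the $\alpha$-dependent quantity $\sqrt 2/(\|\q\|_{\infty} u_s^{a_j-1})$ appearing internally is bounded above by $\sqrt 2\,M\,u_s^{-a_j}$. This gives ubiquity at the tighter scale $u_s^{-a_j}$, which I would feed into Theorem~\ref{MTPRR} with base function $\rho(u)=u^{-1}$, ubiquity exponents $a_j$ (so that $\sum_j a_j=m+n$), and parameters $t_j=\tau_j-a_j$; here $\delta_j=2m$, $\kappa=1-\tfrac{1}{m}$, and hence $(1-\kappa)\delta_j=2$.

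After sorting $\tau_1\geq\cdots\geq\tau_n>1$, I would mimic the case split from the proof of Theorem~\ref{JB p-adic}: either $\tau_n\geq (m+n)/n$ and $a_j=(m+n)/n$ for every $j$, or else pick $u\in\{1,\ldots,n\}$ and $\widetilde{D}>0$ with $\tau_u>\widetilde{D}\geq\tau_{u+1}$ and $u\widetilde{D}+\sum_{j>u}\tau_j=m+n$, and set $a_j=\widetilde{D}$ for $j\leq u$, $a_j=\tau_j$ for $j>u$. For each admissible $k$, the choice $A_i=\tau_k$ together with $\cK_1=\varnothing$, $\cK_2=\{k,\ldots,n\}$, $\cK_3=\{1,\ldots,k-1\}$ reduces, by the same algebra as in the $p$-adic proof but with $(1-\kappa)\delta_j=2$ replacing $1$, to exactly $s_k(\boldsymbol{\tau})$; the remaining $A_i\in A$ give only the trivial full-dimension bound. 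The main obstacle will be the scale reconciliation described above: a literal reading of Lemma~\ref{Lem:UbiqSyst:CxLF:HD} would miss $s_k(\boldsymbol{\tau})$ by exactly $2n/\tau_k$, and the improvement $\|\q\|_{\infty}\geq u_s/M$ is precisely what closes this gap.
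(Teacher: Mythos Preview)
Your approach is correct and follows essentially the same route as the paper: the covering argument for the upper bound and Theorem~\ref{MTPRR} with the identical case split ($\tau_n\gtrless (m+n)/n$ and the choice of $a_j$) for the lower bound; the paper then handles general $\gamma>0$ by the sandwich $\widetilde{W}_\infty(1,\boldsymbol{\tau}+\varepsilon)\subseteq\widetilde{W}_\infty(\gamma,\boldsymbol{\tau})\subseteq\widetilde{W}_\infty(1,\boldsymbol{\tau}-\varepsilon)$ and continuity in $\boldsymbol{\tau}$. Your observation about the scale in Lemma~\ref{Lem:UbiqSyst:CxLF:HD} is well taken: the paper tacitly applies Theorem~\ref{MTPRR} with ubiquity exponent $a_j$ (so that $\sum_j a_j=m+n$ appears in the numerator), which indeed requires the tighter radius $\sqrt{2}M\,u_s^{-a_j}$ that the proof of that lemma actually delivers (via $\|\q\|_\infty\geq u_s/M$), rather than the literally stated $\sqrt{2}\,u_s^{-(a_j-1)}$; your $2n/\tau_k$ discrepancy diagnosis is exactly right.
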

\begin{proof}
Let us further assume that $\gamma=1$. \par

\textbf{Upper bound.} First, for all $Q\in \N$ we have 
\[
\#\{ \mathbf{q}\in \Z[i]^m: \|\mathbf{q}\|_{\infty}=Q\}
\asymp_m
Q^{2m-1},
\]
because
\begin{align*}
\#\left\{ \mathbf{q}\in \Z[i]^m:\| \mathbf{q}\|_{\infty}=Q\right\} 
&= \#\left\{ \mathbf{q}\in \Z^{2m}:\| \mathbf{q}\|_{\infty}=Q\right\}  \\
&= \#\left\{ \mathbf{q}\in \Z^{2m}:\| \mathbf{q}\|_{\infty}\leq Q\right\} - \#\left\{ \mathbf{q}\in \Z^{2m}:\| \mathbf{q}\|_{\infty}\leq Q -1 \right\}  \\
&=(2Q+1)^{2m} - (2Q-1)^{2m} \\
&= 2\sum\limits_{j=0}^{2m-1}(2Q + 1)^{2m-1-j}(2Q - 1)^{j},
\end{align*}
which is a polynomial of degree $2m-1$ in $Q$. 
For all $j,l\in \{1,\ldots, m\}$ the number of balls of radius $\|\mathbf{q}\|^{-\tau_j}$ required to cover $\triangle(R_{\alpha,l},\|\mathbf{q}\|_{\infty}^{- \tau_l})$ is asymptotically equivalent to
\[
\left(\max\left\{ 1, \frac{\|\mathbf{q}\|_{\infty}^{-\tau_l} }{\|\mathbf{q}\|_{\infty}^{-\tau_j}}\right\} \|\mathbf{q}\|_{\infty}^{\tau_j(m-1)}\right)^2.
\]
Hence, for all $s>0$, we have
\begin{align*}
\cH^s\left( \widetilde{W}_{\infty}(1,\boldsymbol{\tau}) \right)
&\ll
\liminf_{N\to\infty} \sum\limits_{Q\geq N} Q^{2m-1}Q^{2n} Q^{-s\tau_j} Q^{2\tau_j n(m-1)} Q^{2 \sum\limits_{\tau_l<\tau_j} \tau_j - \tau_l} \\
&=
\liminf_{N\to\infty} \sum\limits_{Q\geq N} Q^{2m+2n-1 + 2\tau_j n(m-1) + 2 \sum\limits_{\tau_l<\tau_j} \tau_j - \tau_l}  Q^{-s\tau_j}.
\end{align*}
The last series above converges if and only if the exponent of $Q$ is strictly less than $-1$, or equivalently if $s> s_j(\boldsymbol{\tau})$, so $\dimh W_{\infty}(1,\boldsymbol{\tau})\leq s_j$.\\

\textbf{Lower bound.} Let $J$, $(R_{\alpha})_{\alpha\in J}$, $\beta$, $(\tilde{l}_s)_{s\geq 1}$ and  $(u_s)_{s\geq 1}$ as in Lemma \ref{Lem:UbiqSyst:CxLF:HD}. We only establish the setup, the computations are quite similar to the $p$-adic case. Suppose without loss of generality that $\tau_1 \geq  \tau_2 \geq \ldots \geq \tau_n > 1$ and recall that $\delta_k=2m$ for $k\in\{1,\ldots, n\}$. 
First, assume that $\tau_n \geq \frac{n+m}{n}$. For $j\in\{1,\ldots, n\}$ define
\[
a_j:= \frac{n+m}{n} 
\;\text{ and }\; 
t_j:= \tau_j - a_j.
\]
Then, the order of $\cA$ is 
\[
a_1 + t_1
\geq 
a_2 + t_2
\geq
\ldots
\geq
a_n + t_n
\geq 
a_1 = \ldots = a_n. 
\]
Suppose that $\tau_n < \frac{n+m}{n}$ and let $K\in\{1,\ldots, n\}$ be the largest integer such that
\[
\tau_K
>
\frac{ m + n - (\tau_{K+1} + \cdots + \tau_n) }{K}.
\]
For each $j\in\{1,\ldots, n\}$, write
\[
a_j
:=
\begin{cases}
\tau_j, &\quad  \quad (K+1\leq j \leq n), \\[2ex]
\displaystyle\frac{m+n - (\tau_{K+1} + \cdots + \tau_d) }{K}, &\quad \quad (1\leq j\leq K).
\end{cases}
\]
Then, $\cA$ is ordered as follows:
\[
a_1 + t_1 
\geq 
a_2 + t_2
\geq 
\ldots
\geq
a_K + t_K
> 
a_1= \ldots = a_K 
>
a_{K+1}  = a_{K+1}  + t_{K+1}  
\geq 
\ldots
\geq 
a_{n}  = a_{n}  + t_{n}.
\]
By Theorem \ref{MTPRR}, $\dimh \widetilde{W}_{\infty}(1,\boldsymbol{\tau})=\min\{s_1(\boldsymbol{\tau}), \ldots, s_n(\boldsymbol{\tau})\}$. 

Now assume that $\gamma>0$ is arbitrary and take $0<\varepsilon<\max_{1\leq j \leq n} \tau_j -1$. Since $\|\mathbf{q}\|_{\infty}^{\varepsilon}>\gamma$ for all but finitely many $\mathbf{q}\in \Z[i]^m$, we have
\[
\widetilde{W}_{\infty} (1, \tau_1 + \varepsilon, \ldots, \tau_d + \varepsilon)
\subseteq
\widetilde{W}_{\infty} (\gamma,\boldsymbol{\tau})
\subseteq
\widetilde{W}_{\infty} (1, \tau_1 - \varepsilon, \ldots, \tau_d - \varepsilon).
\]
The lemma follows by letting $\varepsilon\to 0$, because $\tau\mapsto \dimh \widetilde{W}_{\infty}(1,\boldsymbol{\tau})$ is continuous.
\end{proof}
\begin{lemma}
If $\boldsymbol{\tau}$ satisfies \eqref{EQ:CondOnTau}, then
\[
\dimh W_{\infty} (\boldsymbol{\tau})
=
\min\{s_1(\boldsymbol{\tau}), \ldots, s_n(\boldsymbol{\tau})\}.
\]
\end{lemma}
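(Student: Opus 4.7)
The strategy is to sandwich $W^{\C}_{\infty}(\boldsymbol{\tau})$ between two sets of the form $\widetilde{W}^{\C}_{\infty}(\gamma,\boldsymbol{\tau})$ and then invoke Lemma \ref{Lem:HD:Wgamma}. Concretely, I plan to prove that there exist positive constants $\gamma_{1},\gamma_{2}$ (depending only on $m$) and a set of exceptional $\mathbf{q}\in\Z[i]^{m}$ of finite cardinality such that
\[
\widetilde{W}^{\C}_{\infty}(\gamma_{1},\boldsymbol{\tau})
\;\subseteq\;
W^{\C}_{\infty}(\boldsymbol{\tau})
\;\subseteq\;
\widetilde{W}^{\C}_{\infty}(\gamma_{2},\boldsymbol{\tau}).
\]
Since Hausdorff dimension is monotonic and insensitive to finite sets, combining this with Lemma \ref{Lem:HD:Wgamma} immediately gives the claimed equality.

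For the right-hand inclusion, suppose $A\in W^{\C}_{\infty}(\boldsymbol{\tau})$ and fix $\mathbf{q}\in\Z[i]^{m}$ with corresponding $\mathbf{p}\in\Z[i]^{n}$ realising $|\mathbf{q}A_{j}-p_{j}|_{\infty}<\|\mathbf{q}\|_{\infty}^{-(\tau_{j}-1)}$ for $1\le j\le n$. Exactly as in the proof of Proposition \ref{Prop:Cx:Measure:Aux}, the $\|\cdot\|_{2}$-distance from $A_{j}$ to the complex hyperplane $R_{\alpha,j}=\{x:\mathbf{q}x=p_{j}\}$ equals $|\mathbf{q}A_{j}-p_{j}|/\|\mathbf{q}\|_{2}$. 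Using \eqref{Eq:EquivNorm} together with $\|\mathbf{q}\|_{2}\ge\|\mathbf{q}\|_{\infty}$ yields
\[
\min_{\mathbf{a}\in R_{\alpha,j}}\|A_{j}-\mathbf{a}\|_{\infty}
\le
\frac{|\mathbf{q}A_{j}-p_{j}|}{\|\mathbf{q}\|_{2}}
\le
\frac{\sqrt{2}}{\|\mathbf{q}\|_{\infty}^{\tau_{j}}}.
\]
Moreover, for all sufficiently large $\|\mathbf{q}\|_{\infty}$ the bound $|p_{j}|_{\infty}\le 2m\|\mathbf{q}\|_{\infty}$ follows from $\|A_{j}\|_{\infty}\le\tfrac12$ and the triangle inequality, so $\alpha=(\mathbf{q},\mathbf{p})\in J$. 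Hence $A\in\widetilde{W}^{\C}_{\infty}(\sqrt{2},\boldsymbol{\tau})$.

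For the left-hand inclusion, if $A\in\widetilde{W}^{\C}_{\infty}(\gamma,\boldsymbol{\tau})$ then for infinitely many $\mathbf{q}$ there are $\mathbf{p}$ and points $\mathbf{a}_{j}\in R_{\alpha,j}$ with $\|A_{j}-\mathbf{a}_{j}\|_{\infty}<\gamma\|\mathbf{q}\|_{\infty}^{-\tau_{j}}$. The estimate $|\mathbf{q}(A_{j}-\mathbf{a}_{j})|_{\infty}\le 2m\|\mathbf{q}\|_{\infty}\|A_{j}-\mathbf{a}_{j}\|_{\infty}$, which uses $|z|\le\sqrt{2}|z|_{\infty}$ coordinatewise, yields $|\mathbf{q}A_{j}-p_{j}|_{\infty}\le 2m\gamma\|\mathbf{q}\|_{\infty}^{-(\tau_{j}-1)}$. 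Taking $\gamma=1/(2m)$ gives $A\in W^{\C}_{\infty}(\boldsymbol{\tau})$.

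The main technical step, and therefore the step I expect to be slightly subtle, is the bookkeeping for the inclusion $\widetilde{W}^{\C}_{\infty}(\gamma_{1},\boldsymbol{\tau})\subseteq W^{\C}_{\infty}(\boldsymbol{\tau})$: one has to make sure that the extra constants generated by passing between $|\cdot|$ and $|\cdot|_{\infty}$ (and between $\|\cdot\|_{2}$ and $\|\cdot\|_{\infty}$) do not spoil the exponent $\tau_{j}-1$, and that a single $\mathbf{p}$ can be chosen simultaneously for all $j$. This is handled by absorbing the constants into a multiplicative factor $\gamma$, which only shifts the parameter in $\widetilde{W}^{\C}_{\infty}(\gamma,\boldsymbol{\tau})$ and by Lemma \ref{Lem:HD:Wgamma} does not change the Hausdorff dimension.
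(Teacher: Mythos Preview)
Your proposal is correct and follows essentially the same approach as the paper: sandwich $W_{\infty}(\boldsymbol{\tau})$ between $\widetilde{W}_{\infty}((2m)^{-1},\boldsymbol{\tau})$ and $\widetilde{W}_{\infty}(\sqrt{2},\boldsymbol{\tau})$ using the distance-to-hyperplane formula from Proposition~\ref{Prop:Cx:Measure:Aux} together with the norm equivalence \eqref{Eq:EquivNorm}, and then invoke Lemma~\ref{Lem:HD:Wgamma}. You have filled in more detail than the paper (which simply states the inclusions and cites the earlier argument), but the constants, the key estimates, and the overall structure are identical.
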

\begin{proof}
Arguing as in the proof of \eqref{Eq:Propo:LemUbiqSys} and using \eqref{Eq:EquivNorm}, we obtain
\[
\widetilde{W}_{\infty}\left((2m)^{-1},\tau\right)
\subseteq 
W_{\infty} (\boldsymbol{\tau})
\subseteq 
\widetilde{W}_{\infty}\left(\sqrt{2},\tau\right).
\] 
The result now follows from Lemma \ref{Lem:HD:Wgamma}.
\end{proof}
In a similar fashion, we may show that the corresponding set obtained from the usual complex absolute value $|\,\cdot\,|$ has the same Hausdorff dimension:
\begin{equation}\label{Eq:HD:Wlambda:NormaUsual}
\dimh W^{\C}_{n,m}(\boldsymbol{\tau})
=
\min\{s_1(\boldsymbol{\tau}), \ldots, s_n(\boldsymbol{\tau})\}.
\end{equation}

\section{Quaternion approximation}

In this section, we study sets of linear forms in quaternion space. We refer the reader to \cite[Section 20]{HardyWright1979} for the elementary algebraic aspects of quaternions and Hurwitz integers, to \cite{DodsonEveritt2014} for a beautiful account on metrical Diophantine approximation aspects of quaternions, and to \cite{ConwaySmith2003} for an overview of the theory. It is worth stressing that no metrical results whatsoever are known in the higher (quaternion) dimensions, other than what is presented below.

Fix two natural numbers $m$ and $n$. Let $i,j$ be two symbols and define the operations 
\begin{equation}\label{Eq:Quat:01}
i^2=-1, 
\;
j^2=-1, 
\;
ij=-ji
\end{equation}
We write $k:=ij$. The \textit{ring of quaternions} $\mathbb{H}$ is the skew field whose elements are the objects $a+bi+cj+dk$ with $a,b,c,d\in\R$ along with the sum defined coordinate-wise and the product obtained by following the usual rules and \eqref{Eq:Quat:01}. Given any $\xi=a+bi+cj+dk\in \mathbb{H}$, its \textit{conjugate} $\overline{\xi}$ is
\[
\overline{\xi}=a - bi -cj  - dk
\]
and its \textit{norm} $|\xi|$ is
\[
|\xi|=\sqrt{a^2+b^2+c^2+d^2},
\]
so $|\xi|=\sqrt{\xi\overline{\xi}}$. The definition of the product implies that for any $\xi,\zeta\in \mathbb{H}$ we have $\overline{\xi\zeta}=\overline{\zeta} \,\overline{\xi}$ and, hence, $|\xi\zeta|=|\xi||\zeta|$. 

When regarded as a real vector space, $\mathbb{H}$ is isomorphic to $\R^4$ and an isomorphism is determined by 
\[
1\mapsto(1,0,0,0), 
\;
i\mapsto(0,1,0,0), 
\;
j\mapsto(0,0,1,0), 
\;
k\mapsto(0,0,0,1).
\]
Under this identification between $\mathbb{H}$ and $\mathbb{R}^4$, the real bi-linear map $\langle \,\cdot\, , \, \cdot\,\rangle: \mathbb{H}\times \mathbb{H} \to \mathbb{R}$ given by 
\[
\langle \zeta, \xi \rangle:= \frac{1}{2}\left( \overline{\zeta}\xi - \zeta\overline{\xi}\right)
\quad
(\zeta, \xi\in \mathbb{H})
\]
is the usual inner product. As a consequence, the function from $\mathbb{H}^n\times \mathbb{H}^n$ to $\mathbb{R}$ given by
\[
\left\langle (\xi_1,\ldots,\xi_n), (\zeta_1,\ldots, \zeta_n)\right\rangle_{\mathbb{H}^n}
\mapsto
\sum_{j=1}^n \langle \xi_j,\zeta_j\rangle_{\mathbb{H}}
\]
for all $(\xi_1,\ldots,\xi_n)$, $(\zeta_1,\ldots, \zeta_n)\in \mathbb{H}^n$ is the usual inner product on $\mathbb{R}^{4n}$.

The set of \textit{Lipschitz integers}, given by 
\[
\{a + ib + cj +dk: a,b,c,d\in \Z\},
\]
is the obvious choice for integers in $\mathbb{H}$. However, it is customary to work instead with the \textit{Hurwitz integers} $\Z_{\mathbb{H}}$, defined as
\[
\Z_{\mathbb{H}}
:= 
\left\{ \frac{a}{2} + \frac{b}{2} i + \frac{c}{2} j + \frac{d}{2} k: a,b,c,d\in \Z \text{ and } a\equiv b\equiv c \equiv d \pmod 2\right\}.
\]
Clearly, the Lipschitz integers are contained in $\Z_{\mathbb{H}}$.
Hurwitz integers are preferred over Lipschitz integers because they are a Euclidean domain while Lipschitz integers are not \cite[Chapter 5]{ConwaySmith2003}. 
Hurwitz integers have $24$ invertible elements or \textit{units}: 
\[
\pm 1, \; \pm i , \; \pm j , \;\pm k, \;
\frac{1}{2}\left( \pm 1 +  \pm i + \pm j + \pm k\right).
\]

As noted in \cite{DodsonEveritt2014}, $\Z_{\mathbb{H}}$ is the additive subgroup of $\mathbb{H}$ generated by $i$, $j$, $k$, $\frac{1+i+j+k}{2}$ and, as a sub-lattice of $\R^4$, the determinant of $\Z_{\mathbb{H}}$ is $\frac{1}{2}$. Let $I_{\mathbb{H} }$ be the Voronoi region for $\Z_{\mathbb{H}}$ containing $0$, that is
\[
I_{\mathbb{H} }
:=
\{ \xi\in \mathbb{H}:  |\xi|\leq |\xi - \zeta| \text{ for all } \zeta\in \Z_{\mathbb{H}}\}.
\]
If $\llbracket \xi \rrbracket_{\mathbb{H}}$ denotes the distance between $\xi\in\mathbb{H}$ and its nearest Hurwitz integer, then $I_{\mathbb{H} }$ is precisely the set of quaternions $\xi$ for which $\llbracket \xi\rrbracket_{\mathbb{H}} = |\xi|$. Observe that $I_{\mathbb{H} }$ is the convex hull of $\{\pm \frac{1}{2}, \pm \frac{1}{2}i, \pm \frac{1}{2}j, \pm \frac{1}{2}k\}$ and that its Lebesgue measure is $\frac{1}{2}$. Call $I_{\mathbb{H}}^{m\times n}$ the set of $m\times n$ matrices $A$ with entries in $I_{\mathbb{H}}$.

Given an $n$-tuple of non-increasing positive functions $\varphi=(\varphi_1,\ldots, \varphi_n):\N \to\R^n$ such that
\[
\lim_{q\to\infty} \phi_j(q)=0
\quad (1\leq j\leq n)
\]
and an $m$-tuple of non-decreasing positive functions $\Phi=(\Phi_1,\ldots, \Phi_m):\N \to \R^m$ such that
\[
\lim_{q\to\infty} \Phi_k(q)=\infty
\quad (1\leq k\leq m),
\]
we call $W_{n,m}^{\mathbb{H}}(\varphi,\Phi)\subseteq I_{\mathbb{H}}^{m\times n}$ the set of $m\times n$ matrices $A$ with the following property: 
\begin{center}
    there are infinitely many integers $u\geq 1$  such that the system 
\begin{align*}
\left\llbracket \q \,A_j  \right\rrbracket &< \varphi_j(u) \quad (1\leq j\leq n)\\
|q_k| &\leq \Phi_k(u)  \quad (1\leq k\leq m)
\end{align*}
has a non-zero solution $\q\in \Z_{\mathbb{H}}^{1\times m}$.
\end{center}
We denote by $\mu_{m\times n}^{\mathbb{H}}$ the Lebesgue measure on $\mathbb{H}^{m\times n}$. 
\begin{theorem}\label{TEO:Q:MEASURE}
If there are some constants $N_0,M\in \N$, $M\geq 2$, and $c_1, c_2>1$ such that for every $j\in\N_{\geq N_0}$ we have
\begin{equation}\label{EQ:Q:LinearForms}
c_1\Phi_k(M^j) \leq \Phi_k(M^{j+1}) \leq c_2\Phi_k(M^j)
\quad (1 \leq k \leq m),
\end{equation}
then, 
\[
\mu_{m\times n}^{\mathbb{H}} \left(W_{n,m}^{\mathbb{H}}(\varphi, \Phi)\right)
=
\begin{cases}
0, &\text{\rm if }\quad \displaystyle\sum_{q=1}^{\infty} \frac{1}{q}\left( \displaystyle\prod_{j=1}^n \varphi_j(q) \displaystyle\prod_{k=1}^m \Phi_k(q)\right)^4   < \infty, \\
\mu_{m\times n}^{\mathbb{H}}(I_{\mathbb{H}}^{m\times n} ), &\text{\rm if }\quad \displaystyle\sum_{q=1}^{\infty} \frac{1}{q} \left( \displaystyle\prod_{j=1}^n \varphi_j(q) \displaystyle\prod_{k=1}^m \Phi_k(q)\right)^4   = \infty.
\end{cases}
\]
\end{theorem}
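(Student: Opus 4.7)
The plan is to follow the complex case (Theorem \ref{TEO:MEASURE}) almost verbatim, with the real dimension $2$ of $\C$ replaced by the real dimension $4$ of $\mathbb{H}$ everywhere it appears; this is precisely what produces the fourth power in the series. The first step is to establish a quaternion analogue of Lemma \ref{Teo:Mink:Cx:LF}: if $\gamma_1,\dots,\gamma_n,\theta_1,\dots,\theta_m>0$ satisfy $\prod_{k}\gamma_k\prod_j\theta_j\ge 1$, then for every $A\in \mathbb{H}^{m\times n}$ there is a non-zero $(\q,\p)\in \Z_{\mathbb{H}}^{1\times(m+n)}$ with $|\q A_k-p_k|_\infty<\gamma_k$ and $|q_j|_\infty\le\theta_j$, where $|\zeta|_\infty$ denotes the $\ell^\infty$-norm of $\zeta\in\mathbb{H}$ viewed as a vector in $\R^4$. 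To obtain this, identify $\mathbb{H}$ with $\R^4$ coordinate-wise and encode left multiplication by any quaternion $\zeta$ via its regular representation as a real $4\times 4$ matrix of determinant $|\zeta|^4$. The system becomes a $\R^{4(m+n)}$ linear system for the lattice $\Z_{\mathbb{H}}^{m+n}$, which has determinant $2^{-(m+n)}$, so classical Minkowski applies.

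Next, I would set up a weighted ubiquitous system analogous to Lemma \ref{Lem:UbiqSyst:CxLF}: take $\Omega_i=I_{\mathbb{H}}^{m\times 1}$ with Lebesgue measure (which is $4m$-Ahlfors regular, so $\delta_i=4m$), let $R_{\alpha,j}=\{A_j\in I_{\mathbb{H}}^{m\times 1}:\q A_j=p_j\}$ for $\alpha=(\q,\p)$ (these are real affine subspaces of dimension $4(m-1)$, giving $\kappa_j=(m-1)/m$ and $\delta_j(1-\kappa_j)=4$), keep $\beta_\alpha=\max_k F_k(|q_k|_\infty)$ and $u_s=M^s$, and define
\[
\rho_j(u)=2M\frac{\varphi_j(u)}{\|\Phi(u)\|_\infty}\Bigl(\prod_s\varphi_s(u)\prod_k\Phi_k(u)\Bigr)^{-1/n},\qquad\psi_j(u)=\frac{\varphi_j(u)}{2m\|\Phi(u)\|_\infty}.
\]
The quaternion Minkowski theorem together with the orthogonal projection argument used in Proposition \ref{Prop:Cx:Measure:Aux} produces, for every large $u$ and every $A$ in the interior of $I_{\mathbb{H}}^{m\times n}$, some $\alpha=(\q,\p)\in J$ with $\beta_\alpha\le u$ and $A\in\prod_j\Delta(R_{\alpha,j},2f_j(u)/\|\q\|_\infty)$ (where the constant $2$ absorbs the equivalence $|\cdot|_\infty\le|\cdot|\le 2|\cdot|_\infty$ on $\R^4$). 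The analogue of Proposition \ref{complexprop4} then shows that the $\q$'s with some coordinate smaller than $\Phi_k(u_s)/M$ contribute at most $\tfrac12\mu_{m\times n}^{\mathbb{H}}(B)$, for $M$ large; the only modification is to replace the estimate $\#\{q\in\Z[i]:|q|_\infty\le R\}\asymp R^2$ by $\#\{q\in\Z_{\mathbb{H}}:|q|_\infty\le R\}\asymp R^4$ and to raise every area factor to the fourth power instead of the second, reflecting $\delta_j(1-\kappa_j)=4$.

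Having the ubiquity at hand, the divergence half follows from Theorem \ref{KW ambient measure} once one verifies the three regularity hypotheses exactly as in Lemma \ref{LF:C:AuxRep}, together with the identity
\[
\prod_{j=1}^n\Bigl(\frac{\psi_j(u)}{\rho_j(u)}\Bigr)^{\delta_j(1-\kappa_j)}=C_{n,m,M}\Bigl(\prod_{j=1}^n\varphi_j(u)\prod_{k=1}^m\Phi_k(u)\Bigr)^{4},
\]
combined with the Cauchy condensation comparison of $\sum_s(\prod_j\varphi_j(u_s)\prod_k\Phi_k(u_s))^4$ to $\sum_q q^{-1}(\prod_j\varphi_j(q)\prod_k\Phi_k(q))^4$ using \eqref{EQ:Q:LinearForms}. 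For the convergence half, I would apply Borel--Cantelli to the $\limsup$ formulation: bounding the number of $\q\in\Z_{\mathbb{H}}^{1\times m}$ with $|q_k|_\infty\le\Phi_k(u)$ by $\prod_k\Phi_k(u)^4$, the number of admissible $\p$ by $\|\q\|_\infty^{4n}$, and the measure of each cylinder by $\|\q\|_\infty^{-4n}\prod_j\varphi_j(u)^4$, yields the expected fourth-power summand. Finally, the passage between $|\cdot|$ and $|\cdot|_\infty$ on $\mathbb{H}$ sandwiches $W_{n,m}^{\mathbb{H}}(\varphi,\Phi)$ between two sets to which the $|\cdot|_\infty$-version applies, and the theorem follows.

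The main obstacle is the quaternion Minkowski theorem: non-commutativity forces a careful choice of side on which to multiply, and one must check that the regular representation indeed turns the $n$ complex columnwise equations $\q A_j=p_j$ into a genuine $\R^{4(m+n)}$ linear system compatible with $\Z_{\mathbb{H}}^{m+n}$ as a lattice. Once this is in place, the counting of Hurwitz integers in a ball ($\asymp R^4$ rather than $R^2$) is the only other place where the complex argument must be mechanically upgraded, and the entire structure of Section 5 then transfers to the quaternion setting.
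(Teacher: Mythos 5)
Your proposal follows the paper's argument essentially verbatim: quaternion Minkowski via the regular representation, a weighted ubiquitous system with $\delta_j = 4m$ and $\kappa_j = (m-1)/m$ so that $\delta_j(1-\kappa_j)=4$, Theorem~\ref{KW ambient measure} plus Cauchy condensation (using~\eqref{EQ:Q:LinearForms}) for divergence, a direct Borel--Cantelli count for convergence, and the norm-equivalence sandwich between $|\cdot|$ and $|\cdot|_\infty$; this is exactly the route taken in the paper, which itself defers the ubiquity verification to the complex case. One very small discrepancy: your quaternion Minkowski hypothesis reads $\prod_k\gamma_k\prod_j\theta_j\geq 1$, whereas the paper's Lemma~\ref{Teo:Mink:Q:LF} uses the (weaker) threshold $\geq 2^{-(m+n)}$ coming from $\det(\Z_{\mathbb{H}})=\tfrac12$; since only the order of magnitude matters for the zero--full dichotomy, this constant has no effect on the result.
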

In \cite{DodsonEveritt2014}, Dodson and Everitt solved the one-dimensional case using the theory of ubiquitous systems of balls as introduced in \cite{BDV06}. More precisely, given a non-increasing function $\psi:\R_{+}\to\R_{+}$ such that $\psi(t)=\psi(\lfloor t \rfloor)$ for all $t>0$, they proved that the set
\[
V(\psi)
:=
\left\{ \xi \in I_{\mathbb{H}}: |\xi q - p | < |q|\psi(|q|) \text{ for i.m. } p,q\in \Z_{\mathbb{H}}\right\}
\]
is of either zero or full measure according to the convergence or divergence of
\[
\sum_{q=1}^{\infty} q^7 \psi(q)^4.
\]
When $m=n=1$, the function $\Phi_1$ is the identity map and $\varphi$ is decreasing, Theorem \ref{TEO:Q:MEASURE} tells us that the set 
\[
\left\{ \xi\in I_{\mathbb{H}}: |q\xi - p| < \varphi(\lfloor |q|\rfloor ) \text{ for i.m. } p,q\in \Z_{\mathbb{H}}\right\}
\]
is of zero or full measure according to the convergence or divergence of
\[
\sum_{q=1}^{\infty} q^3 \varphi(q)^4.
\]
Although quaternions are not commutative, the proof of Theorem \ref{TEO:Q:MEASURE} also allows us to conclude the 0-1 dichotomy for the set 
\[
\left\{ \xi\in I_{\mathbb{H}}: |\xi q - p| < \varphi(\lfloor |q|\rfloor ) \text{ for i.m. } p,q\in \Z_{\mathbb{H}}\right\}.
\]
The result of Dodson and Everitt now follows.



Let $\boldsymbol{\tau}=(\tau_1,\ldots, \tau_n)\in\R^n$ be any vector for which \eqref{EQ:CondOnTau} holds. Define 
 \[
s_j(\boldsymbol{\tau})
:= 
4n(m-1) 
+ 
4\, \frac{m + n - \sum\limits_{r\,:\, \tau_{r}<\tau_{j}} (\tau_r-\tau_j)}{\tau_j}
\quad
(1\leq j\leq n)
\]
and 
\[
W_{n,m}^{\mathbb{H}}(\boldsymbol{\tau})
:=
\left\{ A\in I_{\mathbb{H}}^{m\times n} : \left\llbracket \mathbf{q}  A_j \right\rrbracket < \frac{1}{\|\mathbf{q}\|^{\tau_j-1}} \; (1\leq j \leq n) \text{ for i. m. } \mathbf{q}\in \Z_{\mathbb{H}}^m\right\}.
\]
\begin{theorem}\label{TEO:Q:HDIM}
If $\tau\in\R^n$ satisfies \eqref{EQ:CondOnTau}, then
\[
\dimh W_{n,m}^{\mathbb{H}}(\boldsymbol{\tau}) = \min\{ s_1(\boldsymbol{\tau}), \ldots, s_n(\boldsymbol{\tau})\}.
\]
Moreover, $\cH^{s}\left( W_{n,m}^{\mathbb{H}}(\boldsymbol{\tau})\right)= \infty$.
\end{theorem}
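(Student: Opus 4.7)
The argument will closely parallel the complex case (Theorem~\ref{TEO:HDIM}), with the real dimension of $\mathbb{H}$ over $\R$ equal to $4$ rather than $2$. In the ubiquitous system setup, the resonant set $R_{\alpha,j} = \{A_j \in I_\mathbb{H}^m : \mathbf{q}\,A_j = p_j\}$ is, as a real variety, an affine subspace of real codimension $4$ inside $\mathbb{H}^m \cong \R^{4m}$; consequently I would take $\delta_j = 4m$ and $\kappa_j = 1 - 1/m$ for each $j \in \{1,\ldots,n\}$, and set $\beta_{\alpha} = \|\mathbf{q}\|$.

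For the upper bound, I apply the Hausdorff--Cantelli Lemma~\ref{bclem} to the natural fine cover
$$
W_{n,m}^\mathbb{H}(\boldsymbol{\tau}) \;\subseteq\; \bigcup_{Q \geq N} \bigcup_{\substack{\mathbf{q} \in \Z_\mathbb{H}^m \\ \|\mathbf{q}\|=Q}} \bigcup_{\mathbf{p}} \prod_{j=1}^n \Delta\!\left(R_{\alpha,j},\, \|\mathbf{q}\|^{-\tau_j}\right).
$$
A standard lattice count for the Hurwitz integers gives $\#\{\mathbf{q} \in \Z_\mathbb{H}^m : \|\mathbf{q}\|=Q\} \asymp_m Q^{4m-1}$, and for each such $\mathbf{q}$ the number of admissible $\mathbf{p}$ is $\asymp_n Q^{4n}$. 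For a fixed $j$, each rectangle $\prod_k \Delta(R_{\alpha,k}, \|\mathbf{q}\|^{-\tau_k})$ can be covered by $\prod_k \|\mathbf{q}\|^{4(m-1)\tau_j}\max\{1,\|\mathbf{q}\|^{4(\tau_j-\tau_k)}\}$ balls of radius $\|\mathbf{q}\|^{-\tau_j}$. Summing the resulting $s$-cost over $Q$ and demanding convergence yields $\dimh W_{n,m}^\mathbb{H}(\boldsymbol{\tau}) \leq s_j(\boldsymbol{\tau})$ for every $j$, hence the claimed upper bound.

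For the lower bound I would apply Theorem~\ref{MTPRR}. This requires a local ubiquity statement for rectangles, which is the quaternion analogue of Lemma~\ref{Lem:UbiqSyst:CxLF:HD}: with $u_s = M^s$, $\widetilde l_s = M^{s-1}$, and $\widetilde\rho_j(u) = c\,u^{-(a_j-1)}$ for a vector $(a_1,\ldots, a_n)$ satisfying \eqref{Eq:CondOnA}, the construction transfers verbatim once one has the quaternion version of Minkowski's linear forms theorem (established as in Theorem~\ref{TEO:Q:MEASURE} by encoding left-multiplication by a quaternion as a $4\times 4$ real block and invoking classical Minkowski on the resulting lattice in $\R^{4(m+n)}$). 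The same case split on whether $\tau_n \geq (m+n)/n$ as in Lemma~\ref{Lem:HD:Wgamma}, with $t_j := \tau_j - a_j$, then produces via Theorem~\ref{MTPRR} both the lower bound $\min_j s_j(\boldsymbol{\tau})$ and the stronger statement $\cH^s(B \cap W_{n,m}^\mathbb{H}(\boldsymbol{\tau})) = \cH^s(B)$ for every ball $B \subseteq I_\mathbb{H}^{m\times n}$. Since \eqref{EQ:CondOnTau} forces $s \leq 4mn$ with equality only in the degenerate boundary case $\sum_j \tau_j = m+n$ (excluded by monotonicity/continuity arguments as in \eqref{Eq:HD:Wlambda:NormaUsual}), one has $\cH^s(B) = +\infty$ for any open ball, and the infinite Hausdorff measure conclusion follows at once.

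The main obstacle will be the quaternion Minkowski theorem: in $\Z[i]^m$ the order of multiplication is immaterial, but in $\Z_\mathbb{H}^m$ the $4\times 4$ real representation depends on whether one regards $\mathbf{q}\,A_j$ as left or right multiplication, and the chosen convention must be consistent with the definition of $R_{\alpha,j}$ throughout the ubiquity proof. Secondary technical nuisances are the non-cubic Voronoi region $I_\mathbb{H}$ and the fact that $\Z_\mathbb{H}$ is not a direct sum of copies of $\Z$; both are handled by passing to a slightly enlarged cube in the elementary lattice-point estimates.
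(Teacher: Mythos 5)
Your proposal is correct and follows essentially the same strategy as the paper: reduce to the $\|\cdot\|_{\infty}$ picture, establish the quaternion Minkowski lemma via the real $4\times4$ block representation, set up a local ubiquitous system for rectangles with $\delta_j=4m$, $\kappa_j=1-\tfrac{1}{m}$, apply Theorem~\ref{MTPRR} for the lower bound with the same case split on $\tau_n$ versus $(m+n)/n$, and use the Hausdorff--Cantelli cover for the upper bound. Your count of $\asymp Q^{4n}$ admissible $\mathbf{p}$ is the right one for $\Z_{\mathbb{H}}^n$ (the paper's display shows $Q^{2n}$, which appears to be a transcription slip from the complex case and would not reproduce $s_j(\boldsymbol\tau)$), and you are right that the concluding $\cH^s=\infty$ relies on $s<4mn$ so that $\cH^s(B)=\infty$ for any ball.
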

Theorems \ref{TEO:Q:MEASURE} and \ref{TEO:Q:HDIM} can be shown as their complex counterpart. We thus omit a considerable amount of detail in their proofs. First, as before, we replace $|\,\cdot\,|$ with the more manageable norm $|\,\cdot\,|_{\infty}$ given by
\[
|\xi|_{\infty}:= \max\{|a|, |b|, |c|, |d|\}
\quad
\left( \xi=a+bi+cj+dk \in \mathbb{H}\right)
\]
and $\llbracket \,\cdot\,\rrbracket$ with the function $\llbracket \,\cdot\,\rrbracket_{\infty}:\mathbb{H}\to\R_{+}$ given by
\[
\llbracket \xi \rrbracket_{\infty} := \min\{|\xi - \zeta|_{\infty}: \zeta\in \Z_{\mathbb{H}} \}
\quad (\xi\in \mathbb{H}).
\]
The quaternion version of Minkowski's theorem for linear forms, Lemma \ref{Teo:Mink:Q:LF} below, follows from Minkowski's Theorem and $\det(\Z_{\mathbb{H}})=\frac{1}{2}$.

\begin{lemma}\label{Teo:Mink:Q:LF}
Let $\gamma_1,\ldots, \gamma_n,\theta_1,\ldots, \theta_m$ be positive numbers satisfying $\prod_{j=1}^n \gamma_j \prod_{k=1}^m \theta_k\geq \frac{1}{2^{m+n}}$. For every matrix $A\in \mathbb{H}^{m\times n}$ there exists a vector $(\q,\p)\in \Z_{\mathbb{H}}^{1\times (m + n)}$ with $\q\neq 0$ such that
\begin{align}
\left|\, \q \; A_t - p_t\right|_{\infty} &< \gamma_t \quad (1\leq t \leq n),   \nonumber \\
\left| q_r \right|_{\infty} &\leq  \theta_r \quad (1\leq r \leq m).
\end{align}
\end{lemma}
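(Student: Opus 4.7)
The plan is to mirror the argument used for the complex analogue Lemma \ref{Teo:Mink:Cx:LF}, replacing the identification $\C\simeq \R^2$ by $\mathbb{H}\simeq \R^4$ via $a+bi+cj+dk\leftrightarrow (a,b,c,d)$. Under this identification, right multiplication by a fixed quaternion $\zeta\in\mathbb{H}$ on $\mathbb{H}$ becomes a real linear endomorphism of $\R^4$, represented by a $4\times 4$ real matrix $M_{\zeta}^R$ whose entries are read off from the quaternion multiplication rules \eqref{Eq:Quat:01}. Consequently, if $\q=(q_1,\ldots,q_m)\in\mathbb{H}^{1\times m}$ has real coordinate vector $\mathbf{Q}=(Q_1,\ldots,Q_m)\in\R^{4m}$ with each $Q_r\in\R^4$ encoding $q_r$, then the quaternion linear form $\q A_t=\sum_{r=1}^m q_r a_{r,t}$ is represented in real coordinates by $\sum_{r=1}^m Q_r M_{a_{r,t}}^R\in\R^4$.

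First I would decompose each of the $n$ inequalities $|\q A_t-p_t|_{\infty}<\gamma_t$ into its four real coordinate inequalities, and similarly each of the $m$ inequalities $|q_r|_{\infty}\le\theta_r$ into four real coordinate inequalities. This rewrites the system as $4(m+n)$ real linear inequalities in the $4(m+n)$ real variables $(\mathbf{Q},\mathbf{P})$, where $\mathbf{P}$ collects the real coordinates of $\p$. Assembling the coefficients of these linear forms produces a block matrix of shape
\[
B=\begin{pmatrix} I_{4m} & \cM \\ 0 & -I_{4n} \end{pmatrix},
\]
with $\cM\in\R^{4m\times 4n}$ built from the blocks $M_{a_{r,t}}^R$; an immediate block-triangular computation gives $|\det B|=1$.

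The integrality constraint $(\q,\p)\in\Z_{\mathbb{H}}^{1\times (m+n)}$ is equivalent to $(\mathbf{Q},\mathbf{P})$ lying in the lattice $\Lambda:=\Z_{\mathbb{H}}^{m+n}\subseteq\R^{4(m+n)}$. Since the text records $\det(\Z_{\mathbb{H}})=\tfrac{1}{2}$ as a sublattice of $\R^4$, we have $\det(\Lambda)=2^{-(m+n)}$. Applying Theorem \ref{TheoremMinkowski} to $\Lambda$, the matrix $B$, and the bounds $c_j$ obtained by listing each $\theta_r$ and each $\gamma_t$ four times, the hypothesis on $\prod\gamma_t\prod\theta_k$ guarantees the Minkowski product condition
\[
\det(\Lambda)\,|\det B|=2^{-(m+n)}\ \le\ \left(\prod_{r=1}^m\theta_r\right)^{4}\left(\prod_{t=1}^n\gamma_t\right)^{4},
\]
thereby yielding a non-zero lattice point $(\mathbf{Q},\mathbf{P})\in\Lambda$, i.e.\ a non-zero $(\q,\p)\in\Z_{\mathbb{H}}^{1\times (m+n)}$, satisfying every inequality. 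The main subtlety I anticipate is twofold: upgrading Minkowski's conclusion of $(\q,\p)\neq 0$ to the stated $\q\neq 0$ (done by noting that $\q=0$ would force $|p_t|_{\infty}<\gamma_t$, and any non-zero Hurwitz integer has infinity norm at least $\tfrac{1}{2}$, so in the regime of interest this forces $\p=0$ as well), and correctly propagating the non-standard half-integer lattice covolume $\tfrac{1}{2}$ through the product $\Lambda=\Z_{\mathbb{H}}^{m+n}$ without absorbing it into the $\theta$'s or $\gamma$'s by mistake.
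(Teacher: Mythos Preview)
Your approach is exactly what the paper intends: it states only that the lemma ``follows from Minkowski's Theorem and $\det(\Z_{\mathbb H})=\tfrac12$'', and you are spelling out the $\mathbb H\simeq\R^4$ reduction in the same way the complex case Lemma~\ref{Teo:Mink:Cx:LF} is proved. The block-triangular matrix $B$ with $|\det B|=1$ and the lattice $\Lambda=\Z_{\mathbb H}^{\,m+n}\subset\R^{4(m+n)}$ with $\det\Lambda=2^{-(m+n)}$ are both correct.

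There is, however, a genuine arithmetic gap in your verification of the Minkowski product condition. You write
\[
\det(\Lambda)\,|\det B| \;=\; 2^{-(m+n)} \;\le\; \Bigl(\prod_{r}\theta_r\Bigr)^{4}\Bigl(\prod_{t}\gamma_t\Bigr)^{4},
\]
but the stated hypothesis is only $\prod\gamma_t\prod\theta_r\ge 2^{-(m+n)}$, and for $x\in[2^{-(m+n)},1)$ the implication $x\ge 2^{-(m+n)}\Rightarrow x^4\ge 2^{-(m+n)}$ is false (take $m=n=1$, $x=\tfrac14$: then $x^4=\tfrac1{256}<\tfrac14$). The Minkowski argument you describe actually yields the lemma under the stronger hypothesis $\prod\gamma_t\prod\theta_r\ge 2^{-(m+n)/4}$. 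Either the exponent in the lemma's hypothesis should be $-(m+n)/4$, or your displayed inequality needs a different justification; as written, the step does not follow. (For the applications in the paper this constant is immaterial, but for the lemma \emph{as stated} the gap is real.)

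A second, smaller point: your upgrade from $(\q,\p)\neq 0$ to $\q\neq 0$ relies on every nonzero Hurwitz integer having $|\,\cdot\,|_\infty\ge\tfrac12$, which only forces $\p=0$ when $\max_t\gamma_t\le\tfrac12$. That is not part of the hypothesis, so the lemma in full generality needs the usual workaround (e.g.\ assume without loss of generality each $\gamma_t\le\tfrac12$, since otherwise one may shrink $\gamma_t$ to $\tfrac12$ and enlarge some $\theta_r$, or simply note that the conclusion is trivial for that coordinate). This is the same looseness present in the paper's complex case, but you flagged it yourself, so it is worth recording the fix explicitly.
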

\subsection{Ubiquity for quaternions}
Given $k\in \N$, for any $\xi=(\xi_1,\ldots, \xi_k)\in \mathbb{H}^k$ define 
\[
\| \xi\|_{\infty} 
:=
\max\left\{ |z_1|_{\infty}, \ldots, |z_k|_{\infty}\right\}.
\]
Hence, if $\|\xi\|_2=\sqrt{|\xi_1|^2 + \cdots + |\xi_k|^2}$, we have
\begin{equation}
\frac{1}{2\sqrt{k}} \| \xi\|_{2}
\leq 
\| \xi \|_{\infty}
\leq
\| \xi \|_{2}.
\end{equation}
Consider a sufficiently large $M$ and the next objects:
\begin{enumerate}[i.]
\item $J:= \left\{\alpha=(\q,\p)\in \Z_{\mathbb{H}}^{m+n}: \|\p\|_{\infty} \leq 8m \|\q\|_{\infty}\right\}$,
\item $\beta:J\to\R_{+}$, $\alpha=(\q,\p)\mapsto \beta_{\alpha} := \max\left\{ \Phi_1^{-1}\left( |q_1|_{\infty}\right), \ldots, \Phi_m^{-1}\left( |q_m|_{\infty} \right)\right\}$,
\item $(u_s)_{s\geq 1}$ given by $u_s=M^s$ for all $s\in \N$,
\item For each $\alpha\in J$, write $R_{\alpha,t} :=\left\{ A_j\in I_{\mathbb{H} }^{m\times 1}: \q \,A_t = p_t\right\}$ for $t\in\{1,\ldots, n\}$ and the resonant set $R_{\alpha}$ is $R_{\alpha}:=\prod_{t=1}^n R_{\alpha,t}$,
\item In this context, $\kappa_t=1 - \frac{1}{m}$ and  $\delta_t=4m$ for $t\in\{1,\ldots,n\}$.
\end{enumerate}
Let $\rho:(\rho_1,\ldots, \rho_n):\N\to \R_{+}^n$ be given by
\[
\rho_j(u)
:=
2\,M\frac{\varphi_j(u)}{\|\Phi(u)\|_{\infty}} \left( \prod_{s=1}^n \varphi_s(u)\prod_{k=1}^m \Phi_k(u)\right)^{-1/n}
\qquad(1\leq j \leq n, \; u\in\N)
\]
and $\Psi=(\psi_1,\ldots, \psi_n): \N\to \R_{+}^n$ by
\[
\psi_j(u)
:=
\frac{\varphi_j(u )}{\|\Phi(u )\|_{\infty} }
\quad(1\leq j \leq n, \; u\in\N).
\]
Recall that, by Lemma~\ref{CI_product}, we can replace $\Psi$ with any of its multiples without altering the measure. In view of Lemma \ref{Teo:Mink:Q:LF}, we may assume
\[
\prod_{t=1}^n \varphi_t(u)\prod_{r=1}^m \Phi_r(u) \leq \frac{1}{2^{m+n}}
\quad (u\in\N).
\]
In order to prove Theorem \ref{TEO:Q:HDIM}, take $J$, $R_{\alpha}$ for $\alpha\in J$, $\beta$, and $(u_s)_{s\geq 1}$ as above, pick a sufficiently large $M$ and put $l_s=M^{s-1}$ for all $s\in \N$. Given $\bfa=(a_1,\ldots, a_n)\in \R^n$ satisfying \eqref{Eq:CondOnA}, define $\tilde{\rho} =(\tilde{\rho}_1 ,\ldots,\tilde{\rho}_n):\N\to\R^n$ by 
\[
\tilde{\rho}_j(u) =  \frac{2}{u^{a_t -1 }} 
\quad
(u\in\N)
\]
for each $t=1,\ldots, N$.
\begin{lemma}\label{Lem:UbiqSyst:CxLF:HD1}
The system $((R_{\alpha})_{\alpha\in J}, \beta)$ is ubiquitous with respect to $\rho$ and $(l_s)_{s\geq 1}$, $(u_s)_{s\geq 1}$. The same system is also ubiquitous with respect to $\tilde{\rho}$ and $(\tilde{l}_s)_{s\geq 1}$, $(u_s)_{s\geq 1}$ for a suitable $(\tilde{l}_s)_{s\geq 1}$.
\end{lemma}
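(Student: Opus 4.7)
The plan is to follow the structure of the proofs of Lemma \ref{Lem:UbiqSyst:CxLF} and Lemma \ref{Lem:UbiqSyst:CxLF:HD} almost verbatim, substituting the quaternionic Minkowski theorem (Lemma \ref{Teo:Mink:Q:LF}) for its complex counterpart and adjusting every instance of the real dimension $2$ (the real dimension of $\C$) to $4$ (the real dimension of $\mathbb{H}$). Non-commutativity plays essentially no role, because the geometric estimates that drive the ubiquity proof depend only on $\mathbb{H}^m \simeq \R^{4m}$ as a real inner product space, together with the scalar identity $\q\,\overline{\q}^{\top} = |\q|_2^2 \in \R$ for any non-zero $\q\in \mathbb{H}^{1\times m}$. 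Throughout, I work with $|\cdot|_{\infty}$ and $\llbracket\cdot\rrbracket_{\infty}$ and recover the stated norm version via the elementary bound $|\xi|_{\infty}\leq |\xi|\leq 2|\xi|_{\infty}$ on $\mathbb{H}$.

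First I would establish the analogue of Proposition \ref{Prop:Cx:Measure:Aux}: for a family of positive functions $f_j:\N\to\R_{+}$ with $f_j(u)\to 0$ and
\[
\prod_{j=1}^n f_j(u)\prod_{k=1}^m \Phi_k(u) \;=\; \tfrac{1}{2^{m+n}}
\quad (u\in\N),
\]
every $A$ in the interior of $I_{\mathbb{H}}^{m\times n}$ admits, for all sufficiently large $u$, some $\alpha=(\q,\p)\in J$ with $\beta_{\alpha}\leq u$ and $A \in \prod_{j=1}^n \Delta(R_{\alpha,j},\, 2 f_j(u)/\|\q\|_{\infty})$. Lemma \ref{Teo:Mink:Q:LF} delivers such a $(\q,\p)$ satisfying $|\q A_j - p_j|_{\infty} < f_j(u)$ and $|q_k|_{\infty}\leq \Phi_k(u)$, so $\beta_{\alpha}\leq u$ is immediate. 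The inclusion in the thickened resonant set then comes from the real-Euclidean orthogonal projection of $A_j$ onto $R_{\alpha,j}$: the explicit vector $\mathbf{a} = A_j - \overline{\q}^{\top}(\q A_j - p_j)/|\q|_2^2$ lies in $R_{\alpha,j}$ because $\q\overline{\q}^{\top}=|\q|_2^2$, and a direct computation using $|xy|=|x||y|$ in $\mathbb{H}$ gives $\|A_j-\mathbf{a}\|_2 = |\q A_j-p_j|/\|\q\|_2$. The inequalities $\|\cdot\|_{\infty}\leq \|\cdot\|_2$ on $\mathbb{H}^m$ and $|\xi|\leq 2|\xi|_{\infty}$ on $\mathbb{H}$ then yield the required sup-norm bound with factor $2$.

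The remaining steps parallel Proposition \ref{Prop:EstMeasBall} and Proposition \ref{complexprop4}. Using $\#\{\xi\in\Z_{\mathbb{H}}: |\xi|_{\infty}\leq R\}\asymp R^4$, the same Cauchy--Schwarz calculation shows that for fixed $j$ and $\q$, the number of $p\in\Z_{\mathbb{H}}$ for which $\Delta(R_{\alpha,j},2f_j(u_s)/\|\q\|_{\infty})\cap B(X_j,r)\neq\varnothing$ is at most $O((mr\|\q\|_{\infty}+1)^{4})$; note that $4 = \delta_j(1-\kappa_j) = 4m\cdot m^{-1}$ is exactly the real codimension of $R_{\alpha,j}$. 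Defining $J_{s,k}'$ as in the complex case, one has $\#J_{s,k}' \ll 3^{4m}\bigl(M^{-1}\prod_{j}\Phi_j(u_s)\bigr)^{4}$, and the measure of the union of thickened resonant sets on $J_{s,k}$ splits, as in the right-hand side of (\ref{Eq:Cx:Sums}), into two sums with exponents $4$ in place of $2$. The first is bounded by $\tfrac{1}{4}\mu^{\mathbb{H}}_{m\times n}(B)$ once $M>2^{Cn}3^{4m}m^{n+1}$ for an absolute $C$, and the second tends to zero because $\sum_{q\leq R} q^3/q^{2n}$ is still dominated by $\sum_{q\leq R} q^{-1}\asymp \log R$, so the quaternionic analogue of the tail estimate $\sum_k \log \Phi_k(u_s)/\Phi_k(u_s)^{2}\to 0$ survives. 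Restricting to $\widetilde{J}_s$, where each $|q_k|_{\infty}\geq \Phi_k(u_s)/M$, then transforms the Minkowski level $2f_j(u_s)/\|\q\|_{\infty}$ into $\rho_j(u_s)$, giving ubiquity with respect to $\rho$.

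For the Hausdorff dimension statement, I would repeat the argument with $f_j$ replaced by a power of $u$: setting $\tilde{l}_s=M^{s-1}$ and, for $\mathbf{a}=(a_1,\ldots,a_n)$ satisfying \eqref{Eq:CondOnA}, showing that every $A$ in the interior of $I_{\mathbb{H}}^{m\times n}$ lies, for large $s$, in some $\prod_j \Delta(R_{\alpha,j},2/(\|\q\|_{\infty} u_s^{a_j-1}))$ with $1\leq\|\q\|_{\infty}\leq u_s$; the contribution of $\alpha$ with $\beta_\alpha<\tilde{l}_s$ is removed exactly as in Lemma \ref{Lem:UbiqSyst:CxLF:HD}, using a sufficiently large $M$. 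The only genuinely delicate point I anticipate is the bookkeeping of left/right multiplication in the projection identity: one must consistently view $\q$ as a row vector acting on column vectors from the left, and $\overline{\q}^{\top}$ as a column vector on which the scalar error $\q A_j - p_j$ acts from the right; once this is fixed, the remaining work is a mechanical replacement of exponents $2$ by $4$ and of the lattice counts in $\Z[i]$ by those in $\Z_{\mathbb{H}}$.
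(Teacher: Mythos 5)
Your plan reproduces exactly what the paper intends: the paper itself gives no proof of this lemma, stating only that it ``follows closely'' the complex-case Lemmas \ref{Lem:UbiqSyst:CxLF} and \ref{Lem:UbiqSyst:CxLF:HD}, and you have filled in the one genuinely nontrivial adaptation, namely that the nearest-point projection $\mathbf{a} = A_j - \overline{\q}^{\top}(\q A_j - p_j)/\|\q\|_2^2$ lands on $R_{\alpha,j}$ precisely because $\q\,\overline{\q}^{\top}=\|\q\|_2^2$ is a real scalar and hence commutes with the quaternionic error term. One small slip to fix: in the tail estimate for the second sum the surface count in $\Z_{\mathbb H}$ is $\asymp q^3$ and the relevant power after the reduction $\|\q\|_\infty^{4n}\geq\|\q\|_\infty^{4}$ is $q^{-4}$, so the summand is $q^3/q^{4}=q^{-1}$ (not $q^3/q^{2n}$, which would diverge when $n=1$); the intended logarithmic bound and the conclusion $\sum_k\log\Phi_k(u_s)/\Phi_k(u_s)^4\to 0$ are unchanged.
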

The proof of Lemma \ref{Lem:UbiqSyst:CxLF:HD1} follows closely the proofs of lemmas \ref{Lem:UbiqSyst:CxLF} and \ref{Lem:UbiqSyst:CxLF:HD}. We leave the details to the reader.
\subsection{Proof of Theorem \ref{TEO:Q:MEASURE}}
    Condition \eqref{EQ:Q:LinearForms} and the Cauchy condensation imply that the next series are either both convergent or both divergent:
    \[
    \sum_{q=1}^{\infty} \frac{1}{q}\left(\prod_{t=1}^n  \varphi_t(q) \prod_{r=1}^m \Phi_r(q)\right)^4
    \quad\text{ and }\quad
    \sum_{s=0}^{\infty} \left( \prod_{t=1}^n \varphi_t(u_s) \prod_{r=1}^m \Phi_r(u_s) \right)^4.
    \]
    Hence, the divergence case is a consequence of Lemma \ref{Lem:UbiqSyst:CxLF:HD1} and Theorem \ref{KW ambient measure}. In order to prove the convergence case, for each $u\in \N$, let $\cA^{\mathbb{H}}(u;\varphi,\Phi)$ be the collection of matrices $A\in I_{\mathbb{H}}^{m\times n}$ such that for some non-zero $\q\in\Z[i]^{1\times m}$ we have
\begin{align*}
    \llbracket \q A_t\rrbracket_{\infty}< \varphi_t(u), &\quad (1\leq t \leq n), \\
    |q_r|_{\infty} \leq \Phi_r(u), &\quad  (1\leq r \leq m).
\end{align*}
    Therefore,
    \[
    \mu_{m\times n}^{\mathbb{H}}\left( \cA^{\mathbb{H}}(u;\varphi,\Phi)\right)\ll_{m,n} \left(\prod_{t=1}^n  \varphi_t(u) \prod_{r=1}^m \Phi_r(u)\right)^4
    \]
    and, as in the proof of Theorem \ref{TEO:MEASURE},
    \[
    \limsup_{u\to\infty} \cA^{\mathbb{H}}(u;\varphi,\Phi)
    \subseteq 
    \limsup_{u\to\infty} \cA^{\mathbb{H}}(u;\varphi,c_2\Phi).
    \]
    Finally, the Borel-Cantelli lemma gives the theorem.
\subsection{Proof of Theorem \ref{TEO:Q:HDIM} }
As in the complex setting, for any $\boldsymbol{\tau}=(\tau_1,\ldots, \tau_n)\in\R^n$ such that \eqref{EQ:CondOnTau} holds, define the sets
\[
W_{\infty}^{\mathbb{H}}(\boldsymbol{\tau})
:=
\left\{ A\in I_{\mathbb{H}}^{m\times n}  : \left\llbracket \mathbf{q}  A_t \right\rrbracket_{\infty} < \frac{1}{\|\mathbf{q}\|_{\infty}^{\tau_t-1}} 
\quad (1\leq t\leq n),\,\, \text{ for i. m. } \mathbf{q}\in \Z_{\mathbb{H}}^m\right\}
\]
and, for any $\gamma>0$,
\[
\widetilde{W}_{\infty}^{\mathbb{H} }(\gamma,\boldsymbol{\tau})
:=
\bigcap_{Q\in \tfrac{1}{2}\N}
\bigcup_{\substack{\mathbf{q}\in \Z_{\mathbb{H}}^m\\ \|\mathbf{q}\|_{\infty}=Q}}
\bigcup_{\substack{\mathbf{p}\in \Z_{\mathbb{H}}^n\\ \alpha=(\mathbf{q},\mathbf{p})\in J}} 
\prod_{t=1}^n \Delta\left( R_{\alpha,t}, \frac{\gamma}{\|\mathbf{q}\|_{\infty}^{\tau_t}} \right).
\]
The core of the argument is done under the assumption $\gamma=1$. For the upper bound, note that
\[
\#\left\{ \mathbf{q}\in \Z_{\mathbb{H}}^m: \|\mathbf{q}\|_{\infty}=Q\right\}
\asymp_m
Q^{4m-1}
\quad \left(Q\in \tfrac{1}{2}\N\right),
\]
Indeed, for any $\mathbf{q}\in \Z_{\mathbb{H}}$, the quaternion $2\mathbf{q}$ is a Lipschitz integer and, for all $Q\in \tfrac{1}{2}\N$, we have $\|\mathbf{q}\|_{\infty}=Q$ if and only if $\|2\mathbf{q}\|_{\infty}=2Q$.

Also, for $t,l\in \{1,\ldots, m\}$, we need 
\[
\asymp
\left(\max\left\{ 1, \frac{\|\mathbf{q}\|_{\infty}^{-\tau_l} }{\|\mathbf{q}\|_{\infty}^{-\tau_t}}\right\} \|\mathbf{q}\|_{\infty}^{\tau_j(m-1)}\right)^4
\]
balls of radius $\|\mathbf{q}\|^{-\tau_t}$ to cover $\triangle(R_{\alpha,l},\|\mathbf{q}\|_{\infty}^{- \tau_l})$. Hence, if $s>0$, we have
\begin{align*}
\cH^s\left(\widetilde{W}^{\mathbb{H}}_{\infty}(1,\boldsymbol{\tau})\right)
&\ll
\liminf_{N\to\infty} \sum\limits_{Q\geq N} Q^{4m-1}Q^{2n} Q^{-s\tau_t} Q^{4\tau_t n(m-1)} Q^{4 \sum\limits_{\tau_l<\tau_t} \tau_t - \tau_l} \\
&=
\liminf_{N\to\infty} \sum\limits_{Q\geq N} Q^{4m+4n-1 + 4\tau_t n(m-1) + 4 \sum\limits_{\tau_l<\tau_j} \tau_t - \tau_l}  Q^{-s\tau_t}.
\end{align*}
The series converges if and only if $s> s_t(\boldsymbol{\tau})$. \\

From this point, the argument follows verbatim that of the complex case. That is, we reorder the coefficients $\tau$ to have $\tau_1\geq \ldots \geq \tau_2 \geq \ldots \geq \tau_n >1$ and we consider two cases: $\tau_n \geq \frac{n+m}{m}$ and $\frac{n+m}{m}>\tau_n$. Afterwards, we apply Theorem \ref{wang wu KG} to conclude 
\[
\dim_H \widetilde{W}^{\mathbb{H}}_{\infty}(1,\boldsymbol{\tau})
=
\min\left\{ s_1(\boldsymbol{\tau}), \ldots, s_n(\boldsymbol{\tau})\right\}.
\]
The theorem for an arbitrary $\gamma$ follows from the continuity of the dimension as a function of $\tau$. Finally, we conclude the theorem for $W_{\infty}^{\mathbb{H}}(\boldsymbol{\tau})$ by appealing to the equivalence of any two norms in a finite dimensional vector space.

\section{Formal power series approximation}
In this section we study sets of linear forms over the field of formal power series. Let $\F$ be the finite field with $t=p^r$ elements for some prime $p$ and $r\in\N$. We define \textit{the field of Laurent series with coefficients from} $\mathbb{F}$ or \textit{the field of formal power series with coefficients from} $\mathbb{F}$  to be
\begin{equation}
\LL = \left\{ \sum\limits_{i=-n}^\infty a_{-i}X^{-i}\, : \, n\in\Z,\ a_i\in\F,\ a_n\neq0\right\}\cup\{0\}.
\end{equation}
 An absolute value $\|\cdot\|$ on $\LL$ can be defined as
$$ \left\|\sum\limits_{i=-n}^\infty a_{-i}X^{-i} \right\|=t^n,\quad\,\left\|0\right\|=0.$$
 For any $\x=(x_1,\ldots, x_h)\in \LL^h$, we define the \textit{height of} $\x$ to be
$$
\|\x\|_{\infty} = \max\{\|x_1\|,\ldots,\|x_h\|\}.
$$
Note that for both $\|\cdot \|$ and $\|\cdot\|_\infty$, we have
$$
\|x+y\| \leq \max ( \|x\|, \|y\|) \quad\text{ and }\quad \|\x+\y\|_\infty \leq \max ( \|\x\|_\infty, \|\y\|_\infty).
$$
In $\LL$, the polynomial ring $\F[X]$ plays a role analogous to the one played by the integers in the field of real numbers. We define \textit{the polynomial part} of a non-zero element by
$$
\left[ \sum\limits_{i=-n}^\infty a_{-i}X^{-i} \right] = \sum\limits_{i=-n}^0 a_{-i}X^{-i}.
$$
Define the distance to $\LL[X]^h$ for a point $\x\in\LL^h$ as
$$
|\langle \x \rangle | = \min_{\p\in\F[X]^h} \| \x-\p \|_{\infty}\, .
$$
Let $$
I_{\LL}= \{ x\in \LL : \, [x] = 0 \} = B(0,1) = \{ x\in\LL : \| x \| <1\}.
$$
Fix $n,m\in \N$ and let $\LL^{m \times n}$ to be the set of $m\times n$ dimensional matrices with entries from $\LL$ and $I^{m \times n}_\LL$ to be the $m\times n$ dimensional matrices with entries from $I_\LL$.\\
We will also make use of the fact about the number of polynomials of fixed degree, namely
\begin{equation}\label{numberpoly}
\# \{ \q\in \F[X]^m:\, \|\q \|_\infty = t^r\}=m(r-1)t^{m-1}t^{tm}.
\end{equation}

Denote by $\mu^\LL_{m \times n}$ the $mn$-dimensional Haar measure on $\LL^{m \times n}$ normalised by $\mu^\LL_{m \times n}(I^{m \times n}_\LL)=1$.

As in the previous sections, we have an $n$-tuple $\phi=(\phi_1, \ldots, \phi_n)$ and an $m$-tuple $\Phi=(\Phi_1, \ldots, \Phi_m)$ of positive functions defined on $\N$ such that
$$
\phi_i(u)\to 0 \quad \text{ as } \quad u\to\infty  \quad (1 \leq i \leq n)
$$
and 
$$
\Phi_k(u)\to \infty \quad \text{ as } \quad u\to\infty \quad (1\leq k \leq m).
$$  Define the set
\begin{equation*}
    W^{\LL}_{n,m}(\phi,\Phi):= \left\{ A \in I^{m \times n}_\LL :\begin{array}{l} \text{the system } \left\{ \begin{array}{l} | \langle \q A_i \rangle |< \phi_{i}(u) \quad (1\leq i \leq n),  \\[2ex]
    \| q_k \| \leq \Phi_k(u) \quad  (1 \leq k \leq m), \end{array}\right.\\[3ex]
     \text{ has a solution } \q \in \F[X]^{m}\backslash\{0\} \text{ for i.m. } u\in\N 
     \end{array}
   \right\}.
\end{equation*}

We prove the following weighted version of the Khintchine-Groshve type theorem for the formal power series.
\begin{theorem}\label{formalt1}
Assume that there are constants $N_0,M \in \N_{>1}$ and $c_1, c_2>1$ such that \begin{equation}\label{formalreg}
c_1\Phi_k(M^j) \leq \Phi_k(M^{j+1}) \leq c_2\Phi_k(M^j), \quad 1\leq k \leq m, \quad \forall j\in \N_{\geq N_0}.
\end{equation}
 Then
\begin{equation*}
    \mu^\LL_{m \times n}(W^{\LL}_{n,m}(\phi,\Phi))=\begin{cases}
    0 \quad \text{\rm if }\quad  \sum\limits_{r=1}^{\infty}\frac{1}{r} \prod_{k=1}^{m} \Phi_k(r) \prod_{i=1}^{n}\phi_{i}(r) < \infty , \\[2ex]
    1 \quad \text{\rm if }\quad  \sum\limits_{r=1}^{\infty}\frac{1}{r} \prod_{k=1}^{m} \Phi_k(r) \prod_{i=1}^{n}\phi_{i}(r)  = \infty .
    \end{cases}
\end{equation*}
\end{theorem}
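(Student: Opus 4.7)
The structure will parallel the proofs of Theorems~\ref{p-adic weighted KG} and \ref{TEO:MEASURE}, since $\LL$ carries a non-archimedean absolute value and $\F[X]$ plays the role of the integers. I set up a weighted ubiquitous system with $\Omega_i = I_\LL^m$ (whose $m$-Ahlfors regular Haar measure gives $\delta_i = m$), index set $J = \F[X]^{m+n}$, $\beta_{(\q,\p)} = \|\q\|_\infty$, sequence $u_\ell = M^\ell$ for a sufficiently large $M$ which is a power of $t$, and resonant sets $R_{(\q,\p),i} = \{A_i \in I_\LL^m : \q A_i = p_i\}$. These affine hyperplanes give $\kappa = (m-1)/m$, so the key exponent $\delta_i(1-\kappa_i) = 1$ matches the single power appearing in the target series.

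The main input is the $\LL$-analogue of Minkowski's theorem for linear forms (a standard pigeonhole argument): whenever $\prod_i \gamma_i \prod_k \theta_k \geq 1$, every $A \in \LL^{m\times n}$ admits a non-zero $(\q,\p) \in \F[X]^{m+n}$ with $\|\q A_i - p_i\| < \gamma_i$ and $\|q_k\| \leq \theta_k$. Using this, I would prove a ubiquity proposition in the style of Proposition~\ref{ubiquity_p-adic}: for an $n$-tuple $\rho$ with $\prod_i \rho_i(h) \asymp h^{-(m+n)}$, every ball in $I_\LL^{m\times n}$ is at least half-covered by the thickened resonant sets ranging over $M^{\ell-1} \leq \|\q\|_\infty \leq M^\ell$ for all large $\ell$. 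The argument is to apply Minkowski with $H = M^\ell$ to obtain a full cover by all $\q$ with $\|\q\|_\infty \leq M^\ell$, then to show via the counting formula \eqref{numberpoly} and the ultrametric triangle inequality that the contribution from ``short'' polynomials $\|\q\|_\infty < M^{\ell-1}$ has measure at most $\tfrac{1}{2}\mu^\LL_{m\times n}(B)$. Because $\F[X]$ lacks the valuation-indexed filtration of $\Z_p$ by $\|\q\|_p$, this step is in fact cleaner than its $p$-adic counterpart, and the decomposition into the three sets $A_1, A_2, A_3$ collapses to a single ``small $\q$ versus large $\q$'' split.

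For the convergence case, apply the Borel--Cantelli lemma to the sets $\cA^{\LL}(u)$ of matrices solving the system for a fixed $u$: a direct count of $(\q,\p)$ with $\|q_k\| \leq \Phi_k(u)$, together with the estimate $\mu^{\LL}_{m\times n}\{A : \|\q A_i - p_i\|<\phi_i(u) \;\forall i\} \ll \prod_i \phi_i(u)\|\q\|_\infty^{-n}$ for each admissible $\p$, gives $\mu^\LL_{m\times n}(\cA^\LL(u)) \ll \prod_i \phi_i(u) \prod_k \Phi_k(u)$. The regularity hypothesis \eqref{formalreg} then translates summability over $u\in \N$ into the stated summability over $r$, exactly as in Lemma~\ref{LF:C:AuxRep}. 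For the divergence case, construct (as in Section~\ref{ambient measure}) a piecewise-adapted $\rho$ whose entries dominate $\psi_i(u) = c\,\phi_i(u)/\|\Phi(u)\|_\infty$ coordinatewise while still satisfying the product constraint $\prod_i \rho_i(h) \asymp h^{-(m+n)}$. Monotonicity and $c$-regularity of $\psi$ on $(u_\ell)$ follow from the given hypotheses; Theorem~\ref{KW ambient measure} then reduces full measure to the divergence of
\begin{equation*}
\sum_{\ell=1}^\infty \prod_{i=1}^n \left(\frac{\psi_i(u_\ell)}{\rho_i(u_\ell)}\right)^{\delta_i(1-\kappa_i)} \asymp \sum_{\ell=1}^\infty \prod_{i=1}^n \phi_i(u_\ell) \prod_{k=1}^m \Phi_k(u_\ell),
\end{equation*}
which, via \eqref{formalreg} and Cauchy condensation, is equivalent to the divergence of the series in the theorem. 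The main obstacle is precisely the delicate coordinatewise construction of $\rho$, handled by the same partitioning trick (ordering the $\phi_i$ and choosing the ``crossover index'' $j$) used in the $p$-adic proof.
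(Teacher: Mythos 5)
Your overall strategy (ubiquity for rectangles, $\LL$-Minkowski, Theorem~\ref{KW ambient measure}, Cauchy condensation) matches the paper's, but the way you set up the ubiquitous system imports the wrong template. You treat Theorem~\ref{formalt1} as if it were the $p$-adic theorem with property~P (a single quasi-norm $\|\cdot\|_v$, constraint $\prod_i\rho_i(h)\asymp h^{-(m+n)}$), whereas the theorem here is stated for a general $m$-tuple $\Phi=(\Phi_1,\dots,\Phi_m)$ satisfying only the regularity hypothesis \eqref{formalreg}, and the paper accordingly models the proof on the complex case (Theorem~\ref{TEO:MEASURE}), not the $p$-adic one.

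Concretely, two steps in your proposal fail for general $\Phi$. First, the choice $\beta_{(\q,\p)}=\|\q\|_\infty$ does not parameterize $W^{\LL}_{n,m}(\phi,\Phi)$: the smallest $u$ for which $\|q_k\|\leq\Phi_k(u)$ holds for all $k$ is $\max_k\Phi_k^{-1}(\|q_k\|)$, and this is what the paper takes as $\beta_\alpha$. With your $\beta$, the set $W^\Omega(\Psi)$ you build is only equal to $W^{\LL}_{n,m}(\phi,\Phi)$ in the special case $\Phi_1=\cdots=\Phi_m=\mathrm{id}$. Second, the product constraint $\prod_i\rho_i(h)\asymp h^{-(m+n)}$ is incompatible with your own final computation. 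You want $\prod_i\psi_i/\rho_i\asymp\prod_i\phi_i\prod_k\Phi_k$; with $\psi_i(u)\asymp\phi_i(u)/\|\Phi(u)\|_\infty$ this forces $\prod_i\rho_i(u)\asymp 1/\bigl(\|\Phi(u)\|_\infty^{\,n}\prod_k\Phi_k(u)\bigr)$, which equals $u^{-(m+n)}$ only when every $\Phi_k(u)=u$. The paper's $\rho_j$ is built precisely so that this identity holds, with the extra $\Phi$-dependence visible in the $(t^{-(m+n)}\prod\phi\prod\Phi)^{-1/n}$ factor. Finally, your claim that the $p$-adic $A_1,A_2,A_3$ split ``collapses to a single small $\q$ versus large $\q$ split'' is misplaced: because the $\Phi_k$ can have different growth rates, a cut on $\|\q\|_\infty$ alone does not guarantee $\|\q\|_\infty\gtrsim\|\Phi(u_\ell)\|_\infty/M$, which is needed to bound the Minkowski thickening $f_j(u_\ell)/\|\q\|_\infty$ by $\rho_j(u_\ell)$. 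The paper keeps the per-coordinate decomposition into $\widetilde{J}_s$ (all $\|q_k\|$ in $[\Phi_k(u_s)/M,\Phi_k(u_s)]$) and the $m$ ``small-in-coordinate-$k$'' sets $J_{s,k}$, and Proposition~\ref{formal power series prop 7} bounds each $J_{s,k}$ contribution; this structure is essential, not optional.

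The convergence half of your proposal is fine, and the philosophy of the divergence half is correct; but as written, the ubiquity setup proves the theorem only in the unweighted sub-case $\Phi_k=\mathrm{id}$. To repair it, replace $\beta$ by $\max_k\Phi_k^{-1}(\|q_k\|)$, replace the $\rho$-constraint by the paper's $\Phi$-dependent one, and keep the $m$-fold split $\widetilde{J}_s,\,J_{s,1},\dots,J_{s,m}$.
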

Below we briefly highlight the results preceding this. These include but are not limited to
\begin{itemize}
    \item $n=m=1$, $\psi$ monotonic, de Mathan \cite{deMathan1970}.
    \item $nm\geq1$, $\Psi$  univariable monotonic, Kristensen \cite{Kristensen2003}
    
    \item $n\geq1$, $m\geq2$, $\Psi$  univariable monotonic inhomogeneuos, Kristensen \cite{Kristensen2011}.


    
\end{itemize}
To sum up, nothing is known prior to our result in the weighted settings. However, it is worth remarking that an asymptotic formula for the number of solutions to the Diophantine inequalities $ | \langle \q A \rangle |< \psi(\|\q\|)$ was proven in \cite{DKL}.

For the Hausdorff dimension result we consider a slightly different setup. For any vector $\boldsymbol{\tau}=(\tau_1,\ldots, \tau_n )\in \R^n$, satisfying
\begin{equation}\label{formalcondlambda}
\min_{1\leq j \leq n} \tau_j >1 \text{ \,\,\, and \,\,\, }\sum_j \tau_j \geq  n+m 
\end{equation}
define the set
\[
W^{\LL}_{n,m}(\tau)
:=
\left\{ A\in I^{m \times n}_\LL:  | \langle \q  A_j \rangle | < \|\q\|_{\infty}^{-\tau_j}\|\q\|_{\infty} \quad (1\leq j \leq n) \quad \text{ for i.m. } \q\in \F[X]^m\right\}
\]
and quantities
\[
s_j(\tau)
:= 
n(m-1) 
+ 
\, \frac{m + n - \sum\limits_{r\, :\,  \tau_{r}<\tau_{j}} (\tau_r-\tau_j)}{\tau_j}
\;\quad  (1\leq j \leq n)\, .
\]
Then we have
\begin{theorem}\label{formalt2}
For any vector $\boldsymbol{\tau}\in \R^n$, satisfying \eqref{formalcondlambda}, we have 
$$\dimh W^{\LL}_{n,m}( \tau) = \min\{s_1(\tau),\ldots, s_n(\tau)\}.
$$
\end{theorem}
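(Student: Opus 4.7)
The plan is to follow closely the blueprint already established for the complex case (Theorem \ref{TEO:HDIM}) and quaternion case (Theorem \ref{TEO:Q:HDIM}), adapting it to the ultrametric geometry of $\LL$. As in those settings, the upper bound is a direct covering computation using the polynomial counting formula \eqref{numberpoly}, while the lower bound is obtained by exhibiting a local ubiquitous system of hyperplane rectangles and feeding it into the weighted mass transference principle (Theorem \ref{MTPRR}).

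For the upper bound, fix $j \in \{1,\ldots,n\}$ and cover $W^{\LL}_{n,m}(\tau)$ by balls of radius $\|\q\|_\infty^{-\tau_j}$ indexed by $(\q,\p) \in (\F[X]^m\setminus\{0\}) \times \F[X]^n$. Partitioning by $\|\q\|_\infty = t^L$, formula \eqref{numberpoly} gives $\asymp t^{Lm}$ such $\q$, and for each $\q$ there are $\asymp \|\q\|_\infty^n = t^{Ln}$ polynomial numerators $\p$ with $R_{(\q,\p),i} \cap I_\LL^m \neq \emptyset$. Since $|\langle \q A_i\rangle|/\|\q\|_\infty$ is the $\LL$-distance from $A_i$ to the hyperplane $R_{(\q,\p),i}$, the factor $\Delta(R_{(\q,\p),i},\|\q\|_\infty^{-\tau_i})$ is a tube of thickness $\|\q\|_\infty^{-\tau_i}$ about an affine hyperplane in $I_\LL^m$, which can be covered by $\asymp \max\{1,\|\q\|_\infty^{\tau_j-\tau_i}\}\|\q\|_\infty^{(m-1)\tau_j}$ balls of radius $\|\q\|_\infty^{-\tau_j}$. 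Multiplying over the $n$ coordinates and summing over $L$ yields
\[
\cH^s(W^{\LL}_{n,m}(\tau)) \ll \liminf_{N\to\infty}\sum_{L\geq N} t^{L\bigl(n+m + n(m-1)\tau_j + \sum_{i:\tau_i<\tau_j}(\tau_j-\tau_i) - s\tau_j\bigr)},
\]
which converges exactly when $s > s_j(\tau)$, so $\dimh W^{\LL}_{n,m}(\tau) \leq \min_j s_j(\tau)$.

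For the lower bound I would replicate the setup of Lemmas \ref{Lem:UbiqSyst:CxLF} and \ref{Lem:UbiqSyst:CxLF:HD}. Take $\Omega_i = I_\LL^m$ with its normalised Haar measure (so $\delta_i = m$), $J = \{(\q,\p) \in \F[X]^{m+n} : \q \neq 0,\, \|\p\|_\infty \leq C\|\q\|_\infty\}$ for a suitable constant $C$, $\beta_{(\q,\p)} = \|\q\|_\infty$, $u_s = M^s$ with $M$ large, and $R_{\alpha,i} = \{A_i \in I_\LL^m : \q A_i = p_i\}$; these are affine hyperplanes, so $\kappa_i = (m-1)/m$. For any $\bfa = (a_1,\ldots,a_n) \in \R^n$ with $a_j \geq 1$ and $\sum_j a_j = m + n$, put $\tilde\rho_j(u) := u^{1-a_j}$. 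A Minkowski-type theorem for linear forms over $\LL$, entirely analogous to Lemma \ref{Teo:Mink:Cx:LF} and proved by the standard pigeonhole argument on polynomials of bounded degree, gives for every $A$ in the interior of $I_\LL^{m\times n}$ and every large $s$ some $(\q,\p) \in J$ with $\|\q\|_\infty \leq u_s$ and $A \in \prod_i \Delta(R_{\alpha,i}, u_s^{1-a_i}/\|\q\|_\infty)$. Running the counting estimates of Propositions \ref{Prop:EstMeasBall} and \ref{complexprop4} (significantly cleaner here, since in the ultrametric world balls are either nested or disjoint) then yields local ubiquity with respect to $\tilde\rho$. Theorem \ref{MTPRR} with $t_j = \tau_j - a_j$ finishes the job, once $\bfa$ is chosen optimally: assuming $\tau_1 \geq \cdots \geq \tau_n$, either take $a_j = (n+m)/n$ for all $j$ (when $\tau_n \geq (n+m)/n$), or let $K$ be the largest index satisfying $\tau_K > (m+n - \sum_{k>K}\tau_k)/K$ and set $a_j = (m+n - \sum_{k>K}\tau_k)/K$ for $j \leq K$ and $a_j = \tau_j$ for $j > K$. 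In both cases the casework used in the $p$-adic and complex proofs applies verbatim and the MTPRR formula simplifies to $\min_j s_j(\tau)$.

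The main technical hurdle I anticipate is the careful verification of ubiquity, in particular the formal-power-series analogue of Proposition \ref{complexprop4}: bounding the measure contribution from $(\q,\p)$ with $\|\q\|_\infty$ much smaller than $u_s$. The ultrametric structure on $\LL$, together with the precise polynomial counting from \eqref{numberpoly}, should make this bookkeeping easier rather than harder than in the complex setting; no $(a+b)^{2n} \leq (2a)^{2n} + (2b)^{2n}$ style losses arise because complementary balls of the same radius in $\LL^m$ are automatically disjoint. Once ubiquity is in hand, the optimisation over $\bfa$ in MTPRR is identical to that already carried out in the complex, quaternionic and $p$-adic cases.
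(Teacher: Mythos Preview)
Your proposal is correct and follows essentially the same route as the paper: the upper bound via the standard covering by balls of radius $\|\q\|_\infty^{-\tau_j}$, and the lower bound by setting up a local ubiquitous system of hyperplanes with $\tilde\rho_j(u)=u^{1-a_j}$ (your $a_j$ is the paper's $l_j$), then invoking Theorem~\ref{MTPRR} with the same two-case choice of the exponent vector depending on whether $\tau_n\ge (n+m)/n$. The only differences are cosmetic: the paper already records the required Minkowski-type input as Lemma~\ref{formalmink} (so you need not reprove it by pigeonhole), and your covering exponent $n+m+n(m-1)\tau_j+\sum_{\tau_i<\tau_j}(\tau_j-\tau_i)-s\tau_j$ is the correct one (the paper's displayed $m+n-1$ and the $\frac{n+m}{m}$ threshold appear to be typos, as comparison with the $p$-adic computation in \S\ref{hausdorff dimension} confirms).
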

Previously known results regarding the Hausdorff dimension of this set include:
\begin{itemize}
    \item $n=m=1$, Kristensen \cite{Kristensen2003}.
    \item $nm\geq1$, $\Psi$ non-monotonic, Kristensen \cite{Kristensen2004}.
\end{itemize}
Our weighted result is completely new.

\subsection{Proof of Theorem \ref{formalt1}}
First note that the convergence case is a simple consequence of the Borel-Cantelli lemma.
The proof for the divergence case is similar to the complex setup, so we will only highlight the differences. As in the other applications, we will make use of the following form of Minkowski's theorem on linear forms. 
\begin{lemma}[\cite{S67}] \label{formalmink}
     Suppose that for some $H\in\N$ one has
    \begin{equation*}
        \prod_{i=1}^n \phi_i(H) \prod_{j=1}^m \Phi_j(H)  \geq t^{n+m} .
    \end{equation*}
Then for any $A\in  I^{m \times n}_\LL$, the system
    \begin{align*}
     &  \| \q A_i -p_{i} \|< \phi_i(H) \hspace{0.9cm}   (1\leq i\leq n), \\[2ex]
     &   \|q_j \|\leq \Phi_j(H) \hspace{1.9cm}   (1 \leq j \leq m)
    \end{align*}
     has a non-trivial solution $(\q,\p)\in \F[X]^{m+n}$.
\end{lemma}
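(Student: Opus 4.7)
The plan is to prove Lemma~\ref{formalmink} by adapting the classical Dirichlet-box (pigeonhole) argument to the ultrametric setting of $\LL$. The key feature making this clean is that the norm on $\LL$ takes only discrete values in $\{0\} \cup \{t^r : r \in \Z\}$, so the natural ``balls'' are cosets of finite-dimensional $\F$-subspaces and admit exact counting.

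I would first replace the real-valued bounds $\Phi_j(H),\phi_i(H)$ by integer exponents: pick $K_j, \ell_i \in \Z$ with $t^{K_j} \leq \Phi_j(H) < t^{K_j+1}$ and $t^{-\ell_i} < \phi_i(H) \leq t^{-\ell_i+1}$, discarding trivial indices where $\ell_i \leq 0$ (for such an $i$, the $i$-th inequality is automatic once one sets $p_i := [\q A_i]$). Multiplying these sandwich inequalities and invoking the hypothesis $\prod_i \phi_i(H) \prod_j \Phi_j(H) \geq t^{n+m}$ yields $\sum_j K_j - \sum_i \ell_i \geq 1$, which will be the decisive counting inequality.

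Next I would set up the pigeonhole. Consider the finite $\F$-vector space
\[
\cB := \{\q \in \F[X]^m : \|q_j\| \leq t^{K_j},\ 1 \leq j \leq m\},
\]
of cardinality $t^{\sum_j(K_j+1)}$, and define a truncation map $\Pi : \cB \to \F^{\sum_i(\ell_i-1)}$ sending $\q$ to the tuple of the first $\ell_i-1$ coefficients (those of $X^{-1},\ldots,X^{-\ell_i+1}$) in the fractional part $\{\q A_i\} := \q A_i - [\q A_i] \in I_\LL$, for $1 \leq i \leq n$. The previous counting inequality gives
\[
\log_t|\cB| - \log_t|\mathrm{Image}(\Pi)| \;\geq\; (\textstyle\sum_j K_j + m) - (\textstyle\sum_i \ell_i - n) \;\geq\; n + m + 1 \;>\; 0,
\]
so by the pigeonhole principle there exist distinct $\q^{(1)}, \q^{(2)} \in \cB$ with $\Pi(\q^{(1)}) = \Pi(\q^{(2)})$.

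Finally, I would set $\q := \q^{(1)} - \q^{(2)} \neq 0$ and $p_i := [\q^{(1)} A_i] - [\q^{(2)} A_i] \in \F[X]$. The ultrametric inequality gives $\|q_j\| \leq \max(\|q_j^{(1)}\|,\|q_j^{(2)}\|) \leq t^{K_j} \leq \Phi_j(H)$. Moreover $\q A_i - p_i = \{\q^{(1)} A_i\} - \{\q^{(2)} A_i\}$, and the pigeonhole conclusion $\Pi(\q^{(1)}) = \Pi(\q^{(2)})$ forces its first $\ell_i - 1$ coefficients to vanish, so $\|\q A_i - p_i\| \leq t^{-\ell_i} < \phi_i(H)$. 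The argument is essentially a direct counting exercise; no step is a genuine obstacle, but the one place requiring care is the sandwich-inequality bookkeeping in the first step that absorbs the $t^{n+m}$ slack built into the hypothesis.
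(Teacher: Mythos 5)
The paper offers no proof of this lemma; it is stated with a citation to Sprind\v{z}uk \cite{S67} and used as a black box, so there is no in-paper proof to compare against. Your pigeonhole reconstruction is correct and is the standard Dirichlet box argument for Minkowski's linear-forms theorem in the ultrametric setting over $\F[X]$: discretise the bounds to powers of $t$, truncate the fractional parts $\{\q A_i\}$ to their first $\ell_i-1$ negative-power coefficients, count, and take an ultrametric difference. One point to tighten is the ``discard trivial indices'' step: after removing an index $i$ with $\ell_i\le 0$, the inequality $\sum_j K_j-\sum_i\ell_i\ge 1$ over the surviving indices no longer follows from the hypothesis, since deleting non-positive $\ell_i$ only increases $\sum_i\ell_i$. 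Your argument therefore implicitly assumes $\phi_i(H)\le 1$ for all $i$, and this assumption is in fact necessary to get anything useful: without it the lemma as stated can only guarantee a non-trivial $(\q,\p)$ with $\q=0$ (for instance $n=2$, $m=1$, $\phi_1=t^{50}$, $\phi_2=t^{-100}$, $\Phi_1=t^{53}$, $A_2=X^{-55}$ forces $\q=0$ in every admissible solution), whereas the paper's subsequent ubiquity argument needs $\q\ne 0$. In the regime $\phi_i(H)\le 1$ for every $i$ — which is where the lemma is actually invoked — your proof is complete and delivers the stronger conclusion $\q\ne 0$.
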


We work with the following objects:
\begin{enumerate}[i.]
\item $J:=\{ \alpha=(\q,\p) =(q_{1},\dots , q_{m},p_1,\ldots,p_n) \in \F[X]^{m+n}\,:\, \| \p\|_\infty \leq \| \q \|_{\infty} \}$,
\item     $\beta:J \to \R_{+}, \, \alpha \mapsto \beta_{\alpha}=\max\left\{ \Phi_1^{-1}\left( \|q_1\|\right), \ldots, \Phi_m^{-1}\left( \|q_m\| \right)\right\}$,
\item $u_{k}=M^{k}$ \text{ for some $M\in \N_{\geq 2}$ to be chosen later},
\item  $J_{k}=\{\alpha \in J : l_k \leq \beta_{\alpha} \leq u_k \}$ for some suitable sequence $l_{k}<u_{k}$ for all $k\in\N$.,
\item \text{Resonant sets as  $R_{\q}=\prod_{i=1}^{n}R_{\q,i}$,
 where $R_{\q,i}=\left\{A_{i} \in I^{m \times 1}_\LL:\q A_{i}=p_{i} \text{ for some } p_i\in \F[X]\right\}$},
 \item \text{In the current setting $\kappa_i=\frac{m-1}{m}$ and $\delta_{i}=m$ for each $i=1,\dots, n$}.
 \end{enumerate}
Finally, we define functions
    $$
\rho_j(u)= \frac{\phi_j(u)}{\|\Phi(u)\|_{\infty}} \left(t^{-(m+n)} \prod_{s=1}^n \phi_s(u)\prod_{k=1}^m \Phi_k(u)\right)^{-1/n}
$$
and 
$$
\psi_j(u) = \frac{\phi_j(u )}{ \|\Phi(u )\|_{\infty} }.
$$

Also note that with these choices of functions we have
$$
\prod_{j=1}^n \frac{\psi_j(u)}{\rho_j(u)}
=
t^{m+n}  \prod_{j=1}^n \phi_j(u) \prod_{k=1}^m \Phi_k(u)
\quad (u\in\N).
$$

As in the other applications, using Minkowski's theorem, we can make an additional assumption on $\phi$ and $\Phi$. If there is a strictly increasing sequence of natural numbers $(n_j)_{j\geq 1}$ such that
$$
\prod_{s=1}^n \phi_s(n_j)\prod_{k=1}^m \Phi_k(n_j) \geq t^{m+n} \quad (j\in\N),
$$
then $\mu^\LL_{m \times n}(W^{\LL}_{n,m}(\phi,\Phi))=1$ by Lemma~\ref{formalmink}. Therefore, we will assume that
\begin{equation}\label{assumption}
\prod_{j=1}^n \phi_j(u)\prod_{k=1}^m \Phi_k(u) < t^{m+n}
\;\text{ for large } u\in\N.
\end{equation}

The main ingredient of the proof is the formal power series analogue of a weighted ubiquitous system.
\begin{lemma}\label{formalubiq}
The system $(\left\{ R_{\alpha} \right\}_{\alpha\in J}, \beta)$ is a weighted ubiquitous system with respect to the function $\rho$ given above.
\end{lemma}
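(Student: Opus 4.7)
The plan is to mimic the complex case (Lemma~\ref{Lem:UbiqSyst:CxLF}), exploiting the ultrametric structure of $\LL$ to simplify several of the estimates. Define
\[
f_j(u) := \phi_j(u)\Bigl(t^{-(m+n)}\prod_{s=1}^n \phi_s(u)\prod_{k=1}^m \Phi_k(u)\Bigr)^{-1/n},
\]
so that $\prod_j f_j(u)\prod_k \Phi_k(u) = t^{m+n}$. By Lemma~\ref{formalmink}, for every large $u$ and every $A \in I_\LL^{m\times n}$ there is a nonzero $\alpha = (\q,\p) \in \F[X]^{m+n}$ satisfying $\|\q A_j - p_j\| < f_j(u)$ and $\|q_k\| \leq \Phi_k(u)$. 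Assumption \eqref{assumption} forces $\q \neq 0$, and the strong triangle inequality yields $\|\p\|_\infty \leq \|\q\|_\infty$, so $\alpha \in J$. Moreover, the ultrametric gives the clean identity
\[
\dist(A_j, R_{\alpha,j}) = \frac{\|\q A_j - p_j\|}{\|\q\|_\infty},
\]
(take a displacement supported on a coordinate $k$ with $\|q_k\| = \|\q\|_\infty$), so $A$ lies in $\prod_{j} \Delta\bigl(R_{\alpha,j}, f_j(u)/\|\q\|_\infty\bigr)$ and we have a complete covering of $I_\LL^{m\times n}$.

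Next, for each $s \in \N$ split the parameter set according to the sizes of the coordinates of $\q$. Let
\[
\widetilde J_s := \bigl\{\alpha = (\q,\p)\in J : \Phi_k(u_s)/M < \|q_k\| \leq \Phi_k(u_s)\ \text{for every } 1\leq k\leq m\bigr\},
\]
and for each $h\in\{1,\ldots,m\}$ let
\[
J_{s,h} := \bigl\{\alpha = (\q,\p)\in J : \|q_h\| \leq \Phi_h(u_s)/M,\ \|q_k\| \leq \Phi_k(u_s)\ \text{for } k \neq h\bigr\}.
\]
Choose $l_s$ small enough so that $\widetilde J_s \subseteq J_s$; this is possible since $\beta_\alpha \leq u_s$ for $\alpha \in \widetilde J_s$ by construction. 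The complete covering from the previous paragraph shows
\[
B \subseteq \Bigl(B \cap \bigcup_{\alpha \in \widetilde J_s}\!\!\prod_j \Delta\bigl(R_{\alpha,j}, f_j(u_s)/\|\q\|_\infty\bigr)\Bigr)\,\cup\,\Bigl(B \cap \bigcup_{h=1}^m\bigcup_{\alpha\in J_{s,h}}\!\!\prod_j \Delta\bigl(R_{\alpha,j}, f_j(u_s)/\|\q\|_\infty\bigr)\Bigr).
\]

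Now bound the ``deficient'' contribution. For each $\q$ and each ball $B_j = B(X_j,r)$, the number of $p_j \in \F[X]$ for which $\Delta(R_{\alpha,j}, f_j(u_s)/\|\q\|_\infty) \cap B_j \neq \emptyset$ is $\ll_t 1 + t\|\q\|_\infty r$. Combining this with the $\kappa$-scaling property (so each thickened slab in $B_j$ has measure $\asymp r^{m-1} f_j(u_s)/\|\q\|_\infty$), with the polynomial count \eqref{numberpoly}, and with the fact that $\q \in J_{s,h}'$ carries an extra factor $M^{-1}$ compared to the full range, the deficient measure is bounded by
\[
\sum_{h=1}^m \sum_{\q \in J_{s,h}'} \bigl(1 + t\|\q\|_\infty r\bigr)^n\, r^{n(m-1)} \prod_{j=1}^n \frac{f_j(u_s)}{\|\q\|_\infty}.
\]
Splitting the bracket via $(a+b)^n \leq (2a)^n + (2b)^n$ produces a main term of order $M^{-1}\, r^{nm}$ (made less than $\mu(B)/4$ by choosing $M$ large) together with a residual that tends to $0$ as $s \to \infty$ (the analogue of the $\log \Phi_k(u_s)/\Phi_k(u_s)^2$ term in the complex proof, replaced here by a harmonic sum over polynomial heights). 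Thus the balanced union has measure at least $\mu(B)/2$.

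Finally, for $\alpha \in \widetilde J_s$ we have $\|\q\|_\infty > \|\Phi(u_s)\|_\infty / M$, so
\[
\frac{f_j(u_s)}{\|\q\|_\infty} < \frac{M\, f_j(u_s)}{\|\Phi(u_s)\|_\infty} = M\,\rho_j(u_s),
\]
giving the ubiquity bound $\mu\bigl(B \cap \bigcup_{\alpha\in J_s} \Delta(R_\alpha, M\rho(u_s))\bigr) \geq \tfrac12 \mu(B)$. The constant factor $M$ in the thickening is harmless for later applications (Theorem~\ref{KW ambient measure}) in view of Lemma~\ref{CI_product}. The principal technical obstacle is the counting argument for the deficient sets: the ultrametric geometry forces us to track polynomial height discretely (rather than the continuous sums available in the complex and real cases), so the cancellations that produce the vanishing residual require a careful use of \eqref{numberpoly} and of the regularity hypothesis \eqref{formalreg}.
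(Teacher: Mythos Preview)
Your argument is correct and follows the same overall architecture as the paper's proof: Minkowski gives a full covering, one splits into $\widetilde J_s$ and the deficient sets $J_{s,h}$, bounds the deficient measure, and then passes from $f_j(u_s)/\|\q\|_\infty$ to $\rho_j(u_s)$ on $\widetilde J_s$.

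The one place you diverge from the paper is the deficient-measure estimate, and here you make the argument harder than it needs to be. You carry the bound $(1+t\|\q\|_\infty r)^n$ and then split via $(a+b)^n\leq (2a)^n+(2b)^n$ into a main term and a residual, exactly mimicking the complex Proposition~\ref{complexprop4}. But in $\LL$ the exponents line up: the count of admissible $p_j$ contributes $\|\q\|_\infty^n$, while the slab measures contribute $\prod_j f_j(u_s)r^{m-1}/\|\q\|_\infty = r^{n(m-1)}\|\q\|_\infty^{-n}\prod_j f_j(u_s)$, so the $\|\q\|_\infty$ factors cancel \emph{exactly}. The paper simply uses $\leq (4r\|\q\|_\infty)^n$ (legitimate once $s$ is large relative to $r$) and obtains in one line
\[
\mu(G_{\LL})\leq \frac{4^n t^{m+n}}{\prod_k \Phi_k(u_s)}\,r^{nm}\sum_{k=1}^m \# J_{s,k}' \leq \frac{4^n t^{2m+n} m}{M}\,r^{nm}\leq \tfrac12\mu(B),
\]
with no residual, no harmonic sum over heights, and no appeal to \eqref{numberpoly} or \eqref{formalreg}. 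So what you call ``the principal technical obstacle'' is in fact absent in this setting; your extra work is not wrong, just unnecessary.
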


For each $j\in \{1,\ldots, n\}$, let $f_j:\N\to\R_{+}$ be the function given by
\[
f_j(u)
:=
\phi_j(u) \left( t^{-(m+n)} \prod_{s=1}^n \phi_s(u)  \prod_{k=1}^m \Phi_k(u) \right)^{-1/n}
\quad
(u\in\N).
\]
Observe that $\prod_j f_j(u) \prod_{k} \Phi_k(u) = t^{m+n}$ for all $u\in\N$.
\begin{proposition}
For each $u\in \N$ and each $A\in I^{m \times n}_\LL$ there is some $\alpha=(\q,\p)\in J$ such that 
\[
A\in \prod_{j=1}^n \Delta\left( \mathfrak{R}_{\alpha,j},  \, \frac{f_j(u)}{\|\q\|_\infty} \right)
\;\text{ and }\;
\beta_{\alpha} \leq u.
\]

\end{proposition}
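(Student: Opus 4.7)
The plan is to mirror the complex-case argument (Proposition~\ref{Prop:Cx:Measure:Aux}), but exploit the ultrametric nature of $\|\cdot\|$ to replace the orthogonal projection by a direct coordinate-modification argument. The first step is to apply Minkowski's theorem for linear forms in $\LL$ (Lemma~\ref{formalmink}). Since the functions $f_j$ were defined precisely so that
\[
\prod_{j=1}^n f_j(u)\prod_{k=1}^m \Phi_k(u)=t^{m+n},
\]
Lemma~\ref{formalmink} produces a non-zero $(\q,\p)\in \F[X]^{m+n}$ with $\|\q A_j - p_j\|<f_j(u)$ for $1\le j\le n$ and $\|q_k\|\le \Phi_k(u)$ for $1\le k\le m$. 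For large $u$ one has $f_j(u)<1$, so if $\q=0$ the first inequalities would force $\p=0$, contradicting non-triviality; hence $\q\neq 0$.

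Next I would verify the two bookkeeping conditions. Since each $A_{j,i}\in I_\LL$ satisfies $\|A_{j,i}\|<1$, the strong triangle inequality yields $\|\q A_j\|<\|\q\|_\infty$, so
\[
\|p_j\|\le \max\bigl(\|\q A_j-p_j\|,\|\q A_j\|\bigr)<\max\bigl(f_j(u),\|\q\|_\infty\bigr)\le \|\q\|_\infty
\]
for large $u$, giving $\alpha=(\q,\p)\in J$. The bound $\beta_\alpha\le u$ is immediate from $\|q_k\|\le \Phi_k(u)$ and the definition of $\beta_\alpha$.

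The key step is to exhibit, for each $j$, a point of $\mathfrak{R}_{\alpha,j}$ within distance $f_j(u)/\|\q\|_\infty$ of $A_j$. Fix $k$ with $\|q_k\|=\|\q\|_\infty$ and define $\mathbf{a}=(a_1,\ldots,a_m)$ by $a_i=A_{j,i}$ for $i\neq k$ and
\[
a_k \;=\; A_{j,k} \;-\; \frac{\q A_j - p_j}{q_k}.
\]
A direct calculation gives $\q\mathbf{a}=p_j$, and
\[
\|\mathbf{a}-A_j\|_\infty \;=\; \Bigl\|\tfrac{\q A_j-p_j}{q_k}\Bigr\| \;=\; \frac{\|\q A_j-p_j\|}{\|\q\|_\infty} \;<\; \frac{f_j(u)}{\|\q\|_\infty}.
\]
Since $f_j(u)/\|\q\|_\infty<1$ for large $u$, the ultrametric inequality ensures $\|a_k\|<1$, so $\mathbf{a}\in I_\LL^{m\times 1}$ and hence $\mathbf{a}\in \mathfrak{R}_{\alpha,j}$. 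Taking the product over $j$ completes the proof.

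The only mild obstacle is the boundary check $\mathbf{a}\in I_\LL^{m\times 1}$, which in the real/complex case would require extra work but here is trivial thanks to the non-Archimedean norm. No ingredient beyond Minkowski's theorem and the strong triangle inequality is needed.
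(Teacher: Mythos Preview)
Your proof is correct and follows the same approach as the paper: apply the formal-power-series Minkowski theorem (Lemma~\ref{formalmink}) and then bound the distance from $A_j$ to the resonant set by $\|\q A_j-p_j\|/\|\q\|_\infty$. Your explicit coordinate-modification construction is a cleaner justification of this distance bound than the paper's terse inequality chain, and you additionally verify the side conditions $\q\neq 0$ and $\alpha\in J$ that the paper leaves implicit.
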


\begin{proof}
By Lemma \ref{formalmink}, for every $H\in\N$ and $A\in I^{m \times n}_\LL$ there exists $(\q,\p)\in \F[X]^{m+n} \backslash \{0\}$, such that
\begin{align*}
     &  \| \q A_i -p_{i} \|< f_i(H) \hspace{1cm}  (1\leq i\leq n), \\[2ex]
     &   \|q_j \|\leq \Phi_j(H) \hspace{1.9cm}  \, (1 \leq j \leq m).
\end{align*}

Then in each coordinate $i$ we have
$$
\| \q \|_\infty \dist_\infty(A_i, R_{\q,i}) \leq \inf_{A_i'\in R_{\q,i}} \| \q A_i-\q A_i'\| \leq \|\q A_i-p_i \| < f_i(H).
$$
Dividing everything by $\| \q \|_\infty$ and noting that $\| q_k \| \leq \Phi_k(u)$ for all $k$ finishes the proof.
\end{proof}

For each $j\in\N$, define
\[
\widetilde{J}_j
:=
\left\{ (\q,\p)\in\F[X]^{m + n}: \frac{\Phi_k(u_j)}{M}\leq \|q_k\| \leq \Phi_k(u_j) \quad (1\leq k \leq m) \right\}.
\]
If $\alpha=(\q,\p)\in \widetilde{J}_j$, then $\beta_{\alpha}\leq u_j$. Since $\beta_{\alpha}\to \infty$ as $\alpha\to\infty$, we may pick a sequence $l_j$ ensuring $\widetilde{J}_j\subseteq J_j$. For each $j\in\N$ and $k\in \{1,\ldots, m\}$, define the set 

\[
J_{j,k}
:=
\left\{ \alpha\in J: \left \| q_k\right\| \leq \frac{\Phi_k(u_j)}{M} \quad \text{and}\quad  \|q_s\|\leq \Phi_s(u_j)  \quad (s\in \{1,\ldots m\}\setminus \{k\}) \right\}.
\]

Let $B=\prod_{k=1}^n B(X_k;r)$ be an arbitrary ball. Then, for all $s\in\N$, 
\begin{align*}
B 
&= 
B\cap \bigcup_{\alpha:\beta_{\alpha}\leq u_s} \prod_{k=1}^n\Delta\left( R_{\alpha,k}, \, \frac{f_k(u_s)}{\|\q\|_\infty} \right) \nonumber\\
&=
\left(B\cap \bigcup_{\alpha \in \widetilde{J}_s} \prod_{k=1}^n \Delta\left( R_{\alpha,k}, \, \frac{f_k(u_s)}{\|\q\|_\infty} \right)\right)
\cup
\left(B\cap \bigcup_{h=1}^m \bigcup_{\alpha \in J_{s,h}} \prod_{k=1}^n \Delta\left( R_{\alpha,k}, \, \frac{f_k(u_s)}{\|\q\|_\infty} \right)\right).
\end{align*}

For any fixed $\q=(q_1,\ldots,q_m)$, the number of $p_i$ such that the intersection is non-empty is not greater than $4 \| \q\|_\infty r$.

Finally, we can bound the Haar measure of the second term in the union from above. This is an analogue of Proposition \ref{complexprop4} from the complex setup.
\begin{proposition} \label{formal power series prop 7} If $M\geq 2^{2n+1}t^{2m+n}m$, then for every large $s\in\N$ we have
\begin{align*}
\mu^\LL_{m \times n}\left(B\cap \bigcup_{h=1}^m \bigcup_{\alpha \in J_{s,h}} \prod_{j=1}^n \Delta\left( R_{\alpha,j}, \, \frac{f_j(u_s)}{\|\q\|_\infty} \right)\right)\leq \frac{1}{2}\mu^\LL_{m \times n}(B).
\end{align*}

\end{proposition}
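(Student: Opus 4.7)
The plan is to follow the strategy of Proposition~\ref{complexprop4}, with simplifications afforded by the ultrametric nature of $\|\cdot\|$. For each $k\in\{1,\ldots,m\}$, set
$$J'_{s,k}:=\{\q\in\F[X]^m:(\q,\p)\in J_{s,k}\text{ for some }\p\in\F[X]^n\}.$$
Since there are at most $tH$ polynomials of height at most $H$ in $\F[X]$, the defining conditions of $J_{s,k}$ yield
$$\#J'_{s,k}\leq\frac{t^m}{M}\prod_{j=1}^m\Phi_j(u_s).$$

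For a fixed $\q\in J'_{s,k}$, the next step is to bound the number of $\p$ for which $\prod_{j=1}^n\Delta(R_{(\q,\p),j},f_j(u_s)/\|\q\|_\infty)$ meets $B$. Arguing coordinate by coordinate: if $A_j\in B(X_j,r)\cap\Delta(R_{(\q,\p),j},f_j(u_s)/\|\q\|_\infty)$ then $\|\q A_j-p_j\|<f_j(u_s)$ and $\|\q A_j-\q X_j\|\leq\|\q\|_\infty r$, so the strong triangle inequality gives $\|p_j-\q X_j\|\leq\|\q\|_\infty r$ for all sufficiently large $s$ (ensuring $f_j(u_s)/\|\q\|_\infty<r$). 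Counting polynomials in this ball yields at most $\ll_t\|\q\|_\infty r$ admissible $p_j$ per coordinate, hence at most $\ll_t(\|\q\|_\infty r)^n$ total choices of $\p$.

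Combining this with the $\kappa$-scaling property (with $\delta_j=m$ and $\kappa_j=(m-1)/m$, so $\delta_j\kappa_j=m-1$ and $\delta_j(1-\kappa_j)=1$) and using $\mu^\LL_{m\times n}(B)=r^{nm}$, we obtain
\begin{align*}
\mu^\LL_{m\times n}\biggl(B\cap\bigcup_{\alpha\in J_{s,k}}\prod_{j=1}^n\Delta\bigl(R_{\alpha,j},\tfrac{f_j(u_s)}{\|\q\|_\infty}\bigr)\biggr)
&\ll_t \#J'_{s,k}\cdot(\|\q\|_\infty r)^n\cdot r^{n(m-1)}\prod_{j=1}^n\frac{f_j(u_s)}{\|\q\|_\infty}\\
&= r^{nm}\,\#J'_{s,k}\prod_{j=1}^n f_j(u_s)\\
&\ll_t \frac{t^{2m+n}}{M}\,\mu^\LL_{m\times n}(B),
\end{align*}
where the last step uses the defining identity $\prod_{j=1}^n f_j(u_s)\prod_{k=1}^m\Phi_k(u_s)=t^{m+n}$. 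Summing over $k=1,\ldots,m$ and invoking the hypothesis $M\geq 2^{2n+1}mt^{2m+n}$ produces the required bound $\tfrac12\mu^\LL_{m\times n}(B)$.

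The main obstacle is keeping precise track of absolute constants — the factors of $t$ from counting polynomials of bounded height, the implicit constant in the $\kappa$-scaling estimate, and the coordinate-wise multiplicities — so that the final constant matches the hypothesis on $M$ exactly. Notably, unlike the complex case, no ``second sum'' of the form $\sum_\q\|\q\|^{-2n}$ arises here: the strong triangle inequality subsumes all lower-order contributions into the single clean geometric estimate above, so the logarithmic tail bound needed in the complex proof is unnecessary.
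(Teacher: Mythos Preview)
Your proposal is correct and follows essentially the same approach as the paper: define $J'_{s,k}$, bound its cardinality by $\frac{t^m}{M}\prod_j \Phi_j(u_s)$, count admissible $\p$ per fixed $\q$ via the strong triangle inequality, apply the $\kappa$-scaling measure estimate, and use the identity $\prod_j f_j(u_s)\prod_k \Phi_k(u_s)=t^{m+n}$ to collapse everything to $\frac{4^n t^{2m+n} m}{M}\,\mu^\LL_{m\times n}(B)$. The paper makes the constant explicit---the count of $p_j$ with $\|p_j-\q X_j\|\leq \|\q\|_\infty r$ is taken as $4\|\q\|_\infty r$, giving the factor $4^n=2^{2n}$ and hence the $2^{2n+1}$ in the hypothesis on $M$---which resolves the constant-tracking concern you flag at the end; your observation that the ultrametric inequality eliminates the ``second sum'' present in the complex case is exactly right and is why the paper's proof here is shorter than that of Proposition~\ref{complexprop4}.
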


\begin{proof}
For each $s\in\N$ and $k\in\{1,\ldots, m\}$, write 

\[
J_{s,k}'
:=
\left\{ \q \in \F[X]^m: \alpha=(\q,\p)\in J_{s,k} \right\},
\]

so we can bound the number of elements as 
\[
\# J_{s,k}'
\leq
\frac{t^m}{M}\prod_{j=1}^m \Phi_j(u_s).
\]
If $G_\LL$ is the intersection in the statement of Proposition~\ref{formal power series prop 7}, then 
\begin{align*}
\mu^\LL_{m \times n}(G_\LL) &
\leq 
\sum\limits_{k=1}^m \sum\limits_{\q\in J_{s,k}'} 
(4r\|\q\|_\infty)^{n} \prod_{j=1}^n \,\frac{f_j(u_s)r^{m-1}}{\|\q\|_{\infty}} \\
& \leq 4^n \sum\limits_{k=1}^m \sum\limits_{\q\in J_{s,k}'} r^{nm} \prod_{j=1}^n \,f_j(u_s)\\
& \leq \frac{4^n r^{nm} t^{m+n}}{\Phi_1(u_s)\ldots\Phi_m(u_s)}  \sum\limits_{k=1}^m \# J_{s,k}' \\
& \leq \frac{4^n t^{2m+n}m}{M}r^{nm}\\& \leq \frac{1}{2}\mu^\LL_{m \times n}(B).
\end{align*}
\end{proof}

The definition of $\widetilde{J}_s$ tells us that for each $\alpha=(\q, \p) \in \widetilde{J}_s$ one has
\[
\|\q\|_{\infty}
\geq 
\frac{1}{M} \left\| \Phi_1(u_s), \ldots, \Phi_m(u_s) \right\|_{\infty},
\]
so
\begin{align*}
\mu^\LL_{m \times n}\left( B\cap  \bigcup_{\alpha \in J_{s}} \prod_{k=1}^m \Delta\left( R_{\alpha,k},  M\, \frac{f_k(u_s)}{\|\Phi(u_s)\|_{\infty}} \right)\right)
&\geq 
\mu^\LL_{m \times n}\left( B\cap  \bigcup_{\alpha \in \widetilde{J}_{n}} \prod_{k=1}^m \Delta\left( R_{\alpha,k},  M\, \frac{f_k(u_s)}{\|\Phi(u_s)\|_{\infty}} \right)\right) \\
&\geq 
\mu^\LL_{m \times n}\left( B\cap  \bigcup_{\alpha \in \widetilde{J}_{s}} \prod_{k=1}^m \Delta\left( R_{\alpha,k},  \, \frac{f_k(u_s)}{\|\q \|_{\infty}} \right)\right) \\
&\geq 
\frac{1}{2}\mu^\LL_{m \times n}(B).
\end{align*}
So the ubiquity property is proven.\\

The rest of the proof of Theorem \ref{formalt1} is very similar to the complex setup, but with the difference that we are considering
$$
 \prod_{j=1}^n \varphi_j(q) \prod_{k=1}^m \Phi_k(q) 
$$
instead of
$$
\left( \prod_{j=1}^n \varphi_j(q) \prod_{k=1}^m \Phi_k(q) \right)^2
$$
and therefore we skip it.

\subsection{Proof of Theorem \ref{formalt2}}\

\subsubsection{Upper bound.} We make a natural cover of the limsup set and then use the Hausdorff-Cantelli lemma.
\[
W^{\LL}_{n,m}(\tau)
\subseteq
\bigcap_{Q\in \N} 
\bigcup_{\substack{\q\in\F[X]^m \\ \|\q\|_\infty=t^Q}} 
\bigcup_{\substack{ \p : \\ (\q,\p)\in J}} 
\prod_{j=1}^n \triangle\left( R_{\alpha,j},  \|\q\|_{\infty}^{-\tau_j}\right).
\]
Also, observe that for all $j,l\in \{1,\ldots, n\}$ the number of balls of radius $\|\q\|_\infty^{-\tau_j}$ to cover $\triangle(R_{\alpha,l},\|\q\|_{\infty}^{- \tau_l})$ is asymptotically equivalent to
\[
\max\left\{ 1, \frac{\|\q\|_{\infty}^{-\tau_l} }{\|\q\|_{\infty}^{-\tau_j}}\right\} \|\q\|_{\infty}^{\tau_j(m-1)}.
\]
Hence, for all $s>0$, 
\begin{align*}
\mathcal{H}^s\left(W(\tau)\right)
&\ll
\liminf_{N\to\infty} \sum\limits_{\substack{\q\in\F[X]^m \\ \|\q\|_\infty \geq t^N}} \|\q\|_\infty^{m-1}\|\q\|_\infty^{n} \|\q\|_\infty^{-s\tau_j} \|\q\|_\infty^{\tau_j n(m-1)} \|\q\|_\infty^{ \sum\limits_{l: \, \tau_j>\tau_l} (\tau_j - \tau_l)} \\
&=
\liminf_{N\to\infty} \sum\limits_{\substack{\q\in\F[X]^m \\ \|\q\|_\infty \geq t^N}} \|\q\|_\infty^{m+n-1 + \tau_j n(m-1) +  \sum\limits_{l:\, \tau_j>\tau_l} (\tau_j - \tau_l)}  \|\q\|_\infty^{-s\tau_j}\\
&= m(t-1)t^{m-1} \liminf_{N\to\infty} \sum\limits_{r=N}^\infty \left(t^{m+n-1 + \tau_j n(m-1) +  \sum\limits_{l:\, \tau_j>\tau_l} (\tau_j - \tau_l) -s\tau_j} \right)^r.
\end{align*}

The series above converges if and only if the exponent of $e$ is strictly less than $0$, or equivalently, $s> s_j(\tau)$. The result follows by taking the infimum over $j$.\par

\subsubsection{Lower bound.} The proof of the lower bound is similar to the complex case with obvious modifications. Namely, given any $\boldsymbol{l}=(l_1,\ldots, l_n)\in \R^n$ such that
\begin{equation}
\min_{1\leq j\leq n} l_j \geq 1
\;\text{ and }\;
\sum\limits_{j=1}^n l_j = m+n,
\end{equation}
we define $\tilde{\rho} =(\tilde{\rho}_1 ,\ldots,\tilde{\rho}_n):\N\to\R^n$ by
\[
\tilde{\rho}_j(u) = \,\frac{1}{u^{l_j-1}}
\quad
(1\leq j\leq n,\; u\in\N).
\]
and then prove the ubiquity statement for this set of functions as in Lemma \ref{formalubiq}. For the last part of the proof, we reorder the coefficients $\tau$ to have $\tau_1\geq \ldots \geq \tau_2 \geq \ldots \geq \tau_n >1$ and then we consider two cases: $\tau_n \geq \frac{n+m}{m}$ and $\frac{n+m}{m}>\tau_n$ and then apply Theorem \ref{wang wu KG} to get the statement of the theorem. We refer reader to the Section \ref{hausdorff dimension}, as the choice of parameters and calculations are the same as in the $p$-adic case.

\noindent{\bf Acknowledgments:} We thank Johannes Schleischitz and Dong Han Kim for useful discussions. 



\end{document}